\pgfplotsset{compat=newest} 
\pgfplotsset{plot coordinates/math parser=false}
\newlength\fwidth
\definecolor{myBlue}{rgb}{0.0,0.0,0.55}
  \newcounter{mnote}
  \let\oldmarginpar\marginpar
    \renewcommand\marginpar[1]{\-\oldmarginpar[\raggedleft\footnotesize #1]%
    {\raggedright\footnotesize #1}}
\newtheorem{theorem}{Theorem}[section]
\newtheorem{lemma}[theorem]{Lemma}
\newtheorem{corollary}[theorem]{Corollary}
\newtheorem{proposition}[theorem]{Proposition}
\newtheorem{example}[theorem]{Example}
\newtheorem{remark}[theorem]{Remark}
\newcommand{\dx}{\,{\rm d}x}
\newcommand{\dd}{\,{\rm d}}
\newcommand{\bs}{\boldsymbol}
\DeclareMathOperator*{\spa}{span}
\newcommand{\curl}{{\rm curl\,}}
\renewcommand{\div}{\operatorname{div}}
\newcommand{\grad}{{\rm grad\,}}
\DeclareMathOperator{\dist}{dist}
\newcommand{\tr}{\operatorname{tr}}
\newcommand{\step}[1]{\noindent\raisebox{1.5pt}[10pt][0pt]{\tiny\framebox{$#1$}}\xspace}
\newcommand{\vertiii}[1]{{\left\vert\kern-0.25ex\left\vert\kern-0.25ex\left\vert #1 
    \right\vert\kern-0.25ex\right\vert\kern-0.25ex\right\vert}}
\newcommand{\Oplus}{\ensuremath{\vcenter{\hbox{\scalebox{1.5}{$\oplus$}}}}}
\begin{document}
\title{Finite Element de Rham and Stokes Complexes in Three Dimensions}

 \author{Long Chen}%
 \address{Department of Mathematics, University of California at Irvine, Irvine, CA 92697, USA}%
 \email{chenlong@math.uci.edu}%
 \author{Xuehai Huang}%
 \address{School of Mathematics, Shanghai University of Finance and Economics, Shanghai 200433, China}%
 \email{huang.xuehai@sufe.edu.cn}%

 \thanks{The first author was supported by NSF DMS-1913080 and DMS-2012465.}
 \thanks{The second author is the corresponding author. The second author was supported by the National Natural Science Foundation of China Project 12171300, the Natural Science Foundation of Shanghai 21ZR1480500, and the Fundamental Research Funds for the Central
Universities 2019110066.}

\makeatletter
\@namedef{subjclassname@2020}{\textup{2020} Mathematics Subject Classification}
\makeatother
\subjclass[2020]{
%65N55;   %%  Multigrid methods; domain decomposition for boundary value problems involving PDEs;
%65F10;   %% Iterative numerical methods for linear systems
65N30;   %%  Finite element, Rayleigh-Ritz and Galerkin methods for boundary value problems involving PDEs;
58J10;   %%  Differential complexes [See also 35Nxx]; elliptic complexes
65N12;   %%  Stability and convergence of numerical methods for boundary value problems involving PDEs;
% 65N22;   %%  Numerical solution of discretized equations for boundary value problems involving PDEs;
% 65N15;   %%  Error bounds for boundary value problems involving PDEs
% 15A69;   %%  Multilinear algebra, tensor calculus
% 15A72;   %%  Vector and tensor algebra, theory of invariants [See also 13A50, 14L24]
}

% \subjclass[2010]{
% %65N55;   %% Multigrid methods; domain decomposition
% %65F10;   %%  Iterative methods for linear systems
% 65N30;   %%  Finite elements, Rayleigh-Ritz and Galerkin methods, finite methods;
% 65N12;   %%  Stability and convergence of numerical methods;
% 65N22;   %%  Solution of discretized equations
% %65N15;   %%  Error bounds
% }

\begin{abstract}
Finite element de Rham complexes and finite element Stokes complexes with various smoothness in three dimensions are systematically constructed.
First smooth scalar finite elements in three dimensions are derived through a non-overlapping decomposition of the simplicial lattice.
Based on the smooth scalar finite elements, both $H(\div)$-conforming finite elements and $H(\curl)$-conforming finite elements with various smoothness are devised, which induce the finite element de Rham complexes with various smoothness and the associated commutative diagrams.
The div stability is established for the $H(\div)$-conforming finite elements, and the exactness of these finite element complexes.
\end{abstract}

\maketitle

%\tableofcontents

\section{Introduction}
Hilbert complexes play a fundamental role in the theoretical analysis and the design of stable numerical methods for partial differential equations~\cite{ArnoldFalkWinther2006,Arnold;Falk;Winther:2010Finite,Arnold:2018Finite,ChenHuang2018}. In~\cite{Chen;Huang:2022femcomplex2d}, we have constructed two-dimensional finite element complexes with various smoothness, including finite element de Rham complexes, finite element elasticity complexes, and finite element divdiv complexes etc.
In this work we shall deal with a more challenging problem, that is to construct finite element de Rham complexes and finite element Stokes complexes with various smoothness in three dimensions. 

Introduce the following Sobolev spaces on a domain $\Omega\subseteq \mathbb R^3$
\begin{equation*}
\begin{aligned}
H^1(\Omega) &= \{\phi \in L^2(\Omega) : \grad \phi \in \bs L^2(\Omega;\mathbb R^3) \},\\
\bs H(\curl,\Omega) &= \{\bs u \in  \bs L^2(\Omega;\mathbb R^3): \curl \bs u \in \bs L^2(\Omega;\mathbb R^3) \},\\
\bs H(\grad\curl,\Omega) &= \{\bs u \in  \bs L^2(\Omega;\mathbb R^3): \curl \bs u \in \bs H^1(\Omega;\mathbb R^3) \},\\
\bs H^1(\curl,\Omega) &= \{\bs u \in  \bs H^1(\Omega;\mathbb R^3): \curl \bs u \in \bs H^1(\Omega;\mathbb R^3) \},\\
\bs H(\div,\Omega) &= \{\bs u \in\bs  L^2(\Omega;\mathbb R^3): \div \bs u\in L^2(\Omega) \}.
\end{aligned}
\end{equation*}
%and will write as $H(\dd;\Omega)$ for $\dd = \grad, \curl,$ or $\div$. 
%
The de Rham complex reads as
\begin{equation}\label{eq:deRham}
\mathbb R\hookrightarrow{} H^1(\Omega)\xrightarrow{\grad}\bs H(\curl,\Omega)\xrightarrow{\curl}\bs H(\div;\Omega)\xrightarrow{\div}L^2(\Omega) \xrightarrow{}0. 
\end{equation}
The Stokes complexes read as
\begin{equation}\label{eq:Stokes1}
\mathbb R\hookrightarrow{} H^1(\Omega)\xrightarrow{\grad}\bs H(\grad\curl,\Omega)\xrightarrow{\curl}\bs H^1(\Omega; \mathbb R^3)\xrightarrow{\div}L^2(\Omega) \xrightarrow{}0, 
\end{equation}
\begin{equation}\label{eq:Stokes2}
\mathbb R\hookrightarrow{} H^2(\Omega)\xrightarrow{\grad}\bs H^1(\curl,\Omega)\xrightarrow{\curl}\bs H^1(\Omega; \mathbb R^3)\xrightarrow{\div}L^2(\Omega) \xrightarrow{}0. 
\end{equation}
For simplicity, we assume $\Omega$ is homeomorphic to a ball and thus the de Rham complex \eqref{eq:deRham} and Stokes complexes \eqref{eq:Stokes1}-\eqref{eq:Stokes2} are exact. That is
$$
\mathbb R = \ker(\grad), \ker(\curl) = {\rm img}(\grad), \ker(\div) = {\rm img}(\curl),
$$
and $\div$ is surjective.
	 
We first construct smooth finite elements $\mathbb V^{\grad}$ and $\mathbb V^{L^2}$ for the scalar function space $H^1(\Omega)$ and $L^2(\Omega)$, respectively. Following our recent work~\cite{Chen;Huang:2022femcomplex2d} introduce the simplicial lattice $\mathbb T^{3}_k = \big \{ \alpha = (\alpha_0, \ldots, \alpha_3)\in\mathbb N^{0:3} \mid  \alpha_0  + \ldots + \alpha_3 = k \big \}$. The Bernstein basis for polynomial is $\{\lambda^{\alpha}, \alpha \in \mathbb T^{3}_k\}$. Given integer vector $\boldsymbol r= (r^{\texttt{v}}, r^e, r^f)^{\intercal}$ with $r^f\geq-1$, $r^e\geq \max\{2r^f,-1\}$ and $r^{\texttt{v}}\geq \max\{2r^e,-1\}$, and integer $k\geq \max\{2r^{\texttt{v}}+1,0\}$, we derive a geometric decomposition of the polynomial space $\mathbb P_{k}(T)$ based on a partition of the simplicial lattice $\mathbb T^{3}_k$.
With such geometric decomposition, we construct $C^{r^f}$-continuous finite element spaces $\mathbb V_k(\bs r)$ with $C^{r^{\texttt{v}}}$-smoothness at vertices, $C^{r^e}$-smoothness on edges, and $C^{r^f}$-smoothness on faces. Here $C^{-1}$-smoothness means discontinuity. Therefore if $r^f = -1$, $\mathbb V_k(\bs r)\subset L^2(\Omega)$ and for $r^f \geq 0$, $\mathbb V_k(\bs r)\subset H^1(\Omega)$.
%We refer to Appendix for the geometric decomposition of smooth finite elements in arbitrary dimension.

We then construct $H(\div)$-conforming finite element space $\mathbb V^{\div}_{k}(\boldsymbol{r}_2) = \mathbb V_k^3(\bs r_2) \cap \bs H(\div,\Omega)$ and establish the div stability between finite elements spaces
\begin{equation*}%\label{intro:divonto3dsimple}
\div\mathbb V^{\div}_{k}(\boldsymbol{r}_2)=\mathbb V^{L^2}_{k-1}(\bs r_2\ominus 1),   
\end{equation*}
where $r\ominus n := \max\{r-n, -1\}$. 
The key is to prove the div stability of bubble spaces
\begin{equation*}%\label{intro:divbubble}
\div\mathbb B_{k}^{\div}(T;\boldsymbol{r}_2)=\mathbb B_{k-1}(T; \bs r_2\ominus 1)/\mathbb R,
\end{equation*}
see \eqref{eq:bubbleT} and \eqref{eq:bubbledecomprfn1} for the precise definition of these bubble spaces and \eqref{eq:boundr2fordivbubble} for the condition on the parameter $\bs r_2$. 
%
%After that we derive the div stability
We further construct $H(\div)$-conforming finite element spaces $\mathbb V^{\div}_{k}(\bs r_2, \bs r_3)$ with an inequality constraint on the smoothness parameters $\bs r_3\geq\bs r_2\ominus 1$, and prove the div stability
$$
\div \mathbb V^{\div}_{k}(\bs r_2, \bs r_3) = \mathbb V^{L^2}_{k-1}(\bs r_3). 
$$
When $r_2^f \geq 0$, $\mathbb V^{\div}_{k}(\bs r_2, \bs r_3)\subset \bs H^1(\Omega)$ and $(\mathbb V^{\div}_{k}(\bs r_2, \bs r_3) , \mathbb V^{L^2}_{k-1}(\bs r_3))$ is a stable finite element velocity-pressure pair for the Stokes equation. 

By the aid of the degrees of freedom (DoFs) of spaces $\mathbb V^{\grad}_{k+2}(\bs r_0)$ and $\mathbb V^{\div}_{k}(\bs r_2, \bs r_3)$, we construct $H(\curl)$-conforming finite element spaces $\mathbb V_{k+1}^{\curl}(\bs r_1, \bs r_2) = \{\bs v\in \mathbb V_{k+1}^{\curl}(\bs r_1) \mid \curl \bs v\in \mathbb V_k^{\div}(\bs r_2) \cap \ker(\div)\}$. When identifying DoFs, we keep DoFs for $\curl \bs v\in \mathbb V^{\div}_k(\bs r_2)$, combine DoFs for $\mathbb V^3_{k+1}(\bs r_1)$, and eliminate linear dependent ones.

Given $\bs r_0 \geq 0, \bs r_1 = \bs r_0 -1, \bs r_2\geq\bs r_1\ominus1, \bs r_3\geq \bs r_2\ominus1$, under the assumptions $k\geq\max\{2 r_1^{\texttt{v}} + 1,2 r_2^{\texttt{v}} + 1,2 r_3^{\texttt{v}} + 2,1\} 
$, $(\bs r_2, \bs r_3)$ being a div stable pair, and
$$
r_1^{\texttt{v}}\geq2r_1^e+1,\; r_1^{e}\geq2r_1^f+1,\; r_2^{\texttt{v}}\geq2r_2^e,\; r_2^{e}\geq2r_2^f, \; r_3^{\texttt{v}}\geq2r_3^e,\; r_3^{e}\geq2r_3^f,
$$
we acquire the finite element de Rham complexes with various smoothness in three dimensions
\begin{equation}\label{intro:femderhamcomplex3dgeneral}
	\mathbb R\xrightarrow{\subset}\mathbb V^{\grad}_{k+2}(\boldsymbol{r}_0)\xrightarrow{\grad}\mathbb V^{\curl}_{k+1}(\boldsymbol{r}_1,\boldsymbol{r}_2)\xrightarrow{\curl}\mathbb V^{\div}_{k}(\boldsymbol{r}_2,\boldsymbol{r}_3)\xrightarrow{\div}\mathbb V^{L^2}_{k-1}(\boldsymbol{r}_3)\to0,
\end{equation}
and 
the following commutative diagram
$$
\begin{array}{c}
\xymatrix{
\mathbb R \ar[r]^-{\subset} & \mathcal C^{\infty}(\Omega) \ar[d]^{I_h^{\grad}} \ar[r]^-{\grad}                & \mathcal C^{\infty}(\Omega;\mathbb R^3) \ar[d]^{I_h^{\curl}}   \ar[r]^-{\curl} & \mathcal C^{\infty}(\Omega;\mathbb R^3) \ar[d]^{I_h^{\div}}   \ar[r]^-{\div} & \ar[d]^{I_h^{L^2}} \mathcal C^{\infty}(\Omega) \ar[r]^{} & 0 \\
 \mathbb R \ar[r]^-{\subset} & \mathbb V_{k+2}^{\grad}(\bs r_0) \ar[r]^{\grad}
                &  \mathbb V_{k+1}^{\curl}(\bs r_1, \bs r_2)   \ar[r]^{\curl} & \mathbb V_{k}^{\div}(\bs r_2, \bs r_3)  \ar[r]^{\div} &  \mathbb V_{k-1}^{L^2}(\bs r_3) \ar[r]^{}& 0,}
\end{array}
$$
where $I_h^{\grad}, I_h^{\curl}, I_h^{\div}$, and $I_h^{L^2}$ are the canonical interpolation operators using the DoFs. 

When $r_2^f\geq 0$, space $\mathbb V_{k}^{\div}(\bs r_2, \bs r_3)\subset \bs H^1(\Omega)$ and $ \mathbb V_{k+1}^{\curl}(\bs r_1, \bs r_2) \subset \bs H(\grad \curl,\Omega)$. Therefore \eqref{intro:femderhamcomplex3dgeneral} becomes a finite element Stokes complex. Existing works on finite element Stokes complexes~\cite{Neilan2015} and finite element de Rham complexes~\cite{Christiansen;Hu;Hu:2018finite} are examples of \eqref{intro:femderhamcomplex3dgeneral}.  
We refer to~\cite{ChristiansenHu2018,FuGuzmanNeilan2020,HuZhangZhang2022} for some discrete Stokes complexes based on split meshes, whose shape functions are piece-wise polynomials.

The developed tools (simplicial lattice and barycentric calculus) and the approach to construct $\mathbb V^{\curl}_{k+1}(\boldsymbol{r}_1,\boldsymbol{r}_2)$ and $\mathbb V^{\div}_{k}(\boldsymbol{r}_2,\boldsymbol{r}_3)$ will also shed light on the unified construction of finite element differential complexes other than de Rham and Stokes complexes.

The rest of this paper is organized as follows. The simplical lattice and barycentric calculus are introduced in Section~\ref{sec:simplicallattice}. 
In Section~\ref{sec:geodecomp3d}, the geometric decomposition of $C^m$-conforming finite elements in three dimensions and $H(\div)$-conforming finite elements are studied. In Section~\ref{sec:divstability}, the div stability is proved, and smooth $H(\div)$-conforming finite elements are constructed.
Finite element de Rham complexes with various smoothness are devised in Section~\ref{sec:femderhamcomplex}. Smooth scalar finite elements in arbitrary dimension are constructed in Appendix~\ref{sec:geodecompnd}.

\section{Simplical Lattice and Barycentric Calculus}\label{sec:simplicallattice}
Let $T \in \mathbb{R}^{n}$ be an $n$-dimensional simplex with vertices $\texttt{v}_{0}, \texttt{v}_{1}, \ldots, \texttt{v}_{n}$ in general position. That is
$$
T = \left \{ \sum_{i=0}^n \lambda_i \texttt{v}_i \mid 0\leq \lambda_i \leq 1, \sum_{i=0}^n\lambda_i = 1 \right \},
$$
where $\lambda = (\lambda_0, \lambda_1, \ldots, \lambda_n) $ is called the barycentric coordinate, and volume of $T$ is non-zero. We will write $T = {\rm Convex}(\texttt{v}_0, \ldots, \texttt{v}_n)$, where ${\rm Convex}$ stands for the convex combination. Some content of this section can be found in the book~\cite{Lai;Schumaker:2007Spline} but with different notation. Here notation on sub-simplexes are adapted from~\cite{ArnoldFalkWinther2009}.

\subsection{The simplicial lattice}
For two non-negative integers $l\leq m$, we will use the multi-index notation $\alpha \in \mathbb{N}^{l:m}$, meaning $\alpha=\left(\alpha_{l}, \cdots, \alpha_{m}\right)$ with integer $\alpha_{i} \geqslant 0$. The length of a multi-index is $|\alpha|:=\sum_{i=l}^m \alpha_{i}$ for $\alpha\in \mathbb{N}^{l:m}$. We can also treat $\alpha$ as a row vector with integer valued coordinates. We use the convention that: a vector $\alpha \geq c$ means $\alpha_i \geq c$ for all components $i=0,1, \ldots, n$.
We define $\lambda^{\alpha}:=\lambda_{0}^{\alpha_{0}} \cdots \lambda_{n}^{\alpha_{n}}$ for $\alpha\in \mathbb{N}^{0: n}$.

A simplicial lattice of degree $k$ and dimension $n$ is a multi-index set of $n+1$ components and with fixed length $k$, i.e.,
$$
\mathbb T^{n}_k = \left \{ \alpha = (\alpha_0, \alpha_1, \ldots, \alpha_n)\in\mathbb N^{0:n} \mid  \alpha_0 + \alpha_1 + \ldots + \alpha_n = k \right \}.
$$
%Similarly define $\mathbb T^{1:n}_k =\left \{ \alpha = (\alpha_1, \ldots, \alpha_n)\in\mathbb N^{1:n} \mid \alpha_1 + \ldots + \alpha_n = k \right \}$.
An element $\alpha\in \mathbb T^{n}_k$ is called a node of the lattice. 
It holds that
$$| \mathbb T^{n}_k | = {n + k \choose k} = \dim \mathbb P_k(T),$$
where $\mathbb P_k(T)$  denotes the set of real valued polynomials defined on $T$ of degree less than or equal to $k$.
Indeed the Bernstein basis of $\mathbb P_k(T)$ is
$$
\{ \lambda^{\alpha}: = \lambda_0^{\alpha_0}\lambda_1^{\alpha_1}\ldots \lambda_n^{\alpha_n} \mid \alpha \in \mathbb T^{n}_k\}.
$$
For a subset $S\subseteq \mathbb T^{n}_k$, we define
$$
\mathbb P_k(S) = \spa \{ \lambda^{\alpha}, \alpha \in S\subseteq \mathbb T^{n}_k \}.
$$
With such one-to-one mapping between the lattice node $\alpha$ and the Bernstein polynomial $\lambda^{\alpha}$, we can study properties of polynomials through the simplicial lattice.
%

%Two nodes $\alpha, \beta\in \mathbb T^{n}_k$ are adjacent if there exist $0\leq i_1<i_2\leq n$ such that $|\alpha_{i_1} - \beta_{i_1}|=|\alpha_{i_2} - \beta_{i_2}|=1$ and $|\alpha_i - \beta_i|=0$ for $i\neq i_1, i_2$. By assigning edges to all adjacent nodes, the simplicial lattice becomes an undirected graph. The distance of two nodes in the graph is the length of a minimal path connecting them, where the length of a path is defined as the number of edges in the path. One can easily verify that the graph distance $\dist_G(\alpha,\beta)=\frac{1}{2}\sum_{i=0}^n|\alpha_i-\beta_i|$ which is one half of the $L^1$-norm of $\alpha - \beta$ treating $\alpha,\beta \in \mathbb R^{n+1}$.

\subsection{Geometric embedding of a simplicial lattice}
We can embed the simplicial lattice into a geometric simplex by using $\alpha/k$ as the barycentric coordinate of node $\alpha$. Given $\alpha\in \mathbb T^{n}_k$, the barycentric coordinate of $\alpha$ is given by
$$
\lambda(\alpha) = (\alpha_0, \alpha_1, \ldots, \alpha_n )/k.
$$
Let $T$ be a simplex with vertices $\{\texttt{v}_0, \texttt{v}_1, \ldots, \texttt{v}_n\}$. The geometric embedding is
$$
x: \mathbb T^{n}_k \to T, \quad x(\alpha) = \sum_{i=0}^n \lambda_i(\alpha) \texttt{v}_i. 
$$
We will always assume such a geometric embedding of the simplicial lattice exists and write as $\mathbb T^{n}_k(T)$. 
%See Fig. \ref{fig:lattice} for an illustration for a two-dimensional simplicial lattice embedded into a triangle.

%The so-called reference simplex $\hat T$ is spanned by vertices $\texttt{v}_0 = \bs 0$ and $\texttt{v}_i = \bs e_i = (0, \ldots, 1, \ldots, 0)$, whose barycentric coordinate $\lambda_i = x_i$ for $i=1,2,\ldots n$ and $\lambda_0 = 1 - \sum_{i=1}^n x_i$. If we embed $\mathbb T^{n}_k$ to the scaled reference simplex $k \hat T$, then the coordinate of $(\alpha_0, \alpha_1, \ldots, \alpha_n )\in \mathbb T^{n}_k(k \hat T)$ is simply $(\alpha_1, \ldots, \alpha_n )$, where $\alpha_0$ is dropped as the vertex $\texttt{v}_0$ is mapped to the origin. Of course, we can set other vertex as the origin and obtain other embeddings.
%By such embedding, we can extend the length operator for multi-index to barycentric coordinate:
%$$
%| \lambda | = \lambda_0 + \lambda_1 + \ldots + \lambda_n
%$$

A simplicial lattice $\mathbb T^{n}_k$ is, by definition, an algebraic set. Through the geometric embedding $\mathbb T^{n}_k(T)$, we can use operators for the geometric simplex $T$ to study this  algebraic set. For example, for a subset $S\subseteq T$, we use $\mathbb T^{n}_k(S) = \{ \alpha \in \mathbb T^{n}_k, x(\alpha)\in S\}$ to denote the portion of lattice nodes whose geometric embedding is inside $S$. The superscript ${}^n$ will be replaced by the dimension of $S$ when $S$ is a lower dimensional sub-simplex. 
%In particular,  $\mathbb T^{n}_k(\stackrel{\circ}{T})$

\subsection{Sub-simplicial lattices}
Following~\cite{ArnoldFalkWinther2009}, we let $\Delta(T)$ denote all the subsimplices of $T$, while $\Delta_{\ell}(T)$ denotes the set of subsimplices of dimension $\ell$, for $0\leq \ell \leq n$. The cardinality of $\Delta_{\ell}(T)$ is $\displaystyle{n+1\choose \ell+1}$. Elements of $\Delta_0(T) = \{\texttt{v}_0, \ldots, \texttt{v}_n\}$ are $n+1$ vertices of $T$ and $\Delta_n(T) = T$.

%It is the combinatory structure of the simplex that plays an important role in the construction. 
For a sub-simplex $f\in \Delta_{\ell}(T)$, we will overload the notation $f$ for both the geometric simplex and the algebraic set of indices. Namely $f = \{f(0), \ldots, f(\ell)\}\subseteq \{0, 1, \ldots, n\}$ and 
$$
f ={\rm Convex}(\texttt{v}_{f(0)}, \ldots, \texttt{v}_{f(\ell)}) \in \Delta_{\ell}(T)
$$
is the $\ell$-dimensional simplex spanned by the vertices $\texttt{v}_{f(0)}, \ldots, \texttt{v}_{f( \ell)}$.
%There is a one-to-one mapping
%$$
%\Sigma(0 : \ell, 0: n)\to \Delta_{\ell}(T): \sigma \to f_{\sigma}.
%$$
%With a little bit abuse of notation, we also use $f$ for the index $\sigma$. That is for an $f\in \Delta_{\ell}(T)$, we treat $f\in \Sigma(0 : \ell, 0: n)$ so that $
%f=\left[\texttt{v}_{f(0)}, \ldots, \texttt{v}_{f(\ell)}\right]\in \Delta_{\ell}(T).$
%Then
%$$
%\lambda_{f}^{\alpha_f}: = \lambda_{\sigma}^{\alpha_{\sigma}} = \lambda_{\sigma(0)}^{\alpha_{\sigma(0)}} \ldots \lambda_{\sigma(\ell)}^{\alpha_{\sigma(\ell)}} = \lambda_{f(0)}^{\alpha_{f(0)}} \ldots \lambda_{f(\ell)}^{\alpha_{f(\ell)}},
%$$
%which shows direct relation to $f$ without resort to $\sigma$. When we want to emphasize the dimension of $f$, we write $f^{\ell}$ for an $f\in \Delta_{\ell}(T)$.

If $f \in \Delta_{\ell}(T)$, then $f^{*} \in \Delta_{n- \ell-1}(T)$ denotes the sub-simplex of $T$ opposite to $f$. When treating $f$ as a subset of $\{0, 1, \ldots, n\}$, $f^*\subseteq \{0,1, \ldots, n\}$ so that $f\cup f^* = \{0, 1, \ldots, n\}$, i.e., $f^*$ is the complement of set $f$. Geometrically,
$$
f^* ={\rm Convex}(\texttt{v}_{f^*(1)}, \ldots, \texttt{v}_{f^*(n-\ell)}) \in \Delta_{n- \ell-1}(T)
$$
is the $(n- \ell-1)$-dimensional simplex spanned by vertices not contained in $f$. 

%We use capital $F$ for an $(n-1)$-dimensional face of $T$ and label $F_{i}$ as the $(n-1)$-dimensional face opposite to $\texttt{v}_i$, i.e., $F_i = \{ i \}^*$ as the set and as a simplex $F_i = {\rm Convex}(\texttt{v}_{0}, \ldots, \widehat{\texttt{v}}_i, \ldots, \texttt{v}_{n})$ where $\widehat{\texttt{v}}_i$ means $\texttt{v}_i$ is removed. The sub-simplicial lattice $\mathbb T_k^{n-1}(F_0) =\{ \alpha\in \mathbb N^{1:n}, |\alpha | = k\}$ can be related to derivative $D^{\alpha}u, \alpha\in \mathbb N^{1:n}, |\alpha | = k$. 
%
%
%If $f=f_{\sigma}$ with $\sigma \in \Sigma(0:  \ell, 0: n)$, then $f^{*}=f_{\sigma^{*}}$. Again $f^*$ can be refer to $\sigma^*$.

Given a sub-simplex $f\in \Delta_{\ell}(T)$, through the geometric embedding $f \hookrightarrow T$, we define the prolongation/extension operator $E: \mathbb T^{\ell}_k \to \mathbb T^{n}_k$ as follows:
$$
E(\alpha)_{f(i)} = \alpha_{i}, i=0,\ldots, \ell, \quad \text{ and } E(\alpha)_j = 0, j\not\in f.
$$
For example, assume $f = \{ 1, 3, 4\}$, then for 
$\alpha = ( \alpha_{0},\alpha_{1},\alpha_{2})\in \mathbb T^{\ell}_k$, the extension $ E(\alpha)= (0, \alpha_{0}, 0, \alpha_{1}, \alpha_{2}, \ldots, 0).$ The geometric embedding $x(E(\alpha))\in f$ which justifies the notation $\mathbb T^{\ell}_k(f)$.
With a slight abuse of notation, for a node $\alpha_f\in \mathbb T^{\ell}_k(f)$, we still use the same notation $\alpha_f\in \mathbb T^{n}_k(T)$ to denote such extension. Then we have the following direct decomposition
\begin{equation}\label{eq:decalpha}
\alpha = E(\alpha_f) + E(\alpha_{f^*}) = \alpha_f + \alpha_{f^*}, \text{ and } |\alpha | = |\alpha_f | + | \alpha_{f^*}|.
\end{equation}
Based on \eqref{eq:decalpha}, we can write a Bernstein polynomial as
\begin{equation*}
\lambda^{\alpha} = \lambda_{f}^{\alpha_f}\lambda_{f^*}^{\alpha_{f^*}},
\end{equation*}
where $\lambda_{f}=\lambda_{f(0)} \ldots \lambda_{f(\ell)}\in \mathbb P_{\ell +1}(f)$ is the bubble function on $f$.

In summary, by treating $f$ as a set of indices, we can apply the operators $\cup, \cap, {}^*, \backslash$ on sets. While treating $f$ as a geometric simplex, $\partial f, \stackrel{\circ}{f}$ etc can be applied.
%
%
%
% the contained nodes in $\mathbb T^{n}_k(T)$ can be written as  By removing all zeros, we can also treat $\alpha_{\sigma} \in  \mathbb N^{0:\ell}_k(f_{\sigma})$.

%The interior lattice of $f$ is \mnote{ Should $\mathcal G_{k-(\ell +1)}(\stackrel{\circ}{f})$ be $\mathcal G_{k-(\ell +1)}(f)$?}
%$$
%\mathcal G_r(\stackrel{\circ}{f}) = \{ \alpha_{\sigma} \in  \mathbb N^{0:\ell}_k: \alpha_{\sigma}\geq 1\} \cong \mathcal G_{k-(\ell +1)}(\stackrel{\circ}{f}) = ,
%$$
%and the boundary lattice is
%$$
%\mathcal G_r(\partial f) = \{ \alpha_{\sigma} \in  \mathbb N^{0:\ell}_k: \textrm{ there exists } 0\leq i\leq \ell \textrm{ such that } \alpha_{\sigma(i)} = 0\}.
%$$

\subsection{Distance to a sub-simplex}
Given $f\in \Delta_{\ell}(T)$, we define the distance of a node $\alpha\in \mathbb T_k^n$ to $f$ as
\begin{equation*}%\label{eq:dist}
\dist(\alpha, f) :=| \alpha_{f^*} | = \sum_{i\in f^*} \alpha_i.
\end{equation*}
Next we present a geometric interpretation of $\dist(\alpha, f)$.
We set the vertex $\texttt{v}_{f(0)}$ as the origin and embed the lattice to the scaled reference simplex $k\hat T = {\rm span} \{\bs 0, k\bs e_1, \ldots, k\bs e_n\}$. Then $|\alpha_{f^*}| = s$ corresponds to the linear equation
$$
x_{f^*(1)} + x_{f^*(2)} + \ldots + x_{f^*(n - \ell)} = s,
$$
which defines a hyper-plane in $\mathbb R^n$, denoted by $L(f,s)$, with a normal vector $\bs 1_{f^*}$.
The simplex $f$ can be thought of as convex combination of vectors $\{\bs e_{f(0)f(i)}\}_{i=1}^{\ell}$. Obviously $\bs 1_{f^*}\cdot \bs e_{f(0)f(i)} = 0$ as the zero pattern is complementary to each other. So $f$ is parallel to the hyper-plane $L(f,s)$. The distance $\dist(\alpha, f)$ for $\alpha\in L(f,s)$ is the intercept of the hyper-plane $L(f,s)$; see Fig. \ref{fig:dist} for an illustration.
%As $\bs 0\in f_{\sigma}$ and $\dist(\bs 0, L_{\sigma}^k) = k$\mnote{ $\dist(\bs 0, L_{\sigma}^k) = \frac{k}{\sqrt{n-\ell}}$?}, we conclude $\dist(f_{\sigma}, L_{\sigma}^k) = k$.
%
%On the other hand, given $\alpha \in \mathbb N^{0:n}_{r}$, it is on the plane $L_{\sigma}^{  | \alpha_{f^*} |}$ which motivates the definition \eqref{eq:dist}.
 In particular $f\in L(f,0)$ and $\lambda_{i}|_f = 0$ for $i\in f^*$. Indeed $f = \{x\in T\mid \lambda_i(x) = 0, i\in f^*\}$.
 
%If we treat the simplicial lattice as a graph and introduce the graph distance between a node and a sub-simplex.

We can extend the definition to the distance between sub-simplexes. For $e\in \Delta_{\ell}(T),f \in \Delta(T)$, define $$\dist(e,f) = \min_{\alpha \in \mathbb T_k^{\ell}(e)} \dist (\alpha, f). $$
Then it is easy to verify that: for $e\in \Delta(f^*)$, $\dist (e,f) = k$ and for $e\in \Delta(f)$, i.e., $e\cap f \neq \varnothing,$ then $\dist (e,f) = 0$. 
%One can also show $\dist(\alpha, f)$ is the graph distance from $\alpha$ to lattice nodes in $f$.
% Namely
% $$
% \dist(\alpha, f) = \min_{\beta \in \mathbb T^{n}_k(f)} \dist_G(\alpha,\beta),
% $$
% where $\dist_G$ is the graph distance between two nodes.
%
%
%
%define \mnote{ What's the definition of $|\alpha-\beta|$? Is $|\alpha-\beta|$ the $L^1$ norm $\sum_{i=0}^n|\alpha(i)-\beta(i)|$ or $L^{\infty}$ norm $\max_{i=0}^n|\alpha(i)-\beta(i)|$? I think it is $L^{\infty}$ norm, but the $L^1$ norm is used in the proof of Lemma 1.1.}
%\mnote{ Is $\mathbb T^{n}_k$ $\mathcal G_r(f)$?}

\begin{figure}[htbp]
\subfigure[Distance to an edge.]{
\begin{minipage}[t]{0.5\linewidth}
\centering
\includegraphics*[height=4.45cm]{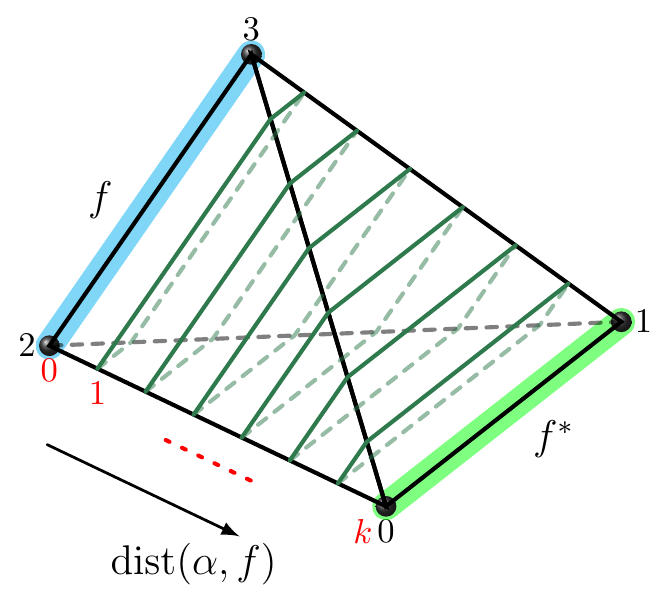}
\end{minipage}}%%
\subfigure[Distance to a face.]
{\begin{minipage}[t]{0.5\linewidth}
\centering
\includegraphics*[height=3.8cm]{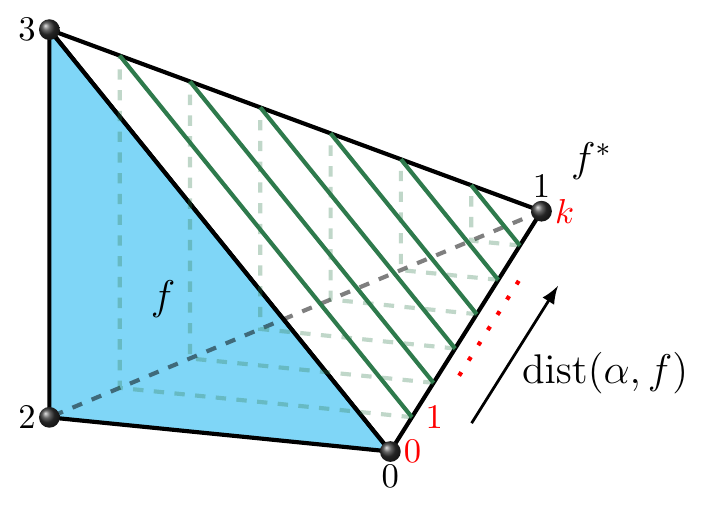}
\end{minipage}}
\caption{Distance to a sub-simplex.}
\label{fig:dist}
\end{figure}

%\begin{figure}
%\includegraphics[height=4.3cm]{figures/tetra_lattice_1.pdf}
%\includegraphics[height=3.8cm]{figures/tetra_lattice.pdf}
%\caption{Distance to a sub-simplex.}
%\label{fig:dist}
%\end{figure}

We define the lattice tube of $f$ with distance $r$ as
$$
D(f, r) = \{ \alpha \in \mathbb T^{n}_k, \dist(\alpha,f) \leq r\},
$$
which contains lattice nodes at most $r$ distance away from $f$. We overload the notation
$$
L(f,s) = \{ \alpha \in \mathbb T^{n}_k, \dist(\alpha,f) = s\},
$$
which is defined as a plane early but here is a subset of lattice nodes on this plane. Then by definition, 
$$
D(f, r) = \cup_{s=0}^r L(f,s), \quad L(f,s) = L(f^*, k - s). 
$$
By definition $D(f, -1) = \varnothing$, $D(f,0) = L(f,0) = f$, and $L(f,k) = f^*$.
We have the following characterization of lattice nodes in $D(f, r)$.
\begin{lemma} \label{lm:dist}
%It holds that
%the nodes with equality $ \{ \alpha \in \mathbb T^{n}_k, |\alpha_{f^*}| = s \}$ is on a hyper-plane with distance $k$ to $f_{\sigma}$.
%Consequently
For lattice node $\alpha \in \mathbb T^{n}_k$,
\begin{align*}
%\label{eq:Dfr} 
\alpha \in D(f, r)  &\iff  |\alpha_{f^*}| \leq r \iff | \alpha_{f} | \geq k - r,\\
%\label{eq:notDfr} 
\alpha \notin D(f, r) & \iff  |\alpha_{f^*}| > r \iff | \alpha_{f} | \leq k - r - 1.
\end{align*}
\end{lemma}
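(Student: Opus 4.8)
The statement is essentially a matter of unwinding the definitions of $D(f,r)$ and $\dist(\cdot,f)$ together with the length identity in \eqref{eq:decalpha}, so the plan is short. First I would recall that, by the definition of the lattice tube, $\alpha\in D(f,r)$ if and only if $\dist(\alpha,f)\leq r$, and that by the definition of distance to a sub-simplex $\dist(\alpha,f)=|\alpha_{f^*}|$; this gives the first equivalence $\alpha\in D(f,r)\iff|\alpha_{f^*}|\leq r$ with no computation.

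Next I would invoke the direct decomposition \eqref{eq:decalpha}, namely $|\alpha|=|\alpha_f|+|\alpha_{f^*}|$, together with $|\alpha|=k$ for $\alpha\in\mathbb T^n_k$. Substituting $|\alpha_{f^*}|=k-|\alpha_f|$ turns the inequality $|\alpha_{f^*}|\leq r$ into $|\alpha_f|\geq k-r$, which completes the first line of the claim.

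For the second line I would simply negate the first: $\alpha\notin D(f,r)\iff|\alpha_{f^*}|>r$. Here I would use that $|\alpha_{f^*}|$ is a non-negative integer and $r$ an integer, so $|\alpha_{f^*}|>r$ is equivalent to $|\alpha_{f^*}|\geq r+1$; applying $|\alpha_f|=k-|\alpha_{f^*}|$ once more converts this into $|\alpha_f|\leq k-r-1$, giving the second line.

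I do not anticipate any genuine obstacle; the only point requiring a word of care is the integrality step in the strict-inequality case, and the behavior at the boundary values of $r$ (e.g. $D(f,-1)=\varnothing$ and $D(f,r)=\mathbb T^n_k$ once $r\geq k$), which one should check are consistent with the stated equivalences — they are, since $|\alpha_{f^*}|$ ranges over $\{0,1,\dots,k\}$.
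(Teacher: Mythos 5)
Your proposal is correct and follows exactly the paper's own argument: the paper proves this lemma in one line by citing the definition of $\dist(\alpha,f)$ and the identity $|\alpha_f|+|\alpha_{f^*}|=k$, which is precisely what you unwind (your added remark on integrality for the strict-inequality case is the only detail the paper leaves implicit).
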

\begin{proof}
%In particular $f_{\sigma}$ is characterized by $\{ \lambda_{\sigma^*(1)} = \lambda_{\sigma^*(2)} =\ldots \lambda_{\sigma^*(n-\ell)}= 0 \}$.
By definition of $\dist(\alpha, f)$ and the fact $| \alpha_{f} |  + | \alpha_{f^*} | = k$.
\end{proof}

For each vertex $\texttt{v}_i\in \Delta_0(T)$,
$$
D(\texttt{v}_i, r) =  \{ \alpha \in \mathbb T^{n}_k, |\alpha_{i^*}| \leq r \},
$$
which is isomorphic to a simplicial lattice $\mathbb T_{r}^n$ of degree $r$. For a face $f\in \Delta_{n-1}(T)$, $D(f,r)$ is a trapezoid of height $r$ with base $f$. In general for $f\in \Delta_{\ell}(T)$, the hyper plane  $L(f, r)$ will cut the simplex $T$ into two parts, and $D(f,r)$ is the part containing $f$.

%For each vertex $\texttt{v}_i\in \Delta_0(T)$,
%$$
%D(\texttt{v}_i, r) =  \{ \alpha \in \mathbb T^{3}_k, |\alpha_{i^*}| \leq r \},
%$$
%which is isomorphic to a simplicial lattice $\mathbb T_{r}^3$ of degree $r$; see the green triangle in Fig. \ref{fig:lattice}. For a face $f\in \Delta_{2}(T)$, $D(f,r)$ is a trapezoid of height $r$ with base $f$. In general for $f\in \Delta_{\ell}(T)$, the hyper plane  $L(f, r)$ will cut the simplex $T$ into two parts, and $D(f,r)$ is the part containing $f$.

\subsection{Derivative and distance}
Recall that in~\cite{ArnoldFalkWinther2009} a smooth function $u$ is said to vanish to order $r$ on $f$ if $D^{\beta} u|_f = 0$ for all $\beta \in \mathbb N^{1:n}, |\beta | < r$. The following result shows that the vanishing order $r$ of a Bernstein polynomial $\lambda^{\alpha}$ on $f$ is the distance $\dist(\alpha, f)$.
%The distance of a node $\alpha$ to a sub-simplex $f$ can be used to control the derivative of the corresponding Bernstein polynomial.
\begin{lemma}\label{lm:derivative}
Let $f\in \Delta_{\ell}(T)$ be a sub-simplex of $T$. For $\alpha\in \mathbb T^{n}_k, \beta \in \mathbb N^{1:n}$, then %and $|\alpha_{f^*}| > |\beta|$, 
$$
D^{\beta} \lambda^{\alpha}|_{f} = 0, \quad  \text{ if }\dist(\alpha, f) > |\beta|, 
$$
\end{lemma}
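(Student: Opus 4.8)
The plan is to reduce the statement to a simple counting argument via the Leibniz rule. Recall from the factorization $\lambda^{\alpha} = \lambda_{f}^{\alpha_f}\lambda_{f^*}^{\alpha_{f^*}}$ together with the identity $f = \{x\in T \mid \lambda_i(x) = 0,\ i\in f^*\}$ that every $\lambda_j$ with $j\in f^*$ vanishes identically on the affine hull of $f$. Set $d := \dist(\alpha,f) = |\alpha_{f^*}| = \sum_{j\in f^*}\alpha_j$, and write the second factor as a product of exactly $d$ affine forms,
\[
\lambda_{f^*}^{\alpha_{f^*}} = \mu_1\mu_2\cdots\mu_d, \qquad \text{each } \mu_t\in\{\lambda_j : j\in f^*\}\text{ with } \mu_t|_f = 0,
\]
so that $\lambda^{\alpha} = P\,\mu_1\cdots\mu_d$ with $P := \lambda_f^{\alpha_f}$.

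First I would expand $D^{\beta}\lambda^{\alpha}$ by the generalized Leibniz rule,
\[
D^{\beta}\big(P\,\mu_1\cdots\mu_d\big) = \sum_{\gamma_0 + \gamma_1 + \cdots + \gamma_d = \beta} \frac{\beta!}{\gamma_0!\,\gamma_1!\cdots\gamma_d!}\,(D^{\gamma_0}P)\,(D^{\gamma_1}\mu_1)\cdots(D^{\gamma_d}\mu_d),
\]
and then argue termwise. In any term of this sum $\sum_{t=0}^{d}|\gamma_t| = |\beta| < d$, so at most $|\beta|$ of the multi-indices $\gamma_1,\dots,\gamma_d$ can be nonzero; by pigeonhole there is an index $t^{\star}\in\{1,\dots,d\}$ with $\gamma_{t^{\star}} = 0$, whence the factor $D^{\gamma_{t^{\star}}}\mu_{t^{\star}} = \mu_{t^{\star}}$ has vanishing restriction to $f$. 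Hence every term, and therefore $D^{\beta}\lambda^{\alpha}$, vanishes on $f$ whenever $|\beta| < d = \dist(\alpha,f)$, which is exactly the claim. (Equivalently, and more abstractly: $\lambda^{\alpha}$ lies in the $d$-th power of the ideal of polynomials vanishing on $f$ — generated by $\{\lambda_j : j\in f^*\}$ — and every element of that power vanishes to order $d$ on $f$; but the Leibniz computation is self-contained and I would keep it.)

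I do not expect a genuine obstacle here. The only points needing a little care are the multi-index bookkeeping in the Leibniz expansion and the precise meaning of ``restriction to $f$'': the operators $D^{\beta}$ are ambient partial derivatives on $\mathbb R^n$ and one restricts the resulting polynomial afterwards, so an undifferentiated factor $\mu_{t^{\star}}$ kills the term on $f$ regardless of which tangential or transverse directions the remaining derivatives in $\beta$ act along — this is where we use that $\mu_{t^{\star}}$ vanishes on the whole hyperplane $L(f,0)\supseteq f$, not merely on $f$ itself. Both ingredients are routine, so the write-up should be short.
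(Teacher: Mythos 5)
Your proof is correct and follows essentially the same route as the paper: both factor $\lambda^{\alpha}=\lambda_f^{\alpha_f}\lambda_{f^*}^{\alpha_{f^*}}$ and observe that after applying $|\beta|<|\alpha_{f^*}|$ derivatives every resulting term still carries a positive power of some $\lambda_j$ with $j\in f^*$, which vanishes on $f$. The paper states this in one line, while you make the same counting explicit via the generalized Leibniz rule and a pigeonhole argument; no new idea is involved, only more careful bookkeeping.
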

\begin{proof}
For $\alpha\in \mathbb T^{n}_k$, we write $\lambda^{\alpha} = \lambda_{f}^{\alpha_f}\lambda_{f^*}^{\alpha_{f^*}}$. When $|\alpha_{f^*}| > |\beta|$, the derivative $D^{\beta} \lambda^{\alpha}$ will contain a factor $\lambda_{f^*}^{\gamma}$ with $\gamma\in \mathbb N^{1:n-\ell},$ and $|\gamma| = |\alpha_{f^*}| - |\beta| > 0$. Therefore $D^{\beta} \lambda^{\alpha}|_{f} = 0$ as $\lambda_{i}|_f = 0$ for $i\in f^*$.
%Otherwise $|\alpha_{f^*}| \leq |\beta|$, there will be a term in $D^{\beta} \lambda^{\alpha}$ which does not contain $\lambda_{f^*}$
\end{proof}

\subsection{Barycentric calculus}
%We list formulae here.
%\begin{equation}
%\int_T \lambda^{\alpha} \dx = \frac{\alpha!n!}{(|\alpha|+n)!}{|T|}.
%\end{equation}
%We present results for the $n$-dimensional simplex in $\mathbb R^n$. 

The normalized basis $\lambda^{\alpha}/\alpha!$ has the constant integral
\begin{equation}\label{eq:normalizedlambda}
\int_T \frac{1}{\alpha!}\lambda^{\alpha} \dx = \frac{n!}{(k+n)!}{|T|}, \quad \forall~\alpha\in \mathbb T_k^n.
\end{equation}

Denote by $\bs t_{i,j}$ the edge vector from $\texttt{v}_i$ to $\texttt{v}_j$. By computing the constant directional derivative $\boldsymbol t_{i,j}\cdot\nabla \lambda_{\ell}$ by values on the two vertices, we have
\begin{equation}\label{eq:tijlambdal}
\boldsymbol t_{i,j}\cdot\nabla \lambda_{\ell}=\delta_{j\ell}-\delta_{i\ell}=\begin{cases}
1, & \textrm{ if } \ell=j,\\
-1, & \textrm{ if } \ell=i,\\
0, & \textrm{ if } \ell\neq i,j.
\end{cases}  
\end{equation}
Define $\epsilon_i\in \mathbb N^{0:n}$ as $\epsilon_i = (0,\ldots, 1, \ldots, 0)$ and $\epsilon_{ij} = \epsilon_i - \epsilon_j = (0,\ldots, 1, \ldots, -1,\ldots, 0)$.

\begin{lemma}
For $\alpha \in \mathbb T^n_k$ and $0\leq i\neq j\leq n$, 
 \begin{align}
% \label{eq:dtij}\nabla (\lambda^{\alpha + e^j})\cdot \bs t_{ij}& = (\alpha_j + 1) \lambda^{\alpha} - \alpha_i \lambda^{\alpha + \epsilon_{ij}}, \\
\label{eq:dtji} \nabla (\lambda^{\alpha + \epsilon_i})\cdot \bs t_{j,i}& = (\alpha_i + 1) \lambda^{\alpha} - \alpha_j \lambda^{\alpha + \epsilon_{ij}}.
\end{align}
\end{lemma}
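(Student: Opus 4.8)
The plan is to prove \eqref{eq:dtji} by a direct computation using the product rule together with the elementary identity \eqref{eq:tijlambdal} for the constant gradients $\nabla\lambda_\ell$. Write $\mu := \alpha + \epsilon_i \in \mathbb T^n_{k+1}$, so that $\lambda^{\mu} = \prod_{\ell=0}^n \lambda_\ell^{\mu_\ell}$ with $\mu_i = \alpha_i + 1$ and $\mu_\ell = \alpha_\ell$ for $\ell \neq i$. Applying the product rule,
$$
\nabla(\lambda^{\mu}) = \sum_{\ell=0}^n \mu_\ell\, \lambda^{\mu - \epsilon_\ell}\, \nabla\lambda_\ell,
$$
and then dotting with $\bs t_{j,i}$ and invoking \eqref{eq:tijlambdal} (with the roles of the two endpoints being $j$ and $i$, so $\bs t_{j,i}\cdot\nabla\lambda_\ell = \delta_{i\ell} - \delta_{j\ell}$) collapses the sum to the two terms $\ell = i$ and $\ell = j$.

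Next I would simplify those two surviving terms. For $\ell = i$: the coefficient is $\mu_i (\bs t_{j,i}\cdot\nabla\lambda_i) = (\alpha_i+1)\cdot 1$, and $\mu - \epsilon_i = \alpha$, giving $(\alpha_i+1)\lambda^{\alpha}$. For $\ell = j$: the coefficient is $\mu_j(\bs t_{j,i}\cdot\nabla\lambda_j) = \alpha_j \cdot(-1)$, and $\mu - \epsilon_j = \alpha + \epsilon_i - \epsilon_j = \alpha + \epsilon_{ij}$, giving $-\alpha_j\,\lambda^{\alpha+\epsilon_{ij}}$. Adding these yields exactly the right-hand side of \eqref{eq:dtji}. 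One should note that the expression is well posed even in degenerate cases: if $\alpha_j = 0$ then $\alpha + \epsilon_{ij}$ has a negative component, but its coefficient $\alpha_j$ vanishes, so the term is simply absent; and $\mu - \epsilon_i = \alpha$ and $\mu - \epsilon_j$ always have nonnegative entries wherever their coefficients are nonzero, so every Bernstein monomial appearing is legitimate.

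There is essentially no obstacle here — the statement is a one-line consequence of the Leibniz rule and \eqref{eq:tijlambdal}. The only point requiring a little care is bookkeeping with the multi-index shifts $\epsilon_i$, $\epsilon_j$, $\epsilon_{ij}$ and the sign convention in $\bs t_{j,i}$ versus $\bs t_{i,j}$; getting the direction of the edge vector right is what produces the relative minus sign between the two terms. I would therefore present the proof compactly, displaying the Leibniz expansion, substituting \eqref{eq:tijlambdal}, and reading off the two nonzero contributions.
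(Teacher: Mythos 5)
Your computation is correct and is exactly the ``direct calculation using formula \eqref{eq:tijlambdal}'' that the paper gives as its entire proof: Leibniz rule, dot with $\bs t_{j,i}$, and read off the $\ell=i$ and $\ell=j$ terms. The sign bookkeeping ($\bs t_{j,i}\cdot\nabla\lambda_\ell=\delta_{i\ell}-\delta_{j\ell}$) and the remark about the degenerate case $\alpha_j=0$ are both handled properly.
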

\begin{proof}
 By direct calculation and formula \eqref{eq:tijlambdal}.
\end{proof}

For two nodes $\alpha,\beta \in \mathbb T^n_k$, define the distance
\begin{equation*}
\dist(\alpha,\beta)= \frac{1}{2} \|\alpha - \beta\|_{\ell_1}.
\end{equation*}
Two nodes $\alpha, \beta\in \mathbb T^{n}_k$ are adjacent if $\dist(\alpha,\beta)= 1$. By assigning edges to all adjacent nodes, the simplicial lattice $ \mathbb T^n_k$ becomes an undirected graph and denoted by $\mathcal G( \mathbb T^n_k)$. The distance of two nodes in the graph is the length of a minimal path connecting them, where the length of a path is defined as the number of edges in the path. Obviously the graph $\mathcal G( \mathbb T^n_k)$ is connected.

\begin{lemma}
For $\alpha,\beta \in \mathbb T^n_k$, it holds
\begin{equation*}
\dist(\alpha,\beta)= 1 \iff \beta = \alpha + \epsilon_{ij}, \quad \text{for some } i,j \in [0, n], i\neq j. 
\end{equation*}
\end{lemma}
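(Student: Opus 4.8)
The plan is to reduce the statement to an elementary arithmetic fact about the integer vector $\gamma := \beta - \alpha \in \mathbb Z^{0:n}$. Since $\alpha,\beta \in \mathbb T^n_k$, both have length $k$, so $\sum_{i=0}^n \gamma_i = |\beta| - |\alpha| = 0$. Writing $\gamma = \gamma^+ - \gamma^-$ with $\gamma^+ = \max(\gamma,0)$ and $\gamma^- = \max(-\gamma,0)$ componentwise, the condition $\sum_i \gamma_i = 0$ gives $|\gamma^+| = |\gamma^-|$, and hence $\|\gamma\|_{\ell_1} = |\gamma^+| + |\gamma^-| = 2|\gamma^+|$. Therefore $\dist(\alpha,\beta) = \tfrac12\|\gamma\|_{\ell_1} = |\gamma^+|$.

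For the forward direction, suppose $\dist(\alpha,\beta) = 1$. By the identity above $|\gamma^+| = |\gamma^-| = 1$, and since the entries of $\gamma$ are integers this is possible only if $\gamma$ has exactly one entry equal to $+1$, say at index $i$, exactly one entry equal to $-1$, say at index $j$, and all remaining entries zero; moreover $i \neq j$ since a single coordinate cannot be both $+1$ and $-1$. Thus $\gamma = \epsilon_i - \epsilon_j = \epsilon_{ij}$, i.e. $\beta = \alpha + \epsilon_{ij}$ with $i,j \in [0,n]$, $i\neq j$. For the converse, if $\beta = \alpha + \epsilon_{ij}$ with $i \neq j$, then $\gamma = \epsilon_{ij}$ has a single $+1$ (at $i$) and a single $-1$ (at $j$), so $\|\gamma\|_{\ell_1} = 2$ and $\dist(\alpha,\beta) = 1$; note that the hypothesis $\alpha,\beta \in \mathbb T^n_k$ is exactly what guarantees $\beta$ is again a lattice node (equivalently $\alpha_j \geq 1$), so no extra feasibility check is needed.

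There is essentially no hard step here: the whole argument is the observation that a zero-sum integer vector of $\ell_1$-norm $2$ must be of the form $\epsilon_i - \epsilon_j$. If one wanted to avoid introducing $\gamma^{\pm}$ one could instead argue directly that $\|\alpha-\beta\|_{\ell_1} = 2$ together with $\sum_i(\alpha_i-\beta_i)=0$ forces at most two nonzero coordinates of $\alpha - \beta$, then enumerate the sign possibilities; this is slightly more case-heavy but equally elementary. I would use the $\gamma^{\pm}$ decomposition for brevity.
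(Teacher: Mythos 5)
Your proof is correct and follows essentially the same route as the paper's: both arguments reduce to the observation that $\beta-\alpha$ is a zero-sum integer vector of $\ell_1$-norm $2$, which forces exactly one entry $+1$ and one entry $-1$. Your $\gamma^{\pm}$ packaging and the explicit converse are just slightly more formal presentations of the same elementary argument.
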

\begin{proof}
Notice for two non-negative integer, if $\alpha_i \neq \beta_i$, then $|\alpha_i - \beta_i |\geq 1$. As $\alpha,\beta \in \mathbb T^n_k$, we have $\sum_{i=0}^n (\alpha_i - \beta_i) = 0$. The condition $\dist(\alpha,\beta)= 1$ means $\sum_{i=0}^n |\alpha_i - \beta_i| = 2$. So the only possibility is: $\alpha_i-\beta_i = -1$ and $\alpha_j-\beta_j = 1$ for some $0\leq i,j\leq n$ and $i\neq j$. 
\end{proof}

We give an explicit form for functions in $\mathbb P_{k}(T)\cap L_0^2(T) = \{ p\in \mathbb P_{k}(T): \int_T p \dx = 0 \}$.
\begin{lemma}\label{lm:L20}
For $k\geq 0$, it holds
$$
\mathbb P_{k}(T)\cap L_0^2(T)=\mathrm{span}\{\lambda^{\alpha}/\alpha!-\lambda^{\beta}/\beta!: \alpha, \beta\in \mathbb T^n_k \textrm{ and } \dist(\alpha,\beta)=1 \}.
$$
\end{lemma}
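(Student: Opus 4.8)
The plan is to prove both inclusions. The easy direction is that each generator $\lambda^{\alpha}/\alpha! - \lambda^{\beta}/\beta!$ has zero integral over $T$: this follows immediately from \eqref{eq:normalizedlambda}, which says $\int_T \lambda^{\alpha}/\alpha! \dx = \frac{n!}{(k+n)!}|T|$ is the \emph{same} constant for every $\alpha \in \mathbb T^n_k$. Hence the span of these differences is contained in $\mathbb P_k(T) \cap L^2_0(T)$.

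For the reverse inclusion, the key is a dimension count combined with a connectivity argument. First I would note $\dim \big(\mathbb P_k(T) \cap L^2_0(T)\big) = \dim \mathbb P_k(T) - 1 = |\mathbb T^n_k| - 1$, since the functional $p \mapsto \int_T p\dx$ is nonzero on $\mathbb P_k(T)$. So it suffices to show the spanning set on the right has dimension at least $|\mathbb T^n_k| - 1$. Consider the graph $\mathcal G(\mathbb T^n_k)$ defined in the excerpt, whose vertices are the lattice nodes and whose edges join adjacent nodes (those at distance $1$, i.e. differing by some $\epsilon_{ij}$, by the preceding lemma). This graph is connected, so it contains a spanning tree with $|\mathbb T^n_k| - 1$ edges. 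I claim the differences $\{\lambda^{\alpha}/\alpha! - \lambda^{\beta}/\beta! : \{\alpha,\beta\} \text{ an edge of the spanning tree}\}$ are linearly independent: any nontrivial linear combination summing to zero, after collecting coefficients, would express a nontrivial relation among the linearly independent Bernstein polynomials $\{\lambda^{\alpha}\}$ — more precisely, orienting the tree edges and viewing the combination as a weighted sum of $\pm$ incidences, the coefficient of $\lambda^{\gamma}/\gamma!$ is the signed sum of edge-weights at the node $\gamma$; setting all these to zero forces (by a standard leaf-stripping induction on the tree) every edge weight to vanish. Therefore these $|\mathbb T^n_k| - 1$ differences are independent elements of $\mathbb P_k(T) \cap L^2_0(T)$, which has exactly that dimension, so they form a basis and in particular the full spanning set on the right spans $\mathbb P_k(T) \cap L^2_0(T)$.

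An alternative, more constructive way to close the gap — which avoids graph theory and may be what the authors prefer — is to show directly that every $\lambda^{\alpha}/\alpha! - \lambda^{\beta}/\beta!$ with $\dist(\alpha,\beta)$ arbitrary lies in the span of the adjacent differences, by telescoping along a path in $\mathcal G(\mathbb T^n_k)$ from $\alpha$ to $\beta$, and then to observe that $\{\lambda^{\alpha}/\alpha! - \lambda^{\alpha_0}/\alpha_0! : \alpha \neq \alpha_0\}$ for a fixed reference node $\alpha_0$ obviously spans $\mathbb P_k(T) \cap L^2_0(T)$ (subtract off the mean). Combining telescoping with this observation gives the result without any dimension count.

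The main obstacle is the linear independence / dimension-matching step: one must be careful that the generating differences are not merely a spanning set with hidden relations but actually match the dimension $|\mathbb T^n_k| - 1$, and the cleanest route to that is the spanning-tree argument above, using connectivity of $\mathcal G(\mathbb T^n_k)$ together with the one-to-one correspondence between lattice nodes and the linearly independent Bernstein basis $\{\lambda^{\alpha}\}$. Everything else — the zero-integral property and the telescoping identity — is a direct consequence of \eqref{eq:normalizedlambda} and the definition of adjacency.
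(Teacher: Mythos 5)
Your proposal is correct and follows essentially the same route as the paper: the zero-integral property of each generator via \eqref{eq:normalizedlambda} for one inclusion, and for the other a spanning tree of the connected graph $\mathcal G(\mathbb T^n_k)$ whose $|\mathbb T^n_k|-1$ edges yield a basis of $\mathbb P_k(T)\cap L_0^2(T)$. Your leaf-stripping justification of linear independence is a detail the paper leaves implicit, and your telescoping alternative is a valid but unnecessary variant.
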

\begin{proof}
By the integral formula \eqref{eq:normalizedlambda}, $\lambda^{\alpha}/\alpha!-\lambda^{\beta}/\beta!\in \mathbb P_{k}(T)\cap L_0^2(T)$ for $\alpha, \beta\in \mathbb T^n_k$. 
%On the other side, for $v=\sum_{\alpha\in \mathbb T^n_k}c_{\alpha}\lambda^{\alpha}\in\mathbb P_{k}(T)\cap L_0^2(T)$ with $c_{\alpha}\in\mathbb R$. By $v\in L_0^2(T)$, we get $\sum_{\alpha\in \mathbb T^n_k}c_{\alpha}\alpha!=0$. Choose a $\beta\in \mathbb T^n_k$, then 
%$c_{\beta}=-\sum_{\alpha\in \mathbb T^n_k\backslash\{\beta\}}c_{\alpha}\alpha!/\beta!$.
%As a result,
%$$
%v=\sum_{\alpha\in \mathbb T^n_k\backslash\{\beta\}}c_{\alpha}\lambda^{\alpha} - \sum_{\alpha\in \mathbb T^n_k\backslash\{\beta\}}c_{\alpha}\frac{\alpha!}{\beta!}\lambda^{\beta}=\sum_{\alpha\in \mathbb T^n_k\backslash\{\beta\}}\alpha!c_{\alpha}(\lambda^{\alpha}/\alpha!-\lambda^{\beta}/\beta!).
%$$

As the graph $\mathcal G( \mathbb T^n_k)$ is connected, we can find a spanning tree $\mathcal T$ with number of edge is equal to $ | \mathbb T^n_k | - 1 = \dim (\mathbb P_{k}(T)\cap L_0^2(T))$. 
So 
$
\{\lambda^{\alpha}/\alpha!-\lambda^{\beta}/\beta!: [\alpha, \beta] \text{ is an edge in } \mathcal T\}
$
is a basis of $\mathbb P_{k}(T)\cap L_0^2(T)$. Then the result follows as the edge of $\mathcal T$ is a subset of $\mathcal G( \mathbb T^n_k)$ and $ [\alpha, \beta]$ is an edge iff $\dist(\alpha,\beta)= 1$. 
%Finally we end the proof by the fact $\mathbb T^n_k$ forms a connected graph.
\end{proof}

\begin{figure}[htbp]
\begin{center}
\includegraphics[width=6cm]{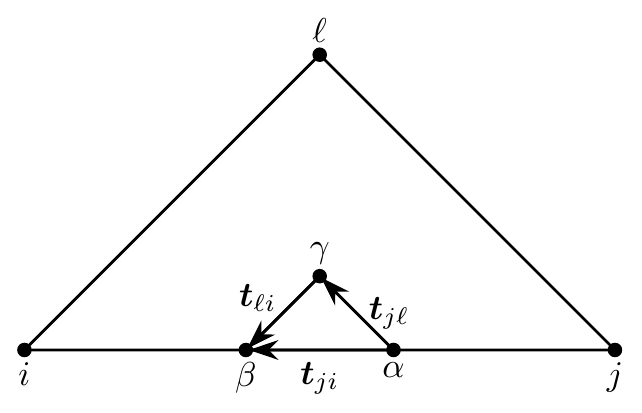}
\caption{Velocity fields satisfying $\div \bs u =  \frac{1}{\alpha!}\lambda^{\alpha} - \frac{1}{\beta!} \lambda^{\beta}$ with $\beta = \alpha + \epsilon_{ij}$. One direction is $\bs t_{ji}$ and another is a detour through $\gamma$.}
\label{default}
\end{center}
\end{figure}

Given a basis function $p = \frac{1}{\alpha!}\lambda^{\alpha} - \frac{1}{\beta!} \lambda^{\beta}$, we can find two $\bs u$ satisfying $\div \bs u = p$. 
\begin{lemma}\label{lm:alphabeta}
 Let $\alpha, \beta\in \mathbb T^{n}_k$ and $\beta = \alpha + \epsilon_{ij}$. Then
\begin{equation}\label{eq:divalphabeta1}
\frac{1}{\beta!\alpha_j}\div (\lambda^{\alpha+\epsilon_i}\bs t_{j,i}) = \frac{1}{\alpha!}\lambda^{\alpha} - \frac{1}{\beta!} \lambda^{\beta}.
\end{equation}
\end{lemma}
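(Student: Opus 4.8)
The plan is to reduce the statement to the single-term identity \eqref{eq:dtji} followed by a bookkeeping check on factorials. First I would observe that $\bs t_{j,i}$ is a constant vector field, so the divergence collapses to a directional derivative:
$$
\div\!\left(\lambda^{\alpha+\epsilon_i}\bs t_{j,i}\right) = \nabla\!\left(\lambda^{\alpha+\epsilon_i}\right)\cdot \bs t_{j,i}.
$$
Applying \eqref{eq:dtji} directly gives $\nabla(\lambda^{\alpha+\epsilon_i})\cdot\bs t_{j,i} = (\alpha_i+1)\lambda^{\alpha} - \alpha_j\lambda^{\alpha+\epsilon_{ij}} = (\alpha_i+1)\lambda^{\alpha} - \alpha_j\lambda^{\beta}$, using the hypothesis $\beta = \alpha + \epsilon_{ij}$.

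Next I would note that $\beta\in\mathbb T^n_k$ forces $\beta_j = \alpha_j - 1 \geq 0$, hence $\alpha_j\geq 1$ and the scalar $\beta!\alpha_j$ is nonzero, so dividing is legitimate. It then remains to match coefficients: dividing the display above by $\beta!\alpha_j$ yields $\frac{\alpha_i+1}{\beta!\alpha_j}\lambda^{\alpha} - \frac{1}{\beta!}\lambda^{\beta}$, so the claim \eqref{eq:divalphabeta1} is equivalent to the factorial identity $\beta!\,\alpha_j = (\alpha_i+1)\,\alpha!$. Since $\beta = \alpha + \epsilon_i - \epsilon_j$ changes only the $i$-th and $j$-th components, $\beta_i = \alpha_i + 1$, $\beta_j = \alpha_j - 1$, and $\beta_\ell = \alpha_\ell$ otherwise; therefore
$$
\beta! = (\alpha_i+1)!\,(\alpha_j-1)!\prod_{\ell\neq i,j}\alpha_\ell!,
$$
and multiplying by $\alpha_j$ gives $\beta!\,\alpha_j = (\alpha_i+1)!\,\alpha_j!\prod_{\ell\neq i,j}\alpha_\ell! = (\alpha_i+1)\,\alpha!$, as needed.

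There is essentially no obstacle here: the content is entirely in the already-established identity \eqref{eq:dtji}, and the only thing to be careful about is the normalization — tracking the factorials $(\alpha_i+1)! = (\alpha_i+1)\,\alpha_i!$ and $(\alpha_j-1)!\,\alpha_j = \alpha_j!$ correctly — together with the validity check $\alpha_j\geq 1$ coming from $\beta\in\mathbb T^n_k$. I would present the argument in exactly the three steps above and conclude.
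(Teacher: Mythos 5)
Your proof is correct and is precisely the ``direct calculation using formula \eqref{eq:dtji}'' that the paper's one-line proof alludes to: you reduce $\div$ to a directional derivative of a constant field, invoke \eqref{eq:dtji}, and verify the factorial identity $\beta!\,\alpha_j=(\alpha_i+1)\,\alpha!$, which is exactly the normalization check needed. The added observation that $\beta\in\mathbb T^n_k$ forces $\alpha_j\geq 1$ (so the division is legitimate) is a worthwhile detail the paper leaves implicit.
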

\begin{proof}
Direct calculation using formula \eqref{eq:dtji}.  
\end{proof}

\begin{lemma}\label{lm:alphabetagamma}
Let $\alpha, \beta\in \mathbb T^{n}_k$ and $\beta = \alpha + \epsilon_{ij}$. Choose an $\ell\in [0:n], \ell \neq i,j$, s.t. $\gamma = \alpha + \epsilon_{\ell j} = \beta + \epsilon_{\ell i}\in \mathbb T^{n}_k$. Then
\begin{equation}\label{eq:divalphabeta}
\frac{1}{\gamma_!\alpha_j}\div(\lambda^{\alpha + \epsilon_{\ell}}\boldsymbol{t}_{j,\ell})+\frac{1}{\gamma_!\beta_i}\div(\lambda^{\beta+ \epsilon_{\ell}}\boldsymbol{t}_{\ell,i}) = \frac{1}{\alpha!}\lambda^{\alpha} - \frac{1}{\beta!} \lambda^{\beta}.
\end{equation}
\end{lemma}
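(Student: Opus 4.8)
The plan is to reduce \eqref{eq:divalphabeta} to a telescoping identity built from the single-edge formula \eqref{eq:divalphabeta1} of Lemma~\ref{lm:alphabeta}. The point is that with $\beta = \alpha + \epsilon_{ij}$ and $\gamma = \alpha + \epsilon_{\ell j} = \beta + \epsilon_{\ell i}$, the three nodes $\alpha$, $\beta$, $\gamma$ are pairwise adjacent in $\mathcal G(\mathbb T^n_k)$: indeed $\gamma - \alpha = \epsilon_{\ell j} = \epsilon_{\ell} - \epsilon_{j}$ and $\gamma - \beta = \epsilon_{\ell i} = \epsilon_{\ell} - \epsilon_i$, so $\dist(\alpha,\gamma) = \dist(\beta,\gamma) = 1$. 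Hence the target $\tfrac{1}{\alpha!}\lambda^{\alpha} - \tfrac{1}{\beta!}\lambda^{\beta}$ can be written as $\bigl(\tfrac{1}{\alpha!}\lambda^{\alpha} - \tfrac{1}{\gamma!}\lambda^{\gamma}\bigr) + \bigl(\tfrac{1}{\gamma!}\lambda^{\gamma} - \tfrac{1}{\beta!}\lambda^{\beta}\bigr)$, a detour through $\gamma$ as the figure suggests.

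First I would apply Lemma~\ref{lm:alphabeta} to the pair $(\alpha,\gamma)$. Since $\gamma = \alpha + \epsilon_{\ell j}$, the roles of the indices $(i,j)$ in \eqref{eq:divalphabeta1} are played by $(\ell, j)$, so
\[
\frac{1}{\gamma!\,\alpha_j}\div\bigl(\lambda^{\alpha+\epsilon_{\ell}}\bs t_{j,\ell}\bigr) = \frac{1}{\alpha!}\lambda^{\alpha} - \frac{1}{\gamma!}\lambda^{\gamma}.
\]
Next I would apply the same lemma to the pair $(\gamma,\beta)$, but with $\gamma$ in the role of the ``$\alpha + \epsilon_i$''-shifted node. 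Writing $\gamma = \beta + \epsilon_{\ell i}$, i.e. $\beta = \gamma + \epsilon_{i\ell}$, and using that $\div(\lambda^{\gamma}\bs t_{\ell,i}) = -\div(\lambda^{\gamma}\bs t_{i,\ell})$, formula \eqref{eq:divalphabeta1} with base node $\beta$ and shift indices $(i,\ell)$ gives
\[
\frac{1}{\gamma!\,\beta_i}\div\bigl(\lambda^{\beta+\epsilon_{\ell}}\bs t_{\ell,i}\bigr) = \frac{1}{\gamma!}\lambda^{\gamma} - \frac{1}{\beta!}\lambda^{\beta},
\]
after noting that $\gamma = \beta + \epsilon_{\ell i}$ means the $\alpha!$-type normalizing factorial on the left is $\gamma!$ and the shift $\beta \mapsto \beta + \epsilon_{\ell}$ matches $\lambda^{\beta + \epsilon_{\ell}}$. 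Adding the two displayed identities produces exactly \eqref{eq:divalphabeta}, since the two $\tfrac{1}{\gamma!}\lambda^{\gamma}$ terms cancel.

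The only real care needed is bookkeeping: checking that the coefficient appearing when Lemma~\ref{lm:alphabeta} is specialized is precisely $\tfrac{1}{\gamma!\,\alpha_j}$ in the first term and $\tfrac{1}{\gamma!\,\beta_i}$ in the second, and that the antisymmetry $\bs t_{j,i} = -\bs t_{i,j}$ is used with the correct sign so that no stray minus sign survives. Concretely, to match the statement of Lemma~\ref{lm:alphabeta} one rewrites $\lambda^{\alpha + \epsilon_{\ell}} = \lambda^{\alpha' + \epsilon_i'}$ with $\alpha' = \alpha$, $i' = \ell$, $j' = j$ for the first term, and similarly for the second term with base node $\beta$; then the ``$\beta$'' of that lemma is $\alpha + \epsilon_{\ell j} = \gamma$ in the first case and $\beta + \epsilon_{i \ell}$ — wait, here one must instead take base node $\gamma$ and shift toward $\beta$, so the ``$\beta$'' of the lemma becomes $\gamma + \epsilon_{i\ell} = \beta$, giving normalizer $(\gamma)!$ and component $\gamma_i = \beta_i + 1$... — so one must track whether the surviving integer factor is $\alpha_j$, $\beta_i$, or an off-by-one variant thereof. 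I expect this index-chasing to be the main (though entirely routine) obstacle; once the two instances of Lemma~\ref{lm:alphabeta} are written down correctly, the identity \eqref{eq:divalphabeta} falls out by simple addition. Alternatively, one can bypass Lemma~\ref{lm:alphabeta} entirely and verify \eqref{eq:divalphabeta} directly by expanding each $\div(\lambda^{\cdot}\bs t_{\cdot,\cdot})$ via \eqref{eq:dtji}, which is a short computation.
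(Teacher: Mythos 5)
Your proposal is correct and is essentially the paper's own proof: both telescope through the intermediate node $\gamma$ by applying Lemma~\ref{lm:alphabeta} to the pairs $(\alpha,\gamma)$ and $(\gamma,\beta)$ and adding the two resulting identities so that the $\frac{1}{\gamma!}\lambda^{\gamma}$ terms cancel. The only bookkeeping you leave implicit (and the paper does too) is the identity $\beta!\,\gamma_\ell=\gamma!\,\beta_i$, which converts the coefficient produced by Lemma~\ref{lm:alphabeta} for the second pair into the form $\frac{1}{\gamma!\beta_i}$ appearing in \eqref{eq:divalphabeta}.
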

\begin{proof}
By \eqref{eq:divalphabeta1}, we have
$$
\frac{1}{\gamma_!\alpha_j}\div(\lambda^{\alpha + \epsilon_{\ell}}\boldsymbol{t}_{j,\ell})=\frac{1}{\alpha!}\lambda^{\alpha} - \frac{1}{\gamma!} \lambda^{\gamma},
\quad
\frac{1}{\gamma_!\beta_i}\div(\lambda^{\beta+ \epsilon_{\ell}}\boldsymbol{t}_{\ell,i})=\frac{1}{\gamma!}\lambda^{\gamma} - \frac{1}{\beta!} \lambda^{\beta}.
$$
Then \eqref{eq:divalphabeta} follows.
% By formulae \eqref{eq:dtij} and \eqref{eq:dtji}, we have
% \begin{align*}
% \div(\lambda^{\alpha + e^{\ell}}\boldsymbol{t}_{j,\ell}) &= \nabla (\lambda^{\alpha + e^{\ell}})\cdot \boldsymbol{t}_{j,\ell} = (\alpha_{\ell}+1)\lambda^{\alpha} - \alpha_j \lambda^{\alpha - e^{\ell j}} = \gamma_{\ell}\lambda^{\alpha} - \alpha_j\lambda^{\gamma},\\
% \div(\lambda^{\beta + e^{\ell}}\boldsymbol{t}_{\ell,i}) &= \nabla (\lambda^{\beta + e^{\ell}})\cdot \boldsymbol{t}_{\ell,i} = -(\beta_{\ell}+1)\lambda^{\beta} + \beta_i \lambda^{\beta + e^{i \ell}} = -\gamma_{\ell}\lambda^{\beta} + \beta_i \lambda^{\gamma}.
% \end{align*} 
% Then \eqref{eq:divalphabeta} follows from the relation of $\alpha,\beta$, and $\gamma$.
\end{proof}

\section{Smooth Finite Elements in Three Dimensions}\label{sec:geodecomp3d}
%Let $T$ be a tetrahedron. 
In this section, an integer vector $\boldsymbol r= (r^{\texttt{v}}, r^e, r^f)^{\intercal}$ is called a valid smoothness vector if $r^f\geq-1$, $r^e\geq \max\{2r^f,-1\}$ and $r^{\texttt{v}}\geq \max\{2r^e,-1\}$. It is also denoted as $\bs r = (r^{0}, r^1, r^2)^{\intercal}$, where the super-script $\ell = 0,1,2$ represents the dimension of the sub-simplex.

\subsection{A decomposition of the simplicial lattice}

\begin{lemma}\label{lm:disjoint}
For $\ell = 1,2$, if $r^{\ell-1}\geq 2r^{\ell}\geq0$, the sub-sets 
$$\{ D(f, r^{\ell}) \backslash \left [ \cup_{e\in \Delta_{\ell - 1}(f)} D(e, r^{\ell - 1})\right ], f\in \Delta_{\ell} (T)\}$$ are disjoint.
\end{lemma}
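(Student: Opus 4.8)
The plan is to argue by contradiction. Fix two \emph{distinct} sub-simplices $f,g\in\Delta_{\ell}(T)$ and suppose some lattice node $\alpha\in\mathbb T^n_k$ lies in both corresponding members of the family. First I would unpack what membership means via Lemma~\ref{lm:dist}. From $\alpha\in D(f,r^{\ell})$ and $\alpha\in D(g,r^{\ell})$ we get
$$
|\alpha_{f^*}|\le r^{\ell}, \qquad |\alpha_{g^*}|\le r^{\ell}.
$$
From the fact that $\alpha$ has been removed from $\bigcup_{e\in\Delta_{\ell-1}(f)}D(e,r^{\ell-1})$, we get $|\alpha_{e^*}|\ge r^{\ell-1}+1$ for \emph{every} $(\ell-1)$-dimensional sub-simplex $e$ of $f$ (these integers satisfy $|\alpha_{e^*}|>r^{\ell-1}$, hence $\ge r^{\ell-1}+1$).

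The key step is to choose the right $(\ell-1)$-face of $f$ to test against. Since $f$ and $g$, viewed as index subsets of $\{0,1,\ldots,n\}$, both have cardinality $\ell+1$ and are distinct, there is an index $j\in f\setminus g$. Take $e:=f\setminus\{j\}$, which is an $(\ell-1)$-dimensional sub-simplex of $f$ (nonempty since $\ell\ge1$), so that as complements $e^*=f^*\cup\{j\}$ with $j\notin f^*$, and therefore $|\alpha_{e^*}|=|\alpha_{f^*}|+\alpha_j$. Combining this identity with the two bounds above yields
$$
\alpha_j=|\alpha_{e^*}|-|\alpha_{f^*}|\ge (r^{\ell-1}+1)-r^{\ell}\ge r^{\ell}+1,
$$
where the last inequality is exactly where the hypothesis $r^{\ell-1}\ge 2r^{\ell}$ is used. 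But $j\notin g$ means $j\in g^*$, hence $|\alpha_{g^*}|\ge\alpha_j\ge r^{\ell}+1$, contradicting $|\alpha_{g^*}|\le r^{\ell}$. This contradiction shows the two sets are disjoint, and since $f,g$ were arbitrary distinct elements of $\Delta_{\ell}(T)$, the whole family is pairwise disjoint.

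I do not anticipate a genuine obstacle: the argument is a short counting estimate on multi-indices, using only Lemma~\ref{lm:dist} and the identity $|\alpha_f|+|\alpha_{f^*}|=k$ implicitly through the distance bookkeeping. The one point that needs care is the selection of the face $e$ of $f$ obtained by deleting a vertex of $f$ \emph{not} shared with $g$; this is precisely the mechanism by which the factor-of-two gap $r^{\ell-1}\ge 2r^{\ell}$ forces the coordinate $\alpha_j$ to be large enough to push $\alpha$ out of $D(g,r^{\ell})$, so I would highlight that choice as the crux of the proof.
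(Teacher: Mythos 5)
Your proof is correct and rests on the same mechanism as the paper's: for a node $\alpha$ in both $D(f,r^{\ell})$ and $D(g,r^{\ell})$, one tests against an $(\ell-1)$-face $e$ of $f$ containing $f\cap g$ and uses $r^{\ell-1}\geq 2r^{\ell}$ to conclude $\alpha\in D(e,r^{\ell-1})$. The only cosmetic differences are that you argue by contradiction with the specific face $e=f\setminus\{j\}$ and the exact identity $|\alpha_{e^*}|=|\alpha_{f^*}|+\alpha_j$, whereas the paper argues directly via the inclusion $e^*\subseteq f^*\cup g^*$ and the bound $|\alpha_{e^*}|\leq|\alpha_{f^*}|+|\alpha_{g^*}|$.
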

\begin{proof}
Consider two different sub-simplices $ f, \tilde f \in \Delta_{\ell} (T)$. The dimension of their intersection is at most $\ell - 1$. Therefore $f\cap \tilde f\subseteq e$ for some $e\in \Delta_{\ell -1}(f)$. Then $e^*\subseteq (f\cap \tilde f )^* = f^*\cup \tilde f^*$. For $\alpha \in D(f, r^{\ell})\cap D(\tilde f, r^{\ell})$, we have $|\alpha_{e^*}| \leq |\alpha_{f^*}| + |\alpha_{\tilde f^*}|\leq 2r^{\ell}\leq r^{\ell - 1}$. Therefore we have shown the intersection region $D(f, r^{\ell})\cap D(\tilde f, r^{\ell})\subseteq \cup_{e\in \Delta_{\ell - 1}(f)} D(e,r^{\ell-1})$ and the result follows.
%
% and the sets $\{ D(f,r^{\ell})\backslash \cup_{e\in \Delta_1(f)} D(e,r_1),  f\in \Delta_1(T)\}$ are disjoints so that we use different lattice nodes for normal derivatives on different faces.
\end{proof}

Next we remove $D(e, r^{i})$ from $D(f, r^{\ell})$ for all $e\in \Delta_{i}(T)$ and $i=0,1,\ldots, \ell-1$. 
\begin{lemma} \label{lm:Deltaf=DeltaT}
Given integer $m\geq 0$, let non-negative integer array $\bs r=(r^{0},r^1, r^2)$ satisfy
$$
r^{2}=m,\;\; r^{\ell}\geq 2r^{\ell+1} \; \textrm{ for } \ell=0,1.
$$
Let $k\geq 2r^{0}+1 \geq 8m + 1$. For $\ell = 1,2,$
\begin{equation}\label{eq:Deltaf=DeltaT}
D(f, r^{\ell}) \backslash \left [ \bigcup_{i=0}^{\ell-1}\bigcup_{e\in \Delta_{i}(f)}D(e, r^{i}) \right ] = D(f, r^{\ell}) \backslash \left [ \bigcup_{i=0}^{\ell-1}\bigcup_{e\in \Delta_{i}(T)}D(e, r^{i}) \right ] .
\end{equation}
\end{lemma}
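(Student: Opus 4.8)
The plan is to prove the non-trivial inclusion
\[
D(f, r^{\ell}) \backslash \Bigl[ \bigcup_{i=0}^{\ell-1}\bigcup_{e\in \Delta_{i}(f)}D(e, r^{i}) \Bigr] \subseteq D(f, r^{\ell}) \backslash \Bigl[ \bigcup_{i=0}^{\ell-1}\bigcup_{e\in \Delta_{i}(T)}D(e, r^{i}) \Bigr],
\]
since the reverse inclusion is obvious ($\Delta_i(f)\subseteq\Delta_i(T)$, so removing more on the right-hand side). Equivalently, I must show: if $\alpha\in D(f,r^{\ell})$ and $\alpha\in D(e,r^i)$ for some sub-simplex $e\in\Delta_i(T)$ with $i\leq\ell-1$ that is \emph{not} a sub-simplex of $f$, then already $\alpha\in D(e',r^{i'})$ for some $e'\in\Delta_{i'}(f)$ with $i'\leq\ell-1$. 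In other words, proximity of $\alpha$ to a low-dimensional face of $T$ lying outside $f$ forces proximity of $\alpha$ to a low-dimensional face of $f$ itself.

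The key geometric input is the relationship between $e$, $f$, and the sub-simplex $g := e\cap f$. Treating sub-simplices as index sets, $g = e\cap f\in\Delta_{j}(f)$ for some $j$ with $-1\leq j\leq \min\{i,\ell\}$ (here $j=-1$ means $g=\varnothing$). The complement satisfies $g^* = e^*\cup f^*$, hence for any lattice node $\alpha$,
\[
|\alpha_{g^*}| \leq |\alpha_{e^*}| + |\alpha_{f^*}| \leq r^i + r^{\ell}.
\]
If $g\neq\varnothing$, i.e. $j\geq 0$, then $g\in\Delta_j(f)$ with $j\leq i\leq \ell-1$, and I want to conclude $\alpha\in D(g, r^j)$, i.e. $|\alpha_{g^*}|\leq r^j$. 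The bound $|\alpha_{g^*}|\leq r^i+r^\ell$ is not immediately $\leq r^j$, so here I would use the hierarchical gap conditions $r^{\ell'}\geq 2r^{\ell'+1}$: since $j\leq i\leq\ell-1<\ell$, iterating gives $r^i\leq r^{j}$ only after accounting for the extra $r^\ell$ term. More carefully, one shows $r^i + r^\ell \leq r^j$ whenever $j<i$ (using $r^j\geq 2r^{j+1}\geq\cdots$ and that $r^\ell\leq r^{i}$ so $r^i+r^\ell\leq 2r^i\leq 2r^{j+1}\leq r^j$ when $j\leq i-1$), and when $j=i$ one needs $e\subseteq f$ which contradicts the assumption $e\notin\Delta_i(f)$; so in fact $j\leq i-1$ always in the relevant case, giving the needed bound. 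The case $g=\varnothing$ must be excluded by the degree condition: if $e\cap f=\varnothing$ then $e\subseteq f^*$, so $\dist(e,f)=k$, which combined with $\alpha\in D(f,r^\ell)$ and $\alpha\in D(e,r^i)$ forces $k = \dist(e,f)\leq |\alpha_{f^*}| + |\alpha_{e^*}|\leq r^\ell + r^i\leq 2r^0$, contradicting $k\geq 2r^0+1$. (This is exactly where the hypothesis $k\geq 2r^0+1\geq 8m+1$ is used.)

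The main obstacle is organizing the case analysis cleanly and tracking which combination of the gap inequalities $r^{\ell}\geq 2r^{\ell+1}$ is needed for each pair $(i,j)$; the chained inequalities must be applied the right number of times, and one must be careful that $g=e\cap f$ can drop in dimension by more than one, so the bound $r^i+r^\ell\leq r^j$ really does need the full telescoping $r^j\geq 2r^{j+1}\geq 4r^{j+2}\geq\cdots$ rather than a single step. Once this bookkeeping is set up, the proof is a short deduction: pick $\alpha$ in the left-hand set, suppose it lies in some $D(e,r^i)$ with $e\in\Delta_i(T)\setminus\Delta_i(f)$, set $g=e\cap f$, rule out $g=\varnothing$ by the degree bound, and then show $\alpha\in D(g,r^{\dim g})$ with $\dim g\leq\ell-1$, contradicting membership in the left-hand set. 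I would present it as a proof by contradiction along exactly these lines.
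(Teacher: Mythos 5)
Your proposal is correct and is essentially the paper's argument run in contrapositive form: where the paper bounds $|\alpha_e|=|\alpha_{e\cap f}|+|\alpha_{e\cap f^*}|\leq (k-r^{i-1}-1)+r^{\ell}\leq k-r^i-1$ directly, you bound $|\alpha_{(e\cap f)^*}|\leq|\alpha_{e^*}|+|\alpha_{f^*}|\leq r^i+r^{\ell}\leq r^j$ and derive a contradiction, which is the same decomposition along $e\cap f$ and the same use of the doubling conditions and of $k\geq 2r^0+1$ for the case $e\cap f=\varnothing$. (One minor quibble: a single doubling step $r^j\geq 2r^{j+1}\geq 2r^i$ plus monotonicity suffices; the full telescoping you mention is not needed.)
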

\begin{proof}
In \eqref{eq:Deltaf=DeltaT}, the relation $\supseteq$ is obvious as $\Delta_i(f)\subseteq \Delta_i(T)$. To prove $\subseteq$, it suffices to show for $\alpha \in D(f, r^{\ell}) \backslash \left [ \bigcup_{i=0}^{\ell-1}\bigcup_{e\in \Delta_{i}(f)}D(e, r^{i}) \right ]$, it is also not in $D(e, r^i)$ for $e\in \Delta_i(T)$ and $e\not\in\Delta_i(f)$. 

By definition, 
$$
|\alpha_{f^*}|\leq r^{\ell},\; |\alpha_{e}|\leq k - r_{i}-1 \; \textrm{ for all }e\in\Delta_i(f), i=0,\ldots,\ell-1.
$$
For each $e\in\Delta_i(T)$ but $e\not\in\Delta_i(f)$, the dimension of the intersection $e \cap f$ is at most $i-1$. It follows from $r^{j}\geq 2r^{j+1}$ and $k\geq 2r^{0}+1$ that: when $i>0$,
$$
|\alpha_{e}|=|\alpha_{e\cap f}|+|\alpha_{e\cap f^*}|\leq k - r^{i-1}-1+r^{\ell}\leq k - r^{i}-1,
$$
and when $i=0$,
$$
|\alpha_{e}|=|\alpha_{e\cap f^*}|\leq r^{\ell}\leq k - r^{i}-1.
$$
So $|\alpha_{e^*}| > r^i$. We conclude that $\alpha \not\in D(e, r^i)$ for all $e\in\Delta_i(T)$ and \eqref{eq:Deltaf=DeltaT} follows. 
\end{proof}

We are in the position to present an important partition of the simplicial lattice.
\begin{theorem}\label{th:decT}
Given integer $m\geq 0$, let non-negative integer array $\bs r=(r^{0},r^1, r^2)^{\intercal}$ satisfy
$$
r^{2}=m,\;\; r^{\ell}\geq 2r^{\ell+1} \; \textrm{ for } \ell=0,1.
$$
Let $k\geq 2r^{0}+1 \geq 8m + 1$. Then we have the following direct decomposition of the simplicial lattice  on a tetrahedron $T$:
\begin{align}\label{eq:smoothdecnd}
  \mathbb T^{3}_k(T) = \Oplus_{\ell = 0}^{3}\Oplus_{f\in \Delta_{\ell}(T)} S_{\ell}(f, \bs r),
\end{align}
where
\begin{align*}
S_0(\texttt{v}, \bs r) &=  D(\texttt{v}, r^0), \\
S_{\ell}(f, \bs r) &= D(f, r^{\ell}) \backslash \left [ \bigcup_{i=0}^{\ell-1}\bigcup_{e\in \Delta_{i}(f)}D(e, r^{i}) \right ], \; \ell = 1,2, \\
S_3(T, \bs r) & = \mathbb T^{3}_k(T) \backslash  \left [  \bigcup_{i=0}^{2}\bigcup_{f\in \Delta_{i}(T)}D(f, r^{i}) \right ].
\end{align*}
\end{theorem}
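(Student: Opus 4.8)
The plan is to establish the partition \eqref{eq:smoothdecnd} by verifying two things: that the listed sets $S_\ell(f,\bs r)$ are pairwise disjoint (across all dimensions $\ell$ and all sub-simplices $f$), and that they cover $\mathbb T^3_k(T)$. Coverage is essentially built into the definitions — $S_3(T,\bs r)$ is exactly $\mathbb T^3_k(T)$ minus the union of all the lower-dimensional tubes $D(f,r^i)$ — so the substance of the argument is disjointness, and within that, the cross-dimensional disjointness is what requires the hypotheses $r^{\ell-1}\ge 2r^\ell$ and $k\ge 2r^0+1$.

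First I would dispose of the same-dimension disjointness: for fixed $\ell\in\{1,2\}$ and distinct $f,\tilde f\in\Delta_\ell(T)$, this is precisely Lemma~\ref{lm:disjoint}, using $r^{\ell-1}\ge 2r^\ell\ge 0$; for $\ell=0$ the sets $S_0(\texttt{v},\bs r)=D(\texttt{v},r^0)$ for distinct vertices $\texttt v$ are disjoint because $k\ge 2r^0+1$ (if $\alpha$ lay in both $D(\texttt v_i,r^0)$ and $D(\texttt v_j,r^0)$ then $\alpha_i\ge k-r^0$ and $\alpha_j\ge k-r^0$, forcing $k\ge 2(k-r^0)$, i.e. $k\le 2r^0$, a contradiction). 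Next, the key reduction: by Lemma~\ref{lm:Deltaf=DeltaT}, for $\ell=1,2$ we may rewrite
$$
S_\ell(f,\bs r)=D(f,r^\ell)\setminus\Big[\bigcup_{i=0}^{\ell-1}\bigcup_{e\in\Delta_i(T)}D(e,r^i)\Big],
$$
i.e. each $S_\ell(f,\bs r)$ has already been purged of every tube around every sub-simplex of dimension strictly less than $\ell$. With this global form in hand, cross-dimensional disjointness becomes transparent: if $i<\ell$ and $e\in\Delta_i(T)$, $f\in\Delta_\ell(T)$, then any $\alpha\in S_\ell(f,\bs r)$ satisfies $\alpha\notin D(e,r^i)$ by the displayed identity, whereas every $\alpha\in S_i(e,\bs r)\subseteq D(e,r^i)$; hence $S_\ell(f,\bs r)\cap S_i(e,\bs r)=\varnothing$. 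The same reasoning with $S_3(T,\bs r)=\mathbb T^3_k(T)\setminus[\bigcup_{i=0}^2\bigcup_{f\in\Delta_i(T)}D(f,r^i)]$ handles all pairs involving the top cell. Combined with the same-dimension cases, all $S_\ell(f,\bs r)$ are pairwise disjoint.

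Finally, coverage: given any $\alpha\in\mathbb T^3_k(T)$, either $\alpha\notin D(f,r^i)$ for all proper sub-simplices, in which case $\alpha\in S_3(T,\bs r)$; otherwise let $\ell$ be the smallest dimension such that $\alpha\in D(f,r^\ell)$ for some $f\in\Delta_\ell(T)$ (with $\ell=0$ meaning $\alpha\in D(\texttt v,r^0)$ for some vertex). By minimality, $\alpha\notin D(e,r^i)$ for every $e\in\Delta_i(T)$ with $i<\ell$, so $\alpha$ lies in $D(f,r^\ell)\setminus[\bigcup_{i<\ell}\bigcup_{e\in\Delta_i(T)}D(e,r^i)]$, which by Lemma~\ref{lm:Deltaf=DeltaT} equals $S_\ell(f,\bs r)$. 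Thus every node is accounted for, and together with disjointness this yields the direct decomposition \eqref{eq:smoothdecnd}.

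I expect the main obstacle to be not in the assembly above — which is a clean bookkeeping argument once the two lemmas are available — but in making sure the chain of inequalities $r^0\ge 2r^1$, $r^1\ge 2r^2=2m$, $k\ge 2r^0+1$ (hence $k\ge 8m+1$) is exactly what Lemma~\ref{lm:Deltaf=DeltaT} needs so that the rewriting $S_\ell(f,\bs r)=D(f,r^\ell)\setminus[\bigcup_{i<\ell}\bigcup_{e\in\Delta_i(T)}D(e,r^i)]$ is legitimate; that step is where the hypothesis on $k$ is truly consumed, and everything downstream is formal.
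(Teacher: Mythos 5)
Your proposal is correct and follows essentially the same route as the paper: same-dimension disjointness via Lemma~\ref{lm:disjoint} plus the direct vertex argument from $k\geq 2r^0+1$, cross-dimensional disjointness and coverage both extracted from the global rewriting of $S_\ell(f,\bs r)$ provided by Lemma~\ref{lm:Deltaf=DeltaT}. The only cosmetic difference is that the paper proves coverage by induction on $\ell$ (showing $\Oplus_{i\le\ell}\Oplus_f S_i(f,\bs r)=\bigcup_{i\le\ell}\bigcup_f D(f,r^i)$), whereas you select the minimal dimension $\ell$ with $\alpha\in D(f,r^\ell)$; these are equivalent.
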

\begin{proof}
First we show that the sets  $\{S_{\ell}(f, \bs r), f\in\Delta_{\ell}(T), \ell=0,\ldots,3\}$ are disjoint.
%Hence we acquire
%\begin{equation}\label{eq:SlDrmD}
%S_{\ell}(f, \bs r) = D(f, r^{\ell}) \backslash \left [ \bigcup_{i=0}^{\ell-1}\bigcup_{e\in \Delta_{i}(T)}D(e, r^{i}) \right ] \quad\textrm{ for } \ell = 1,\dots, n-1.
%\end{equation}
Take two vertices $\texttt{v}_1, \texttt{v}_2\in \Delta_0(T)$. For $\alpha\in D(\texttt{v}_1, r^{0})$, we have $\alpha_{\texttt{v}_1} \geq k - r^0$. As $\texttt{v}_1\in \texttt{v}_2^*$ and $k\geq 2r^0+1$, $|\alpha_{\texttt{v}_2^*}|\geq \alpha_{\texttt{v}_1}\geq k- r^{0}\geq r^{0}+1$, i.e., $\alpha \notin D(\texttt{v}_2, r^{0})$. Hence $\{S_{0}(\texttt{v}), \texttt{v}\in\Delta_{0}(T)\}$ are disjoint and $\Oplus_{\texttt{v}\in\Delta_{0}(T)} S_{0}(\texttt{v})$ is a disjoint union.
By Lemma \ref{lm:disjoint} and \eqref{eq:Deltaf=DeltaT}, we know $\{S_{\ell}(f, \bs r), f\in\Delta_{\ell}(T), \ell=0,\ldots,3\}$ are disjoint.

Next we inductively prove 
\begin{equation}\label{eq:USequalUD}
\Oplus_{i = 0}^{\ell}\Oplus_{f\in \Delta_{i}(T)} S_{i}(f, \bs r)=\bigcup_{i=0}^{\ell}\bigcup_{f\in \Delta_{i}(T)}D(f, r^{i}) \quad \textrm{ for }\; \ell=0,1, 2.
\end{equation}
Obviously \eqref{eq:USequalUD} holds for $\ell=0$. Assume \eqref{eq:USequalUD} holds for $\ell<j$. Then
\begin{align*}
&\quad\Oplus_{i = 0}^{j}\Oplus_{f\in \Delta_{i}(T)} S_{i}(f, \bs r)= \Oplus_{f\in \Delta_{j}(T)} S_{j}(f, \bs r)\;\oplus \;\bigcup_{i=0}^{j-1}\bigcup_{e\in \Delta_{i}(T)}D(e, r^{i}) \\
&= \Oplus_{f\in \Delta_{j}(T)}\left(D(f, r^{j}) \backslash \left [ \bigcup_{i=0}^{j-1}\bigcup_{e\in \Delta_{i}(T)}D(e, r^{i}) \right ]\right)\;\oplus \;\bigcup_{i=0}^{j-1}\bigcup_{e\in \Delta_{i}(T)}D(e, r^{i}) \\
&=\bigcup_{i=0}^{j}\bigcup_{f\in \Delta_{i}(T)}D(f, r^{i}).
\end{align*}
By induction, \eqref{eq:USequalUD} holds for $\ell=0,1,2$. Then \eqref{eq:smoothdecnd} is true from the definition of $S_3(T, \bs r)$ and~\eqref{eq:USequalUD}. 
\end{proof}

\begin{remark}\rm
When integer vector $\bs r=(r^{0},r^1, r^2)^{\intercal}$  satisfies
$$r^2=-1,\quad  r^1\geq -1, \quad r^0 \geq \max\{2r^1,-1\},$$ and nonnegative integer $k\geq 2r^0+1$, the direct decomposition \eqref{eq:smoothdecnd} still holds with 
$$
S_0(\texttt{v}, \bs r)=  D(\texttt{v}, r^0), \quad S_{1}(e, \bs r) = D(e, r^{1}) \backslash \bigcup_{\texttt{v}\in \Delta_{0}(e)}D(\texttt{v}, r^{0}),
$$
$$
S_{2}(f, \bs r)=\varnothing, \quad S_3(T, \bs r) = \mathbb T^{3}_k(T) \backslash  \left [  \bigcup_{i=0}^{1}\bigcup_{f\in \Delta_{i}(T)}D(f, r^{i}) \right ].
$$
\end{remark}

%We can write out the inequality constraints in $S_{\ell}(f, \bs r)$. For $\ell = 1,\dots, 3$,
%\begin{equation}\label{eq:Slfineqlty}
%S_{\ell}(f, \bs r) =  \{\alpha\in\mathbb T_{k}^{3}: |\alpha_{f^*}|\leq r^{\ell}, |\alpha_{e}|\leq k - r^{i}-1, \forall e\in\Delta_i(f), i=0,\ldots,\ell-1\}.
%\end{equation}
%%
%For $\alpha\in S_{\ell}(f, \bs r)$, by Lemma~\ref{lm:disjoint} we also have $\alpha\not\in D(\tilde f, r^{\ell})$ for $\tilde f \in \Delta_{\ell} (T)\backslash\{f\}$, i.e.
%\begin{equation}\label{eq:Slfineqlty2}
%|\alpha_{\tilde f}|\leq k - r^{\ell}-1 \quad\forall~\tilde f \in \Delta_{\ell} (T)\backslash\{f\}.
%\end{equation}
From the implementation point of view, the index set $S_{\ell}(f, \bs r)$ can be found by a logic array and set the entries as true when the distance constraint holds. 

\begin{figure}[htbp]
\label{fig:facebubble}
\subfigure[A decomposition of the simplical lattice.]{
\begin{minipage}[t]{0.5\linewidth}
\centering
\includegraphics*[width=4.2cm]{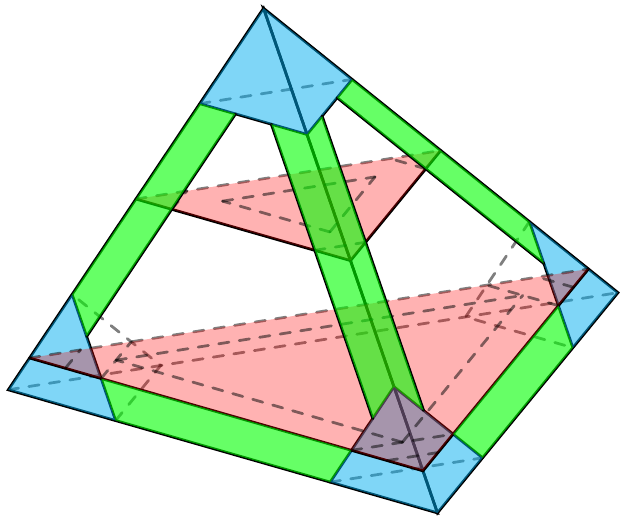}
\end{minipage}}%%
\subfigure[Plane $L(f,r^f)$]
{\begin{minipage}[t]{0.5\linewidth}
\centering
\includegraphics*[width=3.75cm]{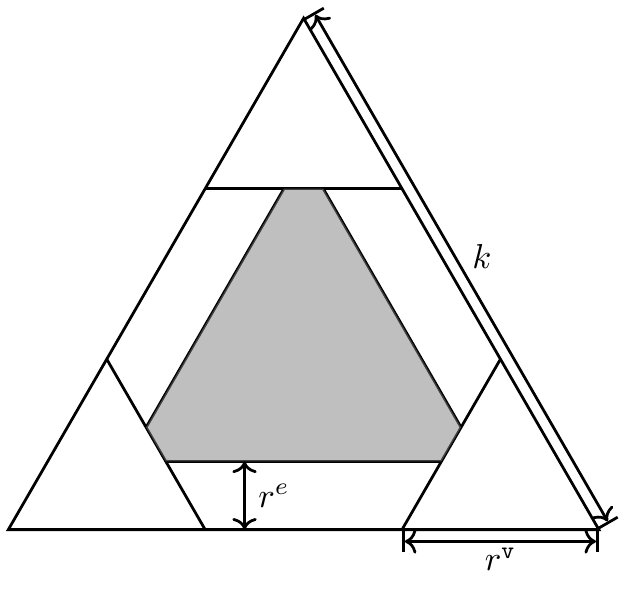}
\end{minipage}}
\subfigure[Plane $L(f,j)$ for $r^e - r^f\leq j\leq r^{\texttt{v}} - r^e$.]{
\begin{minipage}[t]{0.5\linewidth}
\centering
\includegraphics*[width=3.5cm]{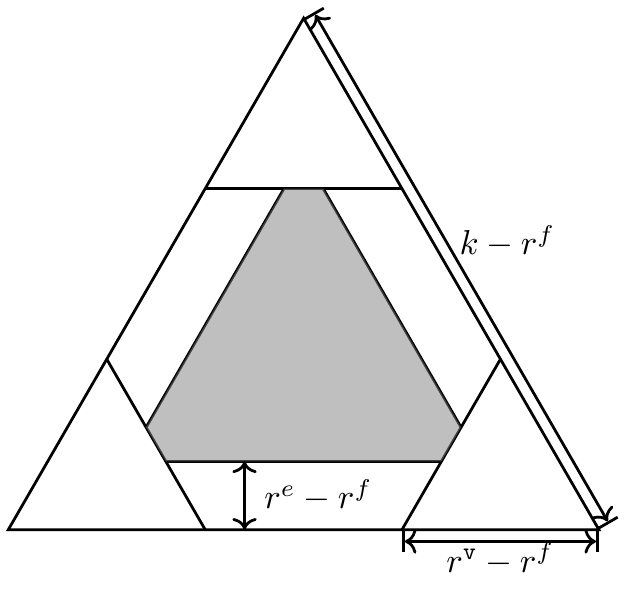}
\end{minipage}}%%
\subfigure[Plane $L(f,j)$ for $r^{\texttt{v}}  - r^e\leq j \leq k-r^{\texttt{v}} - 1$.]
{\begin{minipage}[t]{0.5\linewidth}
\centering
\includegraphics*[width=3cm]{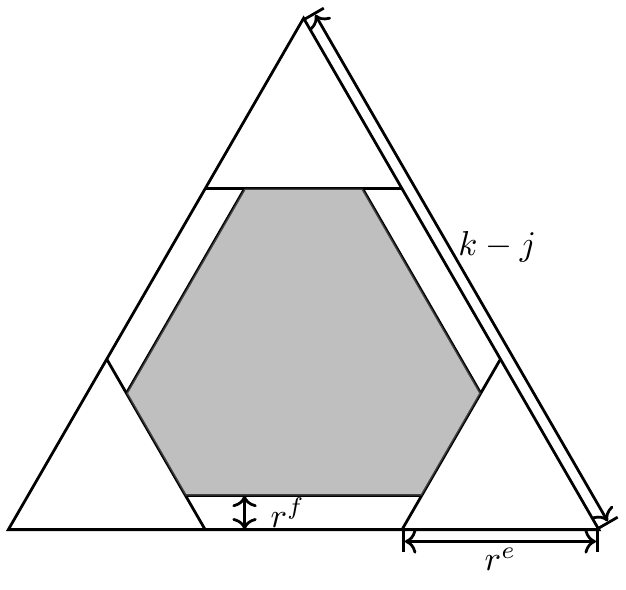}
\end{minipage}}
\caption{Different planes $L(f,j)$ and $S_2(f,  \begin{pmatrix}
r^{\texttt{v}} \\
r^e
\end{pmatrix}-j)$ (in gray). The distance to a vertex is decreasing from $r^{\texttt{v}}$ to $r^{e}$ and the distance to an edge is from $r^e$ to $r^f$.}
\end{figure}

Introduce the polynomial bubble space on $T$
\begin{equation}\label{eq:bubbleT}
\mathbb B_k(T; \bs r) = \mathbb P_k(S_3(T, \bs r)) = \spa \{ \lambda^{\alpha},\; \alpha \in S_3(T, \bs r) \}, 
\end{equation}
the face bubble space on $f\in\Delta_2(T)$
\begin{equation*}%\label{eq:facebubble}
\mathbb B_k(f;\begin{pmatrix}
r^{\texttt{v}} \\
r^e
\end{pmatrix}) = {\rm span} \left\{ \lambda_f^{\alpha}, \;\alpha \in \mathbb T_k^2(f)\backslash  \bigcup_{i=0}^{1}\bigcup_{e\in \Delta_{i}(f)}D(e, r^{i}) \right\},
\end{equation*}
and the edge bubble space
\begin{equation*}%\label{eq:edgebubble}
\mathbb B_k(e; r^{\texttt{v}}) = {\rm span} \left\{ \lambda_e^{\alpha} = b_e^{r^\texttt{v}+1} \lambda_e^{\alpha - r^\texttt{v}-1}, \;\alpha \in \mathbb T_k^1(e)\backslash  \bigcup_{\texttt{v}\in \Delta_0(e)}D( \texttt{v}, r^{\texttt{v}}) \right\}.
\end{equation*}
In general, on the lattice plane $L(f, j)$, the face bubble space becomes $\mathbb B_{k-j}(f; \begin{pmatrix}
r^{\texttt{v}} \\
r^e
\end{pmatrix}-j)$ and
$$
S_3(T, \bs r) = \bigcup_{j=r^f+1}^{k - r^{\texttt{v}} -1} \left (S_2(f,  \begin{pmatrix}
r^{\texttt{v}} \\
r^e
\end{pmatrix}-j)\cap L(f, j) \right ). 
$$
We provide several slides of $L(f, j)$ and corresponding $S_{\ell}$ in Figures \ref{fig:facebubble} (b)(c)(d). 

\begin{corollary}
We have the following geometric decomposition of $\mathbb P_{k}(T)$
\begin{align}\label{eq:PrSdec3d}
\mathbb P_{k}(T) = &\Oplus_{\ell = 0}^{3} \Oplus_{f\in \Delta_{\ell}(T)} \mathbb P_k(S_{\ell}(f, \bs r)).
%= & \mathbb P_k(S_{0}(\texttt{v}, \bs r)) \Oplus_{e\in \Delta_1(T)} \mathbb B.
\end{align}

\end{corollary}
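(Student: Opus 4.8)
The plan is to deduce the polynomial decomposition \eqref{eq:PrSdec3d} directly from the lattice decomposition \eqref{eq:smoothdecnd} of Theorem~\ref{th:decT}, exploiting the one-to-one correspondence between lattice nodes $\alpha\in\mathbb T^3_k(T)$ and Bernstein basis functions $\lambda^\alpha$. First I would recall that $\{\lambda^\alpha:\alpha\in\mathbb T^3_k(T)\}$ is a basis of $\mathbb P_k(T)$, and that for any subset $S\subseteq\mathbb T^3_k(T)$ the space $\mathbb P_k(S)=\spa\{\lambda^\alpha:\alpha\in S\}$ has dimension exactly $|S|$, since a subset of a basis is linearly independent. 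Consequently, whenever $\mathbb T^3_k(T)=\bigsqcup_j S_j$ is a partition into disjoint subsets, the corresponding spaces satisfy $\mathbb P_k(T)=\bigoplus_j\mathbb P_k(S_j)$: the sum spans $\mathbb P_k(T)$ because every basis function $\lambda^\alpha$ lies in exactly one summand, and the sum is direct by a dimension count, $\sum_j\dim\mathbb P_k(S_j)=\sum_j|S_j|=|\mathbb T^3_k(T)|=\dim\mathbb P_k(T)$.

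The remaining step is simply to apply this general principle to the specific partition supplied by Theorem~\ref{th:decT}. Under the hypotheses on $\bs r$ and $k$ stated there (which are the standing assumptions of this subsection), \eqref{eq:smoothdecnd} gives
\[
\mathbb T^3_k(T)=\bigoplus_{\ell=0}^{3}\bigoplus_{f\in\Delta_\ell(T)}S_\ell(f,\bs r),
\]
a decomposition into pairwise disjoint index sets. Taking $\mathbb P_k(\cdot)$ of each block and invoking the principle above yields \eqref{eq:PrSdec3d} immediately.

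There is essentially no obstacle here: the corollary is a formal consequence of Theorem~\ref{th:decT} together with the elementary fact that the Bernstein basis is indexed by the simplicial lattice. The only point requiring a word of care is to note that directness of the sum of polynomial spaces follows from directness (i.e., disjointness) of the underlying lattice partition via the dimension identity $\dim\mathbb P_k(S)=|S|$; no genuine argument about polynomials beyond linear independence of a subset of a basis is needed.
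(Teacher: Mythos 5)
Your argument is correct and is exactly the reasoning the paper intends: the corollary is stated without proof there precisely because it follows immediately from the lattice partition in Theorem~\ref{th:decT} together with the bijection $\alpha\mapsto\lambda^{\alpha}$ between $\mathbb T^3_k(T)$ and the Bernstein basis of $\mathbb P_k(T)$. Your explicit justification of directness via $\dim\mathbb P_k(S)=|S|$ is a sound and complete way to fill in that omitted step.
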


\subsection{Smooth finite elements in three dimensions}
In this subsection, we shall construct finite element spaces with smoothness parameter $\boldsymbol r= (r^{\texttt{v}}, r^e, r^f)^{\intercal}$.

For each edge $e$, we choose two normal vectors $n_1^e, n_2^e$ and abbreviate as $n_1,n_2$. For each face $f$, we choose a normal vector $n_f$ and abbreviate as $n$ when $f$ is clear in the context. When in a conforming mesh $\mathcal T_h$,  $n_1^e, n_2^e$ or $n_f$ depends on $e$ or $f$ not the element containing it.  

\begin{theorem}\label{thm:Cr3dfemunisolvence}
 Let $\boldsymbol r= (r^{\texttt{v}}, r^e, r^f)^{\intercal}$  with $r^{f}=m \geq -1$, $r^{e}\geq\max\{2r^{f},-1\}$, $r^{\texttt{v}}\geq\max\{2r^{e},-1\}$, and nonnegative integer $k\geq 2r^{\texttt{v}}+1$. The shape function space $\mathbb P_{k}(T)$ is determined by the DoFs
\begin{align}
\label{eq:C13d0}
\nabla^j u (\texttt{v}), & \quad \texttt{v}\in \Delta_0(T), j=0,1,\ldots,r^{\texttt{v}}, \\
\label{eq:C13d1}
\int_e \frac{\partial^{j} u}{\partial n_1^{i}\partial n_2^{j-i}} \, q \dd s, & \quad e\in \Delta_1(T), q \in \mathbb P_{k - 2(r^{\texttt{v}}+1) + j}(e), 0\leq i\leq j\leq r^{e}, \\
\label{eq:C13d2}
\int_f \frac{\partial^{j} u}{\partial n_f^{j}} \, q \dd S, & \quad  f\in \Delta_2(T), q \in \mathbb B_{k-j}(f;\begin{pmatrix}
r^{\texttt{v}} \\
r^e
\end{pmatrix}-j), 0\leq j\leq r^{f},\\
\label{eq:C13d3}
\int_T u \, q \dx, & \quad q \in \mathbb B_k(T;\bs r).
\end{align}
With mesh $\mathcal T_h$, define the global $C^m$-continuous finite element space
\begin{align*}
\mathbb V_k(\mathcal T_h; \boldsymbol{r}) &= \{u\in C^m(\Omega): u|_T\in\mathbb P_k(T)\textrm{ for all } T\in\mathcal T_h, \\
&\qquad\textrm{ and all the DoFs~\eqref{eq:C13d0}-\eqref{eq:C13d3} are single-valued}\}.
\end{align*}
\end{theorem}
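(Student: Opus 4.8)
The plan is to establish unisolvence by the standard dimension-count-plus-vanishing argument. First I would verify that the number of DoFs in \eqref{eq:C13d0}--\eqref{eq:C13d3} equals $\dim \mathbb P_k(T) = |\mathbb T^3_k(T)|$. This is precisely where the geometric decomposition \eqref{eq:smoothdecnd} of Theorem~\ref{th:decT} (equivalently \eqref{eq:PrSdec3d}) enters: the vertex DoFs \eqref{eq:C13d0} should match $\sum_{\texttt v}|S_0(\texttt v,\bs r)| = 4\binom{r^{\texttt v}+3}{3}$, the edge DoFs \eqref{eq:C13d1} should match $\sum_e |S_1(e,\bs r)|$, the face DoFs \eqref{eq:C13d2} should match $\sum_f |S_2(f,\bs r)|$, and the interior DoFs \eqref{eq:C13d3} are by definition in bijection with $S_3(T,\bs r)$. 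For the edge count I would use that $\#\{(i,j): 0\le i\le j\le r^e\} = \binom{r^e+2}{2}$ and that on the plane $L(e^*,\cdot)$ slices, $\dim \mathbb P_{k-2(r^{\texttt v}+1)+j}(e)$ sums correctly against the trapezoidal structure of $D(e,r^e)$ minus the two vertex balls $D(\texttt v,r^{\texttt v})$; the hypotheses $r^{\texttt v}\ge 2r^e$ and $k\ge 2r^{\texttt v}+1$ are exactly what make these sliced polynomial degrees nonnegative and the removed vertex regions disjoint. Similarly for the face count one uses that the $j$-th normal slice contributes $\dim \mathbb B_{k-j}(f;\binom{r^{\texttt v}}{r^e}-j)$, summed over $0\le j\le r^f$, matching the layered description of $S_2(f,\bs r) = \bigcup_j (S_2(f,\binom{r^{\texttt v}}{r^e}-j)\cap L(f,j))$ displayed before the corollary.

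Next, with dimensions matched, it suffices to show that if $u\in\mathbb P_k(T)$ and all DoFs vanish then $u=0$. I would argue by increasing dimension of subsimplices. \textbf{Vertices.} If $\nabla^j u(\texttt v)=0$ for all $j\le r^{\texttt v}$ at every vertex, then by Lemma~\ref{lm:derivative} (vanishing order $=$ distance) the Bernstein expansion of $u$ has no terms supported in any $D(\texttt v,r^{\texttt v})=S_0(\texttt v,\bs r)$; more carefully, I would phrase this as: the vertex DoFs are the dual DoFs for the component $\Oplus_{\texttt v}\mathbb P_k(S_0(\texttt v,\bs r))$ in the decomposition \eqref{eq:PrSdec3d}, so their vanishing kills that component of $u$. \textbf{Edges.} On each edge $e$, with the vertex-derivative data already zero, $u$ restricted (together with its mixed normal derivatives up to order $r^e$ along $e$) has a factor $b_e^{r^{\texttt v}+1}$ coming from the two endpoint vanishing conditions; testing $\partial^j u/\partial n_1^i\partial n_2^{j-i}|_e$ against $\mathbb P_{k-2(r^{\texttt v}+1)+j}(e)$ forces the corresponding Bernstein coefficients on $S_1(e,\bs r)$ to vanish. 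The key algebraic point here is that the normal derivatives $\partial/\partial n_1,\partial/\partial n_2$ together with the tangential direction on $e$ form a basis, so that prescribing the edge traces of all $\partial^j u/\partial n_1^i\partial n_2^{j-i}$, $0\le i\le j\le r^e$, is equivalent to prescribing all Bernstein coefficients $c_\alpha$ with $\alpha\in D(e,r^e)\setminus D(\texttt v,r^{\texttt v})$; I'd make this precise using \eqref{eq:dtji}-type identities relating directional derivatives to index shifts. \textbf{Faces.} With vertex and edge coefficients gone, on each face $f$ the remaining relevant coefficients lie in $S_2(f,\bs r)$, stratified by distance $j=0,\dots,r^f$ to $f$; the face DoF at level $j$ tests $\partial^j u/\partial n_f^j|_f$ against the face bubble space $\mathbb B_{k-j}(f;\binom{r^{\texttt v}}{r^e}-j)$, which by construction is exactly $\mathbb P_{k-j}\big(S_2(f,\binom{r^{\texttt v}}{r^e}-j)\cap L(f,j)\big)$, so again all those coefficients vanish. \textbf{Interior.} What remains is $u\in\mathbb B_k(T;\bs r)=\mathbb P_k(S_3(T,\bs r))$, and $\int_T u\,q\,\dx = 0$ for all $q$ in this same space forces $u=0$.

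The main obstacle I anticipate is the edge step: converting the "mixed normal derivative traces on $e$" DoFs into clean statements about Bernstein coefficients on the lattice tube $D(e,r^e)$. Unlike the vertex case (pointwise derivatives, handled directly by Lemma~\ref{lm:derivative}) and the face case (a single normal direction, so the stratification by $j$ is transparent), the edge has a two-dimensional normal space and one must track how the operators $\partial^j/\partial n_1^i\partial n_2^{j-i}$ interact with the barycentric shift identities, and verify that the $\binom{r^e+2}{2}$ families of edge moments are *independent* and jointly equivalent to fixing the coefficients on $S_1(e,\bs r)=D(e,r^e)\setminus\bigcup_{\texttt v}D(\texttt v,r^{\texttt v})$. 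I would handle this by choosing, for each edge $e=\texttt v_a\texttt v_b$, the dual basis viewpoint: pick normals $n_1,n_2$ spanning the quotient, express $\partial/\partial n_1,\partial/\partial n_2$ in terms of $\nabla(\cdot)\cdot\bs t_{c,a}$ for the two opposite vertices $c\in e^*$, and use \eqref{eq:dtji} repeatedly to see that the leading part of $\partial^j u/\partial n_1^i\partial n_2^{j-i}|_e$ in the Bernstein basis of $\mathbb P_{k-j}(e)$ picks out exactly the coefficients at lattice distance $j$ from $e$ with the prescribed split between the two transverse directions; lower-order contamination is already annihilated by the inductive hypothesis. A secondary (but purely bookkeeping) obstacle is the dimension count for the edge and face contributions, which I would reduce to the trapezoid/layer counting already implicit in Theorem~\ref{th:decT} and the slicing identity for $S_3(T,\bs r)$; the hypotheses $r^{\texttt v}\ge 2r^e\ge 4r^f$ and $k\ge 2r^{\texttt v}+1$ guarantee all the intermediate polynomial degrees are nonnegative and all the removed regions along a given subsimplex are pairwise disjoint, so no inclusion-exclusion correction terms appear.
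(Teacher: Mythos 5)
Your proposal is correct and follows essentially the same route as the paper: match the DoF count to $\dim\mathbb P_k(T)$ via the lattice decomposition \eqref{eq:PrSdec3d}, then run the hierarchical vanishing argument over vertices, edges, faces, and the interior. The paper's own proof handles the edge step more classically (the trace of $\partial^j u/\partial n_1^i\partial n_2^{j-i}$ on $e$ vanishes to order $r^{\texttt{v}}+1-j$ at both endpoints, hence factors as $b_e^{r^{\texttt{v}}+1-j}$ times an element of the test space $\mathbb P_{k-2(r^{\texttt{v}}+1)+j}(e)$, which the moment then annihilates), whereas your Bernstein-coefficient/dual-normal-basis treatment is exactly the block-lower-triangular argument the paper deploys in Appendix~\ref{sec:geodecompnd}; the only point you omit is the closing remark that $C^m$-conformity of the global space follows because $(\nabla^j u)|_f$, $j\le m$, is already determined by the DoFs \eqref{eq:C13d0}--\eqref{eq:C13d2} attached to $\bar f$.
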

\begin{proof}
Thanks to the geometric decomposition \eqref{eq:PrSdec3d}, the number of DoFs \eqref{eq:C13d0}-\eqref{eq:C13d3} is same as $\dim\mathbb P_{k}(T)$. Take $u\in\mathbb P_{k}(T)$ and assume all the DoFs \eqref{eq:C13d0}-\eqref{eq:C13d3} vanish. We will prove $u=0$.

The vanishing DoF \eqref{eq:C13d0} implies $(\nabla^ju)(\texttt{v})=0$ for $\texttt{v}\in\Delta_0(T)$ and $0\leq j\leq r^{\texttt{v}}$, which combined with the vanishing DoF \eqref{eq:C13d1} yields $(\nabla^ju)|_e=0$ for $e\in\Delta_1(T)$ and $0\leq j\leq r^{e}$. Then $\left.\frac{\partial^{j} u}{\partial n_f^{j}}\right|_f\in\mathbb B_{k-j}(f;\begin{pmatrix}
r^{\texttt{v}} \\
r^e
\end{pmatrix}-j)$ for $f\in \Delta_2(T)$ and $0\leq j\leq r^{f}$. Now the vanishing DoF \eqref{eq:C13d2} indicates $(\nabla^ju)|_{f}=0$ for $f\in\Delta_2(T)$ and $0\leq j\leq r^{f}$. As a result $u\in\mathbb B_k(T;\bs r)$. Therefore $u=0$ follows from the vanishing DoF \eqref{eq:C13d3}.

Finally $\mathbb V_k(\mathcal T_h; \boldsymbol{r}) \subset C^m(\Omega)$ since we derive $(\nabla^ju)|_{f}=0$ for $f\in\Delta_2(T)$ and $0\leq j\leq m$ by using only DoFs \eqref{eq:C13d0}-\eqref{eq:C13d2} on $f$.
\end{proof}

\begin{remark}\rm 
When $\dim\mathbb B_k(T;\bs r)\geq1$, DoF \eqref{eq:C13d3} can be changed to 
$$
\int_T u\,q\dx,\quad q \in \mathbb P_0(T)\oplus \mathbb B_k(T;\bs r)/\mathbb R,
$$ where $\mathbb B_k(T;\bs r)/\mathbb R:=\mathbb B_k(T;\bs r)\cap L_0^2(T)$.
Similarly, when $\dim\mathbb B_{k}(f;\begin{pmatrix}
r^{\texttt{v}} \\
r^e
\end{pmatrix})\geq1$, the face DoF $\int_f u\, q\dd S, q \in \mathbb B_{k}(f;\begin{pmatrix}
r^{\texttt{v}} \\
r^e
\end{pmatrix})$ can be changed to $\int_f u q\dd S, q \in \mathbb P_0(f)\oplus \mathbb B_{k}(f;\begin{pmatrix}
r^{\texttt{v}} \\
r^e
\end{pmatrix})/\mathbb R$. 
%\mnote{ do we need to prove? The constant may not be a subspace of the face bubble.}
\end{remark}
% Then $V(\mathcal T_h)$ admits the following geometric decomposition
% \begin{align*} %\label{eq:Vh2d}
% V(\mathcal T_h) &= \Oplus_{\texttt{v}\in \Delta_{0}(\mathcal T_h)} \mathbb P_k(D(\texttt{v}, r^{\texttt{v}})) \,\oplus\, \Oplus_{e\in \Delta_{1}(\mathcal T_h)}\mathbb P_k\left ( D(e,r^{e}) \backslash D( \Delta_0(e), r^{\texttt{v}}) \right ) \\
% &\quad\,\oplus\,  \Oplus_{T\in \mathcal T_h}\mathbb B_k(\bs r). %\notag
% \end{align*}

Next we count the dimension of the finite element space. For integers $0\leq i\leq j\leq n$, recall the combinatorial formula
\begin{equation}\label{eq:combinaformula}
{j\choose i}+{j+1\choose i}+\cdots+{n\choose i}={n+1\choose i+1}-{j\choose i+1},
\end{equation}
which holds from ${n\choose i}+{n\choose i+1}={n+1\choose i+1}$. Here we understand ${i\choose i+1}$ as $0$.
% \begin{equation}\label{eq:combinaformula1}
% {n\choose i}+{n\choose i+1}={n+1\choose i+1},
% \end{equation}
% \begin{equation}\label{eq:combinaformula2}
% {i\choose i}+{i+1\choose i}+\cdots+{n\choose i}={n+1\choose i+1}.
% \end{equation}
%\LC{Any geometrical meaning? }
For an $n$-dimensional tetrahedron, the number ${n+1\choose i+1}$ of $i$-dimensional faces equals to the sum of the number ${n\choose i}$ of $i$-dimensional faces including $\texttt{v}_0$ as a vertex and the number ${n\choose i+1}$ of $i$-dimensional faces excluding $\texttt{v}_0$.

\begin{proposition}\label{lem:Cr3dfemdimension}
The dimension of $\mathbb V_k(\mathcal T_h; \boldsymbol{r})$ is
\begin{align*}
\dim \mathbb V_k(\mathcal T_h; \boldsymbol{r}) &= \sum_{i=0}^3C_i(k,\bs r)|\Delta_i(\mathcal T_h)|.
\end{align*}
where
\begin{align*}
C_0(k,\bs r)&={r^{\texttt{v}}+3 \choose 3},\\
C_1(k,\bs r)&=(k+r^e-2r^{\texttt{v}}-1){r^e+2\choose 2}-{r^e+2\choose 3},\\
C_2(k,\bs r)&={k+3\choose3}-3{r^{\texttt{v}}+3\choose3}-3{k-2r^{\texttt{v}}-1 \choose 3}-{k+2-r^f\choose3} \\
&\quad +3{r^{\texttt{v}}+2-r^f\choose3}-3(r^f+1){k-2r^{\texttt{v}}+r^e\choose 2} + 3{k-2r^{\texttt{v}} +r^f\choose 3}, \\
C_3(k,\bs r) &={k+3\choose3} -4 C_0(k,\bs r)-6C_1(k,\bs r)-4C_2(k,\bs r).
\end{align*}
\end{proposition}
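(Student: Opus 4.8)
The plan is to reduce the claim to counting, for $i=0,1,2,3$, the number $C_i(k,\bs r)$ of degrees of freedom among \eqref{eq:C13d0}--\eqref{eq:C13d3} that are attached to an $i$-dimensional sub-simplex, and then to evaluate these four numbers explicitly. By the unisolvence of Theorem~\ref{thm:Cr3dfemunisolvence}, every DoF of the global space is tied to a single sub-simplex of $\mathcal T_h$ (a vertex for \eqref{eq:C13d0}, an edge for \eqref{eq:C13d1}, a face for \eqref{eq:C13d2}, the cell for \eqref{eq:C13d3}), depends only on $u$ and its derivatives on that sub-simplex, and the number of DoFs on a given $i$-sub-simplex depends only on $i,k,\bs r$. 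Since $\mathbb V_k(\mathcal T_h;\bs r)$ is obtained by making all these DoFs single-valued, the global interpolation using all DoFs is a bijection onto the DoF values (injectivity by element-wise unisolvence, surjectivity by matching of traces on shared sub-simplices), so $\dim\mathbb V_k(\mathcal T_h;\bs r)=\sum_{i=0}^3 C_i(k,\bs r)\,|\Delta_i(\mathcal T_h)|$, and it remains to identify $C_0,\dots,C_3$.

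For $C_0$, the DoFs \eqref{eq:C13d0} are the entries of the symmetric tensors $\nabla^j u(\texttt{v})$, $0\le j\le r^{\texttt{v}}$, so $C_0=\sum_{j=0}^{r^{\texttt{v}}}\binom{j+2}{2}=\binom{r^{\texttt{v}}+3}{3}$ by \eqref{eq:combinaformula} with $i=2$. For $C_1$, the edge DoFs \eqref{eq:C13d1} give, for each order $j$, exactly $j+1$ mixed normal derivatives and $\dim\mathbb P_{k-2(r^{\texttt{v}}+1)+j}(e)=k-2r^{\texttt{v}}-1+j$ test polynomials, so $C_1=\sum_{j=0}^{r^e}(j+1)(k-2r^{\texttt{v}}-1+j)$; using $(j+1)j=2\binom{j+1}{2}$ and \eqref{eq:combinaformula} this becomes $C_1=(k-2r^{\texttt{v}}-1)\binom{r^e+2}{2}+2\binom{r^e+2}{3}$, and then $2\binom{r^e+2}{3}=r^e\binom{r^e+2}{2}-\binom{r^e+2}{3}$ (from $3\binom{r^e+2}{3}=r^e\binom{r^e+2}{2}$) gives the stated expression.

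The bulk of the work is $C_2$. By \eqref{eq:C13d2}, $C_2=\sum_{j=0}^{r^f}\dim\mathbb B_{k-j}(f;(r^{\texttt{v}}-j,r^e-j))$, so I first need the dimension of a planar bubble space. Writing $m=k-j$, $a=r^{\texttt{v}}-j$, $b=r^e-j$, the decomposition of $\mathbb T^2_m(f)$ on the triangle $f$ (the two-dimensional analogue of Theorem~\ref{th:decT}, with identical proof; applicable for $0\le j\le r^f$ since then $a\ge 2b\ge 0$ and $m\ge 2a+1$) removes the three vertex lattices $D(\texttt{v},a)$, of cardinality $\binom{a+2}{2}$ each, and the three edge pieces $S_1(e,\cdot)$. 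Slicing $S_1(e,\cdot)$ by the planes $L(e,t)$, $0\le t\le b$, gives $|S_1(e,\cdot)|=\sum_{t=0}^{b}(m+t-2a-1)=(b+1)(m-2a-1)+\binom{b+1}{2}$, hence
\[
\dim\mathbb B_m(f;(a,b))=\binom{m+2}{2}-3\binom{a+2}{2}-3(b+1)(m-2a-1)-3\binom{b+1}{2}.
\]
Substituting $m=k-j$, $a=r^{\texttt{v}}-j$, $b=r^e-j$ and summing over $j=0,\dots,r^f$, the $\binom{\cdot}{2}$-families collapse to differences of $\binom{\cdot}{3}$'s by \eqref{eq:combinaformula}; the bilinear term $-3(b+1)(m-2a-1)$ becomes $-3uv$ with $u=r^e+1-j$, $v=k-2r^{\texttt{v}}-1+j$, and since $u+v=k-2r^{\texttt{v}}+r^e$ does not depend on $j$ one may use $uv=\binom{u+v}{2}-\binom{u}{2}-\binom{v}{2}$, so that $\sum_j uv=(r^f+1)\binom{k-2r^{\texttt{v}}+r^e}{2}-\sum_j\binom{u}{2}-\sum_j\binom{v}{2}$ with the last two sums again telescoping by \eqref{eq:combinaformula}. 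Collecting terms (the $\binom{r^e+\cdot}{3}$ contributions cancel) produces the displayed $C_2(k,\bs r)$.

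Finally $C_3$ is immediate: the geometric decomposition \eqref{eq:PrSdec3d} on the tetrahedron $T$, together with $|\Delta_0(T)|=4$, $|\Delta_1(T)|=6$, $|\Delta_2(T)|=4$, $|\Delta_3(T)|=1$, gives $\binom{k+3}{3}=\dim\mathbb P_k(T)=4C_0+6C_1+4C_2+C_3$, which is the stated formula for $C_3$. The only real obstacle I anticipate is the combinatorial bookkeeping in the $C_2$ step — not deep, but requiring care with several families of binomial coefficients and the index shifts induced by the normal-derivative order $j$; useful consistency checks are that $C_2$ must collapse to $0$ when $r^f=-1$ and to $\binom{k-1}{2}$ when $\bs r=(0,0,0)$.
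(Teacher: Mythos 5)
Your proposal is correct and follows essentially the same route as the paper: count DoFs per sub-simplex, get $C_0$ and $C_1$ directly, compute $C_2=\sum_{j=0}^{r^f}\dim\mathbb B_{k-j}(f;\cdot)$ via the combinatorial identity \eqref{eq:combinaformula}, and obtain $C_3$ by subtraction from $\dim\mathbb P_k(T)$. The only cosmetic difference is that you write the planar bubble dimension with the explicit product $3(b+1)(m-2a-1)$ and sum it using $uv=\binom{u+v}{2}-\binom{u}{2}-\binom{v}{2}$, whereas the paper records the same quantity as $3\binom{m-2a+b}{2}-3\binom{m-2a-1}{2}$ and telescopes directly; the two computations are identical in content.
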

\begin{proof}
By \eqref{eq:combinaformula} with $i=2$, the number of DoFs on edge $e\in\Delta_1(\mathcal{T}_h)$ is
\begin{align*}
\sum_{j=0}^{r^e}(j+1)(k-2r^{\texttt{v}}-1+j)&=(k-2r^{\texttt{v}}-1){r^e+2\choose 2} + 2\sum_{j=0}^{r^e}{j+1\choose 2} \\
&=(k+r^e-2r^{\texttt{v}}-1){r^e+2\choose 2} - {r^e+2\choose 3}.
\end{align*}
Applying \eqref{eq:combinaformula} with $i=2$ again, the number of DoFs on face $f\in\Delta_2(\mathcal{T}_h)$ is
\begin{align*}
&\quad \sum_{j=0}^{r^f}\dim\mathbb B_{k-j}(f;\begin{pmatrix}
r^{\texttt{v}} \\
r^e
\end{pmatrix}-j)\\
&=\sum_{j=0}^{r^f}\left[{k+2-j\choose2}-3{r^{\texttt{v}}+2-j\choose2} - 3{k-2r^{\texttt{v}}+r^e\choose 2}+3{k-2r^{\texttt{v}}-1 +j\choose 2} \right] \\
&={k+3\choose3}-{k+2-r^f\choose3}-3{r^{\texttt{v}}+3\choose3}+3{r^{\texttt{v}}+2-r^f\choose3} \\
&\quad -3(r^f+1){k-2r^{\texttt{v}}+r^e\choose 2} + 3{k-2r^{\texttt{v}} +r^f\choose 3}-3{k-2r^{\texttt{v}}-1 \choose 3},
\end{align*}
which ends the proof.
\end{proof}

\subsection{$H(\div)$-conforming finite elements}
For a linear space $V$, denote by $V^3 := V\otimes \mathbb R^3$. Let $\mathbb V^{\div}_{k}(\mathcal T_h; \boldsymbol{r}):= \mathbb V_k^3(\mathcal T_h; \boldsymbol{r})\cap\boldsymbol{H}(\div,\Omega)$, where $\mathbb V_k(\mathcal T_h; \boldsymbol{r})$ is defined in Theorem~\ref{thm:Cr3dfemunisolvence}. Define the polynomial bubble space 
$$\mathbb B_k^{\div}(T;\bs r) := \ker({\rm tr}^{\div})\cap \mathbb B_k^3(T; \bs r),$$
where ${\rm tr}^{\div} \boldsymbol v = \boldsymbol n\cdot \boldsymbol v|_{\partial T}$. When $r^f\geq 0$, $\mathbb V^{\div}_{k}(\mathcal T_h; \boldsymbol{r}) =\mathbb V_k^3(\mathcal T_h; \boldsymbol{r})\subset \bs H^1(\Omega;\mathbb R^3)$ and $\mathbb B_k^{\div}(T;\bs r) = \mathbb B_k^3(T; \bs r)$. When $r^f = -1$, $\mathbb V_k^3(\mathcal T_h; \boldsymbol{r})$ is discontinuous and to be in $\boldsymbol{H}(\div,\Omega)$, the normal direction should be continuous. 

A precise characterization of $\mathbb B_k^{\div}(T;\bs r)$ with $r^f=-1$ is given below, where for an integer $m\geq -1$, $m_+ := \max \{m, 0\}$, and the Iverson bracket $[ m=-1]=\begin{cases}
1 & \textrm{ if } m =-1,\\
0 &  \textrm{ if } m \neq-1.
\end{cases}$
For each face $f$, choose two linearly independent tangent vectors $\{\bs t_f^1, \bs t_f^2\}$ and for each edge $e$, choose a tangent vector $\bs t_e$. Define
\begin{align*}
\mathbb B_{k}^{\div}(f; \bs r_+) := {}&
\mathbb B_{k}(f; 
\begin{pmatrix}
 r_+^{\texttt{v}}\\
 r_+^e
\end{pmatrix}
) \otimes  \spa \{\bs t_f^1, \bs t_f^2\},
\\
\mathbb B_{k}^{\div}(e; \bs r_+) := {}& \mathbb B_k(e;  r_+^{\texttt{v}})\otimes \spa \{\bs t_e\}.
\end{align*}

\begin{lemma}
Consider $\bs r = (r^{\texttt{v}},r^e,-1)^{\intercal}$ with $r^{\texttt{v}}\geq \max\{2 r^e,-1\}$ and $r^e\geq -1$. We have
\begin{align}\label{eq:bubbledecomprfn1}
\mathbb B_k^{\div}(T; 
\begin{pmatrix}
r^\texttt{v} \\
r^e\\
 -1
\end{pmatrix}
) = &{}\,
\mathbb B_k^3(T; \bs r_+) \Oplus_{f\in \Delta_{2}(T)} 
\mathbb B_{k}^{\div}(f; \bs r_+)\\
&\Oplus_{e\in \Delta_1(T)} [r^e=-1]
\mathbb B_{k}^{\div}(e; \bs r_+).
\notag
\end{align}
\end{lemma}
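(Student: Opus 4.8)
The plan is to decompose $\mathbb B_k^{\div}(T;\bs r)$ according to which facial trace of a vector field is forced to vanish and which is not. Recall $\mathbb B_k^{\div}(T;\bs r) = \ker(\operatorname{tr}^{\div})\cap \mathbb B_k^3(T;\bs r)$, where $\operatorname{tr}^{\div}\bs v = \bs n\cdot\bs v|_{\partial T}$. The crucial observation is that $\mathbb B_k^3(T;\bs r)$ with $r^f=-1$ is built from lattice nodes in $S_3(T;\bs r)$ together with the face-slab contributions that appear because $r^f = -1$ relaxes the distance-to-face constraint relative to $r^f_+ = 0$; concretely, using Theorem~\ref{th:decT} applied with smoothness data $\bs r_+ = (r^{\texttt{v}}_+, r^e_+, 0)^{\intercal}$ versus $\bs r = (r^{\texttt{v}}, r^e, -1)^{\intercal}$, one identifies the index-set difference $S_3(T;\bs r)\setminus S_3(T;\bs r_+)$ as a disjoint union of sets living on the planes $L(f,0)=f$ for $f\in\Delta_2(T)$ (and, when $r^e=-1$, further pieces on edges $e\in\Delta_1(T)$). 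Tensoring with $\mathbb R^3$ and splitting $\mathbb R^3 = \spa\{\bs n_f\}\oplus\spa\{\bs t_f^1,\bs t_f^2\}$ on each face, the normal component is exactly the part killed by $\operatorname{tr}^{\div}$ while the tangential component survives; this yields the claimed summands $\mathbb B_k^{\div}(f;\bs r_+)$ and, in the $r^e=-1$ case, $\mathbb B_k^{\div}(e;\bs r_+)$.

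The key steps, in order, are as follows. First, I would write $\mathbb B_k^3(T;\bs r)$ as a direct sum $\mathbb B_k^3(T;\bs r_+)\oplus W$, where $W$ is the span of $\lambda^\alpha\bs c$ over $\alpha$ in the node-set difference and $\bs c\in\mathbb R^3$; this uses the nested decomposition in Theorem~\ref{th:decT} and the fact that lowering $r^f$ from $0$ to $-1$ (and possibly $r^e$ from $0$ to $-1$) enlarges $S_3$ by precisely the face slabs $S_2(f,\binom{r^{\texttt{v}}_+}{r^e_+})\cap L(f,0)$ and edge slabs $S_1(e,r^{\texttt{v}}_+)\cap L(e,0)$, which by definition of the face/edge bubble spaces are indexed exactly by $\mathbb B_k(f;\binom{r^{\texttt{v}}_+}{r^e_+})$ and $[r^e=-1]\,\mathbb B_k(e;r^{\texttt{v}}_+)$ respectively. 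Second, I would analyze the trace operator on each block: for $\bs v\in\mathbb B_k^3(T;\bs r_+)$ one has $\bs v|_{\partial T}=0$ (genuine interior bubble, since $r^f_+=0$ forces vanishing on all faces), so $\operatorname{tr}^{\div}\bs v=0$ automatically; for a term $\lambda^\alpha\bs c$ with $\alpha$ on plane $L(f,0)$, the restriction to face $f'\neq f$ vanishes (distance $\geq 1$ to $f'$), while on $f$ itself $\bs n_{f'}\cdot(\lambda^\alpha\bs c)|_{f'}$ is nonzero only on $f$ where it equals $(\bs n_f\cdot\bs c)\lambda^\alpha|_f$. Hence $\operatorname{tr}^{\div}(\lambda^\alpha\bs c)=0$ iff $\bs n_f\cdot\bs c=0$, i.e.\ $\bs c\in\spa\{\bs t_f^1,\bs t_f^2\}$; the edge terms are treated similarly, noting a node on $L(e,0)$ restricts to zero on every face. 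Third, I would assemble: $\ker(\operatorname{tr}^{\div})\cap\mathbb B_k^3(T;\bs r)$ is the direct sum of $\mathbb B_k^3(T;\bs r_+)$, the $\bs t_f$-parts of each face slab, and the $\bs t_e$-parts of the surviving edge slabs, which is exactly \eqref{eq:bubbledecomprfn1}. Directness of the sum is inherited from the directness in Theorem~\ref{th:decT} after tensoring with $\mathbb R^3$ and then intersecting with a subspace respecting the grading.

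The main obstacle I anticipate is the bookkeeping in the second step: correctly verifying that a Bernstein function $\lambda^\alpha$ with $\dist(\alpha,f)=0$ but $\alpha$ also avoiding the lower edge/vertex tubes restricts to $f$ as a genuine face bubble in $\mathbb B_k(f;\binom{r^{\texttt{v}}_+}{r^e_+})$ and restricts to $0$ on all other faces $f'$ — this requires Lemma~\ref{lm:derivative} / Lemma~\ref{lm:dist} to pin down that $\lambda^\alpha|_{f'}=0$ whenever $\dist(\alpha,f')\geq 1$, together with a careful comparison of the index sets $\mathbb T_k^2(f)\setminus\cup_{i}\cup_{e\in\Delta_i(f)}D(e,r^i_+)$ defining the face bubble and the slice $S_2(f,\binom{r^{\texttt{v}}_+}{r^e_+})\cap L(f,0)$ coming from Theorem~\ref{th:decT}; these coincide because on the plane $L(f,0)$ the distance to $f$-sub-simplices from within $T$ agrees with the intrinsic distance on $f$. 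A secondary subtlety is the Iverson-bracket case distinction: when $r^e\geq 0$ the edge slabs are already absorbed into the face bubble spaces (the constraint on faces is inactive at the edges), so no separate edge term appears, whereas when $r^e=-1$ one genuinely picks up the extra $\spa\{\bs t_e\}$-valued edge bubbles; making this dichotomy rigorous amounts to checking which of the $D(e,r^i)$ with $i\le 1$ actually bite on $L(f,0)$, which is a direct consequence of the inequalities $r^{\texttt{v}}\geq 2r^e$ used throughout Section~\ref{sec:geodecomp3d}.
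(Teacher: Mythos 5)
Your overall strategy is the same as the paper's: split $\mathbb B_k^3(T;\bs r)$ into the fully interior part $\mathbb B_k^3(T;\bs r_+)$ plus face and edge slabs coming from relaxing $r^f$ (and possibly $r^e$) to $-1$, then observe that $\tr^{\div}$ acts diagonally on this graded decomposition and kills exactly the normal components on each slab. The paper's proof is essentially this argument plus a citation for the base case $\bs r=\bs{-1}$. However, two points in your write-up need repair.

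First, your justification of the edge case is wrong as stated and, taken literally, would yield the wrong subspace. You write that ``a node on $L(e,0)$ restricts to zero on every face.'' For $\alpha$ in the interior of an edge $e=\{i,j\}$ one has $\alpha_\ell=0$ for $\ell\in e^*$ and $\alpha_i,\alpha_j\geq 1$, so $\lambda^{\alpha}$ vanishes on the two faces \emph{not} containing $e$ but is a nonzero Bernstein function on the two faces $f_1,f_2$ that \emph{do} contain $e$ (on $f_1=\{x:\lambda_\ell=0\}$ with $\ell\in e^*$, the factor $\lambda_i^{\alpha_i}\lambda_j^{\alpha_j}$ survives). If your claim were true, the entire $\mathbb R^3$ of coefficients would lie in $\ker(\tr^{\div})$ and the edge contribution would be $\mathbb B_k^3(e;r_+^{\texttt{v}})$ rather than $\mathbb B_k(e;r_+^{\texttt{v}})\otimes\spa\{\bs t_e\}$. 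The correct argument is that $\tr^{\div}(\lambda^{\alpha}\bs c)=0$ forces $\bs c\cdot\bs n_{f_1}=\bs c\cdot\bs n_{f_2}=0$, hence $\bs c\in\spa\{\bs t_e\}$; this is precisely where the tangential edge bubble comes from.

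Second, your identification of the index-set difference $S_3(T;\bs r)\setminus S_3(T;\bs r_+)$ as face slabs plus (when $r^e=-1$) edge slabs is only valid for $r^{\texttt{v}}\geq 0$. The lemma allows $\bs r=(-1,-1,-1)^{\intercal}$, in which case the difference also contains the four vertex nodes $k\epsilon_i$, and you must argue separately that their coefficients vanish in $\ker(\tr^{\div})$ (each vertex lies on three faces whose normals span $\mathbb R^3$, so $\bs c\perp\bs n_{f}$ for all three forces $\bs c=0$, consistent with the absence of a vertex term in \eqref{eq:bubbledecomprfn1}). The paper sidesteps this by quoting the known decomposition for $\bs r=\bs{-1}$ from~\cite{Chen;Huang:2021Geometric} and running the slab argument only for $r^{\texttt{v}}\geq 0$; your uniform argument works but needs this extra vertex step. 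The remaining ingredients of your proposal — the identification of the slice $S_2(f,\cdot)\cap L(f,0)$ with the intrinsic face bubble index set, the vanishing of face-slab nodes on the other faces via the exclusion of the edge tubes, and the diagonality of the trace map with respect to the Bernstein grading — are correct and match the paper.
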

\begin{proof}
For $\bs r = (-1,-1,-1)^{\intercal}$, we have proved the desired decomposition in ~\cite{Chen;Huang:2021Geometric}
\begin{align*}
\mathbb B_k^{\div}(T; 
\bs -1)  = {}&\mathbb B_k^3(T; \bs 0)  
\Oplus_{f\in \Delta_{2}(T)} 
\mathbb B_{k}^{\div}(f; \bs 0)
\Oplus_{e\in \Delta_1(T)} \mathbb B_{k}^{\div}(e; 0).
\end{align*}

We then consider the case $r^{\texttt{v}}\geq 0$. By definition,
\begin{align*}
\mathbb B_k^{3}(T; 
\begin{pmatrix}
r^\texttt{v} \\
r^e\\
 -1
\end{pmatrix}
) = &
\mathbb B_k^3(T; 
\begin{pmatrix}
r^\texttt{v} \\
r_+^e\\
 0
\end{pmatrix}
) \Oplus_{f\in \Delta_{2}(T)} 
\mathbb B_{k}^3(f; 
\begin{pmatrix}
 r^{\texttt{v}}\\
 r^e_+
\end{pmatrix}
) \Oplus_{e\in \Delta_1(T)} [r^e=-1] \mathbb B_k^3(e;r^{\texttt{v}}).
%\oplus \mathbb P_k(S_0(\texttt{v},\bs r_+)). 
\end{align*}
We write
$$
\mathbb B_{k}^3(f; 
\begin{pmatrix}
 r^{\texttt{v}}\\
 r_+^e
\end{pmatrix}
) = \mathbb B_{k}(f; 
\begin{pmatrix}
 r^{\texttt{v}}\\
 r_+^e
\end{pmatrix}
) \otimes \left (\spa \{\bs t_f^1, \bs t_f^2\} + \spa \{ \bs n_f\}\right ). 
$$
The intersection with $\ker(\div)$ will keep the tangential components only. Similarly only $\mathbb B_k(e;  r^{\texttt{v}})\otimes \spa \{\bs t_e\} $ is left in the $t-n$ decomposition of $\mathbb B_k^3(e;  r^{\texttt{v}})$. 
%When $r^e\geq 0$, the edge bubble space $\mathbb B_k^3(e;  r_+^{\texttt{v}})$ are excluded
%By Theorem~\ref{thm:Cr3dfemunisolvence} with $\bs r=\begin{pmatrix}
%r^\texttt{v} \\
%r^e\\
% 0
%\end{pmatrix}$, we have
%$$
%\dim\mathbb B_k^{\div}(T; 
%\begin{pmatrix}
%r^\texttt{v} \\
%r^e\\
% -1
%\end{pmatrix}
%) =
%\dim\mathbb B_k^3(T; 
%\begin{pmatrix}
%r^\texttt{v} \\
%r^e\\
% 0
%\end{pmatrix})
%+ \sum_{f\in \Delta_{2}(T)}2\dim\mathbb B_{k}(f; 
%\begin{pmatrix}
% r^{\texttt{v}}\\
% r^e
%\end{pmatrix}
%),
%$$
%which equals to the dimension of the right-hand side of \eqref{eq:bubbledecomprfn1}.
%Therefore \eqref{eq:bubbledecomprfn1} follows from the fact that each space in the right-hand side of \eqref{eq:bubbledecomprfn1} is a subspace of $\mathbb B_k^{\div}(T; 
%\begin{pmatrix}
%r^\texttt{v} \\
% r^e\\
% -1
%\end{pmatrix}
%)$.
\end{proof}

Notice that we have the relation
$$
\mathbb B_k^3(T; 
\begin{pmatrix}
r_+^\texttt{v} \\
r_+^e\\
0
\end{pmatrix}
)
\subset
\mathbb B_k^{\div}(T; 
\begin{pmatrix}
r^\texttt{v} \\
r^e\\
 -1
\end{pmatrix}
)
\subset 
\mathbb B_k^3(T; 
\begin{pmatrix}
r^\texttt{v} \\
r^e\\
 -1
\end{pmatrix}
)
$$
and $$\mathbb B_k^{\div}(T; 
\begin{pmatrix}
r^\texttt{v} \\
r^e\\
 -1
\end{pmatrix}
)
\subseteq \mathbb B_k^{\div}(T; 
\begin{pmatrix}
-1 \\
-1 \\
 -1
\end{pmatrix}
) = (\ker(\tr^{\div})\cap \mathbb P_k(T;\mathbb R^3)).$$

\begin{theorem}\label{thm:Hdiv3dfemunisolvence}
 Let $\boldsymbol r= (r^{\texttt{v}}, r^e, r^f)^{\intercal}$  with $r^{f}= -1$, $r^{e}\geq -1$, $r^{\texttt{v}}\geq\max\{2r^{e},-1\}$, and nonnegative integer $k\geq 2r_+^{\texttt{v}}+1$. 
% Assume $\mathbb B_{k}(f; 
%\begin{pmatrix}
% r^{\texttt{v}}\\
% r^e
%\end{pmatrix}
%)\geq1$\mnote{ used for $\mathbb P_0(f) \oplus \mathbb B_{k}(f; 
%\begin{pmatrix}
% r^{\texttt{v}}\\
% r^e
%\end{pmatrix}
%)/\mathbb R$ in \eqref{eq:divdof2}. This assumption is not required for $\mathbb B_{k}(f; 
%\begin{pmatrix}
% r^{\texttt{v}}\\
% r^e
%\end{pmatrix}
%)$.}. 
The shape function space $\mathbb P_k(T;\mathbb R^3)$ is determined by the DoFs
\begin{align}
\label{eq:divdof0}
\nabla^j \bs v (\texttt{v}), & \quad \texttt{v}\in \Delta_0(T), j=0,1,\ldots,r^{\texttt{v}}, \\
\label{eq:divdof1}
\int_e \frac{\partial^{j} \bs v}{\partial n_1^{i}\partial n_2^{j-i}} \cdot \bs q \dd s, & \quad e\in \Delta_1(T), \bs q \in \mathbb P^3_{k - 2(r^{\texttt{v}}+1) + j}(e), 0\leq i\leq j\leq r^{e}, \\
\label{eq:divdof2}
\int_f \boldsymbol  v\cdot\boldsymbol n \, q \dd S, & \quad q\in  \mathbb B_{k}(f; 
\begin{pmatrix}
 r^{\texttt{v}}\\
 r^e
\end{pmatrix}
), f\in \Delta_2(T), \\
\label{eq:divdof3}
\int_K \boldsymbol v \cdot \boldsymbol q \dx, &\quad \boldsymbol q\in \mathbb B_k^{\div}(T; 
\boldsymbol r
).
\end{align}
With mesh $\mathcal T_h$, define the global $H(\div)$-conforming finite element space
\begin{align*}
\mathbb V_k^{\div}(\mathcal T_h; \boldsymbol{r}) &= \{\boldsymbol{v}\in \boldsymbol{H}(\div,\Omega): \boldsymbol{v}|_T\in\mathbb P_k(T;\mathbb R^3)\textrm{ for all } T\in\mathcal T_h, \\
&\qquad\quad\;\;\;\textrm{ and all the DoFs~\eqref{eq:divdof0}-\eqref{eq:divdof3} are single-valued}\}.
\end{align*}
\end{theorem}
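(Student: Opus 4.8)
The plan is to mirror the argument of Theorem~\ref{thm:Cr3dfemunisolvence}: show the DoF count matches $\dim \mathbb P_k(T;\mathbb R^3) = 3\binom{k+3}{3}$, then prove unisolvence by assuming all DoFs in \eqref{eq:divdof0}--\eqref{eq:divdof3} vanish and deducing $\bs v = 0$. First I would verify the dimension count. The vertex and edge DoFs \eqref{eq:divdof0}--\eqref{eq:divdof1} are exactly three copies of the scalar DoFs \eqref{eq:C13d0}--\eqref{eq:C13d1} applied componentwise (using the smoothness vector $(r^{\texttt{v}}, r^e, -1)^{\intercal}$ but restricted to vertices and edges, which only sees $r_+^{\texttt{v}}, r_+^e$). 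The face DoFs \eqref{eq:divdof2} are scalar (only the normal component), with $\dim \mathbb B_k(f; \begin{pmatrix} r^{\texttt{v}} \\ r^e \end{pmatrix})$ per face. The interior DoFs \eqref{eq:divdof3} number $\dim \mathbb B_k^{\div}(T; \bs r)$, which by the decomposition \eqref{eq:bubbledecomprfn1} equals $3\dim \mathbb B_k^3(T;\bs r_+)$ worth of interior bubbles plus $2$ per face plus (conditionally) $1$ per edge contributions. I would check that summing the geometric decomposition \eqref{eq:PrSdec3d} for $\bs r_+ = (r^{\texttt{v}}, r^e, 0)^{\intercal}$ against the normal/tangential splitting on faces and edges reproduces $3|\mathbb T^3_k(T)|$; this is bookkeeping but must be done carefully because the face and edge tangential bubbles are ``promoted'' from the $r^f = 0$ decomposition into the $r^f = -1$ bubble space.

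\textbf{Unisolvence.} Assume all DoFs vanish. From \eqref{eq:divdof0}--\eqref{eq:divdof1}, applying the scalar argument of Theorem~\ref{thm:Cr3dfemunisolvence} componentwise, we get $(\nabla^j \bs v)|_e = 0$ for every edge $e \in \Delta_1(T)$ and $0 \le j \le r^e$, hence in particular $\bs v|_e = 0$ and $\bs v$ vanishes to order $r_+^e$ along each edge. The next step is the face analysis: on each face $f$, the normal component $\bs n_f \cdot \bs v|_f$ lies in the face bubble space $\mathbb B_k(f; \begin{pmatrix} r^{\texttt{v}} \\ r^e \end{pmatrix})$ because the vanishing edge DoFs kill its traces and derivatives on $\partial f$ (here one uses $r^{\texttt{v}} \ge 2 r^e$ and $k \ge 2 r_+^{\texttt{v}} + 1$ exactly as in Lemma~\ref{lm:Deltaf=DeltaT}); then the vanishing of \eqref{eq:divdof2} forces $\bs n_f \cdot \bs v|_f = 0$ on every face, i.e. $\operatorname{tr}^{\div} \bs v = 0$. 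Therefore $\bs v \in \ker(\operatorname{tr}^{\div}) \cap \mathbb P_k(T;\mathbb R^3)$. The remaining task is to show $\bs v \in \mathbb B_k^{\div}(T;\bs r)$, i.e. that $\bs v$ actually lies in the smaller bubble space rather than just $\mathbb B_k^{\div}(T;\bs{-1})$; this follows from the decomposition \eqref{eq:bubbledecomprfn1}, since the vanishing vertex/edge DoFs eliminate the edge tangential components $\mathbb B_k^{\div}(e;\bs r_+)$ and force the face tangential components and the interior component to lie in $\mathbb B_k^{\div}(f;\bs r_+)$ and $\mathbb B_k^3(T;\bs r_+)$ respectively. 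Then $\bs v \in \mathbb B_k^{\div}(T;\bs r)$, and the vanishing of \eqref{eq:divdof3} (testing against a basis of exactly this space) gives $\bs v = 0$.

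\textbf{Conformity.} To conclude $\mathbb V_k^{\div}(\mathcal T_h;\bs r) \subset \bs H(\div,\Omega)$, I would observe that the normal component $\bs n_f \cdot \bs v|_f$ on an interior face $f$ is determined solely by the face DoFs \eqref{eq:divdof2} together with the vertex and edge DoFs \eqref{eq:divdof0}--\eqref{eq:divdof1} restricted to $f$ and $\partial f$ — all of which are shared by the two elements adjacent to $f$ and are single-valued by definition of the global space. Hence $\bs n_f \cdot \bs v$ is continuous across every interior face, which is the standard criterion for $\bs H(\div)$-conformity. (When $r^f \ge 0$ the statement reduces to $\mathbb V_k^3 \subset \bs H^1 \subset \bs H(\div)$, but that case is excluded here since $r^f = -1$.)

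\textbf{Main obstacle.} I expect the main difficulty to be the dimension count and, relatedly, the precise claim ``$\bs v \in \mathbb B_k^{\div}(T;\bs r)$'' — one must reconcile the $r^f = -1$ bubble decomposition \eqref{eq:bubbledecomprfn1}, in which face and (conditionally) edge tangential bubbles are absorbed into the volume bubble space, with the componentwise vertex/edge DoFs that are phrased using full gradients $\nabla^j \bs v$. The subtlety is that a nonzero tangential component of $\bs v$ along an edge is \emph{not} detected by the face DoFs \eqref{eq:divdof2} (which only see the normal component), so it must be killed entirely by the edge DoFs \eqref{eq:divdof1}; checking that the edge DoFs have exactly the right cardinality to do this — no more, no less — against the edge tangential bubble count in \eqref{eq:bubbledecomprfn1} is where the conditions $r^{\texttt{v}} \ge 2 r^e$ and $k \ge 2 r_+^{\texttt{v}} + 1$ are genuinely used, and is the crux of the argument.
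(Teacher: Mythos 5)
Your proposal is correct, but it is packaged differently from the paper's proof. The paper never runs a vanishing-DoF argument for the DoF set \eqref{eq:divdof0}--\eqref{eq:divdof3}: it starts from the componentwise scalar element with parameter $\bs r_+$, already unisolvent by Theorem~\ref{thm:Cr3dfemunisolvence}, splits the vector-valued face and edge test functions into normal and tangential parts, absorbs the tangential parts into the volume bubble $\mathbb B_k^{\div}(T;\bs r)$, and redistributes the normal edge components (and, when $r^{\texttt{v}}=-1$, the vertex values) to the faces via the identity $\mathbb B_{k}(f;\begin{pmatrix}r^{\texttt{v}}\\ r^e_+\end{pmatrix})\Oplus_{e\in\Delta_1(f)}\mathbb B_k(e;r^{\texttt{v}})=\mathbb B_k(f;\begin{pmatrix}r^{\texttt{v}}\\ r^e\end{pmatrix})$. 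Because the new functionals are an invertible linear recombination of the old ones, the count and the unisolvence come simultaneously and for free. Your route --- a standalone dimension count followed by a kernel argument --- is equally valid and is cleaner on the kernel side: once you have $\operatorname{tr}^{\div}\bs v=0$ from the face step and, componentwise, $\bs v\in\mathbb B_k^3(T;\bs r)$ from the vanishing vertex and edge DoFs (which follows from the scalar geometric decomposition with $r^f=-1$, where $S_2(f,\bs r)=\varnothing$), the conclusion $\bs v\in\ker(\operatorname{tr}^{\div})\cap\mathbb B_k^3(T;\bs r)=\mathbb B_k^{\div}(T;\bs r)$ is just the definition of the bubble space; the decomposition \eqref{eq:bubbledecomprfn1} is not actually needed for this step, only for the dimension count --- which is precisely the part you defer as ``bookkeeping'' and precisely where the paper's regrouping does the work for you. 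What each approach buys: yours isolates the kernel argument and makes the role of $\operatorname{tr}^{\div}$ transparent; the paper's avoids the count entirely and makes explicit how the global DoFs of $\mathbb V_k^{\div}$ relate to those of $\mathbb V_k^3(\bs r_+)$, which is then reused in Corollary~\ref{cor:spacedec}.

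One minor slip: when $r^e=-1$ the edge DoFs \eqref{eq:divdof1} are empty, so your intermediate claim ``$\bs v|_e=0$'' does not follow. This is harmless, because the face step only needs $\bs v\cdot\bs n_f|_f\in\mathbb B_k(f;\begin{pmatrix}r^{\texttt{v}}\\ r^e\end{pmatrix})$, which for $r^e=-1$ requires only the vertex conditions (and nothing at all in the BDM case $r^{\texttt{v}}=-1$); the undetected edge tangential component then sits in $\mathbb B_k^{\div}(e;\bs r_+)\subset\mathbb B_k^{\div}(T;\bs r)$ and is eliminated by \eqref{eq:divdof3}, exactly as your ``main obstacle'' paragraph anticipates. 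You should state the case split $r^e\geq 0$ versus $r^e=-1$ explicitly rather than asserting $\bs v|_e=0$ uniformly.
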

\begin{proof}
The unisolvence of DoFs~\eqref{eq:divdof0}-\eqref{eq:divdof3} for $\mathbb P_k(T;\mathbb R^3)$ follows from Theorem~\ref{thm:Cr3dfemunisolvence} and decomposition \eqref{eq:bubbledecomprfn1}. More precisely, let us first consider the case $r^{\texttt{v}}\geq 0, r^e\geq 0, r^f = -1$. Then using the DoFs for $\bs r_+ = (r^{\texttt{v}}, r^e, 0)^{\intercal}$, we know $\mathbb P_k(T;\mathbb R^3)$ is determined by \eqref{eq:divdof0}-\eqref{eq:divdof1} and
\begin{align}
\label{eq:divdofproof2}
\int_f \boldsymbol  v\cdot\boldsymbol q \dd S, & \quad q\in  \mathbb B_{k}^3(f; 
\begin{pmatrix}
 r^{\texttt{v}}\\
 r^e
\end{pmatrix}
), f\in \Delta_2(T), \\
\label{eq:divdofproof3}
\int_K \boldsymbol v \cdot \boldsymbol q \dx, &\quad \boldsymbol q\in \mathbb B_k^{3}(T; 
\boldsymbol r_+
).
\end{align}
On each face, we use the decomposition $\mathbb R^3 = \spa \{\bs t_f^1, \bs t_f^2\} \oplus \spa \{ \bs n_f\}$ and move the tangential components into the bubble space $\mathbb B_k^{\div}(T; \boldsymbol r )$. Therefore only the normal component \eqref{eq:divdof2} is left. 

When $r^{\texttt{v}}\geq 0, r^e= -1, r^f = -1$, we consider the DoFs for $\bs r_+ =  (r^{\texttt{v}}, 0, 0)^{\intercal}$. That is \eqref{eq:divdof0}, volume DoF \eqref{eq:divdofproof3}, and the edge and face DoFs
\begin{align}
\label{eq:divdofproof1}
\int_e \boldsymbol  v\cdot\boldsymbol q \dd s, & \quad \bs q\in  \mathbb B_{k}^3(e; 
r^{\texttt{v}}), e\in \Delta_1(T), \\
\label{eq:divdofproof22}
\int_f \boldsymbol  v\cdot\boldsymbol q \dd S, & \quad \bs q\in  \mathbb B_{k}^3(f; 
\begin{pmatrix}
 r^{\texttt{v}}\\
 r^e_+
\end{pmatrix}
), f\in \Delta_2(T).
\end{align}
As before on each face, we move the tangential components into the bubble space $\mathbb B_k^{\div}(T; \boldsymbol r)$ and keep only normal component with the test function $q\in  \mathbb B_{k}(f; 
\begin{pmatrix}
 r^{\texttt{v}}\\
 r^e_+
\end{pmatrix}
)$. On each edge $e$, we use the decomposition $\mathbb R^3 = \spa \{\bs n_{f_1}, \bs n_{f_2}\} \oplus \spa \{ \bs t_e\}$ where $f_1, f_2$ are two faces containing $e$. Again the tangential component $\mathbb B_{k}(e; 
r^{\texttt{v}}) \otimes \spa \{ \bs t_e\}$ is moved into the bubble space $\mathbb B_k^{\div}(T; \boldsymbol r )$. The normal components will be redistributed to the two faces containing $e$ so that $\mathbb B_{k}(f; 
\begin{pmatrix}
 r^{\texttt{v}}\\
r^e_+
\end{pmatrix}
) \Oplus_{e\in \Delta_1(f)} \mathbb B_{k}(e; 
r^{\texttt{v}}) = 
\mathbb B_{k}(f; 
\begin{pmatrix}
 r^{\texttt{v}}\\
r^e
\end{pmatrix}
)
$
for $r^e = -1$, which leads to \eqref{eq:divdof2}. When $ r^{\texttt{v}} = -1$, we can redistribute $3$ components of a vector into $3$ faces containing that vertex so that  \eqref{eq:divdof2} still holds. We refer to \cite[Fig. 3]{Chen;Huang:2021Geometric} for an illustration. 
\end{proof}

\begin{example}[$H(\div)$-conforming element]\rm
We recover the following known $H(\div)$-conforming finite elements:
\begin{enumerate}[(i)]
\item When $k\geq1$, $\boldsymbol{r}=-1$, it is Brezzi-Douglas-Marini (BDM) element~\cite{BrezziDouglasMarini1986,BrezziDouglasDuranFortin1987} and denoted by ${\rm BDM}_k$.
% and the stable pair for mixed Poisson  and $\boldsymbol{r}_3=-1$;
\item When $k\geq2$, $\boldsymbol{r}=(0,-1,-1)^{\intercal}$, it is Stenberg's element~~\cite{Stenberg2010}.
% \item When $k\geq3$, $\boldsymbol{r}_2=(r^{\texttt{v}},-1,-1), r^{\texttt{v}}\geq 0$, it is 
% Christiansen-Hu-Hu element~~\cite{christiansenNodalFiniteElement2018}. 
\end{enumerate}
\end{example}

%For a finite dimensional Hilbert space $U$ with inner product $(\cdot,\cdot)$, we introduce operator $\mathcal N: U\to U'$ as $\mathcal N(p): = (\cdot, p)$. For example, 
%$\mathcal N(\Oplus_{f\in \Delta_1(\mathcal T_h)} \Oplus_{e\in \Delta_1(f)} \mathbb B_k(e;r_+^{\texttt{v}}) \bs n_f)$ contains DoFs:
%$$
%\int_f \bs v\cdot \bs q \dd S \quad \bs q\in \mathbb B_k(e;r_+^{\texttt{v}}) \bs n_f, e\in \Delta_1(f), f\in \Delta_1(\mathcal T_h). 
%$$
%For a subset $S\subset U'$, we let $\mathcal V(S) = ^{\circ}(U'\backslash S)$. Namely $\mathcal V(S)$ is the subspace of $U$ determined by DoFs in $S$ while other DoFs are zero. Through the composition of $\mathcal V\circ\mathcal N$, we can obtain a space decomposition from a decomposition of DoFs. For example, 
%$$
%\mathcal V ( \mathcal N (\Oplus_{f\in \Delta_1(\mathcal T_h)} \mathbb P_1(f) \bs n_f) ) = {\rm BDM}_1.
%$$

\begin{corollary}\label{cor:spacedec}
 Let $\boldsymbol r= (r^{\texttt{v}}, r^e, r^f)^{\intercal}$  with $r^{f}= -1$, $r^{e}\geq -1$, $r^{\texttt{v}}\geq\max\{2r^{e},-1\}$, and nonnegative integer $k\geq 2r_+^{\texttt{v}}+1$. We have 
% the space decomposition
%\begin{align*}
%\mathbb V_k^{\div}(\mathcal T_h; 
%\begin{pmatrix}
%r^\texttt{v} \\
%r^e\\
% -1
%\end{pmatrix}
%) = {}&\mathbb V^3_{k}(\mathcal T_h; 
%\begin{pmatrix}
%r_+^\texttt{v} \\
%r_+^e\\
%0
%\end{pmatrix}
%) 
%+ [ r^{\texttt{v}}=-1 ] {\rm BDM}_{1}\\
%& 
%+ [r^e=-1] \mathcal V\left (\mathcal N(\Oplus_{f\in \Delta_1(\mathcal T_h)} \Oplus_{e\in \Delta_1(f)} \mathbb B_k(e;r_+^{\texttt{v}}) \bs n_f)\right ).\\
%& 
%+\Oplus_{T\in \mathcal T_h}\Oplus_{e\in \Delta_1(T)} [r^e=-1] \mathbb B_k^{\div}(e;r_+^{\texttt{v}}).\\
%&
%+\Oplus_{T\in \mathcal T_h}\Oplus_{f\in \Delta_{2}(T)} 
%\mathbb B_{k}^{\div}(f; 
%\begin{pmatrix}
% r_+^{\texttt{v}}\\
% r^e_+
%\end{pmatrix}
%) .
%\end{align*}
the dimension formula
\begin{align*}
\dim\mathbb V^{\div}_{k}(\mathcal T_h; \bs r)=\dim\mathbb V^3_{k}(\mathcal T_h; \bs r_+) - 3[ r^{\texttt{v}}=-1 ]|\Delta_0(\mathcal T_h)| &\\
\qquad\quad  
-3[r^e=-1](k-2r_+^{\texttt{v}}-1)|\Delta_1(\mathcal T_h)| &\\
\qquad\quad +\left (-2C_2(k,\bs r_+ )+3[r^e=-1](k-2r_+^{\texttt{v}}-1)+3[ r^{\texttt{v}}=-1 ]\right )|\Delta_2(\mathcal T_h)| & \\
\qquad\quad +\left (8C_2(k,\bs r_+)+6[r^e=-1](k-2r_+^{\texttt{v}}-1)\right )|\Delta_3(\mathcal T_h)| &.
\end{align*}
%\begin{align*}
%&\dim\mathbb V^{\div}_{k}(\mathcal T_h; \begin{pmatrix}
% r^{\texttt{v}}\\
% r^e\\
% -1
%\end{pmatrix})=\dim\mathbb V^3_{k}(\mathcal T_h; \begin{pmatrix}
% r_+^{\texttt{v}}\\
% r_+^e\\
% 0
%\end{pmatrix}) \\
%%
%&- 3[ r^{\texttt{v}}=-1 ]|\Delta_0(\mathcal T_h)| 
%-3[r^e=-1](k-2r_+^{\texttt{v}}-1)|\Delta_1(\mathcal T_h)| \\
%&+\left (-2C_2(k,\begin{pmatrix}
% r_+^{\texttt{v}}\\
% r_+^e\\
% 0
%\end{pmatrix})+3[r^e=-1](k-2r_+^{\texttt{v}}-1)+3[ r^{\texttt{v}}=-1 ]| \right )\Delta_2(\mathcal T_h)|\\
%%
%&+\left (8C_2(k,\begin{pmatrix}
% r_+^{\texttt{v}}\\
% r_+^e\\
% 0
%\end{pmatrix})+6[r^e=-1](k-2r_+^{\texttt{v}}-1)\right )|\Delta_3(\mathcal T_h)|.
%\end{align*}
\end{corollary}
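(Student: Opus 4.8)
The plan is to count the degrees of freedom of $\mathbb V^{\div}_{k}(\mathcal T_h;\bs r)$ listed in Theorem~\ref{thm:Hdiv3dfemunisolvence} according to the dimension of the subsimplex carrying them, and to compare each such per-subsimplex count with the corresponding count for $\mathbb V^3_{k}(\mathcal T_h;\bs r_+)$ provided by Proposition~\ref{lem:Cr3dfemdimension}. Since the DoFs in Theorem~\ref{thm:Hdiv3dfemunisolvence} are unisolvent and single-valued across the mesh, writing $N_i$ for the number of DoFs \eqref{eq:divdof0}--\eqref{eq:divdof3} attached to one $i$-dimensional subsimplex gives $\dim\mathbb V^{\div}_{k}(\mathcal T_h;\bs r)=\sum_{i=0}^{3}N_i\,|\Delta_i(\mathcal T_h)|$, while $\dim\mathbb V^3_{k}(\mathcal T_h;\bs r_+)=3\sum_{i=0}^{3}C_i(k,\bs r_+)\,|\Delta_i(\mathcal T_h)|$. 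The corollary then amounts to evaluating the four differences $N_i-3C_i(k,\bs r_+)$ and recognizing them as the coefficients of $|\Delta_i(\mathcal T_h)|$ in the statement.

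For $i=0$ the vertex DoFs \eqref{eq:divdof0} give $N_0=3C_0(k,\bs r_+)$ if $r^{\texttt v}\geq0$ and $N_0=0$ if $r^{\texttt v}=-1$; as $r_+^{\texttt v}=\max\{r^{\texttt v},0\}$ and $C_0(k,\bs r_+)=\binom{r_+^{\texttt v}+3}{3}$ equals $1$ in the latter case, this gives $N_0-3C_0(k,\bs r_+)=-3[r^{\texttt v}=-1]$. For $i=1$ the edge DoFs \eqref{eq:divdof1} coincide with those of $\mathbb V^3_k(\mathcal T_h;\bs r_+)$ whenever $r^e\geq0$ (then $r_+^{\texttt v}=r^{\texttt v}$ and $r_+^e=r^e$), so $N_1=3C_1(k,\bs r_+)$; when $r^e=-1$ there are no edge DoFs and inserting $r_+^e=0$ in the formula for $C_1$ gives $C_1(k,\bs r_+)=k-2r_+^{\texttt v}-1$, whence $N_1-3C_1(k,\bs r_+)=-3[r^e=-1](k-2r_+^{\texttt v}-1)$.

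The face term requires the most care. Here $N_2=\dim\mathbb B_k(f;\begin{pmatrix}r^{\texttt v}\\r^e\end{pmatrix})$ by \eqref{eq:divdof2}, whereas $3C_2(k,\bs r_+)=3\dim\mathbb B_k(f;\begin{pmatrix}r_+^{\texttt v}\\r_+^e\end{pmatrix})$. The gap between the two face bubble spaces is measured by which lattice nodes of $\mathbb T_k^2(f)$ are deleted: if $r^e=-1$ the edge tubes $D(e,-1)=\varnothing$ are not removed, so $\mathbb B_k(f;\begin{pmatrix}r^{\texttt v}\\-1\end{pmatrix})$ contains, beyond $\mathbb B_k(f;\begin{pmatrix}r^{\texttt v}\\0\end{pmatrix})$, the interior nodes of the three edges of $f$, which are mutually disjoint and sit at distance $k$ from the opposite vertex (hence survive every vertex tube), contributing $3(k-2r_+^{\texttt v}-1)$; and if in addition $r^{\texttt v}=-1$ the three vertices of $f$ are also kept, contributing a further $3$. (This is exactly the redistribution of edge-normal and vertex components onto the faces performed in the proof of Theorem~\ref{thm:Hdiv3dfemunisolvence}.) Hence $N_2-3C_2(k,\bs r_+)=-2C_2(k,\bs r_+)+3[r^e=-1](k-2r_+^{\texttt v}-1)+3[r^{\texttt v}=-1]$.

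For $i=3$ the interior DoFs \eqref{eq:divdof3} give $N_3=\dim\mathbb B_k^{\div}(T;\bs r)$, and the decomposition \eqref{eq:bubbledecomprfn1} together with $\dim\mathbb B_k^3(T;\bs r_+)=3C_3(k,\bs r_+)$, $\dim\mathbb B_k^{\div}(f;\bs r_+)=2\dim\mathbb B_k(f;\begin{pmatrix}r_+^{\texttt v}\\r_+^e\end{pmatrix})=2C_2(k,\bs r_+)$, $\dim\mathbb B_k^{\div}(e;\bs r_+)=\dim\mathbb B_k(e;r_+^{\texttt v})=k-2r_+^{\texttt v}-1$, and the fact that a tetrahedron has $4$ faces and $6$ edges, yields $N_3=3C_3(k,\bs r_+)+8C_2(k,\bs r_+)+6[r^e=-1](k-2r_+^{\texttt v}-1)$, i.e.\ $N_3-3C_3(k,\bs r_+)=8C_2(k,\bs r_+)+6[r^e=-1](k-2r_+^{\texttt v}-1)$. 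Substituting the four differences into $\dim\mathbb V^{\div}_{k}(\mathcal T_h;\bs r)=\dim\mathbb V^3_{k}(\mathcal T_h;\bs r_+)+\sum_{i=0}^{3}\big(N_i-3C_i(k,\bs r_+)\big)|\Delta_i(\mathcal T_h)|$ reproduces the claimed formula. The main obstacle lies entirely in the face count: one must disentangle the two degenerate regimes $r^e=-1$ and $r^{\texttt v}=-1$ and confirm that the deleted-node bookkeeping on $\mathbb T_k^2(f)$ yields precisely $3[r^e=-1](k-2r_+^{\texttt v}-1)+3[r^{\texttt v}=-1]$; the rest is routine manipulation with the combinatorial identity \eqref{eq:combinaformula} and the bubble dimension formulas already in hand.
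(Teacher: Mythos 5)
Your proof is correct and follows essentially the same route as the paper: both compare $\mathbb V^{\div}_{k}(\mathcal T_h;\bs r)$ with the continuous element $\mathbb V^3_{k}(\mathcal T_h;\bs r_+)$ and track, subsimplex by subsimplex, how the tangential face/edge components migrate into the interior bubble and how the normal edge/vertex components are redistributed to the faces. Your version merely makes the face-bubble bookkeeping (the $3[r^e=-1](k-2r_+^{\texttt{v}}-1)+3[r^{\texttt{v}}=-1]$ surviving lattice nodes) more explicit than the paper's narrative, which is a welcome clarification but not a different argument.
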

\begin{proof}
%Comparing $\mathbb V^{\div}_{k}(\mathcal T_h; \bs r) $ with the continuous element $\dim\mathbb V^3_{k}(\mathcal T_h; \bs r_+)$, we add the face $\div$ bubble space $\mathbb B_{k}^{\div}(f; \bs r_+)$. When $r^e = -1$, we add the edge bubble space $\mathbb B_{k}^{\div}(e; \bs r_+)$ and the space by moving two normal components to faces. When $r^{\texttt{v}} = -1$, the face redistribution of the vertex DoFs will determine $\mathbb P_1(f)$ which in turn can determine a subspace ${\rm BDM}_{1}$. 

When counting the dimension, we compare $\mathbb V^{\div}_{k}(\mathcal T_h; \bs r) $ with the continuous element $\mathbb V^3_{k}(\mathcal T_h; \bs r_+)$.
%should be aware the space decomposition is not direct. 
As $r^f = -1$, the two tangential components of the face DoFs are considered as interior DoFs and thus subtracted from coefficients of $\Delta_2(\mathcal T_h)$. The cumulation of $4$ faces tangential bubbles contributes to the increase $8C_2(k,\bs r_+)$ in the coefficient of $\Delta_3(\mathcal T_h)$. Similarly when $r^e=-1$, we add total $6$ tangential edge  bubbles to the interior and redistribute the two normal components of edge bubbles to each face. When $r^{\texttt{v}} = -1$, the three components of the vector function at vertices are redistributed to three faces containing that vertex. Therefore facewisely we add $3(k-2r_+^{\texttt{v}}-1)$ edge DoFs and $3$ vertices DoFs. When $r^e=-1$, all $3$ components of edge DoFs of a vector  are removed and when $r^{\texttt{v}} = -1$, all $3$ components of a vector are removed.
\end{proof}

\section{Div Stability between Finite Elements Spaces}\label{sec:divstability}
%Stokes problem: velocity and pressure. 

\subsection{Overview}
To simplify the notation, introduce $r\ominus n := \max\{r-n, -1\}$ so that the result will stagnate at $-1$ when $r-n \leq -1$. 
In this section, for two smoothness parameters $(\bs r_2, \bs r_3)$ with relation $\bs r_3 \geq \bs r_2\ominus 1$, we aim to prove the so-called {\em div stability}, i.e. the div operator is surjective
\begin{equation}\label{eq:div}
\div\mathbb V^{\div}_{k}(\boldsymbol{r}_2)=\mathbb V^{L^2}_{k-1}(\boldsymbol{r}_3).   
\end{equation}
Additional conditions on $\bs r_2, \bs r_3$ are needed to prove the div stability \eqref{eq:div}. For example, $\bs r_2 = 0, \bs r_3 = -1$ is the notorious Stokes finite element pair for which the div stability is hard to verify and may require additional conditions on the triangulation. While $\bs r_2 = -1, \bs r_3 = -1$ corresponds to the div stability for the BDM element which is relatively easy. We shall call $(\bs r_2, \bs r_3)$ a div stable pair if \eqref{eq:div} holds.

\begin{table}[htp]
	\centering
	\caption{Examples of div stability for $\bs r_3 = \bs r_2\ominus 1$.}
	\renewcommand{\arraystretch}{1.5}
	\begin{tabular}{@{} c c  c c @{}}
	\toprule
		 $(r_2^{\texttt{v}}, r_2^e, r_2^f)$ &  $(r_3^{\texttt{v}}, r_3^e, r_3^f)$ & Results & Stronger Constraint \\

 \hline

$(r_2^{\texttt{v}}, -1,-1)$ & $(r_3^{\texttt{v}}, -1,-1)$ & Lemma \ref{lm:divbubbleontor2em1}\\
$(1, 0,-1)$ & $(0, -1,-1)$ & Lemma \ref{lm:divbubbleontor2fm1} & $r_{2}^{\texttt{v}}\geq 1$\\
$(2, 1, 0)$ & $(1, 0,-1)$ & Lemma \ref{lm:divbubbleontosimple1} & $r_{2}^{e}\geq 2r_2^f + 1$\\
$(2, 1, -1)$ & $(1, 0,-1)$ & Corollary \ref{cor:divbubbleontosimple1} & $r_{2}^{e}\geq 1, r_2^f =- 1$\\
$(0, 0, -1)$ & $(-1, -1,-1)$ & Not valid 
\medskip \\

\bottomrule
	\end{tabular}
	\label{table:derhamexamples}
\end{table}

The essential difficulty is the div stability of bubble spaces
$$
\div\mathbb B_{k}^{\div}(T;\boldsymbol{r}_2)=\mathbb B_{k-1}(T; \boldsymbol{r}_3)/\mathbb R,
$$
where $\mathbb B_{k-1}(T; \boldsymbol{r}_3)/\mathbb R=\mathbb B_{k-1}(T; \boldsymbol{r}_3)\cap L_0^2(T)$.
Let us refine the notation $S_{\ell} (f, \bs r)$ to $S_{\ell} (f, \bs r, k)$ to include the degree of polynomial. When $\dim\mathbb B_{k-1}(T; \boldsymbol{r}_3)=1$, we have $\div\mathbb B_{k}^{\div}(T;\boldsymbol{r}_2)=\mathbb B_{k-1}(T; \boldsymbol{r}_3)\cap L_0^2(T)$ as $\mathbb B_{k-1}(T; \boldsymbol{r}_3)\cap L_0^2(T)=\{0\}$. So we only consider the case $\dim\mathbb B_{k-1}(T; \boldsymbol{r}_3)>1$, i.e., $|S_3(T,\boldsymbol{r}_3, k-1) |\geq 2$.

Similar to Lemma \ref{lm:L20}, we have
$$
\mathbb B_{k-1}(T; \boldsymbol{r}_3)\cap L_0^2(T)=\mathrm{span}\{\lambda^{\alpha}/\alpha!-\lambda^{\beta}/\beta!: \alpha, \beta\in S_3(T,\boldsymbol{r}_3, k-1), \dist (\alpha,\beta) = 1\},
$$
as the sub-graph $\mathcal G(S_3(T,\boldsymbol{r}_3,k-1))$ is connected. 
It suffices to prove that: given $$p(\alpha,\beta) = \lambda^{\alpha}/\alpha!-\lambda^{\beta}/\beta!, \quad \alpha, \beta\in S_{3}(T,\boldsymbol{r}_3,k-1), \dist(\alpha,\beta)=1,$$ we can find a function 
$$
\bs u\in\mathbb B_{k}^{\div}(T;\boldsymbol{r}_2)\quad \text{s.t. } \div \bs u =  p.
$$ 
Without loss of generality, we assume 
$$
\beta=\alpha+\epsilon_{01}=(\alpha_0+1,\alpha_1-1,\alpha_2,\alpha_3).
$$

\subsection{Div stability of bubble spaces}
We start from a simple case $r_{2}^e=-1, r_{2}^f=-1$ as tangential components on edges and faces are included in the div bubble space; see \eqref{eq:bubbledecomprfn1}. 
\begin{lemma}\label{lm:divbubbleontor2em1}
Assume 
\begin{equation*}
k\geq\max\{2r_{2}^{\texttt{v}}+1,1\}, \quad r_{2}^{\texttt{v}}\geq-1, \quad r_{2}^e=-1, \quad r_{2}^f=-1, 
\end{equation*}
and $\bs r_3 = \bs r_2 \ominus  1$. Assume $\dim\mathbb B_{k-1}(T; \boldsymbol{r}_3)\geq1$.
It holds that
$$
\div\mathbb B_{k}^{\div}(T;\boldsymbol{r}_2)=\mathbb B_{k-1}(T; \boldsymbol{r}_3)/\mathbb R.
$$
\end{lemma}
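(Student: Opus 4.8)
One inclusion is routine: any $\boldsymbol u\in\mathbb B_{k}^{\div}(T;\boldsymbol r_2)$ satisfies $\tr^{\div}\boldsymbol u=0$, hence $\int_T\div\boldsymbol u=0$, and since $\boldsymbol u\in\mathbb B_k^3(T;\boldsymbol r_2)$ vanishes (by Lemma~\ref{lm:derivative}) to order $r_2^{\texttt{v}}$ at each vertex, $\div\boldsymbol u$ vanishes there to order $r_2^{\texttt{v}}\ominus1=r_3^{\texttt{v}}$, which together with $r_3^e=r_3^f=-1$ gives $\div\boldsymbol u\in\mathbb B_{k-1}(T;\boldsymbol r_3)\cap L_0^2(T)$. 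So the content is surjectivity, and by the reduction recorded in Section~\ref{sec:divstability} it suffices, given adjacent $\alpha,\beta\in S_3(T,\boldsymbol r_3,k-1)$ with (without loss of generality) $\beta=\alpha+\epsilon_{01}=(\alpha_0+1,\alpha_1-1,\alpha_2,\alpha_3)$, to exhibit $\boldsymbol u\in\mathbb B_{k}^{\div}(T;\boldsymbol r_2)$ with $\div\boldsymbol u=p(\alpha,\beta)=\lambda^{\alpha}/\alpha!-\lambda^{\beta}/\beta!$.

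The plan is to take $\boldsymbol u:=\frac{1}{\beta!\,\alpha_1}\,\lambda^{\alpha+\epsilon_0}\boldsymbol t_{1,0}$; formula~\eqref{eq:divalphabeta1} of Lemma~\ref{lm:alphabeta} gives $\div\boldsymbol u=p(\alpha,\beta)$ at once. Everything then reduces to showing $\boldsymbol u\in\mathbb B_{k}^{\div}(T;\boldsymbol r_2)$, which I would read off from the decomposition~\eqref{eq:bubbledecomprfn1}: here $r_2^e=r_2^f=-1$, so $\boldsymbol r_{2,+}=(r_{2,+}^{\texttt{v}},0,0)^{\intercal}$, every tangential face summand $\mathbb B_k^{\div}(f;\boldsymbol r_{2,+})$ and tangential edge summand $\mathbb B_k^{\div}(e;\boldsymbol r_{2,+})$ appears in~\eqref{eq:bubbledecomprfn1} (the Iverson factor $[r_2^e=-1]$ equals $1$), and $k\geq\max\{2r_2^{\texttt{v}}+1,1\}$ makes Theorem~\ref{th:decT} available for $\boldsymbol r_{2,+}$. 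Concretely I must locate the node $\gamma:=\alpha+\epsilon_0$ in the geometric decomposition~\eqref{eq:PrSdec3d} of $\mathbb P_k(T)$ for the smoothness vector $\boldsymbol r_{2,+}$, and verify that the constant vector $\boldsymbol t_{1,0}$ is tangential to whichever sub-simplex contains $\gamma$.

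The two claims I would prove are: (i) $\gamma\notin D(\texttt{v},r_{2,+}^{\texttt{v}})$ for every vertex $\texttt{v}$; and (ii) $\gamma\notin D(g,0)$ for every edge $g\neq\{0,1\}$ and every face $g$ with $\{0,1\}\not\subseteq g$. For (i), observe $r_{2,+}^{\texttt{v}}=r_3^{\texttt{v}}+1$ and $\alpha,\beta\notin D(\texttt{v},r_3^{\texttt{v}})$; if $\texttt{v}\neq\texttt{v}_0$ then $0\in\texttt{v}^*$, so $\dist(\gamma,\texttt{v})=\dist(\alpha,\texttt{v})+1\geq r_3^{\texttt{v}}+2>r_{2,+}^{\texttt{v}}$, while $\dist(\gamma,\texttt{v}_0)=|\alpha_{\{1,2,3\}}|=\dist(\beta,\texttt{v}_0)+1\geq r_3^{\texttt{v}}+2>r_{2,+}^{\texttt{v}}$. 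For (ii), use $\gamma_0=\alpha_0+1\geq1$ and $\gamma_1=\alpha_1\geq1$ (the latter since $\beta_1=\alpha_1-1\geq0$): if $0\notin g$ then $\dist(\gamma,g)\geq\gamma_0\geq1$, and if $0\in g$ but $1\notin g$ then $\dist(\gamma,g)\geq\gamma_1\geq1$. Since $S_1(e,\boldsymbol r_{2,+},k)\subseteq D(e,0)$ and $S_2(f,\boldsymbol r_{2,+},k)\subseteq D(f,0)$ (because $r_{2,+}^e=r_{2,+}^f=0$), claims (i)--(ii) and the partition of Theorem~\ref{th:decT} force $\gamma$ into $S_3(T,\boldsymbol r_{2,+},k)$, or into $S_1(\{0,1\},\boldsymbol r_{2,+},k)$, or into $S_2(f,\boldsymbol r_{2,+},k)$ with $\{0,1\}\subseteq f$; in each of these cases $\boldsymbol t_{1,0}$ is tangential to the corresponding simplex, so $\boldsymbol u$ lies respectively in $\mathbb B_k^3(T;\boldsymbol r_{2,+})$, in $\mathbb B_k^{\div}(\{0,1\};\boldsymbol r_{2,+})$, or in $\mathbb B_k^{\div}(f;\boldsymbol r_{2,+})$, hence in $\mathbb B_{k}^{\div}(T;\boldsymbol r_2)$ by~\eqref{eq:bubbledecomprfn1}.

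The main obstacle is exactly this node-location step. Claim (i) is where the relation $\boldsymbol r_3=\boldsymbol r_2\ominus1$ is essential: it gives $r_{2,+}^{\texttt{v}}=r_3^{\texttt{v}}+1$, so that ``$\alpha,\beta$ lie in the degree-$(k-1)$ interior set for $\boldsymbol r_3$'' translates precisely into ``$\gamma$ lies deep enough for $\boldsymbol r_{2,+}$ at degree $k$'', and one must be careful to control the vertices $\texttt{v}\neq\texttt{v}_0$ through $\alpha$ but the vertex $\texttt{v}_0$ through $\beta$. Claim (ii), together with the hypotheses $r_2^e=r_2^f=-1$ (which are what put the needed tangential edge and face bubbles inside $\mathbb B_{k}^{\div}(T;\boldsymbol r_2)$ in the first place), is what allows the ``boundary'' instances of $\boldsymbol u$ to still land in the bubble space. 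The whole argument is local to a single tetrahedron, so no condition on the triangulation enters.
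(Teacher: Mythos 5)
Your proposal is correct and follows the paper's proof in its essential content: the same velocity field $\bs u=\lambda^{\alpha+\epsilon_0}\boldsymbol t_{1,0}/(\beta!\,\alpha_1)$ via Lemma~\ref{lm:alphabeta}, and the same vertex-distance estimates, including the key step of controlling $\dist(\alpha+\epsilon_0,\texttt{v}_0)$ through $\beta$. The only difference is in verifying $\bs u\in\mathbb B_k^{\div}(T;\boldsymbol r_2)$: the paper does this directly from the definition $\ker(\tr^{\div})\cap\mathbb B_k^3(T;\boldsymbol r_2)$ (zero normal trace from the factor $\lambda_0\lambda_1\boldsymbol t_{1,0}$, plus $\alpha+\epsilon_0\in S_3(T,\boldsymbol r_2,k)$, which for $r_2^e=r_2^f=-1$ reduces to the vertex condition alone), whereas you route through the decomposition \eqref{eq:bubbledecomprfn1} and a case analysis on where the node lands in the partition for $\boldsymbol r_{2,+}$ --- correct, but more work than needed.
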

\begin{proof}
Taking
 $\bs u = \lambda^{\alpha+\epsilon_{0}}\boldsymbol{t}_{1,0}/(\beta!\alpha_1)$, we have $\div \bs u = p$. The edge div bubble function $\lambda_0\lambda_1\boldsymbol{t}_{1,0}\in \boldsymbol{H}_0(\div,T)$. Writing $\lambda^{\alpha + \epsilon_0}\boldsymbol{t}_{1,0}=(\lambda_0^{\alpha_0}\lambda_1^{\beta_1}\lambda_2^{\alpha_2}\lambda_3^{\alpha_3})\lambda_0\lambda_1\boldsymbol{t}_{1,0}$, we conclude $\lambda^{\alpha + \epsilon_0}\boldsymbol{t}_{1,0}\in \boldsymbol{H}_0(\div,T)$. Next we verify $\alpha + \epsilon_0\in S_{3}(T,\boldsymbol{r}_2,k)$ by considering the distance to vertices as follows
$$
\dist( \alpha+\epsilon_{0}, \texttt{v}_i) = \dist( \alpha, \texttt{v}_i)+1 > r_2^{\texttt{v}}\quad\textrm{ for } i=1,2,3,
$$
$$
\dist( \alpha+\epsilon_{0}, \texttt{v}_0) =\dist(\alpha, \texttt{v}_0) =\dist( \beta, \texttt{v}_0)+1 > r_2^{\texttt{v}}.
$$
\end{proof}

Next we set $r_{2}^e= 0$. The tangential component of edge bubbles will be excluded from $\mathbb B_{k}^{\div}(T;\boldsymbol{r}_2)$. The nodes should be away from edges which in turn requires condition $r_{2}^{\texttt{v}}\geq 1$ stronger than the standard one $r_{2}^{\texttt{v}}\geq 2r_2^e\geq 0$. 

\begin{lemma}\label{lm:divbubbleontor2fm1}
Assume 
\begin{equation*}
k\geq2r_{2}^{\texttt{v}}+1, \quad r_{2}^{\texttt{v}}\geq 1, \quad r_{2}^e=0, \quad r_{2}^f=-1, 
\end{equation*}
and $\bs r_3 = \bs r_2\ominus 1$. Assume $\dim\mathbb B_{k-1}(T; \boldsymbol{r}_3)\geq1$.
It holds that
$$
\div\mathbb B_{k}^{\div}(T;\boldsymbol{r}_2)=\mathbb B_{k-1}(T; \boldsymbol{r}_3)/\mathbb R.
$$
\end{lemma}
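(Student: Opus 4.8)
The plan is to reduce, as in the proof of Lemma~\ref{lm:divbubbleontor2em1}, to constructing for each basis function $p(\alpha,\beta) = \lambda^{\alpha}/\alpha! - \lambda^{\beta}/\beta!$ with $\alpha,\beta \in S_3(T,\bs r_3,k-1)$, $\beta = \alpha + \epsilon_{01}$, a velocity field $\bs u \in \mathbb B_k^{\div}(T;\bs r_2)$ with $\div \bs u = p$. Since now $r_2^e = 0$, the tangential edge bubbles are \emph{not} available in $\mathbb B_k^{\div}(T;\bs r_2)$ (only $r_2^f = -1$ still gives face tangential bubbles and the volume bubbles $\mathbb B_k^3(T;\bs r_{2,+})$ with $\bs r_{2,+} = (r_2^{\texttt v}, 0, 0)^{\intercal}$), so the naive choice $\bs u = \lambda^{\alpha+\epsilon_0}\bs t_{1,0}/(\beta!\alpha_1)$ no longer lies in the bubble space when $\alpha+\epsilon_0$ is at distance exactly one from an edge. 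The strategy is therefore a case split according to how close $\alpha$ (equivalently $\beta$) is to the various sub-simplices.

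First I would record what $\alpha,\beta \in S_3(T,\bs r_3,k-1)$ with $\bs r_3 = (r_2^{\texttt v}-1, -1, -1)^{\intercal}$ forces: $\dist(\alpha,\texttt v_i) \ge r_2^{\texttt v}$ for all vertices $\texttt v_i$, and $\dist(\alpha,e) \ge 0$ trivially for edges, plus $\dist(\alpha,f)\ge 0$ for faces; the analogous bounds hold for $\beta$. I want $\bs u \in S_3(T,\bs r_2,k)$ in the sense that each Bernstein mode used has distance $\ge r_2^{\texttt v}$ to every vertex and distance $\ge r_2^e = 0$ (automatic) to every edge, \emph{unless} I route the tangential part through a face bubble or use the $t$-$n$ face decomposition allowed by $r_2^f=-1$. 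The generic case is when $\alpha_0 \ge 1$ and $\alpha_0 + \alpha_1 \ge$ (something), so that $\lambda^{\alpha+\epsilon_0}$ is already a volume bubble for $\bs r_2$; then $\bs u = \lambda^{\alpha+\epsilon_0}\bs t_{1,0}/(\beta!\alpha_1)$ works exactly as in Lemma~\ref{lm:divbubbleontor2em1}, the only new check being the edge-distance condition, which is met because bumping $\alpha_0$ only increases $\dist$ to the edges not containing vertex $\texttt v_0$ and the edges through $\texttt v_0$ already have $\dist \ge 0$.

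The delicate case is when $\alpha+\epsilon_0$ (or symmetrically a detour node) sits at distance $0$ from an edge through neither $\texttt v_0$ nor $\texttt v_1$ — say the edge $\texttt v_2\texttt v_3$, i.e. $\alpha_0 = \alpha_1 = 0$ up to the shift — but then $\dist(\alpha,\texttt v_2) = \alpha_0+\alpha_1+\alpha_3$ and $\dist(\alpha,\texttt v_3)=\alpha_0+\alpha_1+\alpha_2$ must both be $\ge r_2^{\texttt v}$, and $|\alpha| = k-1$, so the "offending" edge coordinates are small only in a controlled way. Here I would use Lemma~\ref{lm:alphabetagamma}: pick an index $\ell \in \{2,3\}$ with $\gamma = \alpha+\epsilon_{\ell 1} = \beta + \epsilon_{\ell 0} \in \mathbb T^3_{k-1}$ and write $p = \div(\lambda^{\alpha+\epsilon_\ell}\bs t_{1,\ell})/(\gamma!\alpha_1) + \div(\lambda^{\beta+\epsilon_\ell}\bs t_{\ell,0})/(\gamma!\beta_0)$, choosing $\ell$ so that adding $\epsilon_\ell$ pushes the nodes away from the problematic edge; the two new monomials $\lambda^{\alpha+\epsilon_\ell}$ and $\lambda^{\beta+\epsilon_\ell}$ then have distance $\ge r_2^{\texttt v}$ to every vertex (one checks: distance to $\texttt v_\ell$ is unchanged or decreased by passing from $\alpha$ to $\alpha+\epsilon_\ell$? no — careful, $\dist(\cdot,\texttt v_\ell)=|\alpha_{\ell^*}|$ so adding $\epsilon_\ell$ \emph{decreases} it by $1$; so one must verify $\dist(\alpha,\texttt v_\ell) \ge r_2^{\texttt v}+1$, which is exactly where the stronger hypothesis $r_2^{\texttt v}\ge 1$ together with $\alpha\in S_3(T,\bs r_3,k-1)$, $\bs r_3 = \bs r_2 \ominus 1$, must be leveraged, possibly requiring a sub-case when $\dist(\alpha,\texttt v_\ell)$ is exactly $r_2^{\texttt v}$, handled by choosing the other $\ell$ or by an additional detour through a face bubble). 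Boundary monomials that are face bubbles but not volume bubbles are still admissible because $r_2^f=-1$ lets the face-tangential components sit in $\mathbb B_k^{\div}(T;\bs r_2)$, exactly as used in the proof of Theorem~\ref{thm:Hdiv3dfemunisolvence}.

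I expect the main obstacle to be the bookkeeping of the vertex-distance conditions after the detours: each detour through $\gamma$ or through a face replaces one monomial by one or two monomials that are "one step closer" to some vertex or edge, and one must verify that the chosen detour never violates $\dist \ge r_2^{\texttt v}$ at a vertex. The reason the hypothesis is $r_2^{\texttt v}\ge 1$ (strictly stronger than $r_2^{\texttt v}\ge 2r_2^e = 0$) is precisely to give one unit of slack so that a single detour step is always affordable; making this quantitative — showing that for every edge $[\alpha,\beta]$ in $\mathcal G(S_3(T,\bs r_3,k-1))$ there is a choice of $\ell$ (and, if needed, a two-step detour using both Lemma~\ref{lm:alphabeta} and Lemma~\ref{lm:alphabetagamma}, or a face-bubble excursion) keeping all intermediate nodes at vertex-distance $\ge r_2^{\texttt v}$ — is the real content, and I would organize it by the number of coordinates of $\alpha$ that equal the "critical" small value.
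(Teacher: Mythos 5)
Your high-level strategy --- reduce to the basis functions $p(\alpha,\beta)$ with $\beta=\alpha+\epsilon_{01}$, use the direct field $\lambda^{\alpha+\epsilon_0}\bs t_{1,0}/(\beta!\alpha_1)$ generically, fall back on the two-term detour of Lemma~\ref{lm:alphabetagamma} in a delicate case, and spend the one unit of slack from $r_2^{\texttt{v}}\geq 1$ on that detour --- is exactly the paper's, but you have misidentified the delicate case, and this is not a cosmetic slip. Since $\beta_1=\alpha_1-1\geq 0$ forces $\alpha_1\geq 1$, the configuration $\alpha_0=\alpha_1=0$ that you single out (distance $0$ to the \emph{opposite} edge $f_{23}$) never occurs, and in any event adding $\epsilon_0$ only increases the distance to $f_{23}$. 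The edge that actually causes trouble is $f_{01}$ itself: $\dist(\alpha+\epsilon_0,f_{01})=\alpha_2+\alpha_3$, so when $\alpha_2=\alpha_3=0$ the field $\lambda^{\alpha+\epsilon_0}\bs t_{1,0}$ is precisely an edge-tangential bubble on $f_{01}$, which is excluded from $\mathbb B_k^{\div}(T;\boldsymbol{r}_2)$ by \eqref{eq:bubbledecomprfn1} because $r_2^e=0\neq -1$. Your edge check in the generic case (``the edges through $\texttt{v}_0$ already have $\dist\geq 0$'') is vacuous: membership in $S_3(T,\boldsymbol{r}_2,k)$ requires $\dist>r_2^e=0$, i.e.\ $\dist\geq 1$, and this is exactly what fails on $f_{01}$ when $\alpha_2+\alpha_3=0$, while the other five edges are automatically at distance $\geq 1$ because $\alpha_1\geq1$ or $\alpha_0+1\geq 1$. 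As written, you would therefore apply the direct field in the one case where it fails and reserve the detour for a case that cannot arise.

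The correct split (the paper's) is: if $\alpha_2+\alpha_3\geq 1$ take the direct field; if $\alpha_2=\alpha_3=0$ take the detour \eqref{eq:ualphabetagamma} with $\ell=3$, which works because $\dist(\alpha+\epsilon_3,\texttt{v}_3)=\alpha_0+\alpha_1+\alpha_2=k-1$ is then far larger than $r_2^{\texttt{v}}$, and $\dist(\alpha+\epsilon_3,f_{03})=\alpha_1+\alpha_2=\alpha_1>r_3^{\texttt{v}}\geq 0=r_2^e$ by the vertex bound on $\alpha$ --- this last step is where $r_2^{\texttt{v}}\geq 1$ actually enters. Two further inaccuracies: the failure threshold for the direct field is distance \emph{zero} to an edge, not ``distance exactly one''; and $\dist(\cdot,\texttt{v}_\ell)$ is \emph{unchanged}, not decreased, in passing from $\alpha\in\mathbb T^3_{k-1}$ to $\alpha+\epsilon_\ell\in\mathbb T^3_k$, since $\alpha_\ell$ and $|\alpha|$ both increase by one (so the requirement is $\dist(\alpha,\texttt{v}_\ell)\geq r_2^{\texttt{v}}+1$, which you state but for the wrong reason). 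Since you explicitly defer ``making this quantitative,'' and those distance verifications are the entire content of the lemma, the proposal stands as an outline whose one concrete case identification points at the wrong edge.
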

\begin{figure}[htbp]
\begin{center}
\includegraphics[width=6cm]{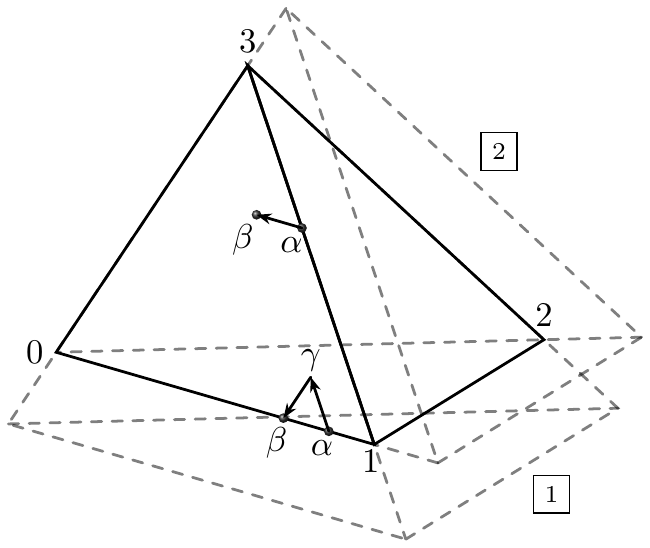}
\caption{Two cases when $r_{2}^{\texttt{v}}\geq 1, r_{2}^e=0, r_{2}^f=-1$: \step 1 $\alpha_2=\alpha_3=0$; \step 2 $\alpha_2+\alpha_3\geq1$. The dash line represents different extension of the simplicial lattice so that lattice nodes $\alpha, \beta$ are away from the edges in the extended lattice.}
\label{fig:edgecontinuous}
\end{center}
\end{figure}
\begin{proof}
%As Lemma~\ref{lm:divbubbleontosimple1}, it suffices to prove that: given $p(\alpha,\beta) = \lambda^{\alpha}/\alpha!-\lambda^{\beta}/\beta!$ for $\alpha, \beta\in S_{3}(T,\boldsymbol{r}_3,k-1)$ and , we can find a function $\bs u\in\mathbb B_{k}^{\div}(T;\boldsymbol{r}_2)$ such that $\div \bs u =  p$. 
\step 1 Consider case $\alpha_2=\alpha_3=0$ and consequently $\alpha_0 + \alpha_1 = k-1$. In this case, $\alpha,\beta\in f_{01}$, and $\dist(\alpha,\texttt{v}_i)> r_3^{\texttt{v}};$ See Fig. \ref{fig:edgecontinuous}.
%$$
%\alpha_i=k-1-\dist( \alpha, \texttt{v}_i)<k-r_2^{\texttt{v}}\quad\textrm{ for } i=0,1.
%$$

By Lemma \ref{lm:alphabetagamma},  we can choose
\begin{equation}\label{eq:ualphabetagamma}
\bs u = \frac{1}{\gamma_!\alpha_1} \lambda^{\alpha + \epsilon_3}\boldsymbol{t}_{1,3}+\frac{1}{\gamma_!\beta_0}\lambda^{\beta + \epsilon_3}\boldsymbol{t}_{3,0}, \quad \gamma=\alpha+\epsilon_{31},
\end{equation}
and verify $\lambda^{\alpha + \epsilon_3}\boldsymbol{t}_{1,3}, \lambda^{\beta + \epsilon_3}\boldsymbol{t}_{3,0}\in \mathbb B_{k}^{\div}(T;\boldsymbol{r}_2)$. We focus on $\lambda^{\alpha + \epsilon_3}\boldsymbol{t}_{1,3}$ as $\lambda^{\beta + \epsilon_3}\boldsymbol{t}_{3,0}$ is symbolically identical. Write $\lambda^{\alpha + \epsilon_3}\boldsymbol{t}_{1,3}=(\lambda_0^{\alpha_0}\lambda_1^{\beta_1}\lambda_2^{\alpha_2}\lambda_3^{\alpha_3})\lambda_1\lambda_3\boldsymbol{t}_{1,3}$. As $\lambda_1\lambda_3\boldsymbol{t}_{1,3}\in \boldsymbol{H}_0(\div,T)$, we conclude $\lambda^{\alpha + \epsilon_3}\boldsymbol{t}_{1,3}\in \boldsymbol{H}_0(\div,T)$. Next we verify $\alpha + \epsilon_3\in S_{3}(T,\boldsymbol{r}_2,k).$

\smallskip
\noindent {\em Distance to vertices.} For vertices on the plane $f_{012}$, the distance is increased by $1$ as $\alpha_3\to \alpha_3 + 1$. Then  
$$
\dist( \alpha+\epsilon_3, \texttt{v}_i) = \dist( \alpha, \texttt{v}_i)+1 > r_3^{\texttt{v}}+1 = r_2^{\texttt{v}}\quad\textrm{ for } i=0,1,2.
$$
The distance to $\texttt{v}_3$ is $k-1$ which is far larger than $r_2^{\texttt{v}}$:
$$
\dist( \alpha+\epsilon_3, \texttt{v}_3  ) = \alpha_0 + \alpha_1 + \alpha_2 = k-1 > r_2^{\texttt{v}}.
$$

\smallskip
\noindent {\em Distance to edges.}  Similarly when computing the distance to edges not containing $\texttt{v}_3$, $\alpha_3 \to \alpha_3 + 1$ will increase the distance by $1$:
$$
\dist( \alpha+\epsilon_3, f_{ij}) = \dist( \alpha, f_{ij})+1\geq 1>r_2^e=0 \quad\textrm{ for }\; i,j\neq3.
$$
When computing the distance to $f_{03}$, we use the distance to vertices on face $f_{012}$: 
$$
\dist (\alpha, \texttt{v}_0) = \alpha_1 + \alpha_2 + \alpha_3 = \alpha_1 > r_3^{\texttt{v}}\geq 0.
$$ 
Then
$$
\dist( \alpha+\epsilon_3, f_{03}) = \alpha_1 + \alpha_2 = \alpha_1 > 0 = r_2^e.
$$
The bound $\dist( \alpha+\epsilon_3, f_{13}) > 0$ is similar. The distance to edge $f_{23}$ is far away as
$$
\dist( \alpha+\epsilon_3, f_{23}) =  \alpha_0 + \alpha_1 = k-1 \geq 2r_2^{\texttt{v}} >0.
$$
%\begin{align*}
%\dist( \alpha+\epsilon_3, f_{i3}) &= \dist( \alpha, f_{i3}) \geq\min\{\alpha_0,\alpha_1\}= k - 1 - \max\{\alpha_0,\alpha_1\}  > r_2^{\texttt{v}}-1\geq 0.
%\end{align*}

\medskip

\step 2 Consider case $\alpha_2+\alpha_3\geq1$. Namely $\dist (\alpha, f_{01}) \geq 1$. 
Setting
 $\bs u = \lambda^{\alpha+\epsilon_{0}}\boldsymbol{t}_{1,0}/(\beta!\alpha_1)$ and by Lemma~\ref{lm:alphabeta}, we have $\div \bs u = p$. Again we have $\lambda^{\alpha + \epsilon_{0}}\boldsymbol{t}_{1,0}\in \boldsymbol{H}_0(\div,T)$. Only need to show $\alpha+\epsilon_{0}\in S_2(T,\bs r_2, k)$. The simplical lattice containing $\alpha+\epsilon_{0}$ is extended in $\alpha_0$ direction; see Fig. \ref{fig:edgecontinuous}. 

\smallskip
\noindent {\em Distance to vertices.} We have
$$
\dist( \alpha+\epsilon_{0}, \texttt{v}_i) = \dist( \alpha, \texttt{v}_i)+1 > r_3^{\texttt{v}}+1 = r_2^{\texttt{v}}\quad\textrm{ for } i=1,2,3,
$$
$$
\dist( \alpha+\epsilon_{0}, \texttt{v}_0) =\dist(\alpha, \texttt{v}_0) =\dist( \beta, \texttt{v}_0)+1 > r_3^{\texttt{v}}+1 = r_2^{\texttt{v}}.
$$

\smallskip
\noindent {\em Distance to edges.}  We have
\begin{align*}
\dist( \alpha+\epsilon_{0}, f_{ij}) &= \dist( \alpha, f_{ij})+1> r_3^e + 1\geq1  > r_2^e=0 \quad\textrm{ for }\; 1\leq i,j\leq3,\\
\dist( \alpha+\epsilon_{0}, f_{01}) &= \alpha_2+\alpha_3\geq1  > r_2^e,\\
\dist( \alpha+\epsilon_{0}, f_{0i}) &= \dist( \alpha, f_{0i})= \dist(\beta, f_{0i})+1\geq1  > r_2^e\quad\textrm{ for }\; i=2,3.
\end{align*}
\end{proof}

We then move to the most difficult case: the velocity is continuous and the pressure is discontinuous. Super-smoothness on vertices and edges is added to ensure the div stability. In the following, $\bs r_2\geq (2,1,0)$ and $k\geq 5$. 

\begin{lemma}\label{lm:divbubbleontosimple1}
Assume 
\begin{equation}\label{eq:boundr2}
k\geq2r_{2}^{\texttt{v}}+1, \quad r_{2}^{\texttt{v}}\geq2r_{2}^e, \quad r_{2}^e\geq2r_{2}^f+1, \quad r_{2}^f\geq0, 
\end{equation}
and $\bs r_3 = \bs r_2 - 1$. Assume $\dim\mathbb B_{k-1}(T; \boldsymbol{r}_3)\geq1$.
It holds that
$$
\div\mathbb B_{k}^3(T;\boldsymbol{r}_2)=\mathbb B_{k-1}(T; \boldsymbol{r}_3)/\mathbb R.
$$
\end{lemma}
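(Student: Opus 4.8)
The plan is to reduce, as in Lemma~\ref{lm:divbubbleontor2fm1}, to constructing for each generator $p(\alpha,\beta)=\lambda^{\alpha}/\alpha!-\lambda^{\beta}/\beta!$ with $\alpha,\beta\in S_3(T,\bs r_3,k-1)$ and $\beta=\alpha+\epsilon_{01}$ a vector field $\bs u\in\mathbb B_k^3(T;\bs r_2)$ with $\div\bs u=p$. Since now $r_2^f\geq 0$, the bubble space is the full vectorial one $\mathbb B_k^3(T;\bs r_2)=\mathbb B_k(T;\bs r_2)\otimes\mathbb R^3$, so the constraint is purely that the relevant lattice node lands in $S_3(T,\bs r_2,k)$, i.e.\ that it is simultaneously far from all vertices (distance $>r_2^{\texttt v}$), all edges (distance $>r_2^e$), and all faces (distance $>r_2^f$). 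The basic ansatz is $\bs u=\lambda^{\alpha+\epsilon_0}\bs t_{1,0}/(\beta!\alpha_1)$ from Lemma~\ref{lm:alphabeta}, possibly replaced by the two-term detour $\bs u=\tfrac{1}{\gamma!\alpha_1}\lambda^{\alpha+\epsilon_\ell}\bs t_{1,\ell}+\tfrac{1}{\gamma!\beta_0}\lambda^{\beta+\epsilon_\ell}\bs t_{\ell,0}$ with $\gamma=\alpha+\epsilon_{\ell 1}$ from Lemma~\ref{lm:alphabetagamma} for a suitable $\ell\in\{2,3\}$; in every case $\div\bs u=p$ is automatic, and the vanishing of the normal trace follows by factoring out a boundary bubble $\lambda_i\lambda_j\bs t_{i,j}\in\bs H_0(\div,T)$ exactly as before, so the whole content is the membership of the exponents in $S_3(T,\bs r_2,k)$.

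The key step is a careful case split on where $\alpha$ (equivalently $\beta$) sits relative to the edge $f_{01}$ and to the faces containing it. Writing $d_1:=\dist(\alpha,f_{01})=\alpha_2+\alpha_3$, the node $\alpha$ being in $S_3(T,\bs r_3,k-1)$ already forces $d_1\geq r_3^e+1=r_2^e$ and similarly $\dist(\alpha,\texttt v_i)\geq r_3^{\texttt v}+1=r_2^{\texttt v}$ and $\dist(\alpha,f)\geq r_3^f+1=r_2^f$ for all vertices/faces. The increments $\epsilon_0$, $\epsilon_\ell$, $\epsilon_{\ell 1}$ each raise distances to all sub-simplices not containing the inserted index by exactly $1$, which upgrades $r_3$-bounds to $r_2$-bounds for those sub-simplices; the only sub-simplices in danger are those containing the inserted vertex index. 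When $\alpha_2+\alpha_3\geq 1$ I expect the simple ansatz $\lambda^{\alpha+\epsilon_0}\bs t_{1,0}$ to work: inserting into slot $0$, the only sub-simplices containing $0$ whose distance could fail to grow are $f_{01}$ and the faces through $\texttt v_0$, and here one uses $\dist(\alpha+\epsilon_0,f_{01})=\alpha_2+\alpha_3$ together with the inequalities $r_2^e\geq 2r_2^f+1$ and $r_2^{\texttt v}\geq 2r_2^e$ and $k\geq 2r_2^{\texttt v}+1$ to close the estimates for edges and faces containing $0$ — this is where the strengthened hypotheses \eqref{eq:boundr2} are consumed. When $\alpha_2=\alpha_3=0$ (so $\alpha,\beta\in f_{01}$, $\alpha_0+\alpha_1=k-1$), the simple ansatz degenerates and one switches to the detour through $\ell=3$ (or $\ell=2$), i.e.\ $\bs u$ as in \eqref{eq:ualphabetagamma}; inserting $\epsilon_3$ and $\epsilon_{31}$ pushes the exponents off the plane $f_{012}$, and the distances to faces/edges/vertices involving index $3$ are then either $\sim k-1$ (huge) or are controlled by $\alpha_1=\dist(\alpha,\texttt v_0)>r_3^{\texttt v}\geq r_2^e\geq r_2^f+1$, again using the chain $r_2^{\texttt v}\geq 2r_2^e\geq 2(2r_2^f+1)$.

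The main obstacle is the bookkeeping in the intermediate regime: when $\alpha_2+\alpha_3\geq 1$ but $\alpha$ is still close to an edge or a face other than $f_{01}$, one must verify that the single insertion $\epsilon_0$ (or, if that fails, a detour $\epsilon_\ell$ with $\ell$ chosen so that $\gamma=\alpha+\epsilon_{\ell 1}$ stays in the lattice and both $\alpha+\epsilon_\ell$ and $\beta+\epsilon_\ell$ land in $S_3(T,\bs r_2,k)$) simultaneously beats the face bound $r_2^f$, the edge bound $r_2^e$, and the vertex bound $r_2^{\texttt v}$ — and the delicate point is the face distance, since faces have the smallest smoothness parameter and the increment only helps for the one face not containing the inserted index. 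I expect this is precisely why the hypothesis demands $r_2^e\geq 2r_2^f+1$ (strictly more than the generic $r_2^e\geq 2r_2^f$): it guarantees that a node which is $r_2$-interior to all edges is automatically comfortably $r_2$-interior to all faces with one unit to spare, so a single detour suffices and no second correction is needed. Organizing the case analysis so that every $(\alpha,\beta)$ falls into one of these handled buckets, and choosing $\ell$ explicitly in the boundary-of-$f_{01}$ cases, is the bulk of the work; once the node-membership claims are checked, the $\bs H_0(\div)$ property and $\div\bs u=p$ are immediate from Lemmas~\ref{lm:alphabeta} and~\ref{lm:alphabetagamma} plus the boundary-bubble factorization.
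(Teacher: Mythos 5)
Your overall framework is right --- reduce to the generators $p(\alpha,\beta)$ with $\beta=\alpha+\epsilon_{01}$, realize them by the one-term field of Lemma~\ref{lm:alphabeta} or the two-term detour of Lemma~\ref{lm:alphabetagamma}, and observe that the only issue is lattice membership $\alpha+\epsilon_{(\cdot)}\in S_3(T,\bs r_2,k)$, with sub-simplices not containing the inserted index handled automatically by the distance-increment observation. But the case split you propose is the one from Lemma~\ref{lm:divbubbleontor2fm1} (where $r_2^e=0$, $r_2^f=-1$), and it does not transfer to the present setting. First, the case $\alpha_2=\alpha_3=0$ you isolate never occurs here: $\alpha\in S_3(T,\bs r_3,k-1)$ forces $\alpha_2+\alpha_3>r_3^e$, i.e.\ $\alpha_2+\alpha_3\geq r_2^e\geq 1$. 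Second, and more seriously, your claim that the simple ansatz $\lambda^{\alpha+\epsilon_0}\bs t_{1,0}$ works whenever $\alpha_2+\alpha_3\geq 1$ is false. The sub-simplices containing the inserted index $0$ include the edge $f_{01}$ and the faces $f_2=f_{013}$, $f_3=f_{012}$, with $\dist(\alpha+\epsilon_0,f_{01})=\alpha_2+\alpha_3$, $\dist(\alpha+\epsilon_0,f_2)=\alpha_2$, $\dist(\alpha+\epsilon_0,f_3)=\alpha_3$. Membership in $S_3(T,\bs r_3,k-1)$ only guarantees $\alpha_2+\alpha_3\geq r_2^e$ and $\alpha_2,\alpha_3\geq r_2^f$, so the required strict bounds $\alpha_2+\alpha_3>r_2^e$ and $\alpha_2,\alpha_3>r_2^f$ fail exactly on the boundary strata $\alpha_2+\alpha_3=r_2^e$ and $\alpha_i=r_2^f$ ($i=2,3$) --- and these strata are nonempty in general. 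No inequality from \eqref{eq:boundr2} rescues this, since those distances are unchanged by adding $\epsilon_0$.

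The correct case analysis (which is the actual content of the proof) is therefore organized around these boundary strata, not around $\alpha_2+\alpha_3\geq 1$: (i) if $\alpha_2=r_2^f$ or $\alpha_3=r_2^f$, one must lift off the offending face via the detour with $\ell=2$ or $3$, and this is where $r_2^e\geq 2r_2^f+1$ is consumed (to show $\dist(\alpha+\epsilon_3,f_2)=\alpha_2>r_2^e-r_2^f-1\geq r_2^f$); (ii) if $\alpha_2,\alpha_3>r_2^f$ but $\alpha_2+\alpha_3=r_2^e$, one again uses the detour with $\ell=3$, now exploiting $k-1-r_2^e+r_2^f\geq r_2^{\texttt v}$ and $r_3^{\texttt v}-r_2^e+r_2^f+1\geq r_2^e$; (iii) only when $\alpha_2,\alpha_3>r_2^f$ \emph{and} $\alpha_2+\alpha_3\geq r_2^e+1$ does the simple ansatz $\lambda^{\alpha+\epsilon_0}\bs t_{1,0}$ go through, with the remaining bounds obtained by comparing against $\beta$'s distances. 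Your last paragraph gestures at this difficulty (``when $\alpha$ is still close to an edge or a face other than $f_{01}$'') but defers precisely the step that constitutes the proof; as written, the bucket assignment sends the problematic nodes to an ansatz that provably fails, so there is a genuine gap.
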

\begin{proof}
%Cleary $\div\mathbb B_{k}^3(T;\boldsymbol{r}_2)\subseteq\mathbb B_{k-1}(T; \boldsymbol{r}_3)\cap L_0^2(T)$. 
%When $\dim\mathbb B_{k-1}(T; \boldsymbol{r}_3)=1$, we have $\div\mathbb B_{k}^3(T;\boldsymbol{r}_2)=\mathbb B_{k-1}(T; \boldsymbol{r}_3)\cap L_0^2(T)$ as $\mathbb B_{k-1}(T; \boldsymbol{r}_3)\cap L_0^2(T)=\{0\}$. Then we only consider the case $\dim\mathbb B_{k-1}(T; \boldsymbol{r}_3)>1$.

Without loss of generality, assume $\beta=\alpha+\epsilon_{01}=(\alpha_0+1,\alpha_1-1,\alpha_2,\alpha_3) \in S_{3}(T,\boldsymbol{r}_3,k-1)$ in the lattice for the pressure. We shall sort the nodes by the distance to the edge $f_{01}$, i.e., the plane $L(f_{01}, s)$ from $s = r_2^e$ to $k-1-r_2^e$; see Fig. \ref{fig:bubblediv}. 

\medskip

\step 1 
We first consider the case: $\alpha_3 = r_2^f$ or $\alpha_2 = r_2^f$. Without loss of generality, we discuss $\alpha_3 = r_2^f$ in detail. To push the node into $S_{3}(T, \bs r_2, k)$, we need to increase the distance to the face $f_3 = f_{012}$ by one, i.e, lift the nodes one level higher in $\alpha_3$ direction by changing $\alpha$ to $\alpha + \epsilon_3$; see Fig. \ref{fig:bubblediv}. 
%\mnote {How to understand node $\alpha\in S_{3}(T, \bs r^f, k)$ satisfy $|\alpha|=k$, $\alpha_3\rightarrow\alpha_3+1$?}. 

%\XH{$\div \bs u(\alpha+\epsilon_3) = p(\alpha,\beta).$}
%\mnote{ Since $\div(\lambda^{\alpha+\epsilon_3}\boldsymbol{t}_{1,0})=\alpha_0\lambda^{\alpha+\epsilon_3-\epsilon_{0}}-\alpha_1\lambda^{\alpha+\epsilon_3-e^1}$, $\div \bs u(\alpha+\epsilon_3) \not= p(\alpha,\beta).$} 
It remains to verify $\bs u\in \mathbb B_{k}^3(T;\boldsymbol{r}_2)$, i.e., $\alpha+\epsilon_3, \beta+\epsilon_3 \in S_{3}(T, \bs r_2, k)\subset \mathbb T^n_{k}$. We focus on $\alpha+\epsilon_3$ as $\beta+\epsilon_3$ is symbolically identical.  

\medskip

\noindent {\em Distance to vertices.} For $\alpha\in S_{3}(T,\boldsymbol{r}_3,k-1)\subset \mathbb T^n_{k-1}$,  $i=0,1,2$, 
$$
\dist (\alpha, \texttt{v}_i) > r_3^{\texttt{v}} \iff \alpha_i \leq k -1 - (r_3^{\texttt{v}} +1) = k-1 - r_2^{\texttt{v}} \iff \dist(\alpha+\epsilon_3, \texttt{v}_i ) >  r_2^{\texttt{v}}.
$$ 
%Similarly for $\alpha+\epsilon_3\in \mathbb T^n_k$, for $i = 0,1,2$,
%$$
% \iff \alpha_i \leq  k-1 - r_2^{\texttt{v}}.
%$$
So only $\texttt{v}_3$ is left. As we assume $\alpha_3 = r_2^f$, 
$$
\dist( \alpha+\epsilon_3, \texttt{v}_3  ) = k - \dist( \alpha+\epsilon_3, f_3  ) = k - (\alpha_3+1) = k - r_2^f -1 \geq 2r_2^{\texttt{v}} - r_2^f > r_2^{\texttt{v}}.
$$

\smallskip
\noindent {\em Distance to edges}. For edges on the face $f_{012}$, w.l.o.g. take edge $f_{01}$, as no $\alpha_3$ involved, we have the equivalence of the bound 
$$
\dist(\alpha, f_{01}) > r^e_3 \iff \alpha_0 + \alpha_1 \leq k-1 - (r^e_3+1) \iff \dist(\alpha + \epsilon_3, f_{01}) > r^e_2.
$$
To estimate the distance to other edges, w.l.o.g. consider the edge $f_{03}$, we use the bound of the distance to vertices. From
$$
\dist(\alpha, \texttt{v}_0) > r_3^{\texttt{v}} \iff \alpha_1 + \alpha_2 + \alpha_3 > r_3^{\texttt{v}},
$$ 
the fact $\alpha_3 = r_2^f$, and the bound \eqref{eq:boundr2} on $\bs r_2$, we conclude 
$$
\dist(\alpha + \epsilon_3, f_{03})  = \alpha_1 + \alpha_2 > r_3^{\texttt{v}} - r_2^f \geq r_2^e.
$$

\smallskip
\noindent {\em Distance to faces}. Obviously the distance to $f_3 = f_{012}$ is increased by $1$, i.e.
$$
\dist ( \alpha + \epsilon_3, f_3) = \alpha_3 + 1 = r_2^f+1 > r_2^f.
$$ 
But other $\alpha_i, i\neq 3,$ remains unchanged. We will use the bound of the distance to edges. Again w.l.o.g. consider face $f_2 = f_{013}$. From 
$$
\dist(\alpha, f_{01}) = \alpha_2 + \alpha_3 > r_3^e = r_2^e - 1,
$$
the fact $\alpha_3 = r_2^f$, and the bound \eqref{eq:boundr2} on $\bs r_2$, we conclude 
$$
\dist( \alpha + \epsilon_3, f_2 ) = \alpha_2 > r_2^e - r_2^f - 1\geq r_2^f.
$$
The last inequality is the motivation to have the stronger constraint $r_2^e\geq 2r_2^f + 1$ in \eqref{eq:boundr2}.
%\end{enumerate}

\begin{figure}[htbp]
\begin{center}
\includegraphics[width=2.8in]{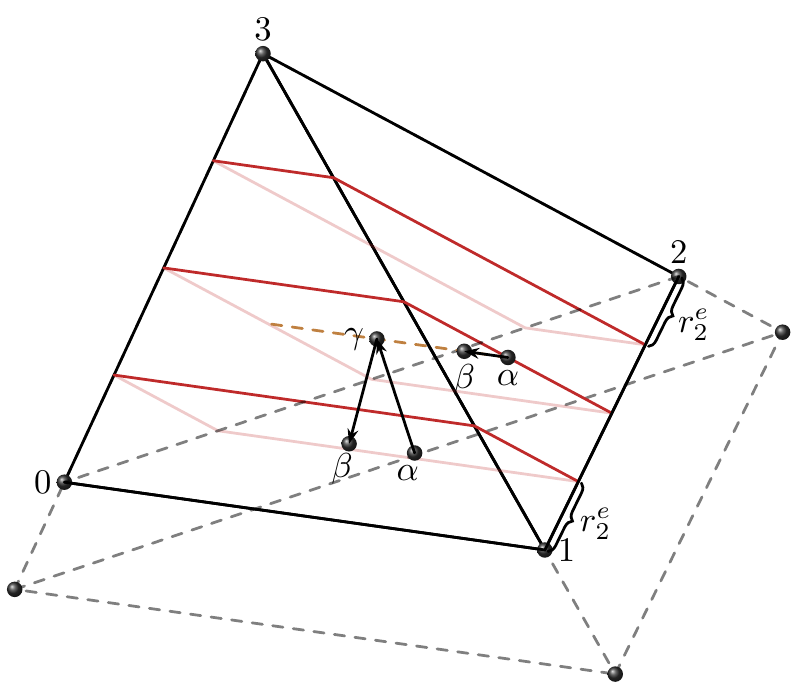}
\caption{Different location of $\alpha, \beta$.}
\label{fig:bubblediv}
\end{center}
\end{figure}

In summary, for vertices and edges on the face $f_{012}$, the upper bound on the sum of indices  automatically holds as no $\alpha_3$ is involved. 
When estimate the distance to edges and faces containing $\texttt{v}_3$, we use the fact $\alpha\in L(f_3, r_2^f)$ is in the face bubble $S_{2}( f_3,\begin{pmatrix}
r_3^{\texttt{v}} \\
r_3^e
\end{pmatrix}-r_2^f, k-1 - r_2^f)$;
%\mnote{ $S_{2}( f_3,\begin{pmatrix}
%r_2^{\texttt{v}} \\
%r_2^e
%\end{pmatrix}-r_2^f, k-1-r_2^f)$?}; 
see Fig. \ref{fig:facebubble} (c). 

\medskip

\begin{figure}[htbp]
\begin{center}
\includegraphics[width=8cm]{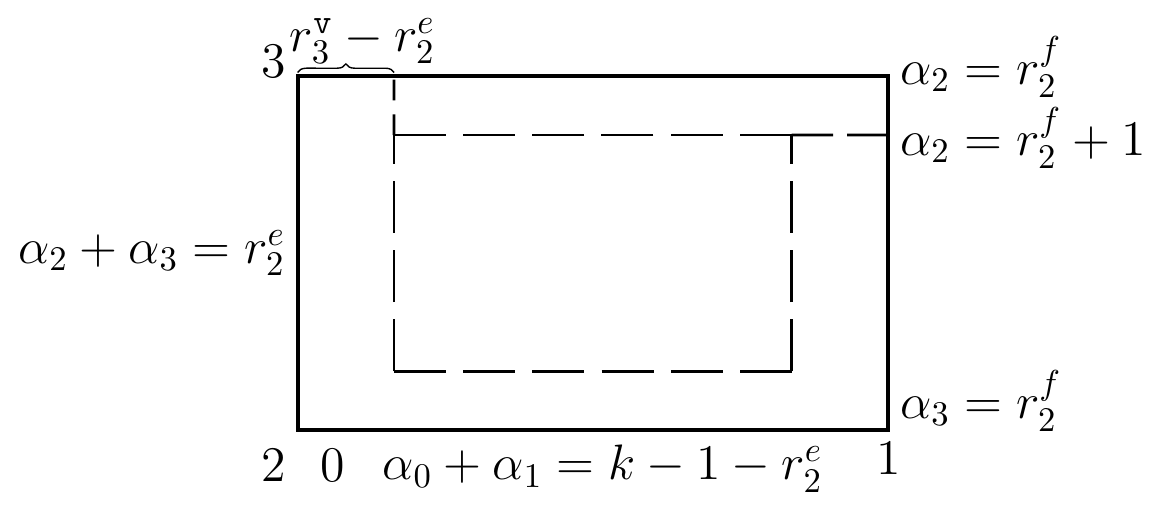}
\caption{On the cut plane $\alpha_2 + \alpha_3 = r_2^e$.}
\label{fig:case2}
\end{center}
\end{figure}

\step 2 Consider case $\alpha_2+\alpha_3=r_{2}^e$ and consequently $\alpha_0 + \alpha_1 = k-1-r_2^e$. As we have considered the case $\alpha_2 = r_2^f$ or $\alpha_3 = r_2^f$ in Case 1, we can further assume $\alpha_2, \alpha_3\geq r_2^f + 1> r_2^f$. Notice that now $r_{2}^e = \alpha_2+\alpha_3 \geq2r_2^f+2$ which means if $r_{2}^e = 2r_2^f+1$ only, either $\alpha_2$ or $\alpha_3 = r_2^f$ which is covered by Case 1.
%\mnote{ ? stronger requirement on $r_2^e$ and $r_2^f$?}. 

We still choose $\bs u$ by \eqref{eq:ualphabetagamma} and verify $\alpha+\epsilon_3, \beta+\epsilon_3 \in S_{3}(T, \bs r_2, k)$.

\medskip

\noindent {\em Distance to vertices.} Again we only need to consider the distance to $\texttt{v}_3$ which is minimized when $\alpha_2 = r_2^f+1> r_2^f$; see Fig. \ref{fig:case2}. Algebraically, we have
$$
\dist( \alpha+\epsilon_3, \texttt{v}_3  ) = \alpha_0 + \alpha_1 + \alpha_2 > \alpha_0 + \alpha_1 + r_2^f = k-1 - r_2^e + r_2^f \geq r_2^{\texttt{v}}.
$$

\smallskip
\noindent {\em Distance to edges.}  The distance to edge $f_{01}$ and $f_{23}$ is easy to bound as $\alpha_2+\alpha_3=r_{2}^e$:
\begin{align*}
\dist( \alpha+\epsilon_3, f_{01}) &= \alpha_2 + \alpha_3 + 1 = r_2^e+1 > r_2^e, \\
\dist( \alpha+\epsilon_3, f_{23}) &= \alpha_0 + \alpha_1 = k - 1 - r_2^e  > r_2^e.
\end{align*}
Without loss of generality consider $\dist ( \alpha+\epsilon_3, f_{13}) = \alpha_0 + \alpha_2$. From the distance to the vertex and the fact $\alpha_2 + \alpha_3 = r_2^e$, we have the lower bound
\begin{equation}\label{eq:alpha0lowerbound}
\dist (\alpha, \texttt{v}_1) = \alpha_0 + \alpha_2 + \alpha_3 > r_3^{\texttt{v}} \quad \Longrightarrow \quad \alpha_0 > r_3^{\texttt{v}} - r_2^e.
\end{equation}
Together with $\alpha_2\geq r_2^f+1$, we have
$$
\dist ( \alpha+\epsilon_3, f_{13}) = \alpha_0 + \alpha_2 > r_3^{\texttt{v}} - r_2^e + r_2^f+1\geq r_2^e.
$$

\smallskip
\noindent {\em Distance to faces.}  As we assume $\alpha_2, \alpha_3 > r_2^f$, we have
$$
\dist ( \alpha+\epsilon_3, f_i) > r_2^f, \quad i = 2,3. 
$$
The lower bound \eqref{eq:alpha0lowerbound} implies
$$
\dist ( \alpha+\epsilon_3, f_0) = \alpha_0 > r_3^{\texttt{v}} - r_2^e\geq r_2^f .
$$
Similar to \eqref{eq:alpha0lowerbound}, $\dist ( \alpha, \texttt{v}_{0}) > r_3^{\texttt{v}}$ will imply the same lower bound on $\alpha_1$ and $\dist ( \alpha+\epsilon_3, f_1) > r_2^f$. 

In summary, when considering the set $L(f_{01}, r_2^e)\cap S_3(T, \bs r_3, k-1)$, the index is well separated from the boundary. See the dash-line in Fig. \ref{fig:case2}.
%Notice that the furthest plane $L(f_{01}, k -1 - r_2^e) = L(f_{23}, r_2^e)$ can be handled by symbolically switching $01$ and $23$.  

%\medskip

\begin{figure}[htbp]
\begin{center}
\includegraphics[width=6cm]{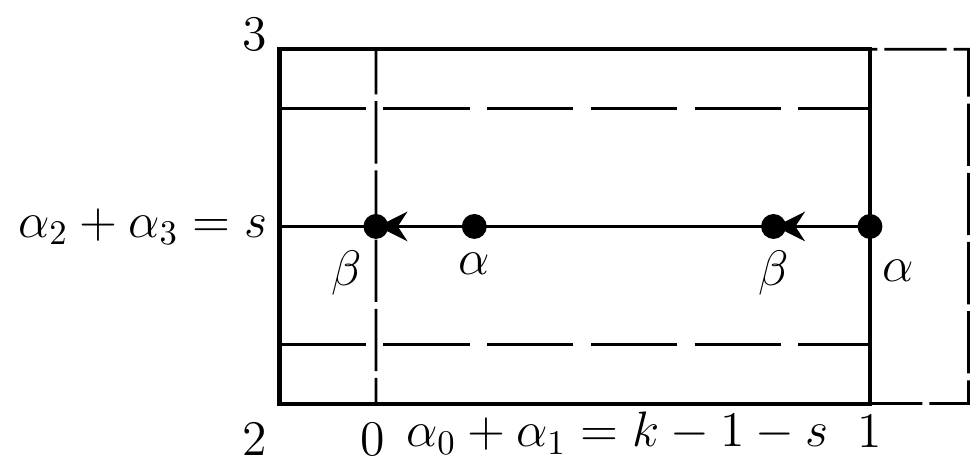}
\caption{On the cut plane $\alpha_2 + \alpha_3 = s \geq r_2^e + 1$.}
\label{fig:case3}
\end{center}
\end{figure}

\step 3 Now only the case $\{L(f_{01}, s), r_{2}^e + 1 \leq s  \leq k -1 - r_2^e \cap S_3(T, \bs r_3, k-1) \}$ is left which implies $\alpha_2 + \alpha_3 \geq r_{2}^e + 1$. See the middle cut plane in Fig. \ref{fig:bubblediv} (a).  

%Now only the set 
%\begin{align*}
%\{L(f_{01}, s), r_{2}^e + 1 \leq s \leq k -1 - r_2^e - 1 \} \cap S_3(T; \bs r_3, k-1) \\
%= \{L(f_{23}, s), r_{2}^e + 1 \leq s \leq k -1 - r_2^e - 1 \} \cap S_3(T; \bs r_3, k-1).
%\end{align*}
%is left which implies the bound 
%$$
%r_2^e+1 \leq \alpha_2+\alpha_3 \leq k -1 - r_2^e - 1.
%$$ 
%
%
%Now only the plane $L(f_{01}, s)$ or equivalently $L(f_{23}, s)$\mnote{ $L(f_{23}, k-1-s)$?} with distance $r_{2}^e + 1 \leq s \leq k -1 - r_2^e - 1$\mnote{ Why $s \leq k -1 - r_2^e - 1$?} are left. 
%\LC{Look at the figure. The closest one is $L(f_{01}, r_{2}^e)$ and the furthest one is near edge $f_{23}$ the plane $L(f_{23}, r_{2}^e) = L(f_{01}, k-1-r_{2}^e)$. They are symmetric. That's the range of $s$. Or as a set
%}

After Case 1, we can  assume $\alpha_2, \alpha_3 \geq r_2^f+1> r_2^f$. The node $\alpha$ can be on the plane $L(f_{123}, r_2^f)$ and thus lift in $\alpha_3$ direction will not push it into the interior. We choose to extend in $\alpha_0$ direction and set
 $\bs u = \lambda^{\alpha+\epsilon_{0}}\boldsymbol{t}_{1,0}/(\beta!\alpha_1)$. By Lemma \ref{lm:alphabeta}, $\div \bs u = p$. We verify $\alpha+\epsilon_{0}\in S_{3}(T, \bs r_2, k)$. 
% The other case is symbolically equivalent by switching $\alpha$ and $\beta$.

%On the plane $L(f_{23}, s)$, $\alpha_0 + \alpha_1 = s$. When $\alpha_1\geq \alpha_0$, \mnote{ When $\alpha_2, \alpha_3 \geq r_2^f+1$ and $\alpha_2+\alpha_3 \geq r_2^e+1$, Seems $\bs u = \lambda^{\alpha+\epsilon_{0}}\boldsymbol{t}_{1,0}/(\beta!\alpha_1)$ works for all cases, no need to divide into two cases $\alpha_1\geq \alpha_0$ and $\alpha_1< \alpha_0$.}. Otherwise we can extend in $\beta_1$ direction and choose  $\bs u = \lambda^{\beta -e^1}\boldsymbol{t}_{0,1}/(\alpha!\beta_1)$. \mnote{ $\div u \not= p$ for $\div(\lambda^{\beta -e^1}\boldsymbol{t}_{0,1})=(\beta_1-1)\lambda^{\beta -2e^1}-\beta_0\lambda^{\beta -\epsilon_{0}-e^1}$.}\mnote{ $\bs u = \lambda^{\beta+e^1}\boldsymbol{t}_{1,0}/(\alpha!\beta_0)?$}

\medskip

\noindent {\em Distance to vertices.} The trouble case is $\dist ( \alpha+\epsilon_{0}, \texttt{v}_0)$ as $\alpha_0+1$ is closer to $\texttt{v}_0$. We will use the fact that $\beta$ is closer to $\texttt{v}_0$, i.e., $\beta_0 = \alpha_0 +1$, and 
$$
\dist(\beta, \texttt{v}_0) > r_3^{\texttt{v}} \Longrightarrow k-1-\beta_0 > r_3^{\texttt{v}},
$$
to conclude the desired bound
$$
\dist ( \alpha+\epsilon_{0}, \texttt{v}_0) = \alpha_1+\alpha_2+\alpha_3 = k-\beta_0 > r_2^{\texttt{v}}.
$$

\smallskip
\noindent {\em Distance to edges.}  For edges $f_{ij}$ not containing $\texttt{v}_0$, no change on $\alpha_i, \alpha_j$, and thus 
$$
\dist(\alpha, f_{ij}) > r_3^e \Longrightarrow \alpha_i + \alpha_j \leq k -1 - r_3^e - 1 = k - r_2^e - 1 \Longrightarrow \dist(\alpha +\epsilon_{0}, f_{ij}) > r_2^e.
$$
Consider edge $f_{01}$. By $\dist(\alpha, f_{01})  = \alpha_2 + \alpha_3 \geq r_2^e+1$, we have 
$$
\dist(\alpha +\epsilon_{0}, f_{01}) = \alpha_2 + \alpha_3 \geq r_2^e + 1 > r_2^e. 
$$
Consider edge $f_{03}$. We use bound for $\beta$
$$
\dist (\beta, f_{03}) > r_3^e \Longrightarrow \beta_0 + \beta_3 = \alpha_0 + 1 + \alpha_3 \leq k-1- r_3^e - 1 = k-1- r_2^e,
$$
to conclude $\dist( \alpha+\epsilon_{0}, f_{03}) > r_2^e$. The  $\dist( \alpha+\epsilon_{0}, f_{02}) $ is similar. 

\smallskip
\noindent {\em Distance to faces.}  Again the distance to $f_0$ is easy as
$$
\dist (\alpha + \epsilon_{0}, f_0) = \alpha_0 + 1 > r_3^f + 1 = r_2^f.
$$ 
The distance to faces $f_2$ and $f_3$ is from the assumption $\alpha_2, \alpha_3 \geq r_2^f+1> r_2^f$. So only need to check $\dist (\alpha + \epsilon_{0}, f_1) = \alpha_1$.  Again we compare with $\beta$. By $\dist (\beta, f_1) = \beta_1 > r_3^f$, we have
$$
\dist (\alpha + \epsilon_{0}, f_1) = \alpha_1 = \beta_1+1 > r_3^f + 1 = r_2^f.
$$

In summary, when $\alpha_2+\alpha_3 \geq r_2^e+1$, we can choose a simple velocity field from $\alpha$ to $\beta$ and use the distance bound of $\beta$ to derive the desired distance bound of $\alpha$; see Fig. \ref{fig:case3}.
\end{proof}

%As a result of the bubble complex~\eqref{eq:femderhambubblecomplex3d}, DoF $\int_T \boldsymbol{v}\cdot\boldsymbol{q}\dx, \boldsymbol{q}\in \mathbb B_{k}^3(\bs r^f)$ %\eqref{eq:Crdivfemdof3d3}
% can be replaced by
%\begin{align*}
%\int_T \div \boldsymbol{v}\, q \dx, &\quad q\in \ \mathbb B_{k-1}(\bs r_3)/\mathbb R, T\in\mathcal T_h, \\
%\int_T \boldsymbol{v}\cdot\boldsymbol{q}\dx, &\quad \boldsymbol{q}\in \ \curl\mathbb B_{k}^3(\bs r_2), T\in\mathcal T_h.
%\end{align*}

%For $\boldsymbol{r}_2\geq-1$, define $\mathbb B_{k}^{\div}(T;\boldsymbol{r}_2)=\mathbb B_{k}^3(T;\boldsymbol{r}_2)\cap\boldsymbol{H}_0(\div,T)$. Clearly we have $\mathbb B_{k}^{\div}(T;\boldsymbol{r}_2)=\mathbb B_{k}^3(T;\boldsymbol{r}_2)$ when $r_2^f\geq0$.
When changing $r_2^f=0$ to $r_2^f=-1$, we add more tangential div bubble functions into the bubble space $\mathbb B_{k}^{\div}(T;\boldsymbol{r}_2)$ and thus the following $\div$ stability result is trivial.
\begin{corollary}\label{cor:divbubbleontosimple1}
Assume 
\begin{equation*}
k\geq2r_{2}^{\texttt{v}}+1, \quad r_{2}^{\texttt{v}}\geq2r_{2}^e, \quad r_{2}^e\geq 1, \quad r_{2}^f=-1, 
\end{equation*}
and $\bs r_3 = \bs r_2\ominus 1$. Assume $\dim\mathbb B_{k-1}(T; \boldsymbol{r}_3)\geq1$.
It holds that
$$
\div\mathbb B_{k}^{\div}(T;\boldsymbol{r}_2)=\mathbb B_{k-1}(T; \boldsymbol{r}_3)/\mathbb R.
$$
\end{corollary}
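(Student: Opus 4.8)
The plan is to deduce this from Lemma~\ref{lm:divbubbleontosimple1} by a one-line smoothness shift: passing from $r_2^f=0$ to $r_2^f=-1$ only enlarges the bubble space $\mathbb B_k^{\div}(T;\bs r_2)$ --- the two tangential components on each face are added back, as recorded in \eqref{eq:bubbledecomprfn1} --- while the target space $\mathbb B_{k-1}(T;\bs r_3)/\mathbb R$ is unchanged, so surjectivity of $\div$ is inherited. Thus no new construction of velocity fields is needed.

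Concretely, I would set $\bs s:=(\bs r_2)_+=(r_2^{\texttt{v}},r_2^e,0)^{\intercal}$ and check that $\bs s$ meets the hypotheses \eqref{eq:boundr2} of Lemma~\ref{lm:divbubbleontosimple1}: from $k\geq 2r_2^{\texttt{v}}+1$, $r_2^{\texttt{v}}\geq2r_2^e$ and $r_2^e\geq1$ the only nontrivial condition is $s^e\geq2s^f+1$, i.e.\ $r_2^e\geq1$. Next I would note that $\bs s-\bs 1=(r_2^{\texttt{v}}-1,r_2^e-1,-1)^{\intercal}$ equals $\bs r_2\ominus1=\bs r_3$, since $r_2^e\geq1$ and $r_2^{\texttt{v}}\geq2$ prevent any clipping at $-1$ in the first two slots; in particular $\dim\mathbb B_{k-1}(T;\bs s-\bs 1)=\dim\mathbb B_{k-1}(T;\bs r_3)\geq1$. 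Lemma~\ref{lm:divbubbleontosimple1} applied to $\bs s$ then gives $\div\mathbb B_k^3(T;\bs s)=\mathbb B_{k-1}(T;\bs r_3)/\mathbb R$. Finally, since $r_2^e\geq1>-1$ the Iverson brackets in \eqref{eq:bubbledecomprfn1} vanish and $\bs r_+$ there equals $\bs s$, so $\mathbb B_k^{\div}(T;\bs r_2)=\mathbb B_k^3(T;\bs s)\,\Oplus_{f\in\Delta_2(T)}\mathbb B_k^{\div}(f;\bs s)\supseteq\mathbb B_k^3(T;\bs s)$, whence $\mathbb B_{k-1}(T;\bs r_3)/\mathbb R=\div\mathbb B_k^3(T;\bs s)\subseteq\div\mathbb B_k^{\div}(T;\bs r_2)$.

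For the reverse inclusion $\div\mathbb B_k^{\div}(T;\bs r_2)\subseteq\mathbb B_{k-1}(T;\bs r_3)/\mathbb R$, which is taken for granted elsewhere in this section, I would argue directly: any $\bs u\in\mathbb B_k^{\div}(T;\bs r_2)\subseteq\mathbb B_k^3(T;\bs r_2)\cap\bs H_0(\div,T)$ satisfies $\int_T\div\bs u\dx=0$ by the divergence theorem, so $\div\bs u\in L_0^2(T)$; and by Lemma~\ref{lm:derivative} each component of $\bs u$ vanishes to order $r_2^{\texttt{v}}+1$ at vertices and $r_2^e+1$ on edges, so $\div\bs u$ vanishes to order $r_2^{\texttt{v}}=r_3^{\texttt{v}}+1$ at vertices and $r_2^e=r_3^e+1$ on edges, which by the characterization of $\mathbb B_{k-1}(T;\bs r_3)$ coming from Theorem~\ref{thm:Cr3dfemunisolvence} (the face DoFs being vacuous as $r_3^f=-1$) places $\div\bs u$ in $\mathbb B_{k-1}(T;\bs r_3)$. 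Equality follows. The only thing requiring care --- and the reason the corollary is essentially free once the lemma is in hand --- is the parameter bookkeeping: verifying that $\bs s$ lands in the regime \eqref{eq:boundr2} covered by Lemma~\ref{lm:divbubbleontosimple1} and that $\bs s-\bs 1=\bs r_3$. There is no genuine obstacle beyond that.
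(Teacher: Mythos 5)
Your proposal is correct and follows essentially the same route as the paper: the paper's proof likewise notes $\mathbb B_{k}^3(T;(\bs r_2)_+)\subset\mathbb B_{k}^{\div}(T;\bs r_2)$, applies Lemma~\ref{lm:divbubbleontosimple1} to $(\bs r_2)_+$, and sandwiches $\div\mathbb B_{k}^{\div}(T;\bs r_2)$ between $\div\mathbb B_{k}^3(T;(\bs r_2)_+)$ and $\mathbb B_{k-1}(T;\bs r_3)/\mathbb R$. Your extra bookkeeping (verifying $(\bs r_2)_+$ satisfies \eqref{eq:boundr2}, that $(\bs r_2)_+-\bs 1=\bs r_3$ without clipping, and the explicit check of the reverse inclusion) is all sound and merely makes explicit what the paper leaves implicit.
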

\begin{proof}
%In view of the space decomposition in Corollary \ref{cor:spacedec}
Noting that $\mathbb B_{k}^3(T; (\bs r_2)_+)\subset\mathbb B_{k}^{\div}(T;\boldsymbol{r}_2)$, we have
$$
\div\mathbb B_{k}^3(T;(\bs r_2)_+)\subseteq\div\mathbb B_{k}^{\div}(T;\boldsymbol{r}_2)\subseteq\mathbb B_{k-1}(T; \boldsymbol{r}_3)/\mathbb R.
$$
By Lemma~\ref{lm:divbubbleontosimple1}, $\div\mathbb B_{k}^3(T;(\bs r_2)_+)=\mathbb B_{k-1}(T; \boldsymbol{r}_3)/\mathbb R$, which ends the proof.
\end{proof}

Integrate results in Lemmas~\ref{lm:divbubbleontor2em1}-\ref{lm:divbubbleontor2fm1}, Lemma~\ref{lm:divbubbleontosimple1} and Corollary~\ref{cor:divbubbleontosimple1} into the following theorem.
\begin{theorem}\label{thm:divbubbleonto}
Assume $k\geq\max\{2r_{2}^{\texttt{v}}+1,1\},$
\begin{equation}\label{eq:boundr2fordivbubble}
\begin{cases}
 r_2^f\geq 0, & \,r_{2}^e\geq2r_{2}^f+1, \quad r_{2}^{\texttt{v}}\geq 2r_{2}^e,\\
 r_2^f = -1, & 
\begin{cases}
 r_2^e \geq 1, & r_{2}^{\texttt{v}}\geq 2r_{2}^e,\\
 r_2^e \in \{0, -1\}, & r_{2}^{\texttt{v}}\geq 2r_{2}^e + 1,
\end{cases}
\end{cases}
%r_{2}^{\texttt{v}}\geq\max\{2r_{2}^e, \ceil{(3r_{2}^e+1)/2}\}, \quad r_{2}^e\geq2r_{2}^f+1, \quad r_{2}^f\geq-1, 
\end{equation}
and $\bs r_3 = \bs r_2\ominus 1$. Assume $\dim\mathbb B_{k-1}(T; \boldsymbol{r}_3)\geq1$.
It holds that
\begin{equation*}%\label{eq:divbubble}
\div\mathbb B_{k}^{\div}(T;\boldsymbol{r}_2)=\mathbb B_{k-1}(T; \boldsymbol{r}_3)/\mathbb R.
\end{equation*}
\end{theorem}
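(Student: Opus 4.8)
The plan is to obtain this theorem as a packaging of the four special cases already proved, via a case distinction on the pair $(r_2^f, r_2^e)$ dictated by \eqref{eq:boundr2fordivbubble}. I would first note that the inclusion $\div\mathbb B_{k}^{\div}(T;\boldsymbol{r}_2)\subseteq\mathbb B_{k-1}(T; \boldsymbol{r}_3)/\mathbb R$ is the routine direction --- for $\bs u\in\mathbb B_k^{\div}(T;\bs r_2)$ one has $\div\bs u\in\mathbb P_{k-1}(T)$, integration by parts with $\bs n\cdot\bs u|_{\partial T}=0$ gives $\int_T\div\bs u\dx=0$, and differentiating once the vanishing-order conditions encoded in $\bs r_2$ places $\div\bs u$ in $\mathbb B_{k-1}(T;\bs r_3)$ with $\bs r_3=\bs r_2\ominus1$, via the geometric decomposition \eqref{eq:PrSdec3d} --- so that the only content is the surjectivity $\supseteq$, which is exactly what the cited results supply.

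Then I would split into the four regimes of \eqref{eq:boundr2fordivbubble} and check that in each the hypotheses of the matching earlier result hold. If $r_2^f\geq0$: the conditions $r_2^e\geq2r_2^f+1$ and $r_2^{\texttt{v}}\geq2r_2^e$ are precisely \eqref{eq:boundr2}; since every component of $\bs r_2$ is nonnegative, $\bs r_2\ominus1=\bs r_2-1$ and $\mathbb B_k^{\div}(T;\bs r_2)=\mathbb B_k^3(T;\bs r_2)$, so Lemma~\ref{lm:divbubbleontosimple1} applies directly. If $r_2^f=-1$ and $r_2^e\geq1$: the conditions $r_2^{\texttt{v}}\geq2r_2^e$ and $k\geq2r_2^{\texttt{v}}+1$ are the hypotheses of Corollary~\ref{cor:divbubbleontosimple1}. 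If $r_2^f=-1$ and $r_2^e=0$: the side condition reads $r_2^{\texttt{v}}\geq1$ with $k\geq2r_2^{\texttt{v}}+1$, which is Lemma~\ref{lm:divbubbleontor2fm1}. If $r_2^f=r_2^e=-1$: the remaining constraint is $r_2^{\texttt{v}}\geq-1$ with $k\geq\max\{2r_2^{\texttt{v}}+1,1\}$, which is Lemma~\ref{lm:divbubbleontor2em1}. Since $r_2^f\in\{-1,0,1,\ldots\}$ and, when $r_2^f=-1$, $r_2^e\in\{-1,0,1,\ldots\}$, the four regimes are exhaustive and pairwise disjoint, so the equality holds in all cases.

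I do not expect a real obstacle here: the substantive work --- for each pressure basis function $p(\alpha,\beta)=\lambda^{\alpha}/\alpha!-\lambda^{\beta}/\beta!$ with $\dist(\alpha,\beta)=1$, constructing a lifting $\bs u$ with $\div\bs u=p$ that stays inside $\mathbb B_k^{\div}(T;\bs r_2)$, which is what forces the sharper constraints such as $r_2^e\geq2r_2^f+1$ --- has already been carried out in Lemmas~\ref{lm:divbubbleontor2em1}--\ref{lm:divbubbleontor2fm1}, Lemma~\ref{lm:divbubbleontosimple1} and Corollary~\ref{cor:divbubbleontosimple1}. The one piece of bookkeeping worth stating explicitly is the compatibility of the smoothness relation assumed in those results (namely $\bs r_3=\bs r_2-1$ in Lemma~\ref{lm:divbubbleontosimple1}, and $\bs r_3=\bs r_2\ominus1$ in the other three) with the single hypothesis $\bs r_3=\bs r_2\ominus1$ used here, which is immediate because in the regime $r_2^f\geq0$ the two operations agree componentwise.
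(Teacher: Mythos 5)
Your proposal is correct and matches the paper's own treatment: the paper offers no separate proof but simply states that the theorem integrates Lemmas~\ref{lm:divbubbleontor2em1}--\ref{lm:divbubbleontor2fm1}, Lemma~\ref{lm:divbubbleontosimple1} and Corollary~\ref{cor:divbubbleontosimple1}, which is exactly your case split on $(r_2^f,r_2^e)$ according to \eqref{eq:boundr2fordivbubble}. Your added remarks on the easy inclusion $\subseteq$ and on the compatibility of $\bs r_2\ominus 1$ with $\bs r_2-1$ when $r_2^f\geq 0$ are accurate bookkeeping that the paper leaves implicit.
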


\subsection{Div stability with equality constraint}
For simplicity, hereafter we will omit the triangulation $\mathcal T_h$ in the notation of global finite element spaces. 
For example, $\mathbb V^{\div}_{k}(\mathcal T_h; \boldsymbol{r}_2)$ will be abbreviated as $\mathbb V^{\div}_{k}(\boldsymbol{r}_2)$.
% For example, $\mathbb V^{\curl}_{k+1}(\mathcal T_h; \boldsymbol{r}_1)$ will be abbreviated as $\mathbb V^{\curl}_{k+1}(\boldsymbol{r}_1)$.

\begin{lemma}
Let $(\bs r_2, \bs r_3), \bs r_3 = \bs r_2\ominus 1,$ and $\bs r_2$ satisfy \eqref{eq:boundr2fordivbubble}. Assume $k$ is a large enough integer satisfying $k\geq\max\{2r_{2}^{\texttt{v}}+1,1\},$ $\dim\mathbb B_{k-1}(T; \boldsymbol{r}_3)\geq1$ for all $T\in \mathcal T_h$, and $\dim\mathbb B_{k}(f;\begin{pmatrix}
r_2^{\texttt{v}} \\
r_2^e
\end{pmatrix})\geq1$ for all $f\in \Delta_{2}(\mathcal T_h)$.
It holds that
\begin{equation}\label{eq:divonto3dsimple}
\div\mathbb V^{\div}_{k}(\boldsymbol{r}_2)=\mathbb V^{L^2}_{k-1}(\boldsymbol{r}_3).   
\end{equation}
\end{lemma}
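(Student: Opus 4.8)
The plan is to establish the two inclusions in \eqref{eq:divonto3dsimple} separately: $\div\mathbb V^{\div}_{k}(\boldsymbol r_2)\subseteq\mathbb V^{L^2}_{k-1}(\boldsymbol r_3)$ is routine, while surjectivity is reduced, through the geometric decomposition of the scalar finite element spaces, to the bubble div stability already proved in Theorem~\ref{thm:divbubbleonto}. For the first inclusion, if $\boldsymbol v\in\mathbb V^{\div}_{k}(\boldsymbol r_2)$ then $\div\boldsymbol v$ is elementwise a polynomial in $\mathbb P_{k-1}$ and belongs to $L^2(\Omega)$; the vertex, edge and face degrees of freedom of $\div\boldsymbol v$ in the sense of Theorem~\ref{thm:Cr3dfemunisolvence} (with smoothness $\boldsymbol r_3$ and degree $k-1$) are determined by the vertex, edge and face degrees of freedom \eqref{eq:divdof0}--\eqref{eq:divdof2} of $\boldsymbol v$ and hence single-valued; and since one differentiation drops the smoothness index by one at every sub-simplex, the resulting smoothness is exactly $\boldsymbol r_3=\boldsymbol r_2\ominus1$, so $\div\boldsymbol v\in\mathbb V^{L^2}_{k-1}(\boldsymbol r_3)$.

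For surjectivity, fix $q\in\mathbb V^{L^2}_{k-1}(\boldsymbol r_3)$ and use the geometric decomposition of $\mathbb V^{L^2}_{k-1}(\boldsymbol r_3)$ with the interior degree of freedom written in the equivalent form $\int_Tq\,q'$, $q'\in\mathbb P_0(T)\oplus\mathbb B_{k-1}(T;\boldsymbol r_3)/\mathbb R$, so that the element mean $\int_Tq$ is itself a degree of freedom. I will construct $\boldsymbol v=\boldsymbol v_1+\boldsymbol v_2+\boldsymbol v_3\in\mathbb V^{\div}_{k}(\boldsymbol r_2)$ with $\div\boldsymbol v=q$ in three stages. \emph{Stage 1 (skeleton data).} Going from vertices to edges to faces, one uses that $\div$ maps the vector-valued jet data of order $r_2^{\texttt{v}}$ at a vertex onto the scalar jet data of order $r_3^{\texttt{v}}=r_2^{\texttt{v}}\ominus1$, and the analogous surjectivity on each edge and each face after a tangential--normal splitting, all made possible by $\boldsymbol r_3=\boldsymbol r_2\ominus1$; hence the vertex, edge and face degrees of freedom of a field $\boldsymbol v_1\in\mathbb V^{\div}_k(\boldsymbol r_2)$ can be chosen, level by level, so that $q_1:=q-\div\boldsymbol v_1$ has vanishing vertex, edge and face degrees of freedom, i.e.\ $q_1|_T\in\mathbb B_{k-1}(T;\boldsymbol r_3)$ for every $T\in\mathcal T_h$. \emph{Stage 2 (element means).} A field $\boldsymbol v_2\in\mathbb V^{\div}_k(\boldsymbol r_2)$ all of whose vertex and edge degrees of freedom vanish has $\boldsymbol v_2\cdot\boldsymbol n|_f\in\mathbb B_k(f;\binom{r_2^{\texttt{v}}}{r_2^e})$ on each face $f$, and since $\dim\mathbb B_k(f;\binom{r_2^{\texttt{v}}}{r_2^e})\geq1$ and every Bernstein bubble has positive integral over $f$, the face flux $\int_f\boldsymbol v_2\cdot\boldsymbol n$ can be assigned an arbitrary value through a face degree of freedom for the normal component, the remaining degrees of freedom of $\boldsymbol v_2$ being set to zero. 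Because $\Omega$ is homeomorphic to a ball, the boundary operator from $3$-chains to $2$-chains of $\mathcal T_h$ is injective, so the face fluxes can be chosen with $\int_{\partial T}\boldsymbol v_2\cdot\boldsymbol n=\int_Tq_1$ for all $T\in\mathcal T_h$; this yields $q_2:=q_1-\div\boldsymbol v_2$ with $q_2|_T\in\mathbb B_{k-1}(T;\boldsymbol r_3)$ and $\int_Tq_2=0$, i.e.\ $q_2|_T\in\mathbb B_{k-1}(T;\boldsymbol r_3)/\mathbb R$. \emph{Stage 3 (bubbles).} Theorem~\ref{thm:divbubbleonto} gives, on each $T$, a field $\boldsymbol v_T\in\mathbb B_k^{\div}(T;\boldsymbol r_2)\subset\boldsymbol H_0(\div,T)$ with $\div\boldsymbol v_T=q_2|_T$; setting $\boldsymbol v_3:=\sum_{T\in\mathcal T_h}\boldsymbol v_T$, extended by zero, we get $\boldsymbol v_3\in\mathbb V^{\div}_k(\boldsymbol r_2)$ and $\div(\boldsymbol v_1+\boldsymbol v_2+\boldsymbol v_3)=q$, completing the proof.

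I expect the main obstacle within this lemma to be Stage~1 and the flux part of Stage~2: one has to verify that the vertex, edge and face degrees of freedom of $\boldsymbol v$ may be prescribed independently, level by level, so as to realize \emph{any} target skeleton data of $\div\boldsymbol v$ together with \emph{any} target element means, and that these local surjectivity statements hold uniformly in the regime $\boldsymbol r_3=\boldsymbol r_2\ominus1$ under the structural hypotheses \eqref{eq:boundr2fordivbubble}, with the cases $r_2^f\geq0$ and $r_2^f=-1$ requiring slightly different bookkeeping of the face component. The injectivity of the chain boundary $\partial_3$ on a ball-like domain is elementary but is exactly what rules out a global obstruction to matching the element means; and once the reduction to elementwise mean-zero bubbles in $\mathbb B_{k-1}(T;\boldsymbol r_3)/\mathbb R$ is achieved, the genuine analytic input is the bubble div stability $\div\mathbb B_k^{\div}(T;\boldsymbol r_2)=\mathbb B_{k-1}(T;\boldsymbol r_3)/\mathbb R$ of Theorem~\ref{thm:divbubbleonto}, whose hypotheses follow from those assumed here.
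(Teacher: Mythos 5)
Your proposal is correct and follows the same overall strategy as the paper: prescribe the DoFs of $\boldsymbol v$ level by level (vertices, edges, faces, interior) so that the skeleton DoFs of $\div\boldsymbol v-q$ vanish, reduce the remainder to an elementwise mean-zero bubble, and invoke the bubble div stability of Theorem~\ref{thm:divbubbleonto}. The one place where you genuinely diverge is the matching of the element means: the paper takes a continuous right inverse $\widetilde{\boldsymbol v}\in\boldsymbol H^{r_3^f+2}(\Omega;\mathbb R^3)$ of $\div$ applied to $p_h$ and assigns $\int_f\boldsymbol v\cdot\boldsymbol n\,q\,{\rm d}S=\int_f\widetilde{\boldsymbol v}\cdot\boldsymbol n\,q\,{\rm d}S$ for $q\in\mathbb P_0(f)\oplus\mathbb B_k(f;\cdot)/\mathbb R$, which immediately gives $\int_T(\div\boldsymbol v-p_h)\,{\rm d}x=0$; you instead solve the face-flux system $\sum_{f\subset\partial T}\sigma_{T,f}c_f=\int_Tq_1$ using the injectivity of the simplicial boundary operator $\partial_3$ (equivalently, surjectivity of its transpose), which holds for any connected triangulation of a domain with boundary. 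Your route is more combinatorial and avoids appealing to the regularity of the continuous-level right inverse, at the cost of having to check that the flux field $\boldsymbol v_2$ does not disturb the vanishing skeleton DoFs of the remainder (which does hold: with all other DoFs zero one gets $\Pi_f\boldsymbol v_2|_f=0$ and $\partial_n^j\boldsymbol v_2|_f=0$ for $1\leq j\leq r_2^f$, so the vertex, edge and face DoFs of $\div\boldsymbol v_2$ up to order $\boldsymbol r_3=\boldsymbol r_2\ominus1$ all vanish). The paper's version is shorter and reuses the exactness of the continuous complex; both are valid, and both ultimately rest on the same analytic input, the bubble stability $\div\mathbb B_k^{\div}(T;\boldsymbol r_2)=\mathbb B_{k-1}(T;\boldsymbol r_3)/\mathbb R$.
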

\begin{proof}
It is obvious that $\div\mathbb V^{\div}_{k}(\boldsymbol{r}_2)\subseteq\mathbb V^{L^2}_{k-1}(\boldsymbol{r}_3)$. For $p_h\in \mathbb V^{L^2}_{k-1}(\boldsymbol{r}_3)\subset H^{r_{3}^f+1}(\Omega)$, we are going to construct $\boldsymbol{v}=(v_1, v_2, v_3)^{\intercal}\in\mathbb V^{\div}_{k}(\boldsymbol{r}_2)$ s.t. $\div \bs v = p_h$. To motivate the construction, consider 1-D case. Given values $p_h^{(j)}(x), j=0,\ldots, m$, to construct $u$ satisfying $u'=p_h$, we simply set $u^{(j+1)}(x) = p_h^{(j)}(x), j=0,\ldots, m$ and $u(x) = 0$. 

For vector function $\bs v$,  on each lower sub-simplex, we will choose a different frame and pick up one direction to assign the derivative relation. 

\noindent$\bullet\,$ For $\texttt{v}\in \Delta_{0}(\mathcal T_h)$, we use the default Cartesian coordinate and write $\bs v = (v_1, v_2, v_3)^{\intercal}$. Set 
\begin{align}
%\boldsymbol{v}(\texttt{v})=\boldsymbol{0}, & \\
%\nabla^jv_i(\texttt{v})=0, & \quad j=1,\ldots, r_{2}^{\texttt{v}}; i=2,3,\\
\nabla^j(\partial_1v_1)(\texttt{v})=\nabla^jp_h(\texttt{v}), & \quad j=0,\ldots, r_{3}^{\texttt{v}},  \label{eq:dof0}
%\\
%\nabla^j(\partial_iv_1)(\texttt{v})=0, & \quad j=0,\ldots, r_{3}^{\texttt{v}}; i=2,3. 
\end{align}
and all other DoFs are zero.
Then 
\begin{equation}\label{eq:202110131}
\nabla^j(\div\boldsymbol{v})(\texttt{v})= \nabla^j(\partial_1v_1)(\texttt{v}) = \nabla^jp_h(\texttt{v}), \quad j=0,\ldots, r_{3}^{\texttt{v}}.   
\end{equation}

\smallskip

\noindent$\bullet\,$ 
For $e\in \Delta_{1}(\mathcal T_h)$, we use the frame $(\bs t, \bs n_1, \bs n_2)$, where $\bs t$ is a tangential vector of $e$ and $\bs n_1, \bs n_2$ are two linearly independent normal vectors of $e$. Set edge DoFs of $\bs v\cdot \bs t$ and $\bs v\cdot \bs n_2$ to zero. Together with DoFs \eqref{eq:dof0} on vertices, $\bs v\cdot \bs t|_e$ is determined. Then set the DoF for $\bs v\cdot \bs n_1$ by
\begin{align*}
%\int_e \frac{\partial^j(\boldsymbol{v}\cdot\boldsymbol{t})}{\partial n_1^i\partial n_2^{j-i}}\ q \dd s&=0,  \quad q\in \mathbb P_{k- 2(r_{2}^{\texttt{v}}+1)+j} (e), 0\leq i\leq j\leq r_{2}^e, \\
%\int_e \frac{\partial^j(\boldsymbol{v}\cdot\boldsymbol{n}_2)}{\partial n_1^i\partial n_2^{j-i}}\ q \dd s&=0,  \quad q\in \mathbb P_{k- 2(r_{2}^{\texttt{v}}+1)+j} (e), 0\leq i\leq j\leq r_{2}^e, \\
\int_e \boldsymbol{v}\cdot\boldsymbol{n}_1\ q \dd s&=0,  \quad q\in \mathbb P_{k- 2(r_{2}^{\texttt{v}}+1)} (e), \quad\textrm{ if }\; r_2^e\geq0,\\
\int_e \frac{\partial^{j+1}(\boldsymbol{v}\cdot\boldsymbol{n}_1)}{\partial n_1\partial n_1^{i}\partial n_2^{j-i}}\ q \dd s&= \int_e \frac{\partial^jp_h}{\partial n_1^i\partial n_2^{j-i}}\ q \dd s 
- \int_e \frac{\partial^{j+1}(\boldsymbol{v}\cdot\boldsymbol{t})}{\partial t\, \partial n_1^i\partial n_2^{j-i}}\ q\dd s,  \\
& \quad\;\; q\in \mathbb P_{k- 2(r_{2}^{\texttt{v}}+1)+j+1} (e), 0\leq i\leq j\leq r_{3}^e.
\end{align*}
%\LC{ How to compute $\frac{\partial^{j+1}(\boldsymbol{v}\cdot\boldsymbol{n}_2)}{\partial n_2\partial n_1^i\partial n_2^{j-i}}$? Is it zero?}\mnote{ Yes. $\displaystyle\int_e \frac{\partial^{j+1}(\boldsymbol{v}\cdot\boldsymbol{n}_2)}{\partial n_2\partial n_1^i\partial n_2^{j-i}}\ q \dd s=0$}
Then by $\div\boldsymbol{v}=\partial_{t}(\boldsymbol{v}\cdot\boldsymbol{t})+\partial_{n_1}(\boldsymbol{v}\cdot\boldsymbol{n}_1)+\partial_{n_2}(\boldsymbol{v}\cdot\boldsymbol{n}_2)$ and the vanishing edge DoFs for $\bs v\cdot \bs n_2$, we have
\begin{equation}\label{eq:202110132}
\int_e \frac{\partial^j(\div\boldsymbol{v}-p_h)}{\partial n_1^i\partial n_2^{j-i}}\ q \dd s=0,  \quad q\in \mathbb P_{k-1- 2(r_{3}^{\texttt{v}}+1)+j} (e), 0\leq i\leq j\leq r_{3}^e.
\end{equation}

\smallskip

\noindent$\bullet\,$  For $f\in \Delta_{2}(\mathcal T_h)$, we choose two tangential vectors $\bs t_1,\bs t_2$ and a normal vector $\bs n_f$ as the local frame. Set the face DoFs for $\boldsymbol{v}\cdot\boldsymbol{t}_i, i=1,2$ as zero. Together with edge and vertice DoFs, the tangential component $\Pi_f \partial_n^j\bs v$, for $j=0,\ldots,r_2^f$, is determined and thus $\partial_n^j(\div_f \bs v)|_f$ is well-defined. Then we set 
\begin{align}
%\int_f \partial_n^j(\boldsymbol{v}\cdot\boldsymbol{t}_i)\ q \dd s&=0, \quad  q\in \mathbb B_{k-j} (f;\begin{pmatrix}
%r_2^{\texttt{v}} \notag\\
%r_2^e
%\end{pmatrix}-j), \, j=0,\ldots, r_{2}^f, \, i=1,2,  \notag\\
\int_f \partial_{n}^{j}\partial_n(\boldsymbol{v}\cdot\boldsymbol{n})\ q \dd S&=\int_f\partial_{n}^j(p_h-\div_f\boldsymbol{v})\ q \dd S,   \notag\\
& \quad\qquad q\in \mathbb B_{k-1-j}(f; \begin{pmatrix}
r_3^{\texttt{v}} \\
r_3^e
\end{pmatrix}-j), j = 0,\ldots, r_{3}^f.\notag
\end{align}
%\LC{Need to prove $\Pi_f \bs v |_f$ is determined by previous DoFs so that $\div_f \bs v|_f$ is well defined.}
%Thanks to the proof of Theorem~\ref{thm:Cr3dfemunisolvence}, $\partial_n^j(\boldsymbol{v}\cdot\boldsymbol{t}_1)|_f$ and $\partial_n^j(\boldsymbol{v}\cdot\boldsymbol{t}_2)|_f$ are determined by previous DoFs for $0\leq j\leq r_3^f$ so that $\partial_n^j(\div_f \bs v)|_f$ is well-defined.
Then by $\div\boldsymbol{v}=\partial_{n}(\boldsymbol{v}\cdot\boldsymbol{n})+\div_f\boldsymbol{v}$, we have
%\\
\begin{equation}\label{eq:202110133}
\int_f \partial_{n}^{j}(\div\boldsymbol{v}-p_h)\ q \dd S=0,  \quad q\in \mathbb B_{k-1-j}(f; \begin{pmatrix}
r_3^{\texttt{v}} \\
r_3^e
\end{pmatrix}-j), j = 0,\ldots, r_{3}^f.
\end{equation}
Notice that DoFs $\int_f \bs v\cdot \bs n \dd S$ remains open as we assume $\dim\mathbb B_{k}(f;\begin{pmatrix}
r_2^{\texttt{v}} \\
r_2^e
\end{pmatrix})\geq1$. Recall that there exists a $\widetilde{\boldsymbol{v}}\in \boldsymbol{H}^{r_{3}^f+2}(\Omega;\mathbb R^3)$ such that $\div\widetilde{\boldsymbol{v}}=p_h$. Then set
\begin{equation}
\int_f (\boldsymbol{v}\cdot\boldsymbol{n})\ q \dd S =\int_f (\widetilde{\boldsymbol{v}}\cdot\boldsymbol{n})\ q \dd S, \quad  q\in \mathbb P_0(f)\oplus\mathbb B_{k}(f;\begin{pmatrix}
r_2^{\texttt{v}} \\
r_2^e
\end{pmatrix})/\mathbb R. \label{eq:facedof}
\end{equation}
By $\div\widetilde{\boldsymbol{v}}=p_h$ and \eqref{eq:facedof},
\begin{equation}\label{eq:202110134-1}
\int_T(\div\boldsymbol{v}-p_h) \dx=\int_T\div(\boldsymbol{v}-\widetilde{\boldsymbol{v}}) \dx=0.
\end{equation}
%Choose  as follows. 

\noindent$\bullet\,$ 
For $T\in \mathcal T_h$, we split the 
\begin{align*}
\int_T \div\boldsymbol{v}\, q \dx&=\int_T p_h\, q \dx, \quad q\in \mathbb B_{k-1}(\bs r_3)/\mathbb R,
\\
\int_T \boldsymbol{v}\cdot\boldsymbol{q} \dx&=0, \quad \boldsymbol{q}\in \mathbb B^{\div}_{k}(\boldsymbol{r}_2)\cap \ker(\div).
%\curl \mathbb B^{3}_{k+1}(\boldsymbol{r}_1).
\end{align*}
%where $\div^{\dag}$ is the pseudo  inverse of $\div: \mathbb B_{k}^{\div}(\boldsymbol{r}_2)\to\mathbb B_{k-1}(\boldsymbol{r}_3)/\mathbb R$ as 
By Theorem~\ref{thm:divbubbleonto}, the mapping div is surjective between bubble spaces. 
%\mnote{Without dimension count, there is no complex yet.}
Together with \eqref{eq:202110134-1},
\begin{equation}\label{eq:202110134}
\int_T(\div\boldsymbol{v}-p_h)\ q \dx=0,  \quad q\in \mathbb B_{k-1}(\boldsymbol{r}_3).
\end{equation}
Finally combining \eqref{eq:202110131}-\eqref{eq:202110133} and \eqref{eq:202110134} and the uni-solvence for $\mathbb V^{L^2}_{k-1}(\boldsymbol{r}_3)$, we conclude $\div\boldsymbol{v}=p_h$.
\end{proof}

\begin{example}\rm
We choose 
%\begin{align*}
%& r_{2}^{\texttt{v}}\geq 2, \quad r_{2}^e\geq 1, \quad r_2^f = 0 \\
%& r_{3}^{\texttt{v}}\geq 1, \quad r_{3}^e\geq 0,\quad r_3^f = -1
%\end{align*}
%and the polynomial degree $k\geq 2 r_{2}^{\texttt{v}} + 1$
$\boldsymbol{r}_2=(2,1,0)^{\intercal}, \boldsymbol{r}_3=(1,0,-1)^{\intercal}$
and polynomial degree $k\geq 6$, to get a stable Stokes-pair,
$$
(\mathbb V^{\div}_{k}(
\begin{pmatrix}
 2\\
 1\\
 0
\end{pmatrix}
), \mathbb V^{L^2}_{k-1}(\begin{pmatrix}
 1\\
 0\\
 -1
\end{pmatrix}
)).
$$ 
The pressure element is discontinuous on faces but continuous on edges and differentiable at vertices. Notice that the lower bound on $k$ is increased from $2r_2^{\texttt{v}}+ 1 = 5$ to $6$ to include a face bubble DoF so that $\div \bs u$ will contain piecewise constant. This is the Stokes element constructed by Neilan in~\cite{Neilan2015}. 
%
%is a . 
%The stable Stokes-pair $
%(\mathbb V^{\div}_{k}(\boldsymbol{r}_2), \mathbb V^{L^2}_{k-1}(\boldsymbol{r_3}))
%$ for  and $k\geq 6$ can be found in
\end{example}

\begin{example}\rm
We choose 
$\boldsymbol{r}_2=(0, -1,-1)^{\intercal}, \boldsymbol{r}_3=(-1,-1,-1)^{\intercal}$
and polynomial degree $k\geq 2$, to get a stable pair for mixed Poisson problem,
$$
(\mathbb V^{\div}_{k}(
\begin{pmatrix}
 0\\
 -1\\
 -1
\end{pmatrix}
), \mathbb V^{L^2}_{k-1}(\begin{pmatrix}
 -1\\
 -1\\
 -1
\end{pmatrix}
)).
$$ 
The $H(\div)$-conforming element $\mathbb V^{\div}_{k}(
\begin{pmatrix}
 0\\
 -1\\
 -1
\end{pmatrix}
)$ is the so-called Stenberg's element~~\cite{Stenberg2010}. The lower bound  $k\geq 2$ is to include a face bubble DoF so that $\div \bs u$ will contain piecewise constant. For BDM element, i.e. $\boldsymbol{r}_2=\bs r_3 = \bs{-1}$, $k\geq 1$ is enough to ensure the div stability. 
\end{example}

\subsection{Div stability with inequality constraint}
We consider more general cases with an inequality constraint on the smoothness parameters $\bs r_2$ and $\bs r_3$:
\begin{equation*}%\label{eq:increaser2}
\bs r_2\geq -1, \quad \quad \boldsymbol{r}_3\geq \bs r_2\ominus 1.
\end{equation*}
To define the finite element spaces, we further require 
\begin{align*}
&k\geq\max\{2r_{2}^{\texttt{v}}+1,1\}, \quad \bs r_{2} \text{ satisfies } \eqref{eq:boundr2fordivbubble}, \\
&k-1 \geq 2r_{3}^{\texttt{v}}+1, \quad r_3^{\texttt{v}} \geq 2 r_3^e,  \quad r_3^e\geq 2r_3^f. & 
\end{align*}

As $\boldsymbol{r}_3\geq \bs r_2\ominus 1,$ we have the relation $\mathbb V^{L^2}_{k-1}(\bs r_3)\subseteq \mathbb V^{L^2}_{k-1}(\bs r_2\ominus 1)$. By the div stability~\eqref{eq:divonto3dsimple} established for the larger space $\mathbb V^{L^2}_{k-1}(\bs r_2\ominus 1)$, we can define a subspace 
$$
\mathbb V^{\div}_{k}(\bs r_2, \bs r_3) \subseteq \mathbb V^{\div}_{k}(\bs r_2), \quad s.t. \,  \div \mathbb V^{\div}_{k}(\bs r_2, \bs r_3) = \mathbb V^{L^2}_{k-1}(\bs r_3). 
$$
Such subspace $\mathbb V^{\div}_{k}(\bs r_2, \bs r_3)$ always exists. The difficulty is to give a finite element definition in terms of local DoFs. 

We use BDM element $\bs r_2 = -1$ as an example to explain the change of DoFs. Due to the div stability of the bubble space, we can write DoFs for BDM element as
\begin{align*}
% \label{eq:BDM1}
\int_f \boldsymbol  v\cdot\boldsymbol n \, q \dd S,& \quad q\in  \mathbb P_k(f) = \mathbb B_{k}(f; \begin{pmatrix}
	r_2^{\texttt{v}} \\
	r_2^e
	\end{pmatrix}), f\in \Delta_2(T), \\
%
% \label{eq:BDM2}
\int_T \div \boldsymbol v \, q \dx, &\quad \boldsymbol q\in \mathbb P_{k-1}(T)/\mathbb R = \mathbb B_{k-1}(T; \bs r_2)/\mathbb R,\\
%
% \label{eq:BDM3}
\int_T \boldsymbol{v}\cdot\boldsymbol{q} \dx, &\quad \boldsymbol{q}\in \mathbb B^{\div}_{k}(T;\bs r_2)\cap \ker(\div).
\end{align*}
The range of div operator is the discontinuous $\mathbb P_{k-1}$ element. 
Now choose $\bs r_3\geq -1$, we increase smoothness of $\div \bs v$ on vertices, edges, and faces, and in turn shrink the interior moments:
\begin{align}
\label{eq:mBDMv}
\nabla^j\div\boldsymbol{v}(\texttt{v}),  & \quad j= 0,\ldots, r_3^{\texttt{v}}, \\
\label{eq:mBDMe}
\int_e \frac{\partial^{j}(\div\boldsymbol{v})}{\partial n_1^{i}\partial n_2^{j-i}} \, q \dd s, &\quad q\in \mathbb P_{k-1 - 2(r_3^{\texttt{v}} +1)+j}(e), 0\leq i\leq j\leq r_3^{e}, \\
\label{eq:mBDM1}
\int_f \boldsymbol  v\cdot\boldsymbol n \, q \dd S,& \quad q\in  \mathbb P_k(f) = \mathbb B_{k}(f; \begin{pmatrix}
	r_2^{\texttt{v}} \\
	r_2^e
	\end{pmatrix}), f\in \Delta_2(T), \\
\label{eq:mBDMf}
\int_f \partial_n^j(\div\boldsymbol{v})\ q \dd S, &\quad  q\in \mathbb B_{k-1 - j} (f;\begin{pmatrix}
r_3^{\texttt{v}} \\
r_3^e
\end{pmatrix}-j),  0\leq j\leq r_3^{f},\\
\label{eq:mBDM2}
\int_T \div \boldsymbol v \, q \dx, &\quad q\in \mathbb B_{k-1}(T; \boldsymbol{r}_3)/\mathbb R,\\
\label{eq:mBDM3}
\int_T \boldsymbol{v}\cdot\boldsymbol{q} \dx, &\quad \boldsymbol{q}\in \mathbb B^{\div}_{k}(T;\bs r_2)\cap \ker(\div).
\end{align}
The added DoFs \eqref{eq:mBDMv}, \eqref{eq:mBDMe}, and \eqref{eq:mBDMf}-\eqref{eq:mBDM2} on $\div \bs v$ will determine $\div \bs v$ up to a constant and the sum of number of these DoFs is always $\dim \mathbb P_{k-1}(T)-1$. The rest DoFs \eqref{eq:mBDM1} and \eqref{eq:mBDM3} are independent of $\bs r_3$. Hence the total number of DoFs remains unchanged. The uni-solvence is also easy as the modified DoFs is to determine $\div \bs v$. 

We then explain the general case.  
For an edge $e$, we use the frame $(\bs t, \bs n_1, \bs n_2)$, where $\bs t$ is a tangential vector of $e$ and $\bs n_1, \bs n_2$ are two linearly independent normal vectors of $e$. For a face $f$, we choose two tangential vectors $\bs t_1,\bs t_2$ and a normal vector $\bs n$ as the local frame. We first add DoFs on $\div \bs v\in \mathbb V^{L^2}_{k-1}(\bs r_3)$ to the original DoFs \eqref{eq:divdof0}-\eqref{eq:divdof3} and then remove redundant DoFs. 
For example, on a face $f$ 
\begin{equation*}%\label{eq:divface}
\div\boldsymbol{v}=\partial_{t_1}(\boldsymbol{v}\cdot\boldsymbol{t}_1)+\partial_{t_2}(\boldsymbol{v}\cdot\boldsymbol{t}_2)+\partial_{n}(\boldsymbol{v}\cdot\boldsymbol{n}) = \div_f \Pi_f \bs v + \partial_{n}(\boldsymbol{v}\cdot\boldsymbol{n}),
\end{equation*}
where $\Pi_f$ is the projection to the plane containing face $f$. 
If $\Pi_f \bs v$ is known, then $\partial_{n}(\boldsymbol{v}\cdot\boldsymbol{n}) = \div \bs v - \div_f \Pi_f \bs v$ can be determined. For normal derivatives, exchange the ordering of derivative, i.e. write $\partial^j_n(\div \bs v) = \div \partial^j_n \bs v$ and apply the above argument to remove $\partial_n^{j}(\bs v\cdot \bs n)$ for $j=1,\ldots, r_2^f$. Notice that DoF on $\bs v\cdot \bs n$ is still needed as $\div \bs v$ only gives constraint on derivatives. 
%% will contain $\partial_n^{j+1}(\bs v\cdot n)$ for $j=0,\ldots, r_3^f$ and provide $\partial_n^j \Pi_f\bs v$, normal derivative of $\bs v\cdot \bs n$ can be removed. 

The situation on edges is more complicated. We write the normal derivative as $D^{\alpha}_n$ with $\alpha = (\alpha_1, \alpha_2)\in \mathbb T_j^1(e)$ for $j=0,1,\ldots r_2^e$. As $\div\boldsymbol{v}=\partial_t(\boldsymbol{v}\cdot\boldsymbol{t})+\partial_{n_1}(\boldsymbol{v}\cdot\boldsymbol{n}_1)+\partial_{n_2}(\boldsymbol{v}\cdot\boldsymbol{n}_2)$, for each $\beta \in \mathbb T_{j}^1(e), j=0,\ldots, r_3^f$, we can write
\begin{equation}\label{eq:divedge}
D^{\beta}_n(\div \bs v) = \partial_t D^{\beta}_n(\bs v\cdot \bs t) +\partial_{n_1}D^{\beta}_n(\bs v\cdot \bs n_1) +\partial_{n_2}D^{\beta}_n(\bs v\cdot \bs n_2).
\end{equation}
On edge $e$, DoFs of $D^{\beta}_n(\bs v\cdot \bs t)$ and $\nabla^j\bs v(\texttt{v})$ will determine the tangential component $D^{\beta}_n(\bs v\cdot \bs t)|_e\in \mathbb P_{k-j}(e)$ and consequently $\partial_t D^{\beta}_n(\bs v\cdot \bs t)|_e$. The normal derivative of $\bs v\cdot \bs n_1$ can be written as
$$
\partial_{n_1}D^{\beta}_n(\bs v\cdot \bs n_1) = D_n^{\alpha} (\bs v\cdot \bs n_1), \quad \alpha = \beta + \epsilon_1, 1\leq |\alpha | \leq r_3^e + 1\geq r_2^e.
$$
Providing DoFs on $D_n^{\alpha} (\bs v\cdot \bs n_1)$ for all $0 \leq |\alpha| \leq r_2^e$, we can then determine the third component in \eqref{eq:divedge}
$$
\partial_{n_2}D^{\beta}_n(\bs v\cdot \bs n_2)= D_n^{\alpha} (\bs v\cdot \bs n_2) , \quad \alpha = \beta + \epsilon_2, \alpha = (\alpha_1, \alpha_2), \alpha_2\geq 1, 1\leq |\alpha | = j \leq r_2^e.
$$
But the lattice node $(\alpha_1, 0)$ is missing, i.e., DoFs on $\partial_{n_1}^j(\boldsymbol{v}\cdot \boldsymbol{n}_2)$ for $j=0,1,\ldots, r_2^e$ should be still included. 
%As $\div \bs v$ only gives constraint on derivatives, some DoFs of function values are still needed. For example, as $\div\boldsymbol{v}=\partial_t(\boldsymbol{v}\cdot\boldsymbol{t})+\partial_{n_1}(\boldsymbol{v}\cdot\boldsymbol{n}_1)+\partial_{n_2}(\boldsymbol{v}\cdot\boldsymbol{n}_2)$ for each edge $e$, linear combination of \eqref{eq:3dCrdivfemdofE2}-\eqref{eq:3dCrdivfemdofE4} will recover $\partial_{n}^j(\boldsymbol{v}\cdot\boldsymbol{n}_2)$ for $j>0$ but not $\bs v\cdot \bs n_2$. When $r_2^e\geq 0$, we need to explicitly include \eqref{eq:3dCrdivfemdofE1}. Similar discussion can be applied to the face DoFs and note that \eqref{eq:3dCrdivfemdofF1} should always present even for $r_2^f = -1$ to impose the normal continuity. The tangential component of face bubbles are included in $\mathbb B^{\div}_{k}(\boldsymbol{r}_2)$.

We are in the position to present finite element description of $\mathbb V^{\div}_{k}(\bs r_2, \bs r_3)$. Take $\mathbb P_{k}(T;\mathbb R^3)$ as the space of shape functions.
The degrees of freedom are %\mnote{ I change the edge dof slightly}
%\mnote { DoFs \eqref{eq:3dCrdivfemdofV1}-\eqref{eq:3dCrdivfemdofE4} are linearly dependent. For example $r_2^e=1$ and $r_3^e=0$. $\div\bs v|_e$ is uniquely determined by \eqref{eq:3dCrdivfemdofV1}-\eqref{eq:3dCrdivfemdofV2} and \eqref{eq:3dCrdivfemdofE4}. On the other side, $\div\bs v=\partial_t(\bs v\cdot\bs t)+\partial_{n_1}(\bs v\cdot\bs n_1)+\partial_{n_2}(\bs v\cdot\bs n_2)$ is uniquely determined by \eqref{eq:3dCrdivfemdofV1}-\eqref{eq:3dCrdivfemdofE3}. Hence $\int_e \frac{\partial^{j}(\boldsymbol{v}\cdot\boldsymbol{t})}{\partial n_1^{i}\partial n_2^{j-i}}\, q \dd s$ should be kept.}
\begin{align}
\nabla^i\boldsymbol{v}(\texttt{v}), & \quad i=0,\ldots, r_2^{\texttt{v}}, \label{eq:3dCrdivfemdofV1}\\
\nabla^j\div\boldsymbol{v}(\texttt{v}),  & \quad j=\max\{r_2^{\texttt{v}},0\},\ldots, r_3^{\texttt{v}}, \label{eq:3dCrdivfemdofV2}\\
%
%\int_e \boldsymbol{v}\cdot \boldsymbol{t}\,q \dd s, &\quad q\in \mathbb P_{k - 2(r_2^{\texttt{v}} +1)}(e), \quad\textrm{ if }\; r_2^e\geq0, \label{eq:3dCrdivfemdofE1}\\
\int_e \partial_{n_1}^j(\boldsymbol{v}\cdot \boldsymbol{n}_2)\,q \dd s, &\quad q\in \mathbb P_{k - 2(r_2^{\texttt{v}} +1)+j}(e), 0\leq j\leq r_2^e, \label{eq:3dCrdivfemdofE1}\\
\int_e \frac{\partial^{j}(\boldsymbol{v}\cdot\boldsymbol{t})}{\partial n_1^{i}\partial n_2^{j-i}}\, q \dd s, &\quad q\in \mathbb P_{k - 2(r_2^{\texttt{v}} +1)+j}(e), 0\leq i\leq j\leq r_2^e, \label{eq:3dCrdivfemdofE2}\\
\int_e \frac{\partial^{j}(\boldsymbol{v}\cdot\boldsymbol{n}_{1})}{\partial n_1^{i}\partial n_2^{j-i}}\, q \dd s, &\quad q\in \mathbb P_{k - 2(r_2^{\texttt{v}} +1)+j}(e), 0\leq i\leq j\leq r_2^e, \label{eq:3dCrdivfemdofE3}\\
\int_e \frac{\partial^{j}(\div\boldsymbol{v})}{\partial n_1^{i}\partial n_2^{j-i}} \, q \dd s, &\quad q\in \mathbb P_{k-1 - 2(r_3^{\texttt{v}} +1)+j}(e), 0\leq i\leq j\leq r_3^{e}, \label{eq:3dCrdivfemdofE4}\\
\int_f \boldsymbol{v}\cdot\boldsymbol{n}\, q\dd S, &\quad  q\in\mathbb P_0(f)\oplus\mathbb B_{k} (f;\begin{pmatrix}
r_2^{\texttt{v}} \\
r_2^e
\end{pmatrix})/\mathbb R,\label{eq:3dCrdivfemdofF1}\\
\int_f \partial_n^j(\boldsymbol{v}\cdot\boldsymbol{t}_{\ell})\ q \dd S, &\quad  q\in \mathbb B_{k - j} (f;\begin{pmatrix}
r_2^{\texttt{v}} \\
r_2^e
\end{pmatrix}-j),  0\leq j\leq r_2^{f}, \ell =1,2, \label{eq:3dCrdivfemdofF2}\\
\int_f \partial_n^j(\div\boldsymbol{v})\ q \dd S, &\quad  q\in \mathbb B_{k-1 - j} (f;\begin{pmatrix}
r_3^{\texttt{v}} \\
r_3^e
\end{pmatrix}-j),  0\leq j\leq r_3^{f}, \label{eq:3dCrdivfemdofF3}\\
\int_T \div\boldsymbol{v}\, q\dx, &\quad q\in \mathbb B_{k-1}(\boldsymbol{r}_3)/\mathbb R, \label{eq:3dCrdivfemdofT1}\\
\int_T \boldsymbol{v}\cdot\boldsymbol{q} \dx, &\quad \boldsymbol{q}\in \mathbb B^{\div}_{k}(\boldsymbol{r}_2)\cap \ker(\div) \label{eq:3dCrdivfemdofT2}
\end{align}
for each $\texttt{v}\in \Delta_{0}(T)$, $e\in \Delta_{1}(T)$ and $f\in \Delta_{2}(T)$.

\begin{lemma}\label{lem:3dCrdivfemunisolvence}
The DoFs~\eqref{eq:3dCrdivfemdofV1}-\eqref{eq:3dCrdivfemdofT2} are uni-solvent for $\mathbb P_{k}(T;\mathbb R^3)$.  
\end{lemma}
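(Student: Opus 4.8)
The statement splits into a dimension count --- the number of functionals in \eqref{eq:3dCrdivfemdofV1}--\eqref{eq:3dCrdivfemdofT2} should equal $\dim\mathbb P_k(T;\mathbb R^3)=3\binom{k+3}{3}$ --- and an injectivity claim --- a polynomial $\bs v\in\mathbb P_k(T;\mathbb R^3)$ killed by all of them must vanish. The plan is to obtain the count by matching the list, subsimplex by subsimplex, against the uni-solvent DoFs \eqref{eq:divdof0}--\eqref{eq:divdof3} of $\mathbb V^{\div}_k(\bs r_2)$ (Theorem~\ref{thm:Hdiv3dfemunisolvence}), and to obtain injectivity by the usual descent from $\partial T$ to the interior.

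For the count, one checks that the present list is produced from \eqref{eq:divdof0}--\eqref{eq:divdof3} by a chain of equinumerous replacements. Splitting the vector edge moments \eqref{eq:divdof1} along the frame $(\bs t,\bs n_1,\bs n_2)$ produces the $\bs v\cdot\bs t$, $\bs v\cdot\bs n_1$, $\bs v\cdot\bs n_2$ families; one retains \eqref{eq:3dCrdivfemdofE2}, \eqref{eq:3dCrdivfemdofE3}, and the $\alpha_2=0$ part \eqref{eq:3dCrdivfemdofE1} of the last, trading the remaining mixed moments of $\bs v\cdot\bs n_2$ for the edge moments \eqref{eq:3dCrdivfemdofE4} of $\div\bs v$ through the identity \eqref{eq:divedge}. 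On faces, $\div\bs v=\div_f\Pi_f\bs v+\partial_n(\bs v\cdot\bs n)$ lets one trade the normal derivatives $\partial_n^j(\bs v\cdot\bs n)$ for the face moments \eqref{eq:3dCrdivfemdofF3} of $\div\bs v$, leaving \eqref{eq:3dCrdivfemdofF1}--\eqref{eq:3dCrdivfemdofF2}. Interiorly, the bubble div stability $\div\mathbb B_k^{\div}(T;\bs r_2)=\mathbb B_{k-1}(T;\bs r_2\ominus1)/\mathbb R$ of Theorem~\ref{thm:divbubbleonto} together with rank--nullity shows $\int_T\bs v\cdot\bs q$, $\bs q\in\mathbb B_k^{\div}(T;\bs r_2)$, has the same count as \eqref{eq:3dCrdivfemdofT2} plus $\int_T\div\bs v\,q$, $q\in\mathbb B_{k-1}(T;\bs r_2\ominus1)/\mathbb R$; finally raising the smoothness of the $\div\bs v$ moments from $\bs r_2\ominus1$ to $\bs r_3$ merely redistributes them onto vertices, edges and faces according to the geometric decomposition \eqref{eq:PrSdec3d} for the degree-$(k-1)$ scalar $C^{\bs r_3}$ element, whose total is again $\dim\mathbb P_{k-1}(T)$ (Theorem~\ref{thm:Cr3dfemunisolvence}). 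Matching the block sizes in each replacement is a routine computation with \eqref{eq:combinaformula}.

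For injectivity, assume every functional vanishes. At a vertex, \eqref{eq:3dCrdivfemdofV1} forces $\nabla^i\bs v(\texttt{v})=0$ for $i\le r_2^{\texttt{v}}$, hence $\nabla^j\div\bs v(\texttt{v})=0$ for $j\le r_2^{\texttt{v}}-1$, and \eqref{eq:3dCrdivfemdofV2} extends this to $j\le r_3^{\texttt{v}}$. On an edge $e$ with frame $(\bs t,\bs n_1,\bs n_2)$, the vertex data and \eqref{eq:3dCrdivfemdofE2}, \eqref{eq:3dCrdivfemdofE3} give $D_n^{\gamma}(\bs v\cdot\bs t)|_e=D_n^{\gamma}(\bs v\cdot\bs n_1)|_e=0$ for $|\gamma|\le r_2^e$, and \eqref{eq:3dCrdivfemdofE1} gives $\partial_{n_1}^j(\bs v\cdot\bs n_2)|_e=0$ for $j\le r_2^e$; writing $D_n^{\beta}(\div\bs v)=\partial_tD_n^{\beta}(\bs v\cdot\bs t)+D_n^{\beta+\epsilon_1}(\bs v\cdot\bs n_1)+D_n^{\beta+\epsilon_2}(\bs v\cdot\bs n_2)$ on $e$ --- as in \eqref{eq:divedge}, valid for $|\beta|\le r_3^e$, which by $\bs r_3\ge\bs r_2\ominus1$ covers every $\beta=\alpha-\epsilon_2$ with $|\alpha|\le r_2^e$, $\alpha_2\ge1$ --- and using \eqref{eq:3dCrdivfemdofE4}, one recovers $D_n^{\alpha}(\bs v\cdot\bs n_2)|_e=0$ by induction on $|\alpha|$; hence $\nabla^j\bs v|_e=0$ for $j\le r_2^e$, and $\nabla^j\div\bs v|_e=0$ for $j\le r_3^e$ from \eqref{eq:3dCrdivfemdofE4}. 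On a face $f$ with frame $(\bs t_1,\bs t_2,\bs n)$, the edge/vertex data and \eqref{eq:3dCrdivfemdofF2} give $\Pi_f\partial_n^j\bs v|_f=0$ for $j\le r_2^f$, hence $\partial_n^j(\div_f\bs v)|_f=0$; combined with \eqref{eq:3dCrdivfemdofF3}, the edge/vertex data for $\div\bs v$, and $\div\bs v=\partial_n(\bs v\cdot\bs n)+\div_f\bs v$ this yields $\partial_n^j(\bs v\cdot\bs n)|_f=0$ for $1\le j\le r_2^f$ and $\partial_n^j\div\bs v|_f=0$ for $j\le r_3^f$, while \eqref{eq:3dCrdivfemdofF1} with the geometric decomposition of $\mathbb P_k(f)$ kills $\bs v\cdot\bs n|_f$; in the degenerate cases $r_2^e=-1$ or $r_2^f=-1$ the residual tangential edge/face components have, by \eqref{eq:bubbledecomprfn1}, been absorbed into \eqref{eq:3dCrdivfemdofT2}. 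Thus $\bs v\in\mathbb B_k^{\div}(T;\bs r_2)$ and $\div\bs v\in\mathbb B_{k-1}(T;\bs r_3)$ with $\int_T\div\bs v=\int_{\partial T}\bs v\cdot\bs n=0$; testing \eqref{eq:3dCrdivfemdofT1} against $q=\div\bs v$ gives $\div\bs v=0$, whence $\bs v\in\mathbb B_k^{\div}(T;\bs r_2)\cap\ker(\div)$, and testing \eqref{eq:3dCrdivfemdofT2} against $\bs q=\bs v$ gives $\bs v=0$.

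The hard part is the edge step: the frame splitting prescribes only the $\alpha_2=0$ slice of the normal moments of $\bs v\cdot\bs n_2$ (via \eqref{eq:3dCrdivfemdofE1}), the mixed slice being recovered recursively from the $\div\bs v$ moments through \eqref{eq:divedge}, and both this recovery and the parallel count matching must be verified across the degenerate parameter windows allowed by \eqref{eq:boundr2fordivbubble} ($r_2^e\in\{-1,0\}$, $r_2^f=-1$, $r_2^{\texttt{v}}=-1$), where the div bubble space picks up extra tangential edge and face pieces and some edge-moment polynomial degrees collapse. The dimension count of the first step, though conceptually routine, is the other place that demands careful bookkeeping.
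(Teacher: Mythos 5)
Your proposal is correct and follows essentially the same route as the paper's proof: the dimension count is obtained by observing that the $\div\bs v$-moment blocks are equinumerous under the replacements (so one may reduce to $\bs r_3=\bs r_2\ominus 1$ and match against the unisolvent set of Theorem~\ref{thm:Hdiv3dfemunisolvence}), and injectivity is obtained by the same vertex--edge--face--interior descent using the frame decompositions of $\div\bs v$ on edges and faces. The only cosmetic difference is that the paper first concludes $\div\bs v=0$ outright from the $\div$-related DoFs (after noting \eqref{eq:3dCrdivfemdofF1} forces $\div\bs v\in L_0^2(T)$) and only then descends for $\bs v$ itself, whereas you interleave the two descents.
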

\begin{proof}
The number of DoFs~\eqref{eq:3dCrdivfemdofV2}, \eqref{eq:3dCrdivfemdofE4} and~\eqref{eq:3dCrdivfemdofF3}-\eqref{eq:3dCrdivfemdofT1} to determine $\div \bs v\in \mathbb V^{L^2}_{k-1}(\bs r_3)$ is
$
\dim \mathbb P_{k-1}(T)-1-4{r_2^{\texttt{v}}+2\choose3},
$
which is constant with respect to $\boldsymbol{r}_3$. Hence the number of DoFs~\eqref{eq:3dCrdivfemdofV1}-\eqref{eq:3dCrdivfemdofT2} is also constant with respect to $\boldsymbol{r}_3$. 
To count the dimension, we only need to consider case $\boldsymbol{r}_3= \bs r_2\ominus 1$. 
Now the number of DoFs \eqref{eq:3dCrdivfemdofE1}-\eqref{eq:3dCrdivfemdofE4} equals to that of 
$$
\int_e \frac{\partial^{j}\boldsymbol{v}}{\partial n_1^{i}\partial n_2^{j-i}} \cdot\boldsymbol{q} \dd s,  \quad e\in \Delta_1(T), \boldsymbol{q} \in \mathbb P_{k - 2(r_2^{\texttt{v}}+1) + j}^3(e), 0\leq i\leq j\leq r_2^{e}.
$$
As a result the number of DoFs~\eqref{eq:3dCrdivfemdofV1}-\eqref{eq:3dCrdivfemdofT2} equals to $\dim\mathbb P_{k}(T;\mathbb R^3)$.

Take $\boldsymbol{v}\in\mathbb P_{k}(T;\mathbb R^3)$ and assume all the DoFs~\eqref{eq:3dCrdivfemdofV1}-\eqref{eq:3dCrdivfemdofT2} vanish. 
The vanishing DoF~\eqref{eq:3dCrdivfemdofF1} implies $\div\boldsymbol{v}\in L_0^2(T)$.
By the vanishing DoFs~\eqref{eq:3dCrdivfemdofV1}-\eqref{eq:3dCrdivfemdofV2}, \eqref{eq:3dCrdivfemdofE3} and~\eqref{eq:3dCrdivfemdofF3}-\eqref{eq:3dCrdivfemdofT1}, we get $\div\boldsymbol{v}=0$.
Since $\div\boldsymbol{v}=\partial_t(\boldsymbol{v}\cdot\boldsymbol{t})+\partial_{n_1}(\boldsymbol{v}\cdot\boldsymbol{n}_1)+\partial_{n_2}(\boldsymbol{v}\cdot\boldsymbol{n}_2)$ for each edge $e$ and $\div\boldsymbol{v}=\partial_n(\boldsymbol{v}\cdot\boldsymbol{n})+\div_f(\Pi_f\boldsymbol{v})$ for each face $f$, it follows from the vanishing DoFs~\eqref{eq:3dCrdivfemdofV1}, \eqref{eq:3dCrdivfemdofE1}-\eqref{eq:3dCrdivfemdofE3}, and~\eqref{eq:3dCrdivfemdofF1}-\eqref{eq:3dCrdivfemdofF2} that $\boldsymbol{v}\in\mathbb B^{\div}_{k}(\boldsymbol{r}_2)\cap \ker(\div)$. Therefore $\boldsymbol{v}=\boldsymbol{0}$ holds from the vanishing DoF~\eqref{eq:3dCrdivfemdofT2}.  
\end{proof}

Define global $H(\div)$-conforming finite element space
\begin{align*}
\mathbb V^{\div}_{k}(\boldsymbol{r}_2, \boldsymbol{r}_3) = \{\boldsymbol{v}\in \boldsymbol{L}^2(\Omega;\mathbb R^3):&\, \boldsymbol{v}|_T\in\mathbb P_{k}(T;\mathbb R^3)\;\forall~T\in\mathcal T_h, \\
&\textrm{ all the DoFs~\eqref{eq:3dCrdivfemdofV1}-\eqref{eq:3dCrdivfemdofF3} are single-valued} \}.
\end{align*}

% For $i=0,1,\ldots, r_{1}^e-1$ with $r_2^e =r_1^e  - 1\geq0$, 
% $$
% \partial_n^i(\div\boldsymbol{v})=\partial_n^i(\partial_n(\boldsymbol{v}\cdot\boldsymbol{n})+\partial_t(\boldsymbol{v}\cdot\boldsymbol{t}))=\partial_n^{i+1}(\boldsymbol{v}\cdot\boldsymbol{n})+\partial_t(\partial_n^i(\boldsymbol{v}\cdot\boldsymbol{t})).
% $$
% The second part $\partial_t(\partial_n^i(\boldsymbol{v}\cdot\boldsymbol{t}))$ will be determined by~\eqref{eq:2dCrdivfemdof0} and~\eqref{eq:2dCrdivfemdof3}. That is a linear combination of~\eqref{eq:2dCrdivfemdof2},~\eqref{eq:2dCrdivfemdof3}, and~\eqref{eq:2dCrdivfemdof4} will determine~\eqref{eq:Crdivfemdof2d1}. Consequently it returns to the finite element defined before.  
When $\boldsymbol{r}_3 = \bs r_2\ominus 1$, we have $\mathbb V^{\div}_{k}(\boldsymbol{r}_2,\bs r_2\ominus 1)=\mathbb V^{\div}_{k}(\boldsymbol{r}_2)$. Although the DoFs defining these two finite element spaces are in different forms, from the proof of Lemma \ref{lem:3dCrdivfemunisolvence}, they can express each other by linear combinations. 

When $\boldsymbol{r}_3\geq \bs r_2\ominus 1$, we have 
%\mnote{ the second = is less obvious since the DoFs are in different form.}
\begin{equation*}%\label{eq:divrelation}
\mathbb V^{\div}_{k}(\boldsymbol{r}_2,\boldsymbol{r}_3) \subseteq \mathbb V^{\div}_{k}(\boldsymbol{r}_2,\bs r_2\ominus 1)=\mathbb V^{\div}_{k}(\boldsymbol{r}_2).
\end{equation*}
Namely additional smoothness on $\div \bs v$ is imposed in space $\mathbb V^{\div}_{k}(\boldsymbol{r}_2,\boldsymbol{r}_3)$. 

\begin{theorem}
Let $\bs r_2\geq -1$ satisfy \eqref{eq:boundr2fordivbubble} and $\boldsymbol{r}_3\geq \bs r_2\ominus 1$ be a valid smoothness parameter, i.e. $r_3^{\texttt{v}} \geq 2 r_3^e, r_3^e\geq 2r_3^f$. 
Let $k$ be a sufficiently large integer so that $k\geq \max\{2r_{2}^{\texttt{v}}+1,  2r_{3}^{\texttt{v}}+2,1\}$, $\dim\mathbb B_{k-1}(T; \boldsymbol{r}_3)\geq1$, and $\dim\mathbb B_{k}(f;\begin{pmatrix}
r_2^{\texttt{v}} \\
r_2^e
\end{pmatrix})\geq1$.
It holds that
\begin{equation}\label{eq:divonto3d}
\div\mathbb V^{\div}_{k}(\boldsymbol{r}_2,\boldsymbol{r}_3)=\mathbb V^{L^2}_{k-1}(\boldsymbol{r}_3).   
\end{equation}
\end{theorem}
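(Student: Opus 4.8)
The plan is to deduce \eqref{eq:divonto3d} from the equality-constraint div stability \eqref{eq:divonto3dsimple}, already proved for the pair $(\bs r_2,\bs r_2\ominus 1)$, by first establishing the identification
\[
\mathbb V^{\div}_{k}(\bs r_2,\bs r_3)=\bigl\{\,\bs v\in\mathbb V^{\div}_{k}(\bs r_2)\ :\ \div\bs v\in\mathbb V^{L^2}_{k-1}(\bs r_3)\,\bigr\}.
\]
Granting it, the inclusion $\div\mathbb V^{\div}_{k}(\bs r_2,\bs r_3)\subseteq\mathbb V^{L^2}_{k-1}(\bs r_3)$ is immediate. For the reverse inclusion, note that $\bs r_3\geq\bs r_2\ominus 1$ gives $\mathbb V^{L^2}_{k-1}(\bs r_3)\subseteq\mathbb V^{L^2}_{k-1}(\bs r_2\ominus 1)$ and $\mathbb B_{k-1}(T;\bs r_3)\subseteq\mathbb B_{k-1}(T;\bs r_2\ominus 1)$, so that the hypotheses of \eqref{eq:divonto3dsimple} (the pair \eqref{eq:boundr2fordivbubble}, $k\geq\max\{2r_2^{\texttt{v}}+1,1\}$, $\dim\mathbb B_{k-1}(T;\bs r_2\ominus 1)\geq1$, $\dim\mathbb B_{k}(f;\binom{r_2^{\texttt{v}}}{r_2^e})\geq1$) all hold; hence $\div\mathbb V^{\div}_{k}(\bs r_2)=\mathbb V^{L^2}_{k-1}(\bs r_2\ominus 1)$. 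Given $p_h\in\mathbb V^{L^2}_{k-1}(\bs r_3)$, choose $\bs v\in\mathbb V^{\div}_{k}(\bs r_2)$ with $\div\bs v=p_h$; then $\div\bs v=p_h\in\mathbb V^{L^2}_{k-1}(\bs r_3)$, so by the identification $\bs v\in\mathbb V^{\div}_{k}(\bs r_2,\bs r_3)$ and $p_h\in\div\mathbb V^{\div}_{k}(\bs r_2,\bs r_3)$.

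To prove the identification I would split the degrees of freedom \eqref{eq:3dCrdivfemdofV1}--\eqref{eq:3dCrdivfemdofF3}. Those in \eqref{eq:3dCrdivfemdofV1}, \eqref{eq:3dCrdivfemdofE1}--\eqref{eq:3dCrdivfemdofE3}, \eqref{eq:3dCrdivfemdofF1}--\eqref{eq:3dCrdivfemdofF2} act only on the components of $\bs v$, do not depend on $\bs r_3$, and are — by the equivalence of DoF systems noted after Lemma~\ref{lem:3dCrdivfemunisolvence} — exactly the DoFs that cut out $\mathbb V^{\div}_{k}(\bs r_2)=\mathbb V^{\div}_{k}(\bs r_2,\bs r_2\ominus 1)$ among piecewise $\mathbb P_k(T;\mathbb R^3)$ fields. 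The remaining ones, \eqref{eq:3dCrdivfemdofV2}, \eqref{eq:3dCrdivfemdofE4}, \eqref{eq:3dCrdivfemdofF3} and the interior moment \eqref{eq:3dCrdivfemdofT1}, act only on $\div\bs v$ and are precisely the degrees of freedom of the scalar $C^{r_3^f}$-element $\mathbb V^{L^2}_{k-1}(\bs r_3)=\mathbb V_{k-1}(\bs r_3)$ of Theorem~\ref{thm:Cr3dfemunisolvence}, the vertex moments $\nabla^j\div\bs v(\texttt{v})$ of order $j<\max\{r_2^{\texttt{v}},0\}$ and the constant $\int_T\div\bs v\,\dx$ absent from this list being forced by the $\bs v$-DoFs via $\div\bs v=\partial_1v_1+\partial_2v_2+\partial_3v_3$ and $\int_T\div\bs v\,\dx=\sum_{f\in\Delta_2(T)}\int_f\bs v\cdot\bs n\,\dd S$. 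Since $\mathbb V^{\div}_{k}(\bs r_2)\subset\bs H(\div,\Omega)$ and thus $\div\bs v\in L^2(\Omega)$, single-valuedness of all of \eqref{eq:3dCrdivfemdofV1}--\eqref{eq:3dCrdivfemdofF3} is equivalent to $\bs v\in\mathbb V^{\div}_{k}(\bs r_2)$ together with $\div\bs v\in\mathbb V^{L^2}_{k-1}(\bs r_3)$, which is the identification.

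The main obstacle is this last splitting of the DoFs: one must check that passing from the system \eqref{eq:divdof0}--\eqref{eq:divdof3} of $\mathbb V^{\div}_{k}(\bs r_2)$ to \eqref{eq:3dCrdivfemdofV1}--\eqref{eq:3dCrdivfemdofT2} trades DoFs on $\bs v$ for DoFs on $\div\bs v$ without loss or duplication, the delicate point being the edge bookkeeping — after rewriting $D^{\beta}_n(\div\bs v)=\partial_t D^{\beta}_n(\bs v\cdot\bs t)+\partial_{n_1}D^{\beta}_n(\bs v\cdot\bs n_1)+\partial_{n_2}D^{\beta}_n(\bs v\cdot\bs n_2)$ the lattice node $(\alpha_1,0)$ is left uncovered, so the DoFs \eqref{eq:3dCrdivfemdofE1} on $\partial^{j}_{n_1}(\bs v\cdot\bs n_2)$ must be retained. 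This is exactly the count carried out before Lemma~\ref{lem:3dCrdivfemunisolvence} (the number of $\div\bs v$-DoFs being $\dim\mathbb P_{k-1}(T)-1-4\binom{r_2^{\texttt{v}}+2}{3}$, independent of $\bs r_3$). A more self-contained alternative would bypass the identification and instead repeat the construction used for \eqref{eq:divonto3dsimple}: given $p_h\in\mathbb V^{L^2}_{k-1}(\bs r_3)$ and an auxiliary $\widetilde{\bs v}\in\bs H^{r_3^f+2}(\Omega;\mathbb R^3)$ with $\div\widetilde{\bs v}=p_h$, assign the refined DoFs \eqref{eq:3dCrdivfemdofV1}--\eqref{eq:3dCrdivfemdofT2} simplex by simplex — at each vertex, edge and face choosing an adapted frame and imposing the derivative relation $\div\bs v=p_h$ up to the prescribed order, using $\widetilde{\bs v}$ for the open normal moment \eqref{eq:3dCrdivfemdofF1}, and in the interior invoking the bubble div stability of Theorem~\ref{thm:divbubbleonto} — and then conclude $\div\bs v=p_h$ from the unisolvence of $\mathbb V^{L^2}_{k-1}(\bs r_3)$; this duplicates the earlier argument but requires no new idea.
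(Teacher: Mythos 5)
Your proposal is correct, and your primary route differs mildly but genuinely from the paper's. The paper proves surjectivity by a direct construction: given $p_h\in\mathbb V^{L^2}_{k-1}(\bs r_3)$ and an auxiliary $\widetilde{\bs v}\in\bs H^{r_3^f+2}(\Omega;\mathbb R^3)$ with $\div\widetilde{\bs v}=p_h$, it prescribes the DoFs \eqref{eq:3dCrdivfemdofV1}--\eqref{eq:3dCrdivfemdofT2} of a field $\bs v_h\in\mathbb V^{\div}_{k}(\bs r_2,\bs r_3)$ -- setting all to zero except the $\div$-related ones, which are matched to the corresponding DoFs of $p_h$ (with the vertex trick $\nabla^j(\partial_1 v_{h,1})(\texttt{v})=\nabla^j p_h(\texttt{v})$ for the low orders and $\int_f\bs v_h\cdot\bs n=\int_f\widetilde{\bs v}\cdot\bs n$ for the mean) -- and then concludes $\div\bs v_h=p_h$ from the unisolvence of the scalar element; this is exactly the ``self-contained alternative'' you sketch at the end. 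Your primary argument instead establishes the characterization $\mathbb V^{\div}_{k}(\bs r_2,\bs r_3)=\{\bs v\in\mathbb V^{\div}_{k}(\bs r_2):\div\bs v\in\mathbb V^{L^2}_{k-1}(\bs r_3)\}$ and then imports the already-proved equality-constraint stability \eqref{eq:divonto3dsimple} for the pair $(\bs r_2,\bs r_2\ominus 1)$, checking correctly that its hypotheses follow from the theorem's (in particular $\dim\mathbb B_{k-1}(T;\bs r_2\ominus 1)\geq\dim\mathbb B_{k-1}(T;\bs r_3)\geq1$ since $\bs r_3\geq\bs r_2\ominus1$ shrinks the bubble space). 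What your route buys is that no new construction is needed once the DoF reorganization of Lemma~\ref{lem:3dCrdivfemunisolvence} is in place; what it costs is that the identification must be argued carefully -- the $\bs v$-only DoFs \eqref{eq:3dCrdivfemdofV1}, \eqref{eq:3dCrdivfemdofE1}--\eqref{eq:3dCrdivfemdofE3}, \eqref{eq:3dCrdivfemdofF1}--\eqref{eq:3dCrdivfemdofF2} do not by themselves cut out $\mathbb V^{\div}_{k}(\bs r_2)$; one needs the $\div$-DoFs together with the edge identity \eqref{eq:divedge} to recover the missing normal-derivative data of $\bs v\cdot\bs n_2$, which is precisely the bookkeeping the paper performs before Lemma~\ref{lem:3dCrdivfemunisolvence} and summarizes as ``they can express each other by linear combinations.'' You flag exactly this as the main obstacle, so the gap is acknowledged rather than overlooked; both approaches ultimately rest on the same lemma.
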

\begin{proof}
It is apparent that $\div\mathbb V^{\div}_{k}(\boldsymbol{r}_2,\boldsymbol{r}_3)\subseteq\mathbb V^{L^2}_{k-1}(\boldsymbol{r}_3)$. We are going to prove the div operator is surjective. 

%\LC{In view of the relation \eqref{eq:divrelation} and the div stability,}

For $p_h\in \mathbb V^{L^2}_{k-1}(\boldsymbol{r}_3)\subset H^{r_{3}^f+1}(\Omega)$, there exists a $\boldsymbol{v}\in \boldsymbol{H}^{r_{3}^f+2}(\Omega;\mathbb R^3)$ such that $\div\boldsymbol{v}=p_h$.
Take $\boldsymbol{v}_h\in\mathbb V^{\div}_{k}(\boldsymbol{r}_2,\boldsymbol{r}_3)$ such that all DoFs \eqref{eq:3dCrdivfemdofV1}-\eqref{eq:3dCrdivfemdofT2} vanish except
\begin{align*}
\nabla^j(\partial_1v_{h,1})(\texttt{v})&=\nabla^jp_h(\texttt{v}), \qquad \qquad \; j=0,\ldots, r_{2}^{\texttt{v}}-1, \\
\nabla^j\div\boldsymbol{v}_h(\texttt{v})&=\nabla^jp_h(\texttt{v}),   \qquad\quad\quad\;\; j=\max\{r_2^{\texttt{v}},0\},\ldots, r_3^{\texttt{v}}, \\
\int_e \frac{\partial^{j}(\div\boldsymbol{v}_h)}{\partial n_1^{i}\partial n_2^{j-i}} \, q \dd s&=\int_e \frac{\partial^{j}p_h}{\partial n_1^{i}\partial n_2^{j-i}} \, q \dd s, \quad q\in \mathbb P_{k-1 - 2(r_3^{\texttt{v}} +1)+j}(e), 0\leq i\leq j\leq r_3^{e}, \\
\int_f \boldsymbol{v}_h\cdot\boldsymbol{n}\dd S&=\int_f \boldsymbol{v}\cdot\boldsymbol{n}\dd S,  \\
\int_f \partial_n^j(\div\boldsymbol{v}_h)\ q \dd S&=\int_f \partial_n^jp_h\ q \dd S, \qquad\quad\, q\in \mathbb B_{k-1 - j} (f;\begin{pmatrix}
r_3^{\texttt{v}} \\
r_3^e
\end{pmatrix}-j),  0\leq j\leq r_3^{f}, \\
\int_T \div\boldsymbol{v}_h\, q\dx&=\int_T p_h\, q\dx, \qquad\quad\quad\; q\in \mathbb B_{k-1}(\boldsymbol{r}_3)/\mathbb R,
\end{align*}
for all $\texttt{v}\in \Delta_{0}(T_h)$, $e\in \Delta_{1}(T_h)$, $f\in \Delta_{2}(T_h)$ and $T\in T_h$.
Then it holds $\div \bs v_h = p_h$. 
\end{proof}

We shall call such $(\bs r_2, \bs r_3)$ a div stable pair. Namely $\bs r_2\geq -1$ satisfies \eqref{eq:boundr2fordivbubble} and $\boldsymbol{r}_3\geq \bs r_2\ominus 1$ is a valid smoothness parameter.

\begin{example}\rm
Taking $k\geq 4$, $\boldsymbol{r}_2=-1$, and $\boldsymbol{r}_3=0,$ we get a stable pair for mixed Poisson problem but with continuous displacement. That is we can construct a subspace of BDM space with the range of $\div$ is continuous. The degree $k\geq 4$ is to ensure $\dim \mathbb B_{k-1}(T,\bs r_3)\geq 1$. 
%$$
%(\mathbb V^{\div}_{k}(\boldsymbol{r}_2, \bs r_3), \mathbb V^{L^2}_{k-1}(\boldsymbol{r_3})).
%$$ 
\end{example}

\begin{example}\rm
Taking $k\geq 6$, $\boldsymbol{r}_2=(2,1,0)^{\intercal}$, and $\boldsymbol{r}_3=(1,0,0)^{\intercal},$ we get a stable Stokes-pair with continuous pressure element
$$
(\mathbb V^{\div}_{k}(
\begin{pmatrix}
 2\\
 1\\
 0
\end{pmatrix},
\begin{pmatrix}
 1\\
 0\\
 0
\end{pmatrix}
), \mathbb V^{L^2}_{k-1}(
\begin{pmatrix}
 1\\
 0\\
 0
\end{pmatrix}
)),
$$
which is a generalization of the two-dimensional Falk-Neilan Stokes element constructed in~\cite{FalkNeilan2013} to three dimensions. 
% We can use a continuous pressure element by modifying $r_3^f = 0$, i.e.,
% %\begin{align*}
% %& r_{2}^{\texttt{v}}\geq 2, \quad r_{2}^e\geq 1, \quad r_2^f = 0 \\
% %& r_{3}^{\texttt{v}}\geq 1, \quad r_{3}^e\geq 0,\quad r_3^f = -1
% %\end{align*}
% %and the polynomial degree $k\geq 2 r_{2}^{\texttt{v}} + 1$
% $$\boldsymbol{r}_2=(2,1,0), \quad \boldsymbol{r}_3=(1,0,0),$$
% and refine the velocity space to
% $$
% \mathbb V^{\div}_{k}(\boldsymbol{r}_2, \bs r_3) = \{ \bs v\in \mathbb V^{\div}_{k}(\boldsymbol{r}_2): \div \bs v \text{ is continuous }\}. 
% $$
% According to \eqref{eq:202110133}, the inf-sup condition still holds. 
% Let polynomial degree $k\geq 6$. We then get another stable Stokes-pair
% $$
% (\mathbb V^{\div}_{k}(\boldsymbol{r}_2, \bs r_3), \mathbb V^{L^2}_{k-1}(\boldsymbol{r_3})).
% $$ 
\end{example}

\section{Finite Element de Rham and Stokes Complexes}\label{sec:femderhamcomplex}
In this section we shall construct several finite element de Rham and Stokes complexes. 
\subsection{Exactness of a complex of finite dimensional spaces}
\begin{lemma}\label{lm:abstract}
Let $\mathcal P$ and $\mathcal V_i$ be finite-dimensional linear spaces for $i=0,\ldots,3$ and 
\begin{equation}\label{eq:generalexactsequence}
\mathcal P \xrightarrow{\subset}\mathcal V_0 \xrightarrow{\dd_0} \mathcal V_1 \xrightarrow{\dd_1} \mathcal V_2 \xrightarrow{\dd_2} \mathcal V_{3}\rightarrow 0,
\end{equation}
be a complex. Assume three out of the four conditions for the exactness of the complex hold
\begin{align*}
\mathcal P &=\mathcal V_0\cap\ker(\dd_0) \\
\dd_0\mathcal V_0 &= \mathcal V_1\cap \ker(\dd_1)\\
\dd_1\mathcal V_1 &= \mathcal V_2\cap \ker(\dd_2)\\
\dd_2\mathcal V_2 &=\mathcal V_3,
\end{align*}
and the dimensions satisfy
\begin{equation}\label{eq:abstractdimenidentity}
\dim \mathcal P - \dim \mathcal V_0 + \dim \mathcal V_1 - \dim \mathcal V_2 + \dim \mathcal V_3 = 0,
\end{equation}
then complex~\eqref{eq:generalexactsequence} is exact. 
\end{lemma}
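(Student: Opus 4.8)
The plan is to deduce the one missing exactness condition from an Euler--Poincar\'e (alternating-dimension) identity for the complex. First I would set up the bookkeeping for the augmentation: write $\mathcal W_0 := \mathcal P$, $\mathcal W_j := \mathcal V_{j-1}$ for $j=1,2,3,4$, and $\mathcal W_5 := 0$, with $\delta_0 : \mathcal W_0 \hookrightarrow \mathcal W_1$ the inclusion, $\delta_j := \dd_{j-1}$ for $j=1,2,3$, and $\delta_4 : \mathcal W_4 \to \mathcal W_5$ the zero map, so that $\mathcal W_0 \xrightarrow{\delta_0} \mathcal W_1 \xrightarrow{\delta_1} \cdots \xrightarrow{\delta_4} \mathcal W_5$ is a bounded complex of finite-dimensional spaces. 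Its cohomology spaces $\mathcal H^j := \ker(\delta_j)/\img(\delta_{j-1})$ (with $\img(\delta_{-1}):=0$) satisfy $\mathcal H^0 = 0$ automatically, because the inclusion $\mathcal P \hookrightarrow \mathcal V_0$ is injective, and $\mathcal H^5 = 0$ automatically because $\mathcal W_5 = 0$; meanwhile the four displayed conditions of the lemma are precisely $\mathcal H^1 = 0$, $\mathcal H^2 = 0$, $\mathcal H^3 = 0$, $\mathcal H^4 = 0$, in that order. Thus proving the lemma amounts to showing that whichever of these four is not assumed is nevertheless zero.

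Next I would record the elementary dimension-counting identity. For each $j$ put $B_j := \img(\delta_{j-1})$; rank--nullity for $\delta_j : \mathcal W_j \to \mathcal W_{j+1}$ gives $\dim \mathcal W_j = \dim \ker(\delta_j) + \dim B_{j+1}$, and the definition of cohomology gives $\dim \ker(\delta_j) = \dim B_j + \dim \mathcal H^j$; hence $\dim \mathcal W_j = \dim B_j + \dim \mathcal H^j + \dim B_{j+1}$. Forming the alternating sum $\sum_{j=0}^{5} (-1)^j \dim \mathcal W_j$, the $B_j$-contributions telescope (using $B_0 = 0$ and $B_6 = 0$) and cancel, leaving
\begin{equation*}
\sum_{j=0}^{5} (-1)^j \dim \mathcal W_j = \sum_{j=0}^{5} (-1)^j \dim \mathcal H^j.
\end{equation*}

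Finally I would combine the two ingredients. The left-hand side equals $\dim \mathcal P - \dim \mathcal V_0 + \dim \mathcal V_1 - \dim \mathcal V_2 + \dim \mathcal V_3$ (the $j=5$ term vanishes), which is $0$ by the hypothesis~\eqref{eq:abstractdimenidentity}; and on the right-hand side $\dim\mathcal H^0 = \dim\mathcal H^5 = 0$, so $-\dim \mathcal H^1 + \dim \mathcal H^2 - \dim \mathcal H^3 + \dim \mathcal H^4 = 0$. Since three of the four nonnegative integers $\dim \mathcal H^1, \dim \mathcal H^2, \dim \mathcal H^3, \dim \mathcal H^4$ vanish by assumption and the coefficient of the remaining one is $\pm 1$, that one vanishes as well. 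Hence every cohomology space of the augmented complex is trivial, which is exactly the statement that \eqref{eq:generalexactsequence} is exact.

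There is no genuine obstacle in this argument: it is the standard fact that a bounded complex of finite-dimensional spaces whose Euler characteristic vanishes and all but one of whose cohomology spaces vanish is exact. The only point requiring a little care is arranging the indices of the augmentation so that the four hypotheses line up cleanly with $\mathcal H^1,\dots,\mathcal H^4$ and so that \eqref{eq:abstractdimenidentity} is recognized as the Euler characteristic of the augmented complex; once this is in place, the conclusion is manifestly insensitive to which of the four exactness conditions is the one left unproven, which is the whole content of the lemma.
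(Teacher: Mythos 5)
Your proposal is correct and follows essentially the same route as the paper: the paper's proof is exactly the Euler--Poincar\'e identity you derive, written compactly as an alternating sum of the four nonnegative defect dimensions (each being a cohomology dimension) equaling the alternating sum of dimensions, whence the one unassumed defect must vanish. No discrepancies to report.
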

% \begin{proof}
% Consider short complex
% \begin{equation}\label{eq:shortexactsequence}
% \mathcal P \xrightarrow{\subset}\mathcal V_0 \xrightarrow{\dd_0} \mathcal V_1 \xrightarrow{\dd_1} \mathcal V_2\cap\ker(\dd_2) \rightarrow 0.
% \end{equation}
% Thanks to $\dd_2\mathcal V_2=\mathcal V_3$, 
% $$
% \dim (\mathcal V_2\cap\ker(\dd_2))=\dim\mathcal V_2 -\dim\dd_2\mathcal V_2=\dim\mathcal V_2 -\dim\mathcal V_3.
% $$
% Then by the identity~\eqref{eq:abstractdimenidentity},
% \begin{align*}	
% &\quad \dim \mathcal P - \dim \mathcal V_0 + \dim \mathcal V_1 - \dim (\mathcal V_2\cap\ker(\dd_2))\\
% &=\dim \mathcal P - \dim \mathcal V_0 + \dim \mathcal V_1 - \dim \mathcal V_2 + \dim \mathcal V_3 = 0.
% \end{align*}
% Appying Lemma 2.1 in~\cite{Chen;Huang:2022femcomplex2d}, we get from $\mathcal P=\mathcal V_0\cap\ker(\dd_0)$ that complex \eqref{eq:shortexactsequence} is exact, which implies the exactness of complex \eqref{eq:generalexactsequence}.
% \end{proof}
% \XH{Another proof.}
\begin{proof}
By \eqref{eq:abstractdimenidentity}, we have
\begin{align*}
&\quad-(\dim\mathcal V_0\cap\ker(\dd_0)-\dim\mathcal P) + (\dim\mathcal V_1\cap \ker(\dd_1)-\dim\dd_0\mathcal V_0) \\
&\quad-(\dim\mathcal V_2\cap \ker(\dd_2)-\dim\dd_1\mathcal V_1) + (\dim\mathcal V_3-\dim\dd_2\mathcal V_2) \\
&=\dim \mathcal P - \dim \mathcal V_0 + \dim \mathcal V_1 - \dim \mathcal V_2 + \dim \mathcal V_3 = 0,
\end{align*}
as required.
% Given the identity~\eqref{eq:abstractdimenidentity}, $\mathcal P=\mathcal V_0\cap\ker(\dd_0)$ and $\dd_2\mathcal V_2=\mathcal V_3$, we prove the equivalence between $\dd_0\mathcal V_0 = \mathcal V_1\cap \ker(\dd_1)$ and $\dd_1\mathcal V_1 = \mathcal V_2\cap \ker(\dd_2)$ by dimension count. By $\mathcal P=\mathcal V_0\cap\ker(\dd_0)$ and $\dd_2\mathcal V_2=\mathcal V_3$,
% $$
% \dim \dd_0\mathcal V_0=\dim\mathcal V_0 -\dim(\mathcal V_0\cap\ker(\dd_0))=\dim\mathcal V_0 -\dim\mathcal P,
% $$
% $$
% \dim (\mathcal V_2\cap\ker(\dd_2))=\dim\mathcal V_2 -\dim\dd_2\mathcal V_2=\dim\mathcal V_2 -\dim\mathcal V_3.
% $$
% Then it follows from~\eqref{eq:abstractdimenidentity} that
% \begin{align*}
% \dim(\mathcal V_1\cap\ker(\dd_1))-\dim \dd_0\mathcal V_0 &=\dim\mathcal V_1-\dim\dd_1\mathcal V_1 - \dim\mathcal V_0+\dim\mathcal P \\
% &=\dim\mathcal V_2 - \dim\mathcal V_3-\dim\dd_1\mathcal V_1 \\
% &=\dim(\mathcal V_2\cap\ker(\dd_2))-\dim\dd_1\mathcal V_1,
% \end{align*}
% as required.
\end{proof}

%\subsection{Polynomial de Rham Complex}
A polynomial de Rham complex on tetrahedron $T$ is, for $k\geq 1$,
\begin{equation}\label{eq:polyderham}
\mathbb R\xrightarrow{\subset} \mathbb P_{k+2}(T)\xrightarrow{\grad}\mathbb P_{k+1}(T;\mathbb R^3)\xrightarrow{\curl}\mathbb P_{k}(T;\mathbb R^3) \xrightarrow{\div} \mathbb P_{k-1}(T)\xrightarrow{}0,
\end{equation}
where 
% $\mathbb P_k(T)$  denotes the set of real valued polynomials defined on $T$ of degree less than or equal to $k$, and 
$\mathbb P_{k}(T;\mathbb X):=\mathbb P_{k}(T)\otimes\mathbb X$ for $\mathbb X$ being vector space $\mathbb R^3$, tensor space $\mathbb M$, or symmetric tensor space $\mathbb S$.

By Lemma~\ref{lm:abstract}, the exactness of polynomial complex \eqref{eq:polyderham} can be verified by the identity
\begin{equation}\label{eq:dimensionpolyderham}
1 - {k+5 \choose 3} + 3{k+4 \choose 3} - 3{k+3 \choose 3} + {k+2 \choose 3} = 0,
\end{equation}
and the fact 
\begin{align*}
\mathbb R &= \ker(\grad), \\
\mathbb P_{k+1}(T;\mathbb R^3)\cap \ker(\curl) &= \grad \mathbb P_{k+2}(T),\\
\div \mathbb P_{k}(T;\mathbb R^3) &= \mathbb P_{k-1}(T).
\end{align*}
The first two are trivial. On the last one, use the fact $\div(\bs x \mathbb P_{k-1}(T)) = \mathbb P_{k-1}(T)$.

\subsection{Graft de Rham complexes}
We first present an abstract definition of space 
$$
\mathbb V_k^{\curl}(\bs r_1) = \mathbb V_k^{3}(\bs r_1)\cap H(\curl,\Omega). 
$$
We call $(\bs r_0, \bs r_1, \bs r_2, \bs r_3)$ a valid de Rham smoothness sequence if the following sequence, with a sufficiently large degree $k$,  
\begin{equation}\label{eq:femderhamcomplex3d1}
\mathbb R\xrightarrow{\subset}\mathbb V^{\grad}_{k+2}(\boldsymbol{r}_0)\xrightarrow{\grad}\mathbb V^{\curl}_{k+1}(\boldsymbol{r}_1)\xrightarrow{\curl}\mathbb V^{\div}_{k}(\boldsymbol{r}_2)\xrightarrow{\div}\mathbb V^{L^2}_{k-1}(\boldsymbol{r}_3)\to0
\end{equation}
is an exact Hilbert complex. For $\hat{\bs r}_2\geq \bs r_2$ and $\hat{\bs r}_2$ is a valid smoothness vector, we can define the subspace 
$$\mathbb V_{k+1}^{\curl}(\bs r_1, \hat{\bs r}_2 ) = \{\bs v\in \mathbb V_{k+1}^{\curl}(\bs r_1) \mid \curl \bs v\in \mathbb V_k^{\div}(\hat{\bs r}_2) \cap \ker(\div)\}.$$ Such space is well defined as $\mathbb V_k^{\div}(\hat{\bs r}_2) \cap \ker(\div)\subseteq \mathbb V_k^{\div}({\bs r}_2) \cap \ker(\div)$ and $(\bs r_0, \bs r_1, \bs r_2, \bs r_3)$ is a valid de Rham smoothness sequence implies $\curl \mathbb V^{\curl}_{k+1}(\boldsymbol{r}_1) =  \mathbb V_k^{\div}({\bs r}_2) \cap \ker(\div)$.

Recall that $(\bs r_2, \bs r_3)$ is called a div stable pair if $\div \mathbb V^{\div}_k(\bs r_2, \bs r_3) = \mathbb V_{k-1}^{L^2}(\bs r_3)$ for a sufficiently large degree $k$. 
\begin{theorem}
Assume $(\bs r_0, \bs r_1, \bs r_2, \bs r_3)$ is a valid de Rham smoothness sequence and $(\hat{\bs r}_2, \hat{\bs r}_3)$ is a div stable smoothness pair with $\hat{\bs r}_2\geq \bs r_2$, $\hat{\bs r}_3\geq \bs r_3$. Then
$(\bs r_0, \bs r_1, \hat{\bs r}_2, \hat{\bs r}_3)$ is also a valid de Rham smoothness sequence in the sense that the following complex
\begin{equation*}
\mathbb R\xrightarrow{\subset}\mathbb V^{\grad}_{k+2}(\boldsymbol{r}_0)\xrightarrow{\grad}\mathbb V^{\curl}_{k+1}(\boldsymbol{r}_1, \hat{\boldsymbol{r}}_2)\xrightarrow{\curl}\mathbb V^{\div}_{k}(\hat{\boldsymbol{r}}_2, \hat{\boldsymbol{r}}_3)\xrightarrow{\div}\mathbb V^{L^2}_{k-1}(\hat{\boldsymbol{r}}_3)\to0
\end{equation*}
is exact.
\end{theorem}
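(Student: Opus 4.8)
The plan is to verify directly that the four-term sequence in the statement is a cochain complex of finite-dimensional spaces and that it is exact at each of its four nontrivial positions; once all four exactness relations are in hand, no appeal to Lemma~\ref{lm:abstract} is needed, although it offers an alternative route. The ingredients are: (i) the triviality $\ker(\grad)=\mathbb R$; (ii) the exactness of the original complex~\eqref{eq:femderhamcomplex3d1} attached to the valid sequence $(\bs r_0,\bs r_1,\bs r_2,\bs r_3)$; (iii) the nestings $\mathbb V^{\div}_{k}(\hat{\bs r}_2)\subseteq\mathbb V^{\div}_{k}(\bs r_2)$ (from $\hat{\bs r}_2\geq\bs r_2$, since more smoothness shrinks the space) and $\mathbb V^{\div}_{k}(\hat{\bs r}_2,\hat{\bs r}_3)\subseteq\mathbb V^{\div}_{k}(\hat{\bs r}_2)$; and (iv) the div stability $\div\mathbb V^{\div}_{k}(\hat{\bs r}_2,\hat{\bs r}_3)=\mathbb V^{L^2}_{k-1}(\hat{\bs r}_3)$, valid since $(\hat{\bs r}_2,\hat{\bs r}_3)$ is a div stable pair. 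Throughout, $k$ is taken large enough that~\eqref{eq:femderhamcomplex3d1} is exact, that $(\hat{\bs r}_2,\hat{\bs r}_3)$ is div stable, and that the constrained spaces $\mathbb V^{\curl}_{k+1}(\bs r_1,\hat{\bs r}_2)$ and $\mathbb V^{\div}_{k}(\hat{\bs r}_2,\hat{\bs r}_3)$ are defined.

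First I would check that consecutive maps compose to zero and land in the correct (sub)spaces. Since $\curl\grad=0$ and $\div\curl=0$ hold pointwise, it suffices to verify the containments $\grad\mathbb V^{\grad}_{k+2}(\bs r_0)\subseteq\mathbb V^{\curl}_{k+1}(\bs r_1,\hat{\bs r}_2)$ and $\curl\mathbb V^{\curl}_{k+1}(\bs r_1,\hat{\bs r}_2)\subseteq\mathbb V^{\div}_{k}(\hat{\bs r}_2,\hat{\bs r}_3)$. For the first, $\grad p\in\mathbb V^{\curl}_{k+1}(\bs r_1)$ because~\eqref{eq:femderhamcomplex3d1} is a complex, and $\curl\grad p=0\in\mathbb V^{\div}_{k}(\hat{\bs r}_2)\cap\ker(\div)$, so the defining constraint of $\mathbb V^{\curl}_{k+1}(\bs r_1,\hat{\bs r}_2)$ holds; for the second, $\curl\bs v\in\mathbb V^{\div}_{k}(\hat{\bs r}_2)$ by definition and $\div\curl\bs v=0\in\mathbb V^{L^2}_{k-1}(\hat{\bs r}_3)$, so $\curl\bs v\in\mathbb V^{\div}_{k}(\hat{\bs r}_2,\hat{\bs r}_3)$. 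The remaining containment $\div\mathbb V^{\div}_{k}(\hat{\bs r}_2,\hat{\bs r}_3)\subseteq\mathbb V^{L^2}_{k-1}(\hat{\bs r}_3)$ is built into the definition.

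Next I would establish exactness at each position. At $\mathbb V^{\grad}_{k+2}(\bs r_0)$ it is immediate since constants are the only polynomials with vanishing gradient. At $\mathbb V^{L^2}_{k-1}(\hat{\bs r}_3)$, surjectivity of $\div$ is precisely the div stability (iv). At $\mathbb V^{\curl}_{k+1}(\bs r_1,\hat{\bs r}_2)$, observe that if $\bs v$ lies in this space and $\curl\bs v=0$ then its defining constraint is automatic, so $\mathbb V^{\curl}_{k+1}(\bs r_1,\hat{\bs r}_2)\cap\ker(\curl)=\mathbb V^{\curl}_{k+1}(\bs r_1)\cap\ker(\curl)=\grad\mathbb V^{\grad}_{k+2}(\bs r_0)$ by exactness of~\eqref{eq:femderhamcomplex3d1} at its $\mathbb V^{\curl}$ position. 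The central position is $\mathbb V^{\div}_{k}(\hat{\bs r}_2,\hat{\bs r}_3)$: the inclusion $\curl\mathbb V^{\curl}_{k+1}(\bs r_1,\hat{\bs r}_2)\subseteq\mathbb V^{\div}_{k}(\hat{\bs r}_2,\hat{\bs r}_3)\cap\ker(\div)$ is the complex property just shown, and for the reverse I would take $\bs w\in\mathbb V^{\div}_{k}(\hat{\bs r}_2,\hat{\bs r}_3)\cap\ker(\div)$, note $\bs w\in\mathbb V^{\div}_{k}(\hat{\bs r}_2)\cap\ker(\div)\subseteq\mathbb V^{\div}_{k}(\bs r_2)\cap\ker(\div)$ using $\hat{\bs r}_2\geq\bs r_2$, invoke exactness of~\eqref{eq:femderhamcomplex3d1} to get $\bs v\in\mathbb V^{\curl}_{k+1}(\bs r_1)$ with $\curl\bs v=\bs w$, and finally observe that since $\curl\bs v=\bs w\in\mathbb V^{\div}_{k}(\hat{\bs r}_2)\cap\ker(\div)$, the definition of $\mathbb V^{\curl}_{k+1}(\bs r_1,\hat{\bs r}_2)$ yields $\bs v\in\mathbb V^{\curl}_{k+1}(\bs r_1,\hat{\bs r}_2)$, hence $\bs w\in\curl\mathbb V^{\curl}_{k+1}(\bs r_1,\hat{\bs r}_2)$.

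This is bookkeeping with definitions and inclusions rather than new analysis; the step I would treat most carefully is the $\mathbb V^{\div}$ position, specifically the observation that $\mathbb V^{\div}_{k}(\hat{\bs r}_2)\cap\ker(\div)=\mathbb V^{\div}_{k}(\hat{\bs r}_2,\hat{\bs r}_3)\cap\ker(\div)$ — a divergence-free field automatically lies in the smaller space because $0\in\mathbb V^{L^2}_{k-1}(\hat{\bs r}_3)$ — so that the curl-preimage produced from the coarser exact complex~\eqref{eq:femderhamcomplex3d1} inherits the finer smoothness constraint built into $\mathbb V^{\curl}_{k+1}(\bs r_1,\hat{\bs r}_2)$. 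As a sanity check one may also confirm the alternating dimension identity~\eqref{eq:abstractdimenidentity} for the new sequence and conclude via Lemma~\ref{lm:abstract}, but with all four exactness relations verified directly this is not necessary.
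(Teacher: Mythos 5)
Your proof is correct and follows essentially the same route as the paper's: both verify the four exactness relations directly, with the key step being that $\ker(\div)\cap\mathbb V^{\div}_{k}(\hat{\bs r}_2)\subseteq\ker(\div)\cap\mathbb V^{\div}_{k}(\bs r_2)=\curl\mathbb V^{\curl}_{k+1}(\bs r_1)$ so that the preimage from the coarser exact complex automatically lands in $\mathbb V^{\curl}_{k+1}(\bs r_1,\hat{\bs r}_2)$ by its very definition, while the kernel of $\curl$ is unchanged and surjectivity of $\div$ is the div-stability hypothesis. Your extra care with the observation that $\mathbb V^{\div}_{k}(\hat{\bs r}_2,\hat{\bs r}_3)\cap\ker(\div)=\mathbb V^{\div}_{k}(\hat{\bs r}_2)\cap\ker(\div)$ is a point the paper leaves implicit, but it is the same argument.
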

\begin{proof}
 Exactness of \eqref{eq:femderhamcomplex3d1} implies $\ker(\div)\cap \mathbb V_k^{\div}(\bs r_2) =\curl \mathbb V_{k+1}^{\curl}(\bs r_1)$. Then $\ker(\div)\cap \mathbb V_{k}^{\div}(\hat{\bs r}_2)\subseteq \ker(\div)\cap \mathbb V_{k}^{\div}(\bs r_2)=\curl \mathbb V_{k+1}^{\curl}(\bs r_1)$, by which we get $$\ker(\div)\cap \mathbb V_{k}^{\div}(\hat{\bs r}_2) =\curl \mathbb V_{k+1}^{\curl}(\bs r_1, \hat{\bs r}_2).$$
As in $\mathbb V^{\curl}_{k+1}(\boldsymbol{r}_1, \hat{\boldsymbol{r}}_2)$, only the range of $\curl$ operator is changed, the relation $\mathbb V^{\curl}_{k+1}(\boldsymbol{r}_1, \hat{\boldsymbol{r}}_2) \cap \ker(\curl) = \grad \mathbb V^{\grad}_{k+2}(\boldsymbol{r}_0)$ still holds. The relation $\div \mathbb V^{\div}_{k}(\hat{\boldsymbol{r}}_2, \hat{\boldsymbol{r}}_3) = \mathbb V^{L^2}_{k-1}(\hat{\boldsymbol{r}}_3)$ is from the assumption  $(\hat{\bs r}_2, \hat{\bs r}_3)$ is div stable.
\end{proof}

In the following examples, we shall further simplify the notation by presenting the smoothness parameters only and skip the space notation which should be clear from the context. 

\begin{example}\rm 
The standard de Rham complex is $(\bs 0, \bs {-1}, \bs{-1}, \bs{-1})$. Take the stable div pair $\hat{\boldsymbol{r}}_2=(2,1,0)^{\intercal}, \hat{\boldsymbol{r}}_3=(1,0,-1)^{\intercal}$, we obtain the finite element Stokes complex 
%\mnote{ Shall we abbreviate it as $\begin{pmatrix}
%	0\\
%	0\\
%	0
%   \end{pmatrix}
%   \xrightarrow{\grad}
%   \begin{pmatrix}
%	-1\\
%	-1\\
%	-1
%   \end{pmatrix}
%   \xrightarrow{\curl}
%   \begin{pmatrix}
%	2\\
%	1\\
%	0
%   \end{pmatrix}
%   \xrightarrow{\div} 
%   \begin{pmatrix}
%	1\\
%	0\\
%	-1
%   \end{pmatrix}$?}
 $$
\begin{pmatrix}
 0\\
 0\\
 0
\end{pmatrix}
\xrightarrow{\grad}
\begin{pmatrix}
 -1\\
 -1\\
 -1
\end{pmatrix}
%\begin{pmatrix}
% 2\\
% 1\\
% 0
%\end{pmatrix}
\xrightarrow{\curl}
\begin{pmatrix}
 2\\
 1\\
 0
\end{pmatrix}
\xrightarrow{\div} 
\begin{pmatrix}
 1\\
 0\\
 -1
\end{pmatrix}.
 $$
Space $\mathbb V_{k+1}^{\curl}(\begin{pmatrix}
 -1\\
 -1\\
 -1
\end{pmatrix},
\begin{pmatrix}
 2\\
 1\\
 0
\end{pmatrix})$, for $k+1\geq 7$, is similar but slightly different from the one constructed in~\cite{ZhangZhang2020}, whose shape function space coincides with the first kind N\'ed\'elec element.

% \LC{ If 1st and 2nd only differs in the bubble function $\mathbb B^{\curl}(T)$, we can also generalize to 1st kind edge and face element.}

% \XH{The face DoFs of 1st and 2nd curl element are also different.}

Use $\hat{\boldsymbol{r}}_2=(2,1,0)^{\intercal}, \hat{\boldsymbol{r}}_3=(1,0,0)^{\intercal}$, we get another finite element Stokes complex ending with a continuous element 
% \mnote{ Shall we abbreviate it as $\begin{pmatrix}
%	0\\
%	0\\
%	0
%   \end{pmatrix}
%   \xrightarrow{\grad}
%   \begin{pmatrix}
%	-1\\
%	-1\\
%	-1
%   \end{pmatrix}
%   \xrightarrow{\curl}
%   \begin{pmatrix}
%	2\\
%	1\\
%	0
%   \end{pmatrix}
%   \xrightarrow{\div} 
%   \begin{pmatrix}
%	1\\
%	0\\
%	0
%   \end{pmatrix}$?}
 $$
\begin{pmatrix}
 0\\
 0\\
 0
\end{pmatrix}
\xrightarrow{\grad}
\begin{pmatrix}
 -1\\
 -1\\
 -1
\end{pmatrix}
%\begin{pmatrix}
% 2\\
% 1\\
% 0
%\end{pmatrix}
\xrightarrow{\curl}
\begin{pmatrix}
 2\\
 1\\
 0
\end{pmatrix}
%\begin{pmatrix}
% 1\\
% 0\\
% 0
%\end{pmatrix}
\xrightarrow{\div} 
\begin{pmatrix}
 1\\
 0\\
 0
\end{pmatrix}.
 $$
As $\mathbb V_{k}^{\div}(\hat{\bs r}_2)\subset \bs H^1(\Omega;\mathbb R^3)$, the space $\mathbb V_{k+1}^{\curl}(\bs r_1, \hat{\bs r}_2 )\subset \bs H(\grad\curl,\Omega):=\{\bs v\in \bs H(\curl,\Omega), \curl \bs v\in \bs H^1(\Omega;\mathbb R^3)\}$ can be used to discretize the quad-curl problem~\cite{ZhangZhang2020,ZhengHuXu2011}.  
\end{example}

\subsection{$H(\curl)$-conforming finite elements}
Next we give a finite element description for $\mathbb V_{k+1}^{\curl}(\bs r_1, \bs r_2)$ with $\bs r_2\geq \bs r_1\ominus 1$. We should keep DoFs for $\curl \bs v\in \mathbb V^{\div}_k(\bs r_2)$, combine DoFs for $\mathbb V^3_{k+1}(\bs r_1)$, and eliminate linearly dependent ones. 

On edge $e$, we choose frame $\{\boldsymbol{t},\boldsymbol{n}_1,\boldsymbol{n}_2\}$ and to facilitate the calculation simplify as $(x_0, x_1, x_2)$. Then $\bs u = (u_0, u_1, u_2)^{\intercal}$ with $u_0 = \bs u\cdot \bs t, u_1 = \bs u\cdot \bs n_1, u_2 = \bs u\cdot \bs n_2$, and 
$\curl \bs u = (\partial_1 u_2 - \partial_2 u_1, \partial_2 u_0 - \partial_0 u_2, \partial_0 u_1 - \partial_1 u_0)^{\intercal}$. Apply the normal derivative $D^{\alpha}_n$ to $\curl\boldsymbol{u}$, we obtain, for $\alpha\in \mathbb T^1_j(e), j = 0,\ldots, r_2^e$, 
\begin{align}
D^{\alpha}_n \curl \bs u = & \big ( D_n^{(\alpha_1+1, \alpha_2)} u_2 - D_n^{(\alpha_1, \alpha_2+1)} u_1, \label{eq:Dcurl0}\\
&\, D_n^{(\alpha_1, \alpha_2+1)} u_0 - \partial_0D_n^{(\alpha_1, \alpha_2)} u_2, \label{eq:Dcurl1}\\
&\, \partial_0D_n^{(\alpha_1, \alpha_2)} u_1 - D_n^{(\alpha_1+1, \alpha_2)} u_0 \big )^{\intercal}. \label{eq:Dcurl2} 
\end{align}
DoFs on $D^{\alpha}_n \curl \bs u$ are given and thus $D^{\alpha}_n \curl \bs u$ is considered as known on edge $e$. 
Include DoFs on $D_n^{\alpha}u_1, 0\leq |\alpha|\leq r_1^e$, then $\partial_0 D_n^{\alpha}u_1$ is  known on edge $e$. Linear combination with \eqref{eq:Dcurl0}, we can determine  $D_n^{\alpha}u_2$, $1\leq |\alpha|\leq r_1^e, \alpha_1\geq 1$ but $\partial_{n_2}^j u_2, j = 0,\ldots, r_1^e$ are left and thus should be included in the DoF. Linear combination with \eqref{eq:Dcurl2}, we can determine $D^{\alpha}_n u_0$ for $1\leq |\alpha|\leq r_1^e$ with $\alpha_1\geq 1$. Linear combination with \eqref{eq:Dcurl1}, we also know $D^{\alpha}_n u_0$ for $1\leq |\alpha|\leq r_1^e$ with $\alpha_2\geq 1$. So only $\alpha = (0,0)$ is left. Namely DoF on $u_0= \bs u\cdot \bs t$ should be included explicitly. 

We then move to faces and present formulae on the normal and tangential component of $\curl \bs u$. 
For smooth scalar function $v$ and face $f$ with unit normal vector $\boldsymbol{n}$, define surface gradient
$$
\nabla_f v:=\Pi_f(\nabla v)= \nabla v - (\partial_n v)\boldsymbol{n}.
$$
For smooth vector function $\boldsymbol{u}$, define surface rotation
$$
\mathrm{rot}_f \boldsymbol{u}:=(\boldsymbol{n}\times\nabla)\cdot\boldsymbol{u}=(\curl\boldsymbol{u})\cdot\boldsymbol{n}.
$$
Clearly it holds $\mathrm{rot}_f \boldsymbol{u}=\mathrm{rot}_f(\Pi_f\boldsymbol{u})$.

\begin{lemma}
On face $f$, for smooth enough function $\bs u$, it holds that
% \begin{equation}
%\Pi_f (\nabla \times \bs u) - {\rm rot}_{f} (\bs u\cdot \bs n)|_f = (\partial_n \Pi_f \bs u)^{\rm{R}_f},
%\end{equation}
%where $^{R_f}$ is the $90^{\circ}$ rotation counter clockwise on $f$ w.r.t. to $\bs n$
\begin{align}
\label{eq:curlun} \bs n\cdot (\nabla \times \bs u) & = \mathrm{rot}_f(\Pi_f \bs u),\\
\label{eq:curluT} \boldsymbol{n}\times(\nabla\times\boldsymbol{u})&=\nabla_f(\boldsymbol{u}\cdot\boldsymbol{n})-\partial_n(\Pi_f\boldsymbol{u}),\\
\label{eq:DcurluT} \partial_n^j (\boldsymbol{n}\times(\nabla\times\boldsymbol{u}))&=\nabla_f(\partial_n^j\boldsymbol{u}\cdot\boldsymbol{n})- \Pi_f\partial_n^{j+1}\boldsymbol{u}, \quad j\geq 0.
%\\
%\Pi_f(\nabla\times\boldsymbol{u})&=-(\boldsymbol{n}\times\nabla)(\boldsymbol{u}\cdot\boldsymbol{n})+\partial_n(\boldsymbol{n}\times\boldsymbol{u}).
\end{align}
%One edge $e$ with frame $\{\boldsymbol{t},\boldsymbol{n}_1,\boldsymbol{n}_2\}$ and $\mathscr N_e = \spa \{\bs n_1, \bs n_2\}$, we have
%\begin{align*}
%\boldsymbol{t}\cdot (\nabla \times \boldsymbol{u})&= \curl_{\mathscr N_e} \Pi_{\mathscr N_e} \bs u = \partial_{n_1}(\boldsymbol{u}\cdot\boldsymbol{n}_2)-\partial_{n_2}(\boldsymbol{u}\cdot\boldsymbol{n}_1),\\
%\boldsymbol{t}\times (\nabla \times \boldsymbol{u})&= \nabla_{\mathscr N_e} (\boldsymbol{u}\cdot\boldsymbol{t})-\partial_t(\Pi_{\mathscr N_e} \boldsymbol{u}) \\
%&= (\partial_{n_1}(\boldsymbol{u}\cdot\boldsymbol{t})-\partial_{t}(\boldsymbol{u}\cdot\boldsymbol{n}_1))\boldsymbol{n}_1 + (\partial_{n_2}(\boldsymbol{u}\cdot\boldsymbol{t})-\partial_{t}(\boldsymbol{u}\cdot\boldsymbol{n}_2))\boldsymbol{n}_2.
%\end{align*}
\end{lemma}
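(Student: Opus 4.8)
The plan is to derive all three identities from the standard constant-vector calculus identity $\bs n\times(\nabla\times\bs u)=\nabla(\bs n\cdot\bs u)-(\bs n\cdot\nabla)\bs u$, valid whenever $\bs n$ is a constant vector, together with the fact that on the planar face $f$ the unit normal $\bs n$, the tangential projection $\Pi_f$, the surface gradient $\nabla_f$, and the normal derivative $\partial_n=\bs n\cdot\nabla$ all have constant coefficients and hence commute with one another. For \eqref{eq:curlun}, I would first observe that for any smooth scalar $\phi$ one has $\curl(\phi\bs n)=\nabla\phi\times\bs n$ since $\bs n$ is constant, so $(\curl(\phi\bs n))\cdot\bs n=0$; writing $\bs u=\Pi_f\bs u+(\bs u\cdot\bs n)\bs n$ and taking the normal component of the curl then gives $\bs n\cdot(\nabla\times\bs u)=\bs n\cdot(\nabla\times\Pi_f\bs u)=\mathrm{rot}_f(\Pi_f\bs u)$, which also supplies the earlier remark $\mathrm{rot}_f\bs u=\mathrm{rot}_f(\Pi_f\bs u)$.

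For \eqref{eq:curluT}, I would start from $\bs n\times(\nabla\times\bs u)=\nabla(\bs u\cdot\bs n)-\partial_n\bs u$ and split the right-hand side into tangential and normal parts: $\nabla(\bs u\cdot\bs n)=\nabla_f(\bs u\cdot\bs n)+\big(\partial_n(\bs u\cdot\bs n)\big)\bs n$ and $\partial_n\bs u=\partial_n(\Pi_f\bs u)+\big(\partial_n(\bs u\cdot\bs n)\big)\bs n$, the latter using that $\bs n$ is constant. The normal contributions cancel, leaving $\bs n\times(\nabla\times\bs u)=\nabla_f(\bs u\cdot\bs n)-\partial_n(\Pi_f\bs u)$, which is consistent with the left-hand side being tangential.

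Finally, \eqref{eq:DcurluT} follows by applying $\partial_n^{\,j}$ to \eqref{eq:curluT} and commuting it through $\nabla_f$, $\Pi_f$, and the contraction with $\bs n$ — all permissible because these operators have constant coefficients on $f$ — so that $\partial_n^{\,j}\big(\bs n\times(\nabla\times\bs u)\big)=\nabla_f\big((\partial_n^{\,j}\bs u)\cdot\bs n\big)-\Pi_f\,\partial_n^{\,j+1}\bs u$. There is no real obstacle here; the only thing requiring care is the bookkeeping of the tangential/normal decomposition and the use of the fact that $f$ is a flat simplex face, so $\bs n$ is a genuine constant and every surface operator commutes with $\partial_n$ — which is precisely what lets the iterated identity \eqref{eq:DcurluT} drop out of \eqref{eq:curluT} for free.
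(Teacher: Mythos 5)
Your proposal is correct and follows essentially the same route as the paper: the tangential identity \eqref{eq:curluT} is obtained from $\bs n\times(\nabla\times\bs u)=\nabla(\bs u\cdot\bs n)-\partial_n\bs u$ with the same tangential/normal splitting and cancellation of the normal parts, and \eqref{eq:DcurluT} by commuting $\partial_n^j$ through the constant-coefficient operators. Your extra justification of \eqref{eq:curlun} via $\curl(\phi\bs n)\cdot\bs n=0$ merely fills in the paper's remark that $\mathrm{rot}_f\bs u=\mathrm{rot}_f(\Pi_f\bs u)$, so there is nothing substantively different.
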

\begin{proof}
Identity \eqref{eq:curlun} is indeed the definition of $\mathrm{rot}_f$. 
By a direct computation, \eqref{eq:curluT} follows from
\begin{align*}	
\boldsymbol{n}\times(\nabla\times\boldsymbol{u})&=\nabla(\boldsymbol{u}\cdot\boldsymbol{n})-\partial_n\boldsymbol{u}\\
&=\nabla_f(\boldsymbol{u}\cdot\boldsymbol{n})+\partial_n(\boldsymbol{u}\cdot\boldsymbol{n})\boldsymbol{n}-\big(\partial_n(\Pi_f\boldsymbol{u})+\partial_n(\boldsymbol{u}\cdot\boldsymbol{n})\boldsymbol{n}\big) \\
&=\nabla_f(\boldsymbol{u}\cdot\boldsymbol{n})-\partial_n(\Pi_f\boldsymbol{u}).
\end{align*}
Exchange partial derivatives $\partial_n^j$ with $n\times (\nabla \times \cdot)$ to get \eqref{eq:DcurluT}.
%which indicates
%$$
%\Pi_f(\nabla\times\boldsymbol{u})=-(\boldsymbol{n}\times\nabla)(\boldsymbol{u}\cdot\boldsymbol{n})+\partial_n(\boldsymbol{n}\times\boldsymbol{u}).
%$$
%For edge $e$, by $\nabla=\boldsymbol{t}\partial_t+\boldsymbol{n}_1\partial_{n_1}+\boldsymbol{n}_2\partial_{n_2}$,
%$$
%\boldsymbol{t}\cdot\curl\boldsymbol{u}=(\boldsymbol{t}\times\nabla)\cdot\boldsymbol{u}=(\boldsymbol{n}_2\partial_{n_1}-\boldsymbol{n}_1\partial_{n_2})\cdot\boldsymbol{u}=\partial_{n_1}(\boldsymbol{u}\cdot\boldsymbol{n_2})-\partial_{n_2}(\boldsymbol{u}\cdot\boldsymbol{n_1}).
%$$
\end{proof}
We always include DoFs for $\curl \bs u$. So by \eqref{eq:curlun} $\mathrm{rot}_f \Pi_f \bs u$ can be determined. By the Helmholtz decomposition of a vector function on the face, the tangential component  $\Pi_f \bs u$ can be determined by $\mathrm{rot}_f \Pi_f \bs u$ and the moment with $\grad_f\mathbb B_{k+2} (f;\begin{pmatrix}
r_1^{\texttt{v}} \\
r_1^e
\end{pmatrix}+1)$. The normal derivative of the normal component $\partial_n^j(\bs u\cdot \bs n)$, for $j=0,1,\ldots, r_1^f$, will be included as DoFs. Then $ \nabla_f(\partial_n^j \boldsymbol{u}\cdot\boldsymbol{n})$ can be computed on face. Thanks to \eqref{eq:DcurluT}, the normal derivative of the tangential component $\partial_n^j \Pi_f \bs u$ can be determined. 

We are in the position to present a finite element description for the space $\mathbb V_{k+1}^{\curl}(\bs r_1, \bs r_2)$. 
Take $\mathbb P_{k+1}(T;\mathbb R^3)$ as the space of shape functions.
The degrees of freedom are
\begin{align}
\nabla^i\boldsymbol{v}(\texttt{v}), & \quad i=0,\ldots, r_1^{\texttt{v}}, \label{eq:3dCrcurlfemdofV1}\\
\nabla^j(\curl\boldsymbol{v})(\texttt{v}),  & \quad j=\max\{r_1^{\texttt{v}},0\},\ldots, r_2^{\texttt{v}}, \label{eq:3dCrcurlfemdofV2}\\
\int_e \boldsymbol{v}\cdot \boldsymbol{t}\,q \dd s, &\quad q\in \mathbb P_{k-1 - 2r_1^{\texttt{v}}}(e), \label{eq:3dCrcurlfemdofE1}\\
\int_e \frac{\partial^{j}(\boldsymbol{v}\cdot\boldsymbol{n}_{1})}{\partial n_1^{i}\partial n_2^{j-i}}\, q \dd s, &\quad q\in \mathbb P_{k -1- 2r_1^{\texttt{v}}+j}(e), 0\leq i\leq j\leq r_1^e, \label{eq:3dCrcurlfemdofE2}\\
\int_e \partial_{n_2}^{j}(\boldsymbol{v}\cdot\boldsymbol{n}_{2})\, q \dd s, &\quad q\in \mathbb P_{k -1- 2r_1^{\texttt{v}}+j}(e), 0\leq j\leq r_1^e, \label{eq:3dCrcurlfemdofE3}\\
\int_e \partial_{n_1}^j((\curl\boldsymbol{v})\cdot \boldsymbol{n}_2)\,q \dd s, &\quad q\in \mathbb P_{k - 2(r_2^{\texttt{v}} +1)+j}(e),  0\leq j\leq r_2^e, \label{eq:3dCrcurlfemdofE4}\\
\int_e \frac{\partial^{j}((\curl\boldsymbol{v})\cdot\boldsymbol{t})}{\partial n_1^{i}\partial n_2^{j-i}}\, q \dd s, &\quad q\in \mathbb P_{k - 2(r_2^{\texttt{v}} +1)+j}(e), 0\leq i\leq j\leq r_2^e, \label{eq:3dCrcurlfemdofE5}\\
\int_e \frac{\partial^{j}((\curl\boldsymbol{v})\cdot\boldsymbol{n}_1)}{\partial n_1^{i}\partial n_2^{j-i}}\, q \dd s, &\quad q\in \mathbb P_{k - 2(r_2^{\texttt{v}} +1)+j}(e), 0\leq i\leq j\leq r_2^e, \label{eq:3dCrcurlfemdofE6}\\
% \int_f (\boldsymbol{v}\times\boldsymbol{n})\cdot\boldsymbol{q}\dd s, &\quad  q\in\mathbb B_{k+1}^{\div} (f;\begin{pmatrix}
% r_1^{\texttt{v}} \\
% r_1^e
% \end{pmatrix},\begin{pmatrix}
% r_2^{\texttt{v}} \\
% r_2^e
% \end{pmatrix}),\label{eq:3dCrcurlfemdofF1}\\
\int_f (\Pi_f\boldsymbol{v})\cdot\boldsymbol{q}\dd S, &\quad \boldsymbol{q}\in\grad_f\mathbb B_{k+2} (f;\begin{pmatrix}
r_1^{\texttt{v}} \\
r_1^e
\end{pmatrix}+1),\label{eq:3dCrcurlfemdofF1}\\
\int_f \partial_n^j(\boldsymbol{v}\cdot\boldsymbol{n})\, q\dd S, &\quad  q\in\mathbb B_{k+1-j} (f;\begin{pmatrix}
r_1^{\texttt{v}} \\
r_1^e
\end{pmatrix}-j), 0\leq j\leq r_1^f, \label{eq:3dCrcurlfemdofF2}\\
\int_f (\curl\boldsymbol{v})\cdot\boldsymbol{n}\,q\dd S, &\quad q\in\mathbb B_{k}(f;\begin{pmatrix}
r_2^{\texttt{v}} \\
r_2^e
\end{pmatrix})/\mathbb R,\label{eq:3dCrcurlfemdofF3}\\
\int_f \partial_n^j((\curl\boldsymbol{v})\cdot\boldsymbol{t}_{\ell})\ q \dd S, &\quad  q\in \mathbb B_{k - j} (f;\begin{pmatrix}
r_2^{\texttt{v}} \\
r_2^e
\end{pmatrix}-j),  0\leq j\leq r_2^{f}, \ell=1,2, \label{eq:3dCrcurlfemdofF4}\\
\int_T (\curl\boldsymbol{v})\cdot\boldsymbol{q} \dx, &\quad \boldsymbol{q}\in \mathbb B^{\div}_{k}(\boldsymbol{r}_2)\cap \ker(\div), \label{eq:3dCrcurlfemdofT1} \\
\int_T \boldsymbol{v}\cdot\boldsymbol{q} \dx, &\quad \boldsymbol{q}\in \grad\mathbb B_{k+2}(\boldsymbol{r}_1+1) \label{eq:3dCrcurlfemdofT2}
\end{align}
for each $\texttt{v}\in \Delta_{0}(T)$, $e\in \Delta_{1}(T)$ and $f\in \Delta_{2}(T)$.

\begin{lemma}\label{lem:3dCrcurlfemunisolvence}
The DoFs~\eqref{eq:3dCrcurlfemdofV1}-\eqref{eq:3dCrcurlfemdofT2} are uni-solvent for $\mathbb P_{k+1}(T;\mathbb R^3)$.  
\end{lemma}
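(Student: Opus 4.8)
The plan is to mimic the unisolvence proofs of Theorem~\ref{thm:Cr3dfemunisolvence} and Lemma~\ref{lem:3dCrdivfemunisolvence}: first verify that the number of DoFs~\eqref{eq:3dCrcurlfemdofV1}--\eqref{eq:3dCrcurlfemdofT2} equals $\dim\mathbb P_{k+1}(T;\mathbb R^3)=3\binom{k+4}{3}$, and then show that a shape function $\boldsymbol v$ annihilating all DoFs must vanish. For the dimension count, the key observation (as in Lemma~\ref{lem:3dCrdivfemunisolvence}) is that the subset of DoFs involving $\curl\boldsymbol v$, namely \eqref{eq:3dCrcurlfemdofV2}, \eqref{eq:3dCrcurlfemdofE4}--\eqref{eq:3dCrcurlfemdofE6}, \eqref{eq:3dCrcurlfemdofF3}--\eqref{eq:3dCrcurlfemdofF4}, and \eqref{eq:3dCrcurlfemdofT1}, is designed to determine $\curl\boldsymbol v\in\mathbb V^{\div}_k(\bs r_2)\cap\ker(\div)$, whose dimension is $\dim\bigl(\mathbb V^{\div}_k(\bs r_2)\cap\ker(\div)\bigr)$ computed locally; by the div stability of the bubble spaces (Theorem~\ref{thm:divbubbleonto}) together with the polynomial de Rham complex \eqref{eq:polyderham}, this count is independent of $\bs r_2$ once $\bs r_2\geq\bs r_1\ominus 1$, so it suffices to check the total in the base case $\bs r_2=\bs r_1\ominus 1$, where the remaining DoFs \eqref{eq:3dCrcurlfemdofV1}, \eqref{eq:3dCrcurlfemdofE1}--\eqref{eq:3dCrcurlfemdofE3}, \eqref{eq:3dCrcurlfemdofF1}--\eqref{eq:3dCrcurlfemdofF2}, \eqref{eq:3dCrcurlfemdofT2} reproduce (by linear recombination, using the edge and face analyses preceding the statement) the DoFs for $\mathbb V^3_{k+1}(\bs r_1)$, i.e.\ those of Theorem~\ref{thm:Cr3dfemunisolvence} applied componentwise. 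Matching this against the exactness-based identity $\dim\grad\mathbb P_{k+2}(T)+\dim\bigl(\mathbb P_k(T;\mathbb R^3)\cap\ker\div\bigr)=\dim\mathbb P_{k+1}(T;\mathbb R^3)$ closes the count.

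For the injectivity argument, assume all DoFs vanish. First I would use the $\curl$-block of DoFs to conclude $\curl\boldsymbol v=0$: the vanishing of \eqref{eq:3dCrcurlfemdofF3} puts $(\curl\boldsymbol v)\cdot\boldsymbol n|_f$ into $\mathbb B_k(f;\cdot)/\mathbb R$, hence into $\mathbb B_k(f;\cdot)\cap L_0^2(f)$ modulo the vertex/edge data, and then vanishing of \eqref{eq:3dCrcurlfemdofV2}, \eqref{eq:3dCrcurlfemdofE4}--\eqref{eq:3dCrcurlfemdofE6}, \eqref{eq:3dCrcurlfemdofF3}--\eqref{eq:3dCrcurlfemdofF4}, \eqref{eq:3dCrcurlfemdofT1} forces $\curl\boldsymbol v=0$ by the unisolvence of the DoFs defining $\mathbb V^{\div}_k(\bs r_2)\cap\ker(\div)$ (equivalently, arguing as in Lemma~\ref{lem:3dCrdivfemunisolvence}). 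Once $\curl\boldsymbol v=0$, the exactness of the polynomial de Rham complex \eqref{eq:polyderham} gives $\boldsymbol v=\grad w$ for some $w\in\mathbb P_{k+2}(T)$, unique up to an additive constant. Then I would translate each remaining DoF on $\boldsymbol v$ into a DoF on $w$ in the style of Theorem~\ref{thm:Cr3dfemunisolvence}: $\boldsymbol v\cdot\boldsymbol t=\partial_t w$ along edges, $\Pi_f\boldsymbol v=\nabla_f w$ on faces so that $\int_f(\Pi_f\boldsymbol v)\cdot\grad_f q\,\dd S$ relates to $\int_f\nabla_f w\cdot\nabla_f q\,\dd S$, $\boldsymbol v\cdot\boldsymbol n=\partial_n w$, and $\nabla^i\boldsymbol v(\texttt v)=\nabla^{i+1}w(\texttt v)$ at vertices; the interior DoFs \eqref{eq:3dCrcurlfemdofT2} with test functions in $\grad\mathbb B_{k+2}(\bs r_1+1)$ pair with $w$ via integration by parts against $\mathbb B_{k+2}(\bs r_1+1)$. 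Matching the resulting set of conditions on $w$ with the unisolvent DoFs of $\mathbb V_{k+2}(\mathcal T_h;\bs r_1+1)$ (note $\bs r_1+1=\bs r_0$, and the smoothness inequalities for $\bs r_0$ hold by hypothesis) yields $w\in\mathbb R$, hence $\boldsymbol v=\grad w=0$.

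The main obstacle I expect is the edge analysis: on an edge the three scalar components of $\curl\boldsymbol v$ mix first-order derivatives of $\boldsymbol v\cdot\boldsymbol t$, $\boldsymbol v\cdot\boldsymbol n_1$, $\boldsymbol v\cdot\boldsymbol n_2$ in a way that is only triangular after one carefully exploits the identities \eqref{eq:Dcurl0}--\eqref{eq:Dcurl2}, so one must track precisely which low-order normal-derivative data $\partial_{n_2}^j(\boldsymbol v\cdot\boldsymbol n_2)$ and $\partial_t$-antiderivative constants are \emph{not} recovered from $\curl\boldsymbol v$ and therefore must be retained as the explicit DoFs \eqref{eq:3dCrcurlfemdofE1}--\eqref{eq:3dCrcurlfemdofE3}; getting the polynomial degrees $k-1-2r_1^{\texttt v}+j$ in \eqref{eq:3dCrcurlfemdofE2}--\eqref{eq:3dCrcurlfemdofE3} to line up with what Theorem~\ref{thm:Cr3dfemunisolvence} prescribes for $\mathbb V_{k+1}(\mathcal T_h;\bs r_1)$ requires exactly this bookkeeping. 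The analogous face step is easier because the Helmholtz/Hodge decomposition of $\Pi_f\boldsymbol v$ on $f$ cleanly separates the $\mathrm{rot}_f$-part (determined by $\curl\boldsymbol v\cdot\boldsymbol n$) from the $\grad_f$-part (captured by \eqref{eq:3dCrcurlfemdofF1}), and the normal-derivative identity \eqref{eq:DcurluT} handles the higher jets, so no new difficulty arises there beyond verifying the degree indices in the face bubble spaces.
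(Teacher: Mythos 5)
Your proposal is correct, and its first half (the dimension count) is essentially the paper's argument: isolate the DoFs acting on $\curl\bs v$, observe their total is independent of $\bs r_2$, reduce to the base case $\bs r_2=\bs r_1\ominus 1$, and match the remainder against the componentwise DoFs of $\mathbb V^3_{k+1}(\bs r_1)$. The kernel argument, however, takes a genuinely different route after the common step $\curl\bs v=\bs 0$ (which both you and the paper obtain from \eqref{eq:3dCrcurlfemdofV2}, \eqref{eq:3dCrcurlfemdofE4}--\eqref{eq:3dCrcurlfemdofE6}, \eqref{eq:3dCrcurlfemdofF3}--\eqref{eq:3dCrcurlfemdofT1}, plus \eqref{eq:3dCrcurlfemdofE1} to fix the mean of $(\curl\bs v)\cdot\bs n$ on each face via Stokes). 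You integrate immediately, writing $\bs v=\grad w$ with $w\in\mathbb P_{k+2}(T)$, translate every remaining DoF into a DoF on $w$, and invoke scalar unisolvence for $\mathbb V^{\grad}_{k+2}(\bs r_0)$; this is valid, but the translated DoFs are not literally those of Theorem~\ref{thm:Cr3dfemunisolvence} --- they are the recombined set \eqref{eq:3dCrgradfemdofV1}--\eqref{eq:3dCrgradfemdofT1}, whose unisolvence the paper only establishes later (Lemma~\ref{lem:3dCrgradfemunisolvence}), so your route would need to prove that equivalence in place. The paper instead stays with $\bs v$: it expands $\curl\bs v=\bs 0$ in the edge frame to get \eqref{eq:edgecurldecompt}--\eqref{eq:edgecurldecompn2}, uses these to upgrade the vanishing of \eqref{eq:3dCrcurlfemdofE2}--\eqref{eq:3dCrcurlfemdofE3} to the vanishing of the full edge-jet DoFs \eqref{eq:curldoftemp1}--\eqref{eq:curldoftemp2}, repeats the analogous face-frame computation to get $(\partial_n^j\bs v)|_f=\bs 0$ for $0\le j\le r_1^f$, and only then kills the interior part with \eqref{eq:3dCrcurlfemdofT2}. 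The two arguments are equivalent in substance (the curl identities you exploit on edges are exactly the symmetry of second derivatives of your potential $w$); yours is conceptually cleaner and reuses scalar machinery, while the paper's avoids the forward dependence on the recombined scalar DoFs and keeps the bookkeeping local to $\bs v$. Your identification of the edge analysis as the delicate point, and of which data ($\partial_{n_2}^j(\bs v\cdot\bs n_2)$, the tangential mean) cannot be recovered from $\curl\bs v$, is exactly the right concern and is precisely what the explicit DoFs \eqref{eq:3dCrcurlfemdofE1}--\eqref{eq:3dCrcurlfemdofE3} are retained for.
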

\begin{proof}
Since $\nabla(\curl\boldsymbol{v})$ is trace-free, the number of DoF~\eqref{eq:3dCrcurlfemdofV2} at one vertex is 
$$
3{r_2^{\texttt{v}}+3\choose3}-3{r_1^{\texttt{v}}+2\choose3}-\left({r_2^{\texttt{v}}+2\choose3}-{r_1^{\texttt{v}}+1\choose3}\right).
$$
Then thanks to the proof of Lemma~\ref{lem:3dCrdivfemunisolvence},
the sum of the number of DoFs~\eqref{eq:3dCrcurlfemdofV2}, \eqref{eq:3dCrcurlfemdofE4}-\eqref{eq:3dCrcurlfemdofE6} and~\eqref{eq:3dCrcurlfemdofF3}-\eqref{eq:3dCrcurlfemdofT1} is 
\begin{align*}
&\quad3\dim \mathbb P_{k}(T)-\left(\dim \mathbb P_{k-1}(T)-1-4{r_2^{\texttt{v}}+2\choose3}\right)-4-12{r_2^{\texttt{v}}+3\choose3}\\
&\quad+ 12{r_2^{\texttt{v}}+3\choose3}-12{r_1^{\texttt{v}}+2\choose3}-4{r_2^{\texttt{v}}+2\choose3}+4{r_1^{\texttt{v}}+1\choose3} \\
&=3\dim \mathbb P_{k}(T)-\dim \mathbb P_{k-1}(T)-3-12{r_1^{\texttt{v}}+2\choose3}+4{r_1^{\texttt{v}}+1\choose3},
\end{align*}
which is constant with respect to $\boldsymbol{r}_2$. Hence the sum of the number of DoFs~\eqref{eq:3dCrcurlfemdofV1}-\eqref{eq:3dCrcurlfemdofT2} is also constant with respect to $\boldsymbol{r}_2$. 
It suffices to consider case $\boldsymbol{r}_2= \boldsymbol{r}_1\ominus1$ to count the dimension.
Now the number of DoFs \eqref{eq:3dCrcurlfemdofE3}-\eqref{eq:3dCrcurlfemdofE6} equals to that of 
\begin{align}
\int_e \frac{\partial^{j}(\boldsymbol{v}\cdot\boldsymbol{t})}{\partial n_1^{i}\partial n_2^{j-i}}\, q \dd s, &\quad q\in \mathbb P_{k -1- 2r_1^{\texttt{v}}+j}(e), 0\leq i\leq j, 1\leq j\leq r_1^e, \label{eq:curldoftemp1}\\
\int_e \frac{\partial^{j}(\boldsymbol{v}\cdot\boldsymbol{n}_{2})}{\partial n_1^{i}\partial n_2^{j-i}}\, q \dd s, &\quad q\in \mathbb P_{k -1- 2r_1^{\texttt{v}}+j}(e), 0\leq i\leq j\leq r_1^e. \label{eq:curldoftemp2}
\end{align}
As a result the number of DoFs~\eqref{eq:3dCrcurlfemdofV1}-\eqref{eq:3dCrcurlfemdofT2} equals to $\dim\mathbb P_{k+1}(T;\mathbb R^3)$.

Take $\boldsymbol{v}\in\mathbb P_{k+1}(T;\mathbb R^3)$ and assume all the DoFs~\eqref{eq:3dCrcurlfemdofV1}-\eqref{eq:3dCrcurlfemdofT2} vanish. 
The vanishing DoF~\eqref{eq:3dCrcurlfemdofE1} implies $(\curl\boldsymbol{v})\cdot\boldsymbol{n}|_f\in L_0^2(f)$ for $f\in\Delta_2(T)$.
By the vanishing DoFs~\eqref{eq:3dCrcurlfemdofV1}-\eqref{eq:3dCrcurlfemdofV2}, \eqref{eq:3dCrcurlfemdofE4}-\eqref{eq:3dCrcurlfemdofE6} and~\eqref{eq:3dCrcurlfemdofF3}-\eqref{eq:3dCrcurlfemdofT1}, we get $\curl\boldsymbol{v}=\boldsymbol{0}$.

For edge $e\in\Delta_1(T)$ with frame $\{\boldsymbol{t},\boldsymbol{n}_1,\boldsymbol{n}_2\}$, we have
\begin{align*}
\curl\boldsymbol{v}&=\curl((\boldsymbol{v}\cdot\boldsymbol{t})\boldsymbol{t}+(\boldsymbol{v}\cdot\boldsymbol{n}_1)\boldsymbol{n}_1+(\boldsymbol{v}\cdot\boldsymbol{n}_2)\boldsymbol{n}_2) \\
&= -(\boldsymbol{t}\times\nabla)(\boldsymbol{v}\cdot\boldsymbol{t})-(\boldsymbol{n}_1\times\nabla)(\boldsymbol{v}\cdot\boldsymbol{n}_1)-(\boldsymbol{n}_2\times\nabla)(\boldsymbol{v}\cdot\boldsymbol{n}_2),
\end{align*}
which combined with $\curl\boldsymbol{v}=\boldsymbol{0}$ implies
\begin{align}
(\curl\boldsymbol{v})\cdot\boldsymbol{t}&=\partial_{n_1}(\boldsymbol{v}\cdot\boldsymbol{n}_2)-\partial_{n_2}(\boldsymbol{v}\cdot\boldsymbol{n}_1)=0, \label{eq:edgecurldecompt} \\
(\curl\boldsymbol{v})\cdot\boldsymbol{n}_1&=\partial_{n_2}(\boldsymbol{v}\cdot\boldsymbol{t})-\partial_{t}(\boldsymbol{v}\cdot\boldsymbol{n}_2)=0,\label{eq:edgecurldecompn1}\\
(\curl\boldsymbol{v})\cdot\boldsymbol{n}_2&=\partial_{t}(\boldsymbol{v}\cdot\boldsymbol{n}_1)-\partial_{n_1}(\boldsymbol{v}\cdot\boldsymbol{t})=0. \label{eq:edgecurldecompn2}
\end{align}
Then it follows from the vanishing DoFs \eqref{eq:3dCrcurlfemdofE2}-\eqref{eq:3dCrcurlfemdofE3} that
\eqref{eq:curldoftemp1}-\eqref{eq:curldoftemp2} vanish.

Similarly for face $f\in\Delta_2(T)$ with frame $\{\boldsymbol{n},\boldsymbol{t}_1,\boldsymbol{t}_2\}$, we have
\begin{align*}
\curl\boldsymbol{v}&=\curl((\boldsymbol{v}\cdot\boldsymbol{n})\boldsymbol{n}+(\boldsymbol{v}\cdot\boldsymbol{t}_1)\boldsymbol{t}_1+(\boldsymbol{v}\cdot\boldsymbol{t}_2)\boldsymbol{t}_2) \\
&= -(\boldsymbol{n}\times\nabla)(\boldsymbol{v}\cdot\boldsymbol{n})-(\boldsymbol{t}_1\times\nabla)(\boldsymbol{v}\cdot\boldsymbol{t}_1)-(\boldsymbol{t}_2\times\nabla)(\boldsymbol{v}\cdot\boldsymbol{t}_2),
\end{align*}
which combined with $\curl\boldsymbol{v}=\boldsymbol{0}$ implies
\begin{align*}
(\curl\boldsymbol{v})\cdot\boldsymbol{n}&=\partial_{t_1}(\boldsymbol{v}\cdot\boldsymbol{t}_2)-\partial_{t_2}(\boldsymbol{v}\cdot\boldsymbol{t}_1)=0,  \\
(\curl\boldsymbol{v})\cdot\boldsymbol{t}_1&=\partial_{t_2}(\boldsymbol{v}\cdot\boldsymbol{n})-\partial_{n}(\boldsymbol{v}\cdot\boldsymbol{t}_2)=0,\\
(\curl\boldsymbol{v})\cdot\boldsymbol{t}_2&=\partial_{n}(\boldsymbol{v}\cdot\boldsymbol{t}_1)-\partial_{t_1}(\boldsymbol{v}\cdot\boldsymbol{n})=0.
\end{align*}
Then it follows from the vanishing DoFs \eqref{eq:3dCrcurlfemdofF1}-\eqref{eq:3dCrcurlfemdofF2} that
$$
(\partial_n^j\boldsymbol{v})|_f=\boldsymbol{0} \quad\textrm{ for }\; 0\leq j\leq r_1^f. 
$$
% $$
% \int_f \partial_n^j(\boldsymbol{v}\cdot\boldsymbol{t}_i)\, q\dd s, \quad  q\in\mathbb B_{k+1-j} (f;\begin{pmatrix}
% r_1^{\texttt{v}} \\
% r_1^e
% \end{pmatrix}-j), 1\leq j\leq r_1^f, i=1,2. 
% $$

Finally $\boldsymbol{v}=\boldsymbol{0}$ holds from the vanishing DoF~\eqref{eq:3dCrcurlfemdofT2}.  
\end{proof}

Define the global $H(\curl)$-conforming finite element space
\begin{align*}
\mathbb V_{k+1}^{\curl}(\bs r_1, \bs r_2 )=\{\bs v\in\bs L^2(\Omega;\mathbb R^3):&\, \bs v|_{T}\in\mathbb P_{k+1}(T;\mathbb R^3) \textrm{ for each } T\in\mathcal T_h \\
& \textrm{ all the DoFs \eqref{eq:3dCrcurlfemdofV1}-\eqref{eq:3dCrcurlfemdofF4} are single-valued}\}.
\end{align*}
By the proof of Lemma~\ref{lem:3dCrcurlfemunisolvence}, we have 
$$
\mathbb V_{k+1}^{\curl}(\bs r_1, \bs r_2)\subseteq\mathbb V_{k+1}^{\curl}(\bs r_1), \quad \curl\mathbb V_{k+1}^{\curl}(\bs r_1, \bs r_2)\subseteq\mathbb V_{k}^{\div}(\bs r_2, \bs r_3).
$$
%\LC{Verify it is $H(\curl)$-conforming.}
 We illustrate $\mathbb V_{k+1}^{\curl}(\bs r_1, \bs r_2)\subset\boldsymbol{H}(\curl,\Omega)$. First consider $r_2^e\geq0$. For $\boldsymbol{v}\in\mathbb V_{k+1}^{\curl}(\bs r_1, \bs r_2)$, DoFs~\eqref{eq:3dCrcurlfemdofV1}-\eqref{eq:3dCrcurlfemdofV2} and \eqref{eq:3dCrcurlfemdofE4}-\eqref{eq:3dCrcurlfemdofE6} determine $(\nabla^j(\curl\boldsymbol{v}))|_e$ for edge $e$ and $j=0,\ldots,r_2^e$. Due to identities \eqref{eq:edgecurldecompt}-\eqref{eq:edgecurldecompn2}, DoFs~\eqref{eq:3dCrcurlfemdofV1} and \eqref{eq:3dCrcurlfemdofE1}-\eqref{eq:3dCrcurlfemdofE3} then determine $(\boldsymbol{v}\cdot\boldsymbol{t})|_e$ and $(\nabla^j\boldsymbol{v})|_e$ for $j=0,\ldots,r_1^e$. Finally, $\boldsymbol{v}\in\boldsymbol{H}(\curl,\Omega)$ follows from DoFs \eqref{eq:3dCrcurlfemdofF1} and \eqref{eq:3dCrcurlfemdofF3}.

 When $r_2^e=-1$, we have $r_1^e\in\{-1,0\}$. For $\boldsymbol{v}\in\mathbb V_{k+1}^{\curl}(\bs r_1, \bs r_2)$, DoFs~\eqref{eq:3dCrcurlfemdofV1} and \eqref{eq:3dCrcurlfemdofE1}-\eqref{eq:3dCrcurlfemdofE3} determine $(\boldsymbol{v}\cdot\boldsymbol{t})|_e$ for $r_1^e=-1$ and $\boldsymbol{v}|_e$ for $r_1^e=0$. Then $\boldsymbol{v}\in\boldsymbol{H}(\curl,\Omega)$ follows from DoFs \eqref{eq:3dCrcurlfemdofF1} and \eqref{eq:3dCrcurlfemdofF3}.

\subsection{Finite element de Rham and Stokes complexes with decay smoothness}
%\LC{Sort by the beginning space $\mathbb V^{\grad}(\bs r_0)$.}

We consider the decay smoothness sequence with lower bound $-1$
\begin{equation*}%\label{eq:rsimple}
\bs r_0, \quad \bs r_1 = \bs r_0\ominus 1, \quad \bs r_2 = \bs r_1 \ominus 1, \quad \bs r_3 = \bs r_2 \ominus 1.
\end{equation*}

%\subsubsection{$C^m, m\geq 2$}
We first consider the case $\bs r_0^f = m\geq 2$. Then $\bs r_i = \bs r_{i-1} - 1\geq 0$ for $i=1,2$. The polynomial degree starts from $k\geq 8m+1\geq 17$.

\begin{lemma} \label{lm:dimension}
Let $r_2^f\geq 0, r_2^{e}\geq 2 \, r_2^f + 2, r_2^{\texttt{v}}\geq 2 \, r_2^e + 2, k\geq 2 r_2^{\texttt{v}} + 3
$, and let $ \bs r_0 = \bs r_1+1, \bs r_1 = \bs r_2+1, \bs r_3 = \bs r_2-1$.
Write
\begin{align*}
\dim \mathbb V^{\grad}_{k+2}(\mathcal T_h; \boldsymbol{r}_0) &= C_{00}|\Delta_0(\mathcal T_h)| + C_{01}|\Delta_1(\mathcal T_h)| + C_{02}|\Delta_2(\mathcal T_h)| + C_{03}|\Delta_3(\mathcal T_h)|,\\
\dim \mathbb V^{\curl}_{k+1}(\mathcal T_h; \boldsymbol{r}_1) &= C_{10}|\Delta_0(\mathcal T_h)| + C_{11}|\Delta_1(\mathcal T_h)| + C_{12}|\Delta_2(\mathcal T_h)| + C_{13}|\Delta_3(\mathcal T_h)|,\\
\dim \mathbb V^{\div}_{k}(\mathcal T_h; \boldsymbol{r}_2)&= C_{20}|\Delta_0(\mathcal T_h)| + C_{21}|\Delta_1(\mathcal T_h)| + C_{22}|\Delta_2(\mathcal T_h)| + C_{23}|\Delta_3(\mathcal T_h)|,\\
\dim \mathbb V^{L^2}_{k-1}(\mathcal T_h; \boldsymbol{r}_3)&= C_{30}|\Delta_0(\mathcal T_h)| + C_{31}|\Delta_1(\mathcal T_h)| + C_{32}|\Delta_2(\mathcal T_h)| + C_{33}|\Delta_3(\mathcal T_h)|.
\end{align*}
Then 
\begin{equation*}
C_{ij} = {3 \choose i} C_j(k+2-i, \boldsymbol{r}_i), \quad i, j=0,1,2,3,
\end{equation*}
satisfy the alternating sum identity
\begin{equation*}%\label{eq:localcomplexdimension}
C_{0i} - C_{1i} + C_{2i} - C_{3i} = (-1)^i, \quad i=0,1,2,3.
\end{equation*}
\end{lemma}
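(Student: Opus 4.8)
The plan is to evaluate the alternating sum directly on the closed‑form dimension formulas of Proposition~\ref{lem:Cr3dfemdimension}, recognizing it as a ``diagonal'' third finite difference that collapses to a constant because of a cancellation among the arguments of the binomial coefficients. (As a consistency check, contracting the asserted identity against Euler's relation $\sum_j(-1)^j|\Delta_j(\mathcal T_h)|=1$ for $\Omega$ a ball reproduces exactly the total dimension identity $\dim\mathcal V_0-\dim\mathcal V_1+\dim\mathcal V_2-\dim\mathcal V_3=1$ needed in Lemma~\ref{lm:abstract}.)

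\textbf{Step 1: the formula for $C_{ij}$.} Under the hypotheses $r_2^f\ge 0$, $r_2^e\ge 2r_2^f+2$, $r_2^{\texttt v}\ge 2r_2^e+2$ one checks that $\bs r_0=\bs r_2+2$, $\bs r_1=\bs r_2+1$, $\bs r_2$ and $\bs r_3=\bs r_2-1$ are all valid smoothness vectors and that $k+2$, $k+1$, $k$, $k-1$ respectively meet the degree bound required in Proposition~\ref{lem:Cr3dfemdimension}; this is where $k\ge 2r_2^{\texttt v}+3$ is used. Moreover $r_1^f=r_2^f+1\ge0$ and $r_2^f\ge0$ force $\mathbb V_{k+1}(\bs r_1)$ and $\mathbb V_k(\bs r_2)$ to be $C^{\ge0}$, hence $\mathbb V^{\curl}_{k+1}(\bs r_1)=\mathbb V^3_{k+1}(\bs r_1)$ and $\mathbb V^{\div}_k(\bs r_2)=\mathbb V^3_k(\bs r_2)$ (tangential, resp.\ normal, continuity being implied by full continuity). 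Applying Proposition~\ref{lem:Cr3dfemdimension} to each of the four scalar‑ or vector‑valued spaces, with scalar multiplicity $1,3,3,1=\binom30,\binom31,\binom32,\binom33$, yields $C_{ij}=\binom3i\,C_j(k+2-i,\bs r_i)$.

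\textbf{Step 2: reduction to a finite difference.} Put $h_j(t):=C_j(k+t,\ \bs r_2+t)$ (the smoothness shift being componentwise), so that $C_{ij}=\binom3i\,h_j(2-i)$ since $\bs r_i=\bs r_2+(2-i)$. Then
\[
C_{0j}-C_{1j}+C_{2j}-C_{3j}=h_j(2)-3h_j(1)+3h_j(0)-h_j(-1)=\Delta^3 h_j(-1),
\]
where $\Delta$ is the forward difference in $t$. The key observation is that every binomial coefficient $\binom{\,\cdot\,}{3}$ or $\binom{\,\cdot\,}{2}$ and every linear prefactor in the formula for $C_j(k,\bs r)$ has an argument of the form $c+ak+b\,r^{\texttt v}+c'r^e+d\,r^f$; after $k\mapsto k+t$, $r^{\texttt v}\mapsto r_2^{\texttt v}+t$, $r^e\mapsto r_2^e+t$, $r^f\mapsto r_2^f+t$ this argument becomes its value at $(k,\bs r_2)$ plus $(a+b+c'+d)\,t$. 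Hence each summand of $h_j(t)$ is a polynomial in $t$ of degree at most $3$, so $\Delta^3 h_j$ is the constant $6\,[t^3]h_j$, computed summand by summand by linearity. For $j=0$ only $\binom{r^{\texttt v}+3}{3}$ depends on $t$ and $\Delta^3\binom{r_2^{\texttt v}+t+3}{3}=1$. For $j=1$ the prefactor $k+r^e-2r^{\texttt v}-1$ has coefficient sum $1+1-2=0$, hence is constant in $t$; then $(k+r^e-2r^{\texttt v}-1)\binom{r^e+2}{2}$ is quadratic in $t$ and contributes $0$, while $-\binom{r^e+2}{3}$ contributes $-1$; total $-1$. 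For $j=2$ the arguments $k+2-r^f$, $r^{\texttt v}+2-r^f$, $k-2r^{\texttt v}+r^e$, $k-2r^{\texttt v}+r^f$ all have coefficient sum $0$, so those four summands are constant or linear in $t$ and contribute nothing, whereas $\binom{k+3}{3}$, $-3\binom{r^{\texttt v}+3}{3}$, $-3\binom{k-2r^{\texttt v}-1}{3}$ (the last with argument decreasing in $t$) contribute $1-3+3=1$. For $j=3$ use $C_3=\binom{k+3}{3}-4C_0-6C_1-4C_2$ and the three values just found: $1-4\cdot1-6\cdot(-1)-4\cdot1=-1$. In all cases $\Delta^3 h_j(-1)=(-1)^j$, which is the claim.

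\textbf{Main obstacle.} The computation in Step~2 treats each $\binom{x}{m}$ as the polynomial $x(x-1)\cdots(x-m+1)/m!$, while Proposition~\ref{lem:Cr3dfemdimension} only supplies $C_j(k,\bs r)$ as the genuine dimension for valid $\bs r$ and $k$ above threshold; the point requiring care is that for all four shifts $t\in\{-1,0,1,2\}$ every binomial argument remains in the range where the count and the polynomial agree — which is exactly the validity‑and‑degree verification already carried out in Step~1 for $\bs r_0,\bs r_1,\bs r_2,\bs r_3$. Granting this, the polynomial identities of Step~2 are identities of dimensions, and the lemma follows.
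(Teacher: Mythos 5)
Your proof is correct and follows essentially the same route as the paper: both evaluate the alternating sum column by column on the closed-form coefficients of Proposition~\ref{lem:Cr3dfemdimension}, with the whole computation reducing to the identity $\binom{m+5}{3}-3\binom{m+4}{3}+3\binom{m+3}{3}-\binom{m+2}{3}=1$ (the paper's \eqref{eq:dimensionpolyderham}), which is exactly your statement that the third difference of $\binom{\cdot}{3}$ is $1$ while terms of degree at most two in the shift parameter cancel. Your finite-difference packaging merely makes explicit the cancellations that the paper performs by direct grouping, and your Step~1 checks (validity of all four $\bs r_i$, the degree thresholds, and $\mathbb V^{\curl}=\mathbb V^3$, $\mathbb V^{\div}=\mathbb V^3$ when $r^f\geq 0$) are the same implicit prerequisites the paper relies on.
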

\begin{proof}
For the column of  $|\Delta_0(\mathcal T_h)|$, by Lemma~\ref{lem:Cr3dfemdimension} and \eqref{eq:dimensionpolyderham} with $k=r_2^{\texttt{v}}$,
$$
C_{00} - C_{10} + C_{20} - C_{30} = {r_2^{\texttt{v}}+5 \choose 3} - 3{r_2^{\texttt{v}}+4 \choose 3} + 3{r_2^{\texttt{v}}+3 \choose 3} - {r_2^{\texttt{v}}+2 \choose 3}=1.
$$
For the column of  $|\Delta_1(\mathcal T_h)|$, by Lemma~\ref{lem:Cr3dfemdimension} and \eqref{eq:dimensionpolyderham} with $k=r_2^{e}-1$,
\begin{align*}
&\quad C_{01} - C_{11} + C_{21} - C_{31}\\
&=(k+r_2^e-2r_2^{\texttt{v}}-1)\left [{r_2^{e}+4 \choose 2} - 3{r_2^{e}+3 \choose 2} + 3{r_2^{e}+2 \choose 2} - {r_2^{e}+1 \choose 2}\right ] \\
& \quad -{r_2^e+4 \choose 3} + 3{r_2^e+3 \choose 3} - 3{r_2^e+2 \choose 3} + {r_2^e+1 \choose 3}=-1.
\end{align*}
For the column of  $|\Delta_2(\mathcal T_h)|$, by Lemma~\ref{lem:Cr3dfemdimension} and \eqref{eq:dimensionpolyderham},
\begin{align*}
&\quad C_{02} - C_{12} + C_{22} - C_{32}\\
&={k+5 \choose 3} - 3{k+4 \choose 3} + 3{k+3 \choose 3} - {k+2 \choose 3} \\
& \quad -3\left [{r^{\texttt{v}}+5 \choose 3} - 3{r^{\texttt{v}}+4 \choose 3} + 3{r^{\texttt{v}}+3 \choose 3} - {r^{\texttt{v}}+2 \choose 3}\right ] \\
& \quad -3\left [{k-2r^{\texttt{v}}-3 \choose 3} - 3{k-2r^{\texttt{v}}-2 \choose 3} + 3{k-2r^{\texttt{v}}-1 \choose 3} - {k-2r^{\texttt{v}} \choose 3}\right ] \\
&=1-3+3=1.
\end{align*}
For the column of  $|\Delta_3(\mathcal T_h)|$,
applying \eqref{eq:dimensionpolyderham} again,
\begin{align*}
&\quad C_{03} - C_{13} + C_{23} - C_{33}\\
&={k+5 \choose 3} - 3{k+4 \choose 3} + 3{k+3 \choose 3} - {k+2 \choose 3} -4(C_{00} - C_{10} + C_{20} - C_{30})\\
&\quad -6(C_{01} - C_{11} + C_{21} - C_{31}) -4(C_{02} - C_{12} + C_{22} - C_{32}) \\
&=1-4+6-4=-1.
\end{align*}
This ends the proof.  
\end{proof}

We summarize the coefficients $C_{ij}$ in Table \ref{table:dimension}. 
%\begin{table}[htbp]
%	\centering
%\caption{Dimensions of finite element spaces.}
%	\renewcommand{\arraystretch}{1.8}
%	\begin{tabular}{@{} c c c c c @{}}
%	\toprule
%& $|\Delta_0(\mathcal T_h)|$ & $|\Delta_1(\mathcal T_h)|$ & $|\Delta_2(\mathcal T_h)|$  & $|\Delta_3(\mathcal T_h)|$ \\
%\hline
%$\dim \mathbb V^{\grad}_{k+2}(\mathcal T_h; \boldsymbol{r}_0)$ & $C_0(k+2,\bs r^{\texttt{v}})$ & $C_1(k+2,\bs r^{\texttt{v}})$ & $C_2(k+2,\bs r^{\texttt{v}})$ & $C_3(k+2,\bs r^{\texttt{v}})$ \\
%$\dim \mathbb V^{\curl}_{k+1}(\mathcal T_h; \boldsymbol{r}_1)$ & $3C_0(k+1,\bs r_1)$ & $3C_1(k+1,\bs r_1)$ & $3C_2(k+1,\bs r_1)$ & $3C_3(k+1,\bs r_1)$ \\
% $\dim \mathbb V^{\div}_{k}(\mathcal T_h; \boldsymbol{r}_2)$ & $3C_0(k,\bs r^f)$ & $3C_1(k,\bs r^f)$ & $3C_2(k,\bs r^f)$ & $3C_3(k,\bs r^f)$ \\
%$\dim \mathbb V^{L^2}_{k-1}(\mathcal T_h; \boldsymbol{r}_3)$ & $C_0(k-1,\bs r_3)$ & $C_1(k-1,\bs r_3)$ & $C_2(k-1,\bs r_3)$ & $C_3(k-1,\bs r_3)$ \\
%$C_{0i} - C_{1i} + C_{2i} - C_{3i} $ & $1$ & $-1$ & $1$ & $-1$
%\medskip \\
%\bottomrule
%\end{tabular}
%\label{table:dimension}
%\end{table}%

\begin{table}[htbp]
	\centering
\caption{Dimensions of finite element spaces.}
	\renewcommand{\arraystretch}{1.8}
	\begin{tabular}{@{} c c c c c c @{}}
	\toprule
& $\mathbb V^{\grad}_{k+2}(\boldsymbol{r}_0)$ &  $\mathbb V^{\curl}_{k+1}(\boldsymbol{r}_1)$ &   $\mathbb V^{\div}_{k}(\boldsymbol{r}_2)$ & $\mathbb V^{L^2}_{k-1}(\boldsymbol{r}_3)$ & $\sum_{i=0}^3 (-1)^iC_{ij}$\\
\hline
$|\Delta_0(\mathcal T_h)|$ & $C_0(k+2, \bs r_0)$ & $3C_0(k+1, \bs r_1)$ & $3C_0(k, \bs r_2)$ & $C_0(k-1, \bs r_3)$ & $1$ \\
$|\Delta_1(\mathcal T_h)|$ & $C_1(k+2, \bs r_0)$ & $3C_1(k+1, \bs r_1)$ & $3C_1(k, \bs r_2)$ & $C_1(k-1, \bs r_3)$ & $-1$\\
$|\Delta_2(\mathcal T_h)|$ & $C_2(k+2, \bs r_0)$ & $3C_2(k+1, \bs r_1)$  & $3C_2(k, \bs r_2)$ & $C_2(k-1, \bs r_3)$ & $1$\\
$|\Delta_3(\mathcal T_h)|$ & $C_3(k+2, \bs r_0)$ & $3C_3(k+1,\bs r_1)$  & $3C_3(k,\bs r_2)$ & $C_3(k-1, \bs r_3)$ & $-1$\\
%
%\medskip \\
\bottomrule
\end{tabular}
\label{table:dimension}
\end{table}%

We first consider the case $r_2^f\geq 0$ so that $\mathbb V^{\div}_{k}(\boldsymbol{r}_2)\subset \bs H^1(\Omega;\mathbb R^3)$ and present the following finite element Stokes complex. 
\begin{theorem}\label{lm:femderhamcomplex}
Let $r_2^f\geq 0, r_2^{e}\geq 2 \, r_2^f + 2, r_2^{\texttt{v}}\geq 2 \, r_2^e + 2, k\geq 2 r_2^{\texttt{v}} + 3
$, and let $\bs r_0 = \bs r_1+1, \bs r_1 = \bs r_2+1, \bs r_3 = \bs r_2-1$. The finite element Stokes complex %\eqref{eq:femderhamcomplex3d}
\begin{equation}\label{eq:femderhamcomplex3d}
\mathbb R\xrightarrow{\subset}\mathbb V^{\grad}_{k+2}(\boldsymbol{r}_0)\xrightarrow{\grad}\mathbb V^{\curl}_{k+1}(\boldsymbol{r}_1)\xrightarrow{\curl}\mathbb V^{\div}_{k}(\boldsymbol{r}_2)\xrightarrow{\div}\mathbb V^{L^2}_{k-1}(\boldsymbol{r}_3)\to0
\end{equation}
is exact.
% \begin{equation}\label{eq:femderhamcomplex3d}
% \mathbb R\xrightarrow{\subset}\mathbb V^{\grad}_{k+2}(\boldsymbol{r}_0)\xrightarrow{\grad}\mathbb V^{\curl}_{k+1}(\boldsymbol{r}_1)\xrightarrow{\curl}\mathbb V^{\div}_{k}(\boldsymbol{r}_2)\xrightarrow{\div}\mathbb V^{L^2}_{k-1}(\boldsymbol{r}_3)\to0
% \end{equation}
%is exact.
\end{theorem}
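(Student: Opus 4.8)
The plan is to invoke the abstract exactness criterion of Lemma~\ref{lm:abstract} for the complex \eqref{eq:femderhamcomplex3d}. First I would check that \eqref{eq:femderhamcomplex3d} is indeed a complex. Because $r_2^f\geq 0$ we have $r_1^f=r_2^f+1\geq 0$ and $r_0^f=r_2^f+2\geq 0$, so all three spaces reduce to the plain smooth (vector) finite element spaces $\mathbb V_{k+2}(\bs r_0)$, $\mathbb V^3_{k+1}(\bs r_1)$, $\mathbb V^3_k(\bs r_2)$. Since $\bs r_1=\bs r_0-1$, $\bs r_2=\bs r_1-1$, $\bs r_3=\bs r_2-1$, differentiating lowers the polynomial degree and the smoothness vector each by one, so $\grad$ maps $\mathbb V_{k+2}(\bs r_0)$ into $\mathbb V^3_{k+1}(\bs r_1)$, $\curl$ maps $\mathbb V^3_{k+1}(\bs r_1)$ into $\mathbb V^3_k(\bs r_2)$, and $\div$ maps $\mathbb V^3_k(\bs r_2)$ into $\mathbb V_{k-1}(\bs r_3)$; the consecutive compositions vanish since $\curl\grad=0$ and $\div\curl=0$.

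Next I would verify three of the four exactness conditions needed by Lemma~\ref{lm:abstract}. The identity $\mathbb R=\mathbb V^{\grad}_{k+2}(\bs r_0)\cap\ker(\grad)$ is immediate. For $\grad\mathbb V^{\grad}_{k+2}(\bs r_0)=\mathbb V^{\curl}_{k+1}(\bs r_1)\cap\ker(\curl)$ only $\supseteq$ needs proof: given $\bs v\in\mathbb V^{\curl}_{k+1}(\bs r_1)\subset\bs H(\curl,\Omega)$ with $\curl\bs v=0$, exactness of the continuous de Rham complex on the ball-like domain $\Omega$ provides $\phi\in H^1(\Omega)$ with $\grad\phi=\bs v$; then $\phi|_T\in\mathbb P_{k+2}(T)$, $\phi$ is globally continuous, and on an $\ell$-subsimplex and for $1\leq|\beta|\leq r_0^\ell$ one has $D^\beta\phi=D^{\beta'}v_i$ with $|\beta'|=|\beta|-1\leq r_1^\ell$, hence $D^\beta\phi$ is single-valued there, so $\phi\in\mathbb V^{\grad}_{k+2}(\bs r_0)$ and $\bs v\in\grad\mathbb V^{\grad}_{k+2}(\bs r_0)$. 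The third condition, $\div\mathbb V^{\div}_k(\bs r_2)=\mathbb V^{L^2}_{k-1}(\bs r_3)$, is precisely the div stability \eqref{eq:divonto3dsimple}: its hypotheses hold since $\bs r_3=\bs r_2-1=\bs r_2\ominus 1$ (as $r_2^f\geq 0$), $\bs r_2$ satisfies \eqref{eq:boundr2fordivbubble} because $r_2^e\geq 2r_2^f+2\geq 2r_2^f+1$ and $r_2^{\texttt{v}}\geq 2r_2^e+2\geq 2r_2^e$, and $k\geq 2r_2^{\texttt{v}}+3$ makes $k$ large enough to ensure the bubble spaces entering \eqref{eq:divonto3dsimple} are nontrivial.

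Then I would confirm the dimension identity \eqref{eq:abstractdimenidentity}. By Lemma~\ref{lm:dimension} the dimension coefficients obey $C_{0i}-C_{1i}+C_{2i}-C_{3i}=(-1)^i$ for $i=0,1,2,3$; summing against $|\Delta_i(\mathcal T_h)|$ gives
\[
\dim\mathbb V^{\grad}_{k+2}(\bs r_0)-\dim\mathbb V^{\curl}_{k+1}(\bs r_1)+\dim\mathbb V^{\div}_k(\bs r_2)-\dim\mathbb V^{L^2}_{k-1}(\bs r_3)=\sum_{i=0}^{3}(-1)^i|\Delta_i(\mathcal T_h)|=1,
\]
the last equality being the Euler characteristic $\chi(\Omega)=1$ of the ball-like domain $\Omega$; hence $\dim\mathbb R-\dim\mathbb V^{\grad}_{k+2}(\bs r_0)+\dim\mathbb V^{\curl}_{k+1}(\bs r_1)-\dim\mathbb V^{\div}_k(\bs r_2)+\dim\mathbb V^{L^2}_{k-1}(\bs r_3)=0$. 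With three exactness conditions and \eqref{eq:abstractdimenidentity} verified, Lemma~\ref{lm:abstract} delivers exactness of \eqref{eq:femderhamcomplex3d}; in particular the remaining identity $\curl\mathbb V^{\curl}_{k+1}(\bs r_1)=\mathbb V^{\div}_k(\bs r_2)\cap\ker(\div)$ comes for free.

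The substantive work lies outside this proof: the div stability \eqref{eq:divonto3dsimple} (the content of Section~\ref{sec:divstability}) and the alternating dimension identity of Lemma~\ref{lm:dimension}. The only delicate point internal to the present argument is the regularity-lifting step for the scalar potential $\phi$ — checking that $\grad\phi=\bs v$ with $\bs v$ having single-valued derivatives up to order $r_1^\ell$ on each $\ell$-subsimplex forces $\phi$ to be single-valued together with its derivatives up to order $r_0^\ell=r_1^\ell+1$ there, using global $C^0$-continuity of $\phi$ for the zeroth order. This is also why exactness is verified at $\mathbb V^{\grad}$ rather than at $\mathbb V^{\div}$: the $\supseteq$ inclusion for $\curl$ would require choosing a vector potential modulo the gradient gauge, which is much harder to keep simultaneously polynomial and inter-element smooth.
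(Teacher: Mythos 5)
Your proposal is correct and follows essentially the same route as the paper: verify that \eqref{eq:femderhamcomplex3d} is a complex, establish $\mathbb R=\ker(\grad)$, $\grad\mathbb V^{\grad}_{k+2}(\bs r_0)=\mathbb V^{\curl}_{k+1}(\bs r_1)\cap\ker(\curl)$, and the div stability \eqref{eq:divonto3dsimple}, then combine Lemma~\ref{lm:dimension} with Euler's formula to get the alternating dimension identity and invoke Lemma~\ref{lm:abstract}. The only difference is that you spell out the potential-lifting argument for the $\grad$--$\curl$ exactness (which the paper states without proof), and that added detail is sound.
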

\begin{proof}
By construction~\eqref{eq:femderhamcomplex3d} is a complex, and  
$$
\grad\mathbb V^{\grad}_{k+2}(\boldsymbol{r}_0)=\mathbb V^{\curl}_{k+1}(\boldsymbol{r}_1)\cap\ker(\curl).
$$
Thanks to \eqref{eq:divonto3dsimple}, $\div\mathbb V^{\div}_{k}(\boldsymbol{r}_2)=\mathbb V^{L^2}_{k-1}(\boldsymbol{r}_3)$.
By Lemma~\ref{lm:dimension} and the Euler's formula,
\begin{align*}
& \quad\; 1 - \dim\mathbb V^{\grad}_{k+2}(\boldsymbol{r}_0) + \dim\mathbb V^{\curl}_{k+1}(\boldsymbol{r}_1) - \dim\mathbb V^{\div}_{k}(\boldsymbol{r}_2) + \dim\mathbb V^{L^2}_{k-1}(\boldsymbol{r}_3) \\
&=  1 - |\Delta_0(\mathcal T_h)| + |\Delta_1(\mathcal T_h)| - |\Delta_2(\mathcal T_h)| + |\Delta_3(\mathcal T_h)| = 0.
\end{align*}
Therefore the exactness of complex~\eqref{eq:femderhamcomplex3d} follows from Lemma~\ref{lm:abstract}.
\end{proof}

We then consider the case $r_2^f=-1$ and thus $\mathbb V^{\div}_{k}(\boldsymbol{r}_2)\subset \bs H(\div,\Omega)$. 
\begin{theorem}\label{lm:femderhamcomplexr2fm1}
	Let $\bs r_{2}$ satisfy \eqref{eq:boundr2fordivbubble} and $r_2^f=-1$. 
	Let $k\geq\max\{2 r_1^{\texttt{v}} + 1,1\}$, and let $\bs r_0 = \bs r_1+1\geq0, \bs r_2 = \bs r_1\ominus1, \bs r_3 = \bs r_2\ominus1$. Assume all $\bs r_i$ are valid smoothness vectors. The finite element de Rham complex
	\begin{equation}\label{eq:femderhamcomplex3dr2fm1}
	\mathbb R\xrightarrow{\subset}\mathbb V^{\grad}_{k+2}(\boldsymbol{r}_0)\xrightarrow{\grad}\mathbb V^{\curl}_{k+1}(\boldsymbol{r}_1)\xrightarrow{\curl}\mathbb V^{\div}_{k}(\boldsymbol{r}_2)\xrightarrow{\div}\mathbb V^{L^2}_{k-1}(\boldsymbol{r}_3)\to0
	\end{equation}
	is exact.
	\end{theorem}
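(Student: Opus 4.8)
The plan is to follow the proof of Theorem~\ref{lm:femderhamcomplex} almost verbatim, replacing only the Stokes‑complex dimension count (Lemma~\ref{lm:dimension}) by one adapted to the case $r_2^f=-1$. By the construction of the $H(\curl)$‑ and $H(\div)$‑conforming spaces, \eqref{eq:femderhamcomplex3dr2fm1} is a complex, $\mathbb R=\mathbb V^{\grad}_{k+2}(\bs r_0)\cap\ker(\grad)$ is trivial, and the relation $\bs r_0=\bs r_1+1$ is precisely what forces $\grad\mathbb V^{\grad}_{k+2}(\bs r_0)=\mathbb V^{\curl}_{k+1}(\bs r_1)\cap\ker(\curl)$ (facewise this is the polynomial identity $\mathbb P_{k+2}(T;\mathbb R^3)\cap\ker(\curl)=\grad\mathbb P_{k+3}(T)$ together with the matching of the smoothness degrees of freedom, exactly as in the Stokes case). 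So two of the four exactness conditions of Lemma~\ref{lm:abstract} are available for free.

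The third exactness condition, $\div\mathbb V^{\div}_{k}(\bs r_2)=\mathbb V^{L^2}_{k-1}(\bs r_3)$, is exactly the div‑stability statement \eqref{eq:divonto3dsimple}: its hypotheses hold because $\bs r_2$ satisfies \eqref{eq:boundr2fordivbubble} with $r_2^f=-1$ by assumption, $\bs r_3=\bs r_2\ominus1$, and since $\bs r_2=\bs r_1\ominus1$ gives $r_1^{\texttt v}\geq r_2^{\texttt v}$, the degree bound $k\geq 2r_1^{\texttt v}+1$ implies $k\geq\max\{2r_2^{\texttt v}+1,1\}$; nonemptiness of the interior and face bubble spaces is covered by the standing ``$k$ sufficiently large'' assumption. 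Thus three of the four conditions of Lemma~\ref{lm:abstract} are in hand, and it remains only to verify the dimension identity
\[
1-\dim\mathbb V^{\grad}_{k+2}(\bs r_0)+\dim\mathbb V^{\curl}_{k+1}(\bs r_1)-\dim\mathbb V^{\div}_{k}(\bs r_2)+\dim\mathbb V^{L^2}_{k-1}(\bs r_3)=0 .
\]

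I would prove this identity by writing every dimension face‑wise as $\sum_{i=0}^{3}\widetilde C_i^{(\cdot)}\,|\Delta_i(\mathcal T_h)|$: for $\mathbb V^{\grad}$ and $\mathbb V^{L^2}$ the coefficients are the $C_i$ of Proposition~\ref{lem:Cr3dfemdimension}; for $\mathbb V^{\div}_k(\bs r_2)$ with $r_2^f=-1$ they are read off Corollary~\ref{cor:spacedec}; and for $\mathbb V^{\curl}_{k+1}(\bs r_1)$, which when $r_1^f=-1$ is only tangentially continuous on faces (and carries the corresponding edge/vertex redistributions when $r_1^e$ or $r_1^{\texttt v}$ equals $-1$), by the analogous $H(\curl)$ count. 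One then checks column by column that $\widetilde C_i^{(0)}-\widetilde C_i^{(1)}+\widetilde C_i^{(2)}-\widetilde C_i^{(3)}=(-1)^i$, with $(0),(1),(2),(3)$ labelling the four spaces in order. The ``bulk'' part of this is exactly Lemma~\ref{lm:dimension}, i.e. it reduces to the polynomial de~Rham identity \eqref{eq:dimensionpolyderham} evaluated at $k=r_{2,+}^{\texttt v}$, at $k=r_{2,+}^{e}-1$, and at the generic $k$; the extra correction terms from Corollary~\ref{cor:spacedec} (the Iverson‑bracket pieces and the $C_2(k,\bs r_{2,+})$ redistributions, together with their $H(\curl)$ analogues) cancel in the alternating sum because each degree of freedom they move is moved from a subsimplex $f$ into the interiors of the tetrahedra containing $f$, hence enters two consecutive terms of $\sum_i(-1)^i(\cdot)$ with opposite signs. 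Combining this column identity with Euler's formula $|\Delta_0(\mathcal T_h)|-|\Delta_1(\mathcal T_h)|+|\Delta_2(\mathcal T_h)|-|\Delta_3(\mathcal T_h)|=1$ gives the displayed identity, and Lemma~\ref{lm:abstract} then yields exactness.

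The main obstacle is this last bookkeeping. In Lemma~\ref{lm:dimension} every space is the ``full'' $C^{r}$‑element, so the telescoping to \eqref{eq:dimensionpolyderham} is immediate; here $\mathbb V^{\div}_k(\bs r_2)$ (and possibly $\mathbb V^{\curl}_{k+1}(\bs r_1)$) carries the extra corrections coming from normal‑only (respectively tangential‑only) continuity, and the nontrivial point is to verify that these corrections leave the alternating sum unchanged — equivalently, that replacing a $\bs H^1$‑conforming element by the corresponding $\bs H(\div)$‑conforming one does not alter the Euler characteristic of the complex. Carrying this out requires distinguishing the subcases of \eqref{eq:boundr2fordivbubble} ($r_2^e\geq1$; $r_2^e=0$; $r_2^e=-1$; and, within each, whether $r_2^{\texttt v}=-1$), which is the only place where genuine, if routine, computation is needed.
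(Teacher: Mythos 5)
Your overall skeleton coincides with the paper's: establish that \eqref{eq:femderhamcomplex3dr2fm1} is a complex, get $\grad\mathbb V^{\grad}_{k+2}(\bs r_0)=\mathbb V^{\curl}_{k+1}(\bs r_1)\cap\ker(\curl)$ from the construction, get $\div\mathbb V^{\div}_{k}(\bs r_2)=\mathbb V^{L^2}_{k-1}(\bs r_3)$ from \eqref{eq:divonto3dsimple}, and close with the alternating dimension identity and Lemma~\ref{lm:abstract}. The divergence is in how the dimension identity is verified, and there your argument has a genuine gap.

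The cancellation mechanism you invoke --- that each correction term in Corollary~\ref{cor:spacedec} ``is moved from a subsimplex $f$ into the interiors of the tetrahedra containing $f$, hence enters two consecutive terms of $\sum_i(-1)^i(\cdot)$ with opposite signs'' --- is not correct. The redistribution is not globally measure-preserving: a face tangential bubble of dimension $2C_2(k,\bs r_{2,+})$ is removed once per face ($-2C_2|\Delta_2(\mathcal T_h)|$) but added once for \emph{every} tetrahedron containing that face ($+8C_2|\Delta_3(\mathcal T_h)|$, since each tetrahedron has four faces), and $8|\Delta_3(\mathcal T_h)|\neq 2|\Delta_2(\mathcal T_h)|$ in general. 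This is exactly why $\dim\mathbb V^{\div}_{k}(\bs r_2)>\dim\mathbb V^{3}_{k}(\bs r_{2,+})$: relaxing to normal-only continuity enlarges the space. So the corrections do not cancel within a single space, nor across the columns $i$ of one space; they can only cancel against the matching corrections in $\mathbb V^{\curl}_{k+1}(\bs r_1)$ and $\mathbb V^{L^2}_{k-1}(\bs r_3)$. Moreover the ``$H(\curl)$ analogue'' of Corollary~\ref{cor:spacedec} that your column-by-column check would need is not available and would itself require the bookkeeping you are trying to avoid. The paper instead verifies the identity by a pairwise DoF comparison: the DoFs~\eqref{eq:3dCrdivfemdofV2}, \eqref{eq:3dCrdivfemdofE4}, \eqref{eq:3dCrdivfemdofF3}--\eqref{eq:3dCrdivfemdofT1} of $\mathbb V^{\div}_{k}(\bs r_2)$ determine $\div\bs v$ and are matched against $\dim\mathbb V^{L^2}_{k-1}(\bs r_3)$ up to local constants, so $\dim\mathbb V^{\div}_{k}(\bs r_2)-\dim\mathbb V^{L^2}_{k-1}(\bs r_3)$ is expressed by the leftover DoFs~\eqref{eq:3dCrdivfemdofV1}, \eqref{eq:3dCrdivfemdofE1}--\eqref{eq:3dCrdivfemdofE3}, \eqref{eq:3dCrdivfemdofF1}--\eqref{eq:3dCrdivfemdofF2}, \eqref{eq:3dCrdivfemdofT2}; the same is then done for $\mathbb V^{\curl}_{k+1}(\bs r_1)$ relative to $\curl\bs v$, after which the alternating sum telescopes to Euler's formula. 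If you want to keep your column-wise formulation you must carry out this matching explicitly (it does work out, precisely because the DoFs of each space were designed to contain those of the next one in the complex), but the heuristic you give in its place would not survive scrutiny. Two further minor points: the local polynomial identity you want is $\mathbb P_{k+1}(T;\mathbb R^3)\cap\ker(\curl)=\grad\mathbb P_{k+2}(T)$, not the degrees $k+2$ and $k+3$ you wrote; and Lemma~\ref{lm:dimension} is proved only under $r_2^f\geq0$ with equality constraints throughout, so it cannot simply be cited as the ``bulk part'' here.
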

	\begin{proof}
	By construction~\eqref{eq:femderhamcomplex3dr2fm1} is a complex, and  
	$$
	\grad\mathbb V^{\grad}_{k+2}(\boldsymbol{r}_0)=\mathbb V^{\curl}_{k+1}(\boldsymbol{r}_1)\cap\ker(\curl).
	$$
	Thanks to \eqref{eq:divonto3dsimple}, $\div\mathbb V^{\div}_{k}(\boldsymbol{r}_2)=\mathbb V^{L^2}_{k-1}(\boldsymbol{r}_3)$.
	Then we count the dimensions. By comparing DoFs of $\mathbb V^{\div}_{k}(\boldsymbol{r}_2)$ and $\mathbb V^{L^2}_{k-1}(\boldsymbol{r}_3)$,
	\begin{align*}
	&\quad \dim\mathbb V^{\div}_{k}(\boldsymbol{r}_2)-\dim\mathbb V^{L^2}_{k-1}(\boldsymbol{r}_3)\\
	&=|\Delta_0(\mathcal T_h)|\left(3{r_2^{\texttt{v}}+3\choose3}-{r_2^{\texttt{v}}+2\choose3}\right) + |\Delta_1(\mathcal T_h)|\sum_{j=0}^{r_2^e}(2j+3)(k-2r_2^{\texttt{v}}-1+j) \\
	&\quad + |\Delta_2(\mathcal T_h)|\dim\mathbb B_{k} (f;\begin{pmatrix}
		r_2^{\texttt{v}} \\
		r_2^e
		\end{pmatrix}) + |\Delta_3(\mathcal T_h)|(\dim\mathbb B^{\div}_{k}(\boldsymbol{r}_2)\cap \ker(\div) -1).
	\end{align*}
	Then it follows from DoFs \eqref{eq:3dCrcurlfemdofV1}-\eqref{eq:3dCrcurlfemdofT2} of space $\mathbb V^{\curl}_{k+1}(\boldsymbol{r}_1)$ that
	\begin{align*}
		&\quad \dim\mathbb V^{\curl}_{k+1}(\boldsymbol{r}_1)-\dim\mathbb V^{\div}_{k}(\boldsymbol{r}_2)+\dim\mathbb V^{L^2}_{k-1}(\boldsymbol{r}_3)\\
		&=|\Delta_0(\mathcal T_h)|\left(3{r_1^{\texttt{v}}+3\choose3}-3{r_1^{\texttt{v}}+2\choose3}+{r_1^{\texttt{v}}+1\choose3}\right) \\
		&\quad +|\Delta_1(\mathcal T_h)|(k-2r_1^{\texttt{v}})+ |\Delta_1(\mathcal T_h)|\sum_{j=0}^{r_1^e}(j+2)(k-2r_1^{\texttt{v}}+j) \\
		&\quad +|\Delta_2(\mathcal T_h)|\dim\mathbb B_{k+2} (f;\begin{pmatrix}
			r_0^{\texttt{v}} \\
			r_0^e
			\end{pmatrix})+\chi(r_1^f=0)|\Delta_2(\mathcal T_h)|\dim\mathbb B_{k+1} (f;\begin{pmatrix}
				r_1^{\texttt{v}} \\
				r_1^e
				\end{pmatrix}) \\
		&\quad- |\Delta_2(\mathcal T_h)| + |\Delta_3(\mathcal T_h)|(\dim\mathbb B_{k+2}(\boldsymbol{r}_0) +1).
		\end{align*}
As a result, by the Euler's formula,
	\begin{align*}
	& \quad\; 1 - \dim\mathbb V^{\grad}_{k+2}(\boldsymbol{r}_0) + \dim\mathbb V^{\curl}_{k+1}(\boldsymbol{r}_1) - \dim\mathbb V^{\div}_{k}(\boldsymbol{r}_2) + \dim\mathbb V^{L^2}_{k-1}(\boldsymbol{r}_3) \\
	&=  1 - |\Delta_0(\mathcal T_h)| + |\Delta_1(\mathcal T_h)| - |\Delta_2(\mathcal T_h)| + |\Delta_3(\mathcal T_h)| = 0.
	\end{align*}
	Therefore the exactness of complex~\eqref{eq:femderhamcomplex3dr2fm1} follows from Lemma~\ref{lm:abstract}.
	\end{proof}

%
%
%\subsection{$\mathbb V^{\grad}$ is $C^1$ element}
% $$
%\begin{pmatrix}
% 4\\
% 2\\
% 1
%\end{pmatrix}
%\xrightarrow{\grad}
%\begin{pmatrix}
% 3\\
% 1\\
% 0
%\end{pmatrix}
%\xrightarrow{\curl}
%\begin{pmatrix}
% 2\\
% 0\\
% -1
%\end{pmatrix}
%\xrightarrow{\div} 
%\begin{pmatrix}
% 1\\
% -1\\
% -1
%\end{pmatrix}
% $$
%\LC{When count dimension, $r_2^f = -1$ will move some into element bubble.}
%
%
%\subsection{$\mathbb V^{\grad}$ is $C^0$ element}
% $$
%\begin{pmatrix}
% r_0^{\texttt{v}}\\
% r_0^e\\
% 0
%\end{pmatrix}
%\xrightarrow{\grad}
%\begin{pmatrix}
% r_0^{\texttt{v}} \ominus 1\\
% r_0^e\ominus 1\\
% -1
%\end{pmatrix}
%\xrightarrow{\curl}
%\begin{pmatrix}
% r_0^{\texttt{v}} \ominus 2\\
%  r_0^e\ominus 2\\
% -1
%\end{pmatrix}
%\xrightarrow{\div} 
%\begin{pmatrix}
% r_0^{\texttt{v}} \ominus 3\\
% r_0^e\ominus 3 \\
% -1
%\end{pmatrix}
% $$
%
%\LC{From standard de Rham which is exact. Then adding $r_0^v>1$ is easy (as the dimension is easy). Then add $r_0^e>1$. }

\subsection{Finite element de Rham and Stokes complex with inequality constraint}
We consider the most general case.
\begin{theorem}\label{thm:femderhamcomplex3dgeneral}
Let $\bs r_0 \geq 0, \bs r_1 = \bs r_0 -1, \bs r_2\geq\bs r_1\ominus1, \bs r_3\geq \bs r_2\ominus1$. Assume all $\bs r_i$ are valid smoothness vectors, i.e.,
$$
r_1^{\texttt{v}}\geq2r_1^e+1,\; r_1^{e}\geq2r_1^f+1,\; r_2^{\texttt{v}}\geq2r_2^e,\; r_2^{e}\geq2r_2^f, \; r_3^{\texttt{v}}\geq2r_3^e,\; r_3^{e}\geq2r_3^f.
$$
Assume $(\bs r_2, \bs r_3)$ is a div stable pair. 
Assume $k\geq\max\{2 r_1^{\texttt{v}} + 1,2 r_2^{\texttt{v}} + 1,2 r_3^{\texttt{v}} + 2,1\} 
$, $\dim\mathbb B_{k-1}(T;\boldsymbol{r}_3)\geq1$, and $\dim\mathbb B_{k}(f;\begin{pmatrix}
r_2^{\texttt{v}} \\
r_2^e
\end{pmatrix})\geq1$. The finite element complex \begin{equation}\label{eq:femderhamcomplex3dgeneral}
\mathbb R\xrightarrow{\subset}\mathbb V^{\grad}_{k+2}(\boldsymbol{r}_0)\xrightarrow{\grad}\mathbb V^{\curl}_{k+1}(\boldsymbol{r}_1,\boldsymbol{r}_2)\xrightarrow{\curl}\mathbb V^{\div}_{k}(\boldsymbol{r}_2,\boldsymbol{r}_3)\xrightarrow{\div}\mathbb V^{L^2}_{k-1}(\boldsymbol{r}_3)\to0
\end{equation}
is exact.
\end{theorem}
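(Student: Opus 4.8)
The plan is to obtain exactness from the abstract criterion Lemma~\ref{lm:abstract}, reducing the general complex to the ``decay'' de Rham and Stokes complexes already proved in Theorems~\ref{lm:femderhamcomplex} and~\ref{lm:femderhamcomplexr2fm1}. Set $\widetilde{\bs r}_2 := \bs r_1 \ominus 1$ and $\widetilde{\bs r}_3 := \widetilde{\bs r}_2 \ominus 1$. I would first check that $(\bs r_0,\bs r_1,\widetilde{\bs r}_2,\widetilde{\bs r}_3)$ is a valid de Rham smoothness sequence at the given degree $k$: when $r_1^f\geq 1$ one has $\widetilde{r}_2^f = r_1^f-1 \geq 0$, and the bounds $r_1^{\texttt{v}}\geq 2r_1^e+1$, $r_1^e\geq 2r_1^f+1$, $k\geq 2r_1^{\texttt{v}}+1$ translate exactly into the hypotheses $\widetilde{r}_2^e\geq 2\widetilde{r}_2^f+2$, $\widetilde{r}_2^{\texttt{v}}\geq 2\widetilde{r}_2^e+2$, $k\geq 2\widetilde{r}_2^{\texttt{v}}+3$ of Theorem~\ref{lm:femderhamcomplex}; when $r_1^f\leq 0$ one has $\widetilde{r}_2^f=-1$, and a short case split on $r_1^e$ verifies that $\widetilde{\bs r}_2$ fulfils~\eqref{eq:boundr2fordivbubble}, so Theorem~\ref{lm:femderhamcomplexr2fm1} applies (and $\dim\mathbb B_{k-1}(T;\bs r_3)\geq 1$ forces $\dim\mathbb B_{k-1}(T;\widetilde{\bs r}_3)\geq 1$, since stronger smoothness only shrinks the bubble space). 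Consequently
\begin{equation*}
\mathbb R\xrightarrow{\subset}\mathbb V^{\grad}_{k+2}(\bs r_0)\xrightarrow{\grad}\mathbb V^{\curl}_{k+1}(\bs r_1)\xrightarrow{\curl}\mathbb V^{\div}_{k}(\widetilde{\bs r}_2)\xrightarrow{\div}\mathbb V^{L^2}_{k-1}(\widetilde{\bs r}_3)\to 0
\end{equation*}
is exact; in particular $\grad\mathbb V^{\grad}_{k+2}(\bs r_0)=\ker(\curl)\cap\mathbb V^{\curl}_{k+1}(\bs r_1)$ and $\curl\mathbb V^{\curl}_{k+1}(\bs r_1)=\ker(\div)\cap\mathbb V^{\div}_{k}(\widetilde{\bs r}_2)$.

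Next, $(\bs r_2,\bs r_3)$ is a div stable pair (by assumption) with $\bs r_2\geq\bs r_1\ominus 1=\widetilde{\bs r}_2$; monotonicity of $\ominus$ then gives $\bs r_3\geq\bs r_2\ominus 1\geq\widetilde{\bs r}_2\ominus 1=\widetilde{\bs r}_3$, so $(\bs r_2,\bs r_3)$ dominates $(\widetilde{\bs r}_2,\widetilde{\bs r}_3)$ componentwise. The quickest finish is then to invoke the grafting theorem of Section~\ref{sec:femderhamcomplex} with the valid de Rham smoothness sequence $(\bs r_0,\bs r_1,\widetilde{\bs r}_2,\widetilde{\bs r}_3)$ and replacement pair $(\bs r_2,\bs r_3)$: since $\mathbb V^{\curl}_{k+1}(\bs r_1,\widetilde{\bs r}_2)=\mathbb V^{\curl}_{k+1}(\bs r_1)$ and $\mathbb V^{\div}_{k}(\widetilde{\bs r}_2,\widetilde{\bs r}_3)=\mathbb V^{\div}_{k}(\widetilde{\bs r}_2)$, the complex it delivers is precisely~\eqref{eq:femderhamcomplex3dgeneral}, which is therefore exact.

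For a self-contained argument one instead feeds Lemma~\ref{lm:abstract} directly, in parallel with the proofs of Theorems~\ref{lm:femderhamcomplex} and~\ref{lm:femderhamcomplexr2fm1}. The sequence~\eqref{eq:femderhamcomplex3dgeneral} is a complex ($\curl\grad=0$, $\div\curl=0$, with $\grad\mathbb V^{\grad}_{k+2}(\bs r_0)\subseteq\mathbb V^{\curl}_{k+1}(\bs r_1,\bs r_2)$ and $\curl\mathbb V^{\curl}_{k+1}(\bs r_1,\bs r_2)\subseteq\mathbb V^{\div}_{k}(\bs r_2,\bs r_3)$, as noted after Lemma~\ref{lem:3dCrcurlfemunisolvence}). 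Three of the four exactness identities hold: $\mathbb R=\ker(\grad)\cap\mathbb V^{\grad}_{k+2}(\bs r_0)$ trivially; $\grad\mathbb V^{\grad}_{k+2}(\bs r_0)=\ker(\curl)\cap\mathbb V^{\curl}_{k+1}(\bs r_1,\bs r_2)$, because passing to the subspace $\mathbb V^{\curl}_{k+1}(\bs r_1,\bs r_2)$ restricts only the range, not the kernel, of $\curl$, and the kernel inside $\mathbb V^{\curl}_{k+1}(\bs r_1)$ is $\grad\mathbb V^{\grad}_{k+2}(\bs r_0)$ by the first paragraph; and $\div\mathbb V^{\div}_{k}(\bs r_2,\bs r_3)=\mathbb V^{L^2}_{k-1}(\bs r_3)$ by~\eqref{eq:divonto3d}. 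Finally, for the dimension identity~\eqref{eq:abstractdimenidentity}, rank--nullity for $\div$ on $\mathbb V^{\div}_{k}(\bs r_2)$ (surjective onto $\mathbb V^{L^2}_{k-1}(\bs r_2\ominus 1)$ by~\eqref{eq:divonto3dsimple}) and for $\curl$ on $\mathbb V^{\curl}_{k+1}(\bs r_1)$ yield
\begin{align*}
\dim\mathbb V^{\div}_{k}(\bs r_2,\bs r_3) &= \dim\mathbb V^{\div}_{k}(\bs r_2)-\dim\mathbb V^{L^2}_{k-1}(\bs r_2\ominus 1)+\dim\mathbb V^{L^2}_{k-1}(\bs r_3),\\
\dim\mathbb V^{\curl}_{k+1}(\bs r_1,\bs r_2) &= \dim\mathbb V^{\curl}_{k+1}(\bs r_1)-\dim\big(\ker(\div)\cap\mathbb V^{\div}_{k}(\widetilde{\bs r}_2)\big)+\dim\big(\ker(\div)\cap\mathbb V^{\div}_{k}(\bs r_2)\big),
\end{align*}
and combining with $\dim\big(\ker(\div)\cap\mathbb V^{\div}_{k}(\bs s)\big)=\dim\mathbb V^{\div}_{k}(\bs s)-\dim\mathbb V^{L^2}_{k-1}(\bs s\ominus 1)$ for $\bs s\in\{\bs r_2,\widetilde{\bs r}_2\}$ (both satisfy~\eqref{eq:boundr2fordivbubble}), a direct cancellation shows the alternating sum in~\eqref{eq:abstractdimenidentity} coincides with the one for the decay complex $(\bs r_0,\bs r_1,\widetilde{\bs r}_2,\widetilde{\bs r}_3)$, which is $0$ by Lemma~\ref{lm:dimension} and Euler's formula (or by the count inside the proof of Theorem~\ref{lm:femderhamcomplexr2fm1}). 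Lemma~\ref{lm:abstract} then gives exactness of~\eqref{eq:femderhamcomplex3dgeneral}.

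I expect the main obstacle to be administrative rather than conceptual: all the analytic content (the kernel and range identities) is inherited from the decay complexes and the div stability theorem~\eqref{eq:divonto3d}, so the genuine work is the case analysis confirming that $(\bs r_0,\bs r_1,\widetilde{\bs r}_2,\widetilde{\bs r}_3)$ meets the hypotheses of Theorem~\ref{lm:femderhamcomplex} or Theorem~\ref{lm:femderhamcomplexr2fm1} in every sub-case --- the sign of $r_1^f$, the several strengthened inequalities, and all the conditions on $k$, $\dim\mathbb B_{k-1}(T;\cdot)$ and $\dim\mathbb B_{k}(f;\cdot)$ --- together with the bookkeeping of shared versus interior degrees of freedom in the global dimension comparison. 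Neither step is deep, but each must be carried out carefully to get the constants right.
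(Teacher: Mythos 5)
Your proposal is correct and follows essentially the same route as the paper: verify three of the four exactness conditions directly (the kernel of $\curl$ is unchanged when passing to the subspace $\mathbb V^{\curl}_{k+1}(\bs r_1,\bs r_2)$, and $\div$ is onto by the div-stability theorem with inequality constraint \eqref{eq:divonto3d}), then obtain the fourth from Lemma~\ref{lm:abstract} by reducing the alternating dimension sum to that of the decay complex $(\bs r_0,\bs r_1,\bs r_1\ominus 1,\bs r_1\ominus 2)$. The only differences are presentational --- you phrase the two reduction steps via rank--nullity where the paper argues that the relevant DoF-count differences are constant in $\bs r_3$ and in $\bs r_2$ --- and your explicit case split on the sign of $r_1^f$ (invoking Theorem~\ref{lm:femderhamcomplexr2fm1} when $\widetilde{r}_2^f=-1$) is, if anything, slightly more careful than the paper's citation of Theorem~\ref{lm:femderhamcomplex} alone.
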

\begin{proof}
By construction~\eqref{eq:femderhamcomplex3dgeneral} is a complex, and  
$$
\mathbb V^{\curl}_{k+1}(\boldsymbol{r}_1,\boldsymbol{r}_2)\cap\ker(\curl)=\mathbb V^{\curl}_{k+1}(\boldsymbol{r}_1)\cap\ker(\curl)=\grad\mathbb V^{\grad}_{k+2}(\boldsymbol{r}_0).
$$
Thanks to \eqref{eq:divonto3d}, $\div\mathbb V^{\div}_{k}(\boldsymbol{r}_2,\boldsymbol{r}_3)=\mathbb V^{L^2}_{k-1}(\boldsymbol{r}_3)$.
By Lemma~\ref{lm:abstract}, it suffices to prove
\begin{equation}\label{eq:femderhamcomplex3ddimenidentity}
\dim \mathbb V^{\grad}_{k+2}(\boldsymbol{r}_0) - \dim\mathbb V^{\curl}_{k+1}(\boldsymbol{r}_1,\boldsymbol{r}_2) + \dim\mathbb V^{\div}_{k}(\boldsymbol{r}_2,\boldsymbol{r}_3) - \dim\mathbb V^{L^2}_{k-1}(\boldsymbol{r}_3) = 1.
\end{equation}
Through comparing DoFs, we find that $\dim\mathbb V^{\div}_{k}(\boldsymbol{r}_2,\boldsymbol{r}_3) - \dim\mathbb V^{L^2}_{k-1}(\boldsymbol{r}_3)$ is constant with respect to $\boldsymbol{r}_3$, which
$$
\dim\mathbb V^{\div}_{k}(\boldsymbol{r}_2,\boldsymbol{r}_3) - \dim\mathbb V^{L^2}_{k-1}(\boldsymbol{r}_3)=\dim\mathbb V^{\div}_{k}(\boldsymbol{r}_2) - \dim\mathbb V^{L^2}_{k-1}(\boldsymbol{r}_2\ominus1).
$$
Similarly, since $\dim\mathbb V^{\curl}_{k+1}(\boldsymbol{r}_1,\boldsymbol{r}_2) - \dim\mathbb V^{\div}_{k}(\boldsymbol{r}_2) + \dim\mathbb V^{L^2}_{k-1}(\boldsymbol{r}_2\ominus1)$ is constant with respect to $\boldsymbol{r}_2$, we have
\begin{align*}	
&\quad\dim\mathbb V^{\curl}_{k+1}(\boldsymbol{r}_1,\boldsymbol{r}_2) - \dim\mathbb V^{\div}_{k}(\boldsymbol{r}_2) + \dim\mathbb V^{L^2}_{k-1}(\boldsymbol{r}_2\ominus1) \\
&=\dim\mathbb V^{\curl}_{k+1}(\boldsymbol{r}_1) - \dim\mathbb V^{\div}_{k}(\boldsymbol{r}_1\ominus1) + \dim\mathbb V^{L^2}_{k-1}(\boldsymbol{r}_1\ominus2).
\end{align*}
Combining the last two identities yields
\begin{align*}	
&\quad -\dim\mathbb V^{\curl}_{k+1}(\boldsymbol{r}_1,\boldsymbol{r}_2)+\dim\mathbb V^{\div}_{k}(\boldsymbol{r}_2,\boldsymbol{r}_3) - \dim\mathbb V^{L^2}_{k-1}(\boldsymbol{r}_3) \\
&=-\dim\mathbb V^{\curl}_{k+1}(\boldsymbol{r}_1)+\dim\mathbb V^{\div}_{k}(\boldsymbol{r}_1\ominus1) - \dim\mathbb V^{L^2}_{k-1}(\boldsymbol{r}_1\ominus2).
\end{align*}
Therefore \eqref{eq:femderhamcomplex3ddimenidentity} follows from Theorem~\ref{lm:femderhamcomplex}.
\end{proof}

\begin{example}\rm 
Taking $\boldsymbol{r}_0=(1,0,0)^{\intercal}$, $\boldsymbol{r}_1=\boldsymbol{r}_0-1$, $\boldsymbol{r}_2=\boldsymbol{r}_1\ominus1$, $\boldsymbol{r}_3=\boldsymbol{r}_2\ominus1$ and $k\geq1$, we obtain the Hermite family finite element de Rham complex in~\cite{Christiansen;Hu;Hu:2018finite}
	 $$
	\begin{pmatrix}
	 1\\
	 0\\
	 0
	\end{pmatrix}
	\xrightarrow{\grad}
	\begin{pmatrix}
	 0\\
	 -1\\
	 -1
	\end{pmatrix}
	\xrightarrow{\curl}
	\begin{pmatrix}
	 -1\\
	 -1\\
	 -1
	\end{pmatrix}
	\xrightarrow{\div} 
	\begin{pmatrix}
	 -1\\
	 -1\\
	 -1
	\end{pmatrix}.
	 $$
	 Taking $\boldsymbol{r}_0=(2,1,0)^{\intercal}$, $\boldsymbol{r}_1=\boldsymbol{r}_0-1$, $\boldsymbol{r}_2=\boldsymbol{r}_1\ominus1$, $\boldsymbol{r}_3=\boldsymbol{r}_2\ominus1$ and $k\geq3$, we obtain the Argyris family finite element de Rham complex in~\cite{Christiansen;Hu;Hu:2018finite}
	 $$
	\begin{pmatrix}
	 2\\
	 1\\
	 0
	\end{pmatrix}
	\xrightarrow{\grad}
	\begin{pmatrix}
	 1\\
	 0\\
	 -1
	\end{pmatrix}
	\xrightarrow{\curl}
	\begin{pmatrix}
	 0\\
	 -1\\
	 -1
	\end{pmatrix}
	\xrightarrow{\div} 
	\begin{pmatrix}
	 -1\\
	 -1\\
	 -1
	\end{pmatrix}.
	 $$
\end{example}

\begin{example}\rm 
Taking $\boldsymbol{r}_0=(4,2,1)^{\intercal}$, $\boldsymbol{r}_1=\boldsymbol{r}_0-1$, $\boldsymbol{r}_2=\boldsymbol{r}_1-1$, $\boldsymbol{r}_3=\boldsymbol{r}_2\ominus1$ and $k\geq7$, the finite element de Rham complex 
$$
\begin{pmatrix}
	 4\\
	 2\\
	 1
\end{pmatrix}
\xrightarrow{\grad}
\begin{pmatrix}
 3\\
 1\\
 0
\end{pmatrix}
\xrightarrow{\curl}
\begin{pmatrix}
	2\\
	0\\
	-1
\end{pmatrix}
\xrightarrow{\div} 
\begin{pmatrix}
	 1\\
	 -1\\
	 -1
\end{pmatrix}
$$
can be used to discretize the decoupled formulation of the biharmonic equation in three dimensions in \cite[Section~3.2]{ChenHuang2018}.
\end{example}

\begin{example}\rm 
Taking $\boldsymbol{r}_0=(4,2,1)^{\intercal}$, $\boldsymbol{r}_2=(2,1,0)^{\intercal}$, $\boldsymbol{r}_1=\boldsymbol{r}_0-1$, $\boldsymbol{r}_3=\boldsymbol{r}_2-1$ and $k\geq7$, we obtain the Stokes complex in~\cite{Neilan2015}
$$
\begin{pmatrix}
	 4\\
	 2\\
	 1
\end{pmatrix}
\xrightarrow{\grad}
\begin{pmatrix}
 3\\
 1\\
 0
\end{pmatrix}
\xrightarrow{\curl}
\begin{pmatrix}
	2\\
	1\\
	0
\end{pmatrix}
\xrightarrow{\div} 
\begin{pmatrix}
	 1\\
	 0\\
	 -1
\end{pmatrix}.
$$
\end{example}

\subsection{Commutative diagram}
To construct a commutative diagram for finite element complex \eqref{eq:femderhamcomplex3dgeneral}, we adjust DoFs~\eqref{eq:C13d0}-\eqref{eq:C13d3} of $\mathbb V_{k+2}^{\grad}(\bs r_0)$ in consideration of DoFs~\eqref{eq:3dCrcurlfemdofV1}-\eqref{eq:3dCrcurlfemdofT2} of $\mathbb V_{k+1}^{\curl}(\bs r_1, \bs r_2)$. We present new DoFs for $\mathbb V_{k+2}^{\grad}(\bs r_0)$ as follows:
\begin{align}
	\nabla^ju(\texttt{v}), & \quad j=0,\ldots, r_0^{\texttt{v}}, \label{eq:3dCrgradfemdofV1}\\
	\int_e \partial_tu\,q \dd s, &\quad q\in \mathbb P_{k - 2r_0^{\texttt{v}}+1}(e)/\mathbb R, \label{eq:3dCrgradfemdofE1}\\
	\int_e \frac{\partial^{j}u}{\partial n_1^{i}\partial n_2^{j-i}}\, q \dd s, &\quad q\in \mathbb P_{k- 2r_0^{\texttt{v}}+j}(e), 0\leq i\leq j, 1\leq j\leq r_0^e, \label{eq:3dCrgradfemdofE2}\\
	\int_f (\grad_fu)\cdot\boldsymbol{q}\dd S, &\quad \boldsymbol{q}\in\grad_f\mathbb B_{k+2} (f;\begin{pmatrix}
	r_0^{\texttt{v}} \\
	r_0^e
	\end{pmatrix}),\label{eq:3dCrgradfemdofF1}\\
	\int_f \partial_n^ju\, q\dd S, &\quad  q\in\mathbb B_{k+2-j} (f;\begin{pmatrix}
	r_0^{\texttt{v}} \\
	r_0^e
	\end{pmatrix}-j), 1\leq j\leq r_0^f, \label{eq:3dCrgradfemdofF2}\\
	\int_T (\grad u)\cdot\boldsymbol{q} \dx, &\quad \boldsymbol{q}\in \grad\mathbb B_{k+2}(\boldsymbol{r}_0) \label{eq:3dCrgradfemdofT1}
	\end{align}
	for each $\texttt{v}\in \Delta_{0}(T)$, $e\in \Delta_{1}(T)$ and $f\in \Delta_{2}(T)$.
	
\begin{lemma}\label{lem:3dCrgradfemunisolvence}
The DoFs~\eqref{eq:3dCrgradfemdofV1}-\eqref{eq:3dCrgradfemdofT1} are uni-solvent for $\mathbb P_{k+2}(T)$.  
\end{lemma}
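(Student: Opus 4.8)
The plan is to mimic the two-step scheme used for Lemma~\ref{lem:3dCrdivfemunisolvence}: first verify that the number of functionals in \eqref{eq:3dCrgradfemdofV1}--\eqref{eq:3dCrgradfemdofT1} equals $\dim\mathbb P_{k+2}(T)$, and then show that any $u\in\mathbb P_{k+2}(T)$ killed by all of them must vanish. For the count, observe that these DoFs differ from the standard $C^{r_0^f}$ DoFs \eqref{eq:C13d0}--\eqref{eq:C13d3} of $\mathbb V^{\grad}_{k+2}(\bs r_0)$ only in the lowest-order ($j=0$) layer: the moments $\int_e u\, q\dd s$ with $q\in\mathbb P_{k-2r_0^{\texttt v}}(e)$ are traded for the already-present vertex values together with \eqref{eq:3dCrgradfemdofE1}; the face moments $\int_f u\, q\dd S$ with $q\in\mathbb B_{k+2}(f;\begin{pmatrix}r_0^{\texttt v}\\ r_0^e\end{pmatrix})$ are traded for \eqref{eq:3dCrgradfemdofF1}; and the interior moments $\int_T u\, q\dx$ with $q\in\mathbb B_{k+2}(T;\bs r_0)$ for \eqref{eq:3dCrgradfemdofT1}. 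Since $\partial_t$ is an isomorphism from $\mathbb P_{k-2r_0^{\texttt v}+1}(e)/\mathbb R$ onto $\mathbb P_{k-2r_0^{\texttt v}}(e)$, and since $\bs r_0\ge 0$ implies that the bubble spaces $\mathbb B_{k+2}(f;\begin{pmatrix}r_0^{\texttt v}\\ r_0^e\end{pmatrix})$ and $\mathbb B_{k+2}(T;\bs r_0)$ consist of functions vanishing on the pertinent boundary and hence contain no nonzero constant, the maps $\grad_f$ and $\grad$ restrict to injections on them; thus each trade preserves cardinality, and the total matches $\dim\mathbb P_{k+2}(T)$ by the geometric decomposition \eqref{eq:PrSdec3d}.

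For injectivity, let $u\in\mathbb P_{k+2}(T)$ annihilate every DoF. Vanishing of \eqref{eq:3dCrgradfemdofV1} gives $\nabla^j u(\texttt v)=0$ for $\texttt v\in\Delta_0(T)$, $0\le j\le r_0^{\texttt v}$, so on each edge $e$ the trace $u|_e\in\mathbb P_{k+2}(e)$ vanishes to order $r_0^{\texttt v}$ at both endpoints, i.e.\ $u|_e=b_e^{\,r_0^{\texttt v}+1}p$ with $p\in\mathbb P_{k-2r_0^{\texttt v}}(e)$, where $b_e$ is the edge bubble. Integrating \eqref{eq:3dCrgradfemdofE1} by parts and using $u(\texttt v)=0$ at the endpoints yields $\int_e u\,\partial_t q\dd s=0$ for all $q\in\mathbb P_{k-2r_0^{\texttt v}+1}(e)/\mathbb R$, hence $\int_e b_e^{\,r_0^{\texttt v}+1}p\,r\dd s=0$ for all $r\in\mathbb P_{k-2r_0^{\texttt v}}(e)$; choosing $r=p$ forces $p=0$, so $u|_e=0$. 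Then \eqref{eq:3dCrgradfemdofE2} together with the vertex data gives $(\nabla^j u)|_e=0$ for $0\le j\le r_0^e$, exactly as in Theorem~\ref{thm:Cr3dfemunisolvence}. On a face $f\in\Delta_2(T)$, these edge and vertex conditions force $u|_f\in\mathbb B_{k+2}(f;\begin{pmatrix}r_0^{\texttt v}\\ r_0^e\end{pmatrix})$; taking $\bs q=\grad_f u$ in \eqref{eq:3dCrgradfemdofF1} gives $\int_f|\grad_f u|^2\dd S=0$, so $u|_f$ is constant, and since $u|_{\partial f}=0$ we get $u|_f=0$. For $1\le j\le r_0^f$ the same reasoning identifies $\partial_n^j u|_f\in\mathbb B_{k+2-j}(f;\begin{pmatrix}r_0^{\texttt v}\\ r_0^e\end{pmatrix}-j)$, and testing \eqref{eq:3dCrgradfemdofF2} against $q=\partial_n^j u|_f$ gives $\partial_n^j u|_f=0$; hence $(\nabla^j u)|_f=0$ for $0\le j\le r_0^f$ and all $f$. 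By \eqref{eq:PrSdec3d} and the unisolvence in Theorem~\ref{thm:Cr3dfemunisolvence} this means $u\in\mathbb B_{k+2}(T;\bs r_0)$, and finally taking $\bs q=\grad u$ in \eqref{eq:3dCrgradfemdofT1} gives $\int_T|\grad u|^2\dx=0$, so $u$ is constant; as $u$ vanishes on $\partial T$, $u=0$.

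The one step that requires care is the passage to faces: one must translate the vanishing of the three-dimensional derivative tensors $\nabla^j u(\texttt v)$ and of the edge traces $(\nabla^j u)|_e$ into the statement that $u|_f$ and each normal derivative $\partial_n^j u|_f$ lie in the appropriate scalar face bubble space, which is precisely what makes the $L^2$-duality arguments against $\grad_f$ and against the bubble test spaces legitimate; this is the same device used for the $C^m$ element in Theorem~\ref{thm:Cr3dfemunisolvence} and for the two-dimensional complexes in~\cite{Chen;Huang:2022femcomplex2d}. Everything else is a routine transcription of that proof, the standing degree assumptions guaranteeing that the bubble spaces involved vanish on the relevant boundary.
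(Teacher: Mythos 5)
Your proposal is correct and follows essentially the same route as the paper's proof: count the DoFs by comparison with the standard $C^{m}$ DoFs \eqref{eq:C13d0}--\eqref{eq:C13d3} (noting that $\partial_t$ and $\grad_f$, $\grad$ give bijections on the traded test spaces), then run the vertex--edge--face--interior cascade, recovering $\int_e u\,q\dd s=0$ from \eqref{eq:3dCrgradfemdofE1} by integration by parts and testing \eqref{eq:3dCrgradfemdofF1} and \eqref{eq:3dCrgradfemdofT1} against $\grad_f u$ and $\grad u$ to conclude via Theorem~\ref{thm:Cr3dfemunisolvence}. Your write-up simply supplies details the paper leaves implicit; no gap.
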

\begin{proof}
By comparing DoFs~\eqref{eq:C13d0}-\eqref{eq:C13d3} and DoFs~\eqref{eq:3dCrgradfemdofV1}-\eqref{eq:3dCrgradfemdofT1},
the number of DoFs~\eqref{eq:3dCrgradfemdofV1}-\eqref{eq:3dCrgradfemdofT1} equals to $\dim\mathbb P_{k+2}(T)$.

Assume $u\in\mathbb P_{k+2}(T)$ and all the DoFs~\eqref{eq:3dCrgradfemdofV1}-\eqref{eq:3dCrgradfemdofT1} vanish.
The vanishing DoFs~\eqref{eq:3dCrgradfemdofV1}-\eqref{eq:3dCrgradfemdofE1} imply $\int_e u\,q \dd s=0$ for $q\in \mathbb P_{k - 2r_0^{\texttt{v}}}(e)$. Thanks to Theorem \ref{thm:Cr3dfemunisolvence}, we get from the vanishing DoFs~\eqref{eq:3dCrgradfemdofV1} and \eqref{eq:3dCrgradfemdofE2} that $\nabla^ju|_e=0$ for $0\leq j\leq r_0^e$ and $e\in\Delta_1(T)$.
Then $u|_f\in\mathbb B_{k+2} (f;\begin{pmatrix}
	r_0^{\texttt{v}} \\
	r_0^e
	\end{pmatrix})$ for $f\in\Delta_2(T)$, which combined with \eqref{eq:3dCrgradfemdofF1} yields $u|_f=0$. Applying Theorem \ref{thm:Cr3dfemunisolvence} again, it follows from  the vanishing DoF \eqref{eq:3dCrgradfemdofF2} that $\partial_n^ju|_f=0$ for $0\leq j\leq r_0^f$, i.e. $u\in\mathbb B_{k+2}(\boldsymbol{r}_0)$. Thus $u=0$ holds from the vanishing DoF \eqref{eq:3dCrgradfemdofT1}.
\end{proof}

Define $I_h^{\grad}, I_h^{\curl}, I_h^{\div}$, and $I_h^{L^2}$ as the canonical interpolation operators using the DoFs. 

\begin{corollary}
With the same setting in Theorem \ref{thm:femderhamcomplex3dgeneral}, the following diagram is commutative. 
$$
\begin{array}{c}
\xymatrix{
\mathbb R \ar[r]^-{\subset} & \mathcal C^{\infty}(\Omega) \ar[d]^{I_h^{\grad}} \ar[r]^-{\grad}                & \mathcal C^{\infty}(\Omega;\mathbb R^3) \ar[d]^{I_h^{\curl}}   \ar[r]^-{\curl} & \mathcal C^{\infty}(\Omega;\mathbb R^3) \ar[d]^{I_h^{\div}}   \ar[r]^-{\div} & \ar[d]^{I_h^{L^2}} \mathcal C^{\infty}(\Omega) \ar[r]^{} & 0 \\
 \mathbb R \ar[r]^-{\subset} & \mathbb V_{k+2}^{\grad}(\bs r_0) \ar[r]^{\grad}
                &  \mathbb V_{k+1}^{\curl}(\bs r_1, \bs r_2)   \ar[r]^{\curl} & \mathbb V_{k}^{\div}(\bs r_2, \bs r_3)  \ar[r]^{\div} &  \mathbb V_{k-1}^{L^2}(\bs r_3) \ar[r]^{}& 0.    }
\end{array}
$$
\end{corollary}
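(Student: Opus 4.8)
The plan is to verify the commutativity square by square, exploiting the fact that each interpolation operator was defined precisely to make the diagonal arrows compatible with the differential operators. The key structural observation is that in every newly-introduced DoF set — DoFs~\eqref{eq:3dCrgradfemdofV1}-\eqref{eq:3dCrgradfemdofT1} for $\mathbb V_{k+2}^{\grad}(\bs r_0)$, DoFs~\eqref{eq:3dCrcurlfemdofV1}-\eqref{eq:3dCrcurlfemdofT2} for $\mathbb V_{k+1}^{\curl}(\bs r_1,\bs r_2)$, DoFs~\eqref{eq:3dCrdivfemdofV1}-\eqref{eq:3dCrdivfemdofT2} for $\mathbb V_k^{\div}(\bs r_2,\bs r_3)$, and the $L^2$-element DoFs — the DoFs split into two groups: those that are functionals of $\dd u$ (namely $\grad u$, $\curl \bs v$, or $\div \bs v$ respectively), and those that are genuinely ``new'' functionals of $u$ itself. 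The first group in the domain space matches, functional-by-functional, the natural DoFs of the target space. This was the whole point of the ``graft'' construction: for $\mathbb V_{k+1}^{\curl}(\bs r_1,\bs r_2)$ we kept the DoFs for $\curl\bs v\in\mathbb V_k^{\div}(\bs r_2)$, and similarly down the complex.

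First I would treat the rightmost square. For $\bs w\in \mathcal C^{\infty}(\Omega;\mathbb R^3)$ we must show $\div I_h^{\div}\bs w = I_h^{L^2}(\div \bs w)$. Both sides lie in $\mathbb V_{k-1}^{L^2}(\bs r_3)$, so by uni-solvence (Lemma~\ref{lem:3dCrdivfemunisolvence} together with the $\mathbb V^{L^2}$ element) it suffices to check they produce the same values under every DoF of $\mathbb V_{k-1}^{L^2}(\bs r_3)$. But those DoFs are exactly the subset \eqref{eq:3dCrdivfemdofV2}, \eqref{eq:3dCrdivfemdofE4}, \eqref{eq:3dCrdivfemdofF3}, \eqref{eq:3dCrdivfemdofT1} of the $\mathbb V_k^{\div}$ DoFs, each evaluated on $\div(\cdot)$; and those DoFs applied to $I_h^{\div}\bs w$ return, by definition of $I_h^{\div}$, the corresponding values on $\div \bs w$, which are precisely the values of the $\mathbb V^{L^2}$ DoFs on $\div\bs w$, i.e. the defining values of $I_h^{L^2}(\div\bs w)$. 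The one subtlety is the normalization modulo constants in \eqref{eq:3dCrdivfemdofT1} versus the face DoF \eqref{eq:3dCrdivfemdofF1}: one must check the constant is pinned down consistently, which follows from \eqref{eq:202110134-1} in the div-stability lemma — the average of $\div I_h^{\div}\bs w$ over each $T$ equals that of $\div\bs w$ because the face normal DoF \eqref{eq:3dCrdivfemdofF1}/\eqref{eq:facedof} is matched. The middle two squares follow the same template: for the $\curl$ square use the decomposition of $\curl$ in terms of tangential/normal traces (equations \eqref{eq:curlun}-\eqref{eq:DcurluT} and \eqref{eq:Dcurl0}-\eqref{eq:Dcurl2}) and the fact that the ``$\curl$-type'' DoFs \eqref{eq:3dCrcurlfemdofV2},\eqref{eq:3dCrcurlfemdofE4}-\eqref{eq:3dCrcurlfemdofE6},\eqref{eq:3dCrcurlfemdofF3}-\eqref{eq:3dCrcurlfemdofT1} of $\mathbb V_{k+1}^{\curl}$ coincide with the full DoF set of $\mathbb V_k^{\div}(\bs r_2,\bs r_3)$; for the $\grad$ square use that the ``$\grad$-type'' DoFs \eqref{eq:3dCrgradfemdofE1}-\eqref{eq:3dCrgradfemdofT1} of $\mathbb V_{k+2}^{\grad}$ match the DoFs \eqref{eq:3dCrcurlfemdofE1}-\eqref{eq:3dCrcurlfemdofE3},\eqref{eq:3dCrcurlfemdofF1}-\eqref{eq:3dCrcurlfemdofF2},\eqref{eq:3dCrcurlfemdofT2} of $\mathbb V_{k+1}^{\curl}$ evaluated on $\grad(\cdot)$, after identifying $\grad_f u$ with the tangential trace $\Pi_f\grad u$ and $\partial_n^j u$ with $\grad u$ tested appropriately. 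The leftmost square $\mathbb R\to\mathbb R$ with the inclusions is trivial since both $I_h^{\grad}$ and the identity fix constants (the DoFs \eqref{eq:3dCrgradfemdofV1} with $j=0$ reproduce vertex values, and all higher-order and non-vertex DoFs annihilate constants).

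The main obstacle I anticipate is bookkeeping rather than conceptual: one must carefully match, for each sub-simplex $f$ of each dimension, which local frame $(\bs t,\bs n_1,\bs n_2)$ or $(\bs t_1,\bs t_2,\bs n)$ is used in the domain DoF versus the target DoF, and confirm the moment spaces line up after the degree shift $k+2\rightsquigarrow k+1\rightsquigarrow k\rightsquigarrow k-1$ and the smoothness shift $\bs r_0\rightsquigarrow\bs r_1\rightsquigarrow\bs r_2\rightsquigarrow\bs r_3$. Because the frames were chosen to depend only on the sub-simplex (not the element), single-valuedness of DoFs is preserved under $\dd$, so the argument is purely local on each $T$ and then glues. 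The trickiest bookkeeping point is the edge case in the $\curl$ square, where (as in the uni-solvence proof of Lemma~\ref{lem:3dCrcurlfemunisolvence}) the lattice node $(\alpha_1,0)$ is ``missing'' and the DoF on $\partial_{n_2}^j(\bs v\cdot\bs n_2)$ must be carried explicitly; one checks that $\grad u$ automatically satisfies the corresponding curl-free edge identities \eqref{eq:edgecurldecompt}-\eqref{eq:edgecurldecompn2}, so the extra DoFs on $I_h^{\curl}(\grad u)$ are consistent with those of $\grad I_h^{\grad}u$. Once these matchings are tabulated, commutativity of each square is immediate from the defining property of the interpolants, and the corollary follows.
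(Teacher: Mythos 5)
Your proposal is correct and follows essentially the same route as the paper: verify each square by matching the derivative-type DoFs of the domain element against the DoFs of the target element, with the only nontrivial point being the constant modes, which are pinned down by the lower-dimensional DoFs via the divergence theorem on $T$, Stokes' theorem on $f$, and the fundamental theorem of calculus on $e$ (the paper isolates exactly these three identities as the content of its Steps 1--3). The one cosmetic difference is that the paper also uses the already-proved square to conclude $\div(\curl I_h^{\curl}\bs v)=0$ and $\curl(\grad I_h^{\grad}u)=\bs 0$ before comparing the remaining DoFs, a chaining you gesture at but do not state explicitly.
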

\begin{proof}
\step 1  We first prove
\begin{equation}\label{eq:divcommuting}
\div(I_h^{\div}\boldsymbol{v})= I_h^{L^2}(\div\boldsymbol{v})\quad\forall~\boldsymbol{v}\in\mathcal C^{\infty}(\Omega;\mathbb R^3).
\end{equation}
By comparing DoFs~\eqref{eq:C13d0}-\eqref{eq:C13d3} for $\mathbb V_{k-1}^{L^2}(\bs r_3)$, and DoFs~\eqref{eq:3dCrdivfemdofV1}-\eqref{eq:3dCrdivfemdofV2}, \eqref{eq:3dCrdivfemdofE4} and~\eqref{eq:3dCrdivfemdofF3}-\eqref{eq:3dCrdivfemdofT1} for $\mathbb V_{k}^{\div}(\bs r_2,\bs r_3)$, it suffices to prove
$$
\int_T\div(I_h^{\div}\boldsymbol{v})\dx=\int_T\div\boldsymbol{v}\dx\quad\forall~T\in\mathcal T_h,
$$
which is an immediate result of DoF \eqref{eq:3dCrdivfemdofF1}.

\step 2  Next we prove
\begin{equation}\label{eq:curlcommuting}
\curl(I_h^{\curl}\boldsymbol{v})= I_h^{\div}(\curl\boldsymbol{v})\quad\forall~\boldsymbol{v}\in\mathcal C^{\infty}(\Omega;\mathbb R^3).
\end{equation}
By \eqref{eq:divcommuting}, we have $\div(\curl(I_h^{\curl}\boldsymbol{v}))=\div(I_h^{\div}(\curl\boldsymbol{v}))=0$.
By comparing DoFs~\eqref{eq:3dCrdivfemdofV1}, \eqref{eq:3dCrdivfemdofE1}-\eqref{eq:3dCrdivfemdofE3}, \eqref{eq:3dCrdivfemdofF1}-\eqref{eq:3dCrdivfemdofF2} and~\eqref{eq:3dCrdivfemdofT2} for $\mathbb V_{k}^{\div}(\bs r_2,\bs r_3)$, and DoFs~\eqref{eq:3dCrcurlfemdofV1}-\eqref{eq:3dCrcurlfemdofV2}, \eqref{eq:3dCrcurlfemdofE4}-\eqref{eq:3dCrcurlfemdofE6} and~\eqref{eq:3dCrcurlfemdofF3}-\eqref{eq:3dCrcurlfemdofT1} for $\mathbb V_{k+1}^{\curl}(\bs r_1,\bs r_2)$, it suffices to prove
$$
\int_f\curl(I_h^{\curl}\boldsymbol{v})\cdot\boldsymbol{n}\dd S=\int_f(\curl\boldsymbol{v})\cdot\boldsymbol{n}\dd S\quad\forall~f\in\Delta_2(\mathcal T_h),
$$
which is an immediate result of DoF \eqref{eq:3dCrcurlfemdofE1}.

\step 3  Finally we prove
\begin{equation}\label{eq:gradcommuting}
\grad(I_h^{\grad}u)= I_h^{\curl}(\grad u)\quad\forall~u\in\mathcal C^{\infty}(\Omega).
\end{equation}
By \eqref{eq:curlcommuting}, we have $\curl(\grad(I_h^{\grad}u))=\curl(I_h^{\curl}(\grad u))=\boldsymbol{0}$.
By comparing DoFs~\eqref{eq:3dCrcurlfemdofV1}, \eqref{eq:3dCrcurlfemdofE1}-\eqref{eq:3dCrcurlfemdofE3}, \eqref{eq:3dCrcurlfemdofF1}-\eqref{eq:3dCrcurlfemdofF2} and~\eqref{eq:3dCrcurlfemdofT2} for $\mathbb V_{k+1}^{\curl}(\bs r_1,\bs r_2)$, and DoFs~\eqref{eq:3dCrgradfemdofV1}-\eqref{eq:3dCrgradfemdofT1} for $\mathbb V_{k+2}^{\grad}(\bs r_0)$, it suffices to prove
$$
\int_e\partial_t(I_h^{\grad}u)\dd s=\int_e\partial_tu\dd s\quad\forall~e\in\Delta_1(\mathcal T_h),
$$
which is an immediate result of DoF \eqref{eq:3dCrgradfemdofV1}.

Combining \eqref{eq:divcommuting}-\eqref{eq:gradcommuting} will end the proof.
\end{proof}

\appendix
\section{Smooth Finite Elements in Arbitrary Dimension}\label{sec:geodecompnd}
In a recent work~\cite{huConstructionConformingFinite2021}, Hu, Lin and Wu have constructed a $C^m$-conforming finite elements on simplexes in arbitrary dimension. It unifies the scattered results ~\cite{BrambleZlamal1970,Zenisek1970,ArgyrisFriedScharpf1968} in two dimensions,~\cite{Zenisek1974a,Zhang2009a,Lai;Schumaker:2007Trivariate} in three dimensions, and~\cite{Zhang2016a} in four dimensions. In this appendix, we use the simplicial lattice to give a geometric decomposition of the finite element spaces constructed in~\cite{huConstructionConformingFinite2021} and consequently give a simplified construction different from~\cite{huConstructionConformingFinite2021}.
%In this appendix we shall generalize the construction of smooth finite elements to arbitrary dimension. 
The smoothness at sub-simplexes is exponentially increasing as the dimension decreases
$$
r_{n}=0,\;\; r_{n-1}=m,\;\; r_{\ell}\geq 2r_{\ell+1} \; \textrm{ for } \ell=n-2,\ldots, 0.
$$
And the degree of polynomial $k\geq 2r_0+1 \geq 2^n m + 1$. The key in the construction is a non-overlapping decomposition of the simplicial lattice in which each component will be used to determine the normal derivatives on lower sub-simplexes.

%\LC{Add more literature review later on.}
%When $n=3, m = 1, r_1 = 2, r_0 = 4$ and $k \geq 9$, it is the $C^1$ element on tetrahedron constructed by Zhang in~\cite{Zhang2009a}.
%When $n=4, m = 1, r_2 = 2, r_1 = 4, r_0 = 8$ and $k \geq 17$, it is the $C^1$ element on simplex in four dimensions constructed by Zhang in 
%\cite{Zhang2016a}. Neilan's Stokes element~\cite{Neilan2015} is a $C^0$ element with parameters $n = 3, m=r_2 =0, r_1 = 1, r_0 = 2,$ and $k\geq 5$. 
 
Our approach is closely related to the multivariate splines on triangulations~\cite{Chui;Lai:1990Multivariate,Lai;Schumaker:2007Spline}. 
%Especially  similar geometric decompositions of the simplicial lattice in $n=2,3$ can be found in the book~\cite[Section 8.1 for 2D and Section 18.11 for 3D]{Lai;Schumaker:2007Spline}, where $C^m$-conforming finite elements have been constructed for $n=2,3$ and the lowest order $k=2^nm+1$. 
For example, a systematical construction of $C^m$ element in $n=2,3$ can be also found in the book~\cite[Section 8.1 for 2D and Section 18.11 for 3D]{Lai;Schumaker:2007Spline}. 
The major difference, which is also the art of designing finite elements, is the choice of DoFs. In the multivariate splines, DoFs are chosen as function values or derivatives at some points, as the major question studied there is the interpolation of data, while the integral form on sub-simplexes enables us to prove the unisolvence easily and has the advantage for constructing finite element de Rham complexes as we have done in three dimensions. 
%See, for example,~\cite{Le-Mehaute:2000aspects} for a review on the unisolvence of multivariate polynomial interpolation. More specifically, in the multivariate interpolation problem, the unisolvence is related to the invertibility of a square matrix in the form $(\phi_i(x_j))$, where $\{ \phi_i \}$ is a basis of $\mathbb P_k$ and $\{x_j\}$ is a set of points. Then theories in algebraic geometry are needed to ensure the invertibility which is solved in two dimensions $n=2$ due to Bezout's theorem~\cite{Walker:1950Algebraic,Le-Mehaute:1987Unisolvent} and in arbitrary dimension for some cases: Chung--Yao interpolation for Lagrange interpolation~\cite{Chung;Yao:1977lattices}, Busch interpolation for Hermite interpolation~\cite{Busch:1985Osculatory}, and Birkhoff interpolation~\cite{Lorentz:2006Multivariate} etc. In~~\cite{Chui;Lai:1990Multivariate}, extra interpolatory conditions are imposed at the ``vertices" of the simplex. While in~\cite{huConstructionConformingFinite2021},  integral form DoFs are proposed. We will show the DoF-Fun matrix is block lower triangular and the diagonal block consists of Gram matrix, e.g. $(\int_f \phi_i \phi_j )$, where $f$ is a sub-simplex of $T$. As the Gram matrix is always symmetric and positive definite (SPD), the invertibility then follows.  

\subsection{Important relation}
%We shall introduce a graph distance $\dist(\alpha, f)$ from a lattice node $\alpha \in \mathbb T^{n}_k$ to a $\ell$-dimensional sub-simplex $f\in \Delta_{\ell}(T)$  and reveal the following fact:
%crucial lower triangular structure of the derivative. Let $f\in \Delta_{\ell}(T)$ be a sub-simplex of $T$. 
The first important relation is: for $\alpha\in \mathbb T^{n}_k, \beta \in \mathbb N^{1:n}$, we have
\begin{equation*}%\label{eq:dist2diff}
D^{\beta} \lambda^{\alpha}|_{f} = 0, \quad \text{ if } \dist(\alpha, f) > |\beta|.
\end{equation*}
Namely the polynomial $\lambda^{\alpha}$ vanishes on $f$ to order $\dist(\alpha, f)$.

The second one is the one-to-one mapping of the space $\spa\{ \lambda^{\alpha} = \lambda_f^{\alpha_f}\lambda_{f^*}^{\alpha_{f^*}} , \alpha\in L(f,s), i.e., \alpha \in \mathbb T^{n}_k, | \alpha_{f^*}| = s \}$ to the following DoFs, by changing $\alpha_{f^*}$ to $\beta$, 
\begin{equation*}%\label{eq:intronormalDof}
\int_f  \frac{\partial^{\beta} u}{\partial n_f^{\beta}} \, \lambda_f^{\alpha_f} \dd s \quad \forall~\alpha_f\in \mathbb T^{\ell}_{k-s}(f), \beta \in \mathbb N^{1:n-\ell}, |\beta | = s.
\end{equation*}

\subsection{A decomposition of the simplicial lattice}
We explain the requirement $r_{\ell-1}\geq 2r_{\ell}$. 
\begin{lemma}\label{lm:appdisjoint}
Let $T$ be an $n$-dimensional simplex. For $\ell = 1,\ldots, n-1$, if $r_{\ell-1}\geq 2r_{\ell}$, the sub-sets $\{ D(f, r_{\ell}) \backslash \left [ \cup_{e\in \Delta_{\ell - 1}(f)} D(e, r_{\ell - 1})\right ], f\in \Delta_{\ell} (T)\}$ are disjoint.
\end{lemma}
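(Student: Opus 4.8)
The statement is the higher–dimensional analogue of Lemma~\ref{lm:disjoint}, so the plan is to reproduce its argument verbatim, with $n$ in place of $3$. First I would take two distinct sub-simplices $f,\tilde f\in\Delta_{\ell}(T)$ and observe that, since they are distinct $\ell$-simplices, their geometric intersection has dimension at most $\ell-1$; equivalently, as index sets, $|f\cap\tilde f|\le\ell$, so $f\cap\tilde f\subseteq e$ for some $e\in\Delta_{\ell-1}(f)$ (here I use that $\Delta_{\ell-1}(f)$ consists of the codimension-one faces of $f$). Passing to complements inside $\{0,1,\dots,n\}$, this gives $e^{*}\subseteq(f\cap\tilde f)^{*}=f^{*}\cup\tilde f^{*}$.

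Second, I would estimate $|\alpha_{e^{*}}|$ for any $\alpha\in D(f,r_{\ell})\cap D(\tilde f,r_{\ell})$. Using \eqref{eq:decalpha}-type additivity of $|\cdot|$ over a union of index sets and the characterization in Lemma~\ref{lm:dist} (namely $\alpha\in D(g,r)\iff|\alpha_{g^{*}}|\le r$), we get
$$
|\alpha_{e^{*}}|\;\le\;|\alpha_{f^{*}}|+|\alpha_{\tilde f^{*}}|\;\le\;2r_{\ell}\;\le\;r_{\ell-1},
$$
where the last inequality is exactly the hypothesis $r_{\ell-1}\ge 2r_{\ell}$. Hence $\alpha\in D(e,r_{\ell-1})$ with $e\in\Delta_{\ell-1}(f)$, which shows
$$
D(f,r_{\ell})\cap D(\tilde f,r_{\ell})\;\subseteq\;\bigcup_{e\in\Delta_{\ell-1}(f)}D(e,r_{\ell-1}).
$$
Therefore the set $D(f,r_{\ell})\setminus\bigl[\cup_{e\in\Delta_{\ell-1}(f)}D(e,r_{\ell-1})\bigr]$ is disjoint from every $D(\tilde f,r_{\ell})$ with $\tilde f\neq f$, a fortiori from $D(\tilde f,r_{\ell})\setminus\bigl[\cup_{e\in\Delta_{\ell-1}(\tilde f)}D(e,r_{\ell-1})\bigr]$, which is the claim.

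There is essentially no obstacle here: the proof is purely combinatorial on index sets and is identical in structure to Lemma~\ref{lm:disjoint}. The only point requiring a line of care is the implication ``distinct $\ell$-simplices $\Rightarrow$ their intersection lies in a common $(\ell-1)$-subface of $f$,'' which follows because $f\neq\tilde f$ forces $f\cap\tilde f\subsetneq f$ as index sets, hence $|f\cap\tilde f|\le\ell$, and any subset of $f$ of cardinality $\le\ell$ is contained in some element of $\Delta_{\ell-1}(f)$. I would also note in passing that the same computation underlies Lemma~\ref{lm:Deltaf=DeltaT} in arbitrary dimension, so the bookkeeping with $r_{\ell-1}\ge 2r_{\ell}$ is consistent throughout; but for the present lemma only the single inequality $r_{\ell-1}\ge 2r_{\ell}$ is invoked.
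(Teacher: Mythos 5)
Your proof is correct and follows essentially the same route as the paper's own argument: identify a codimension-one face $e$ of $f$ containing $f\cap\tilde f$, pass to complements to get $e^{*}\subseteq f^{*}\cup\tilde f^{*}$, and bound $|\alpha_{e^{*}}|\le|\alpha_{f^{*}}|+|\alpha_{\tilde f^{*}}|\le 2r_{\ell}\le r_{\ell-1}$ to conclude $D(f,r_{\ell})\cap D(\tilde f,r_{\ell})\subseteq\cup_{e\in\Delta_{\ell-1}(f)}D(e,r_{\ell-1})$. No gaps; the extra remarks you add about why the intersection lies in a single $(\ell-1)$-subface are a harmless elaboration of what the paper states in one line.
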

\begin{proof}
Consider two different sub-simplices $ f, \tilde f \in \Delta_{\ell} (T)$. The dimension of their intersection is at most $\ell - 1$. Therefore $f\cap \tilde f\subseteq e$ for some $e\in \Delta_{\ell -1}(f)$. Then $e^*\subseteq (f\cap \tilde f )^* = f^*\cup \tilde f^*$. For $\alpha \in D(f, r_{\ell})\cap D(\tilde f, r_{\ell})$, we have $|\alpha_{e^*}| \leq |\alpha_{f^*}| + |\alpha_{\tilde f^*}|\leq 2r_{\ell}\leq r_{\ell - 1}$. Therefore we have shown the intersection region $D(f, r_{\ell})\cap D(\tilde f, r_{\ell})\subseteq \cup_{e\in \Delta_{\ell - 1}(f)} D(e,r_{\ell-1})$ and the result follows.
%
% and the sets $\{ D(f,r_{\ell})\backslash \cup_{e\in \Delta_1(f)} D(e,r_1),  f\in \Delta_1(T)\}$ are disjoints so that we use different lattice nodes for normal derivatives on different faces.
\end{proof}

Next we remove $D(e, r_{i})$ from $D(f, r_{\ell})$ for all $e\in \Delta_{i}(T)$ and $i=0,1,\ldots, \ell-1$. 
\begin{lemma} \label{lm:appDeltaf=DeltaT}
Given integer $m\geq 0$, let non-negative integer array $\bs r=(r_0,r_1, \cdots, r_n)$ satisfy
$$
r_{n}=0,\;\; r_{n-1}=m,\;\; r_{\ell}\geq 2r_{\ell+1} \; \textrm{ for } \ell=n-2,\ldots, 0.
$$
Let $k\geq 2r_0+1 \geq 2^n m + 1$. For $\ell = 1,\dots, n-1,$
\begin{equation}\label{eq:appDeltaf=DeltaT}
D(f, r_{\ell}) \backslash \left [ \bigcup_{i=0}^{\ell-1}\bigcup_{e\in \Delta_{i}(f)}D(e, r_{i}) \right ] = D(f, r_{\ell}) \backslash \left [ \bigcup_{i=0}^{\ell-1}\bigcup_{e\in \Delta_{i}(T)}D(e, r_{i}) \right ] .
\end{equation}
\end{lemma}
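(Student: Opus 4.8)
The plan is to follow verbatim the structure of the three-dimensional Lemma~\ref{lm:Deltaf=DeltaT}, whose proof already contains every idea needed here; the only change is that the dimension $n$ is now arbitrary and the chain of inequalities $r_{\ell}\geq 2r_{\ell+1}$ for $\ell = n-2,\dots,0$ together with $k\geq 2r_0+1$ replaces the specialized $3$-dimensional hypotheses. First I would observe that the inclusion $\supseteq$ in \eqref{eq:appDeltaf=DeltaT} is immediate, since $\Delta_i(f)\subseteq \Delta_i(T)$ for every $i$, so removing more sets on the right can only shrink the set. Hence the whole content is the reverse inclusion $\subseteq$.

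For $\subseteq$ I would fix $\alpha\in D(f,r_{\ell})\setminus\bigl[\bigcup_{i=0}^{\ell-1}\bigcup_{e\in\Delta_i(f)}D(e,r_i)\bigr]$ and show that $\alpha\notin D(e,r_i)$ for every $e\in\Delta_i(T)$ with $e\notin\Delta_i(f)$ and $0\leq i\leq \ell-1$; equivalently, by Lemma~\ref{lm:dist}, that $|\alpha_e|\leq k-r_i-1$, i.e. $|\alpha_{e^*}|>r_i$. The hypotheses on $\alpha$ translate, again via Lemma~\ref{lm:dist}, to $|\alpha_{f^*}|\leq r_{\ell}$ and $|\alpha_e|\leq k-r_i-1$ for all $e\in\Delta_i(f)$, $i=0,\dots,\ell-1$. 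The key geometric point, identical to the $3$-D case, is that if $e\in\Delta_i(T)$ but $e\notin\Delta_i(f)$, then $\dim(e\cap f)\leq i-1$, so $e\cap f$ is contained in some $(i-1)$-dimensional face of $f$ (when $i\geq 1$), or $e\cap f=\varnothing$ (when $i=0$). Splitting $|\alpha_e|=|\alpha_{e\cap f}|+|\alpha_{e\cap f^*}|$ and using $|\alpha_{e\cap f}|\leq k-r_{i-1}-1$ (from the vanishing on the lower face of $f$, when $i\geq1$) together with $|\alpha_{e\cap f^*}|\leq |\alpha_{f^*}|\leq r_{\ell}\leq r_{i}\leq r_{i-1}$ gives, for $i\geq 1$,
\begin{equation*}
|\alpha_e|\leq (k-r_{i-1}-1)+r_{\ell}\leq k-r_{i-1}-1+r_i\leq k-r_i-1,
\end{equation*}
where the last step uses $r_{i-1}\geq 2r_i\geq r_i+r_i$ hence $r_{i-1}-r_i\geq r_i$; for $i=0$ one has directly $|\alpha_e|=|\alpha_{e\cap f^*}|\leq r_{\ell}\leq r_0\leq k-r_0-1$, using $k\geq 2r_0+1$. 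In either case $|\alpha_{e^*}|>r_i$, so $\alpha\notin D(e,r_i)$, which is exactly what is needed.

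I would then conclude that $\alpha$ lies in the right-hand side of \eqref{eq:appDeltaf=DeltaT}, establishing $\subseteq$ and hence equality. I do not anticipate a genuine obstacle here: the proof is a transcription of Lemma~\ref{lm:Deltaf=DeltaT} with ``$\ell$'' ranging over $1,\dots,n-1$ instead of $1,2$. The only points requiring a little care are bookkeeping ones — making sure the index shift $r_{\ell}\leq r_i\leq r_{i-1}$ is justified from the monotone chain $r_{\ell}\geq 2r_{\ell+1}$ (so $r_j$ is nonincreasing in $j$, and in fact $r_j\leq r_{j-1}/2$), and handling the boundary case $i=0$ separately since there is no ``$r_{-1}$''. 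Once those are in place the computation above closes the argument; the condition $k\geq 2r_0+1\geq 2^n m+1$ is used exactly once, in the $i=0$ estimate, and the doubling condition is used exactly in the $i\geq1$ estimate.
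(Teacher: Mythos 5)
Your proof is correct and follows essentially the same route as the paper's own argument: the paper likewise notes that $\supseteq$ is trivial, splits $|\alpha_e|=|\alpha_{e\cap f}|+|\alpha_{e\cap f^*}|$ for $e\notin\Delta_i(f)$ using $\dim(e\cap f)\leq i-1$, and closes the $i\geq 1$ case with $k-r_{i-1}-1+r_{\ell}\leq k-r_i-1$ (from $r_{i-1}\geq 2r_i$ and $r_\ell\leq r_i$) and the $i=0$ case with $k\geq 2r_0+1$. Your added bookkeeping (monotonicity of $r_j$, the intermediate bound $r_\ell\leq r_i$) only makes explicit what the paper leaves implicit.
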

\begin{proof}
In \eqref{eq:Deltaf=DeltaT}, the relation $\supseteq$ is obvious as $\Delta_i(f)\subseteq \Delta_i(T)$. 

To prove $\subseteq$, it suffices to show for $\alpha \in D(f, r_{\ell}) \backslash \left [ \bigcup_{i=0}^{\ell-1}\bigcup_{e\in \Delta_{i}(f)}D(e, r_{i}) \right ]$, it is not in $D(e,r_i)$ for $e\in \Delta_i(T)$ and $e\not\in\Delta_i(f)$. 

By definition, 
$$
|\alpha_{f^*}|\leq r_{\ell},\; |\alpha_{e}|\leq k - r_{i}-1 \; \textrm{ for all }e\in\Delta_i(f), i=0,\ldots,\ell-1.
$$
For each $e\in\Delta_i(T)$ but $e\not\in\Delta_i(f)$, the dimension of the intersection $e \cap f$ is at most $i-1$. It follows from $r_{j}\geq 2r_{j+1}$ and $k\geq 2r_0+1$ that: when $i>0$,
$$
|\alpha_{e}|=|\alpha_{e\cap f}|+|\alpha_{e\cap f^*}|\leq k - r_{i-1}-1+r_{\ell}\leq k - r_{i}-1,
$$
and when $i=0$,
$$
|\alpha_{e}|=|\alpha_{e\cap f^*}|\leq r_{\ell}\leq k - r_{i}-1.
$$
So $|\alpha_{e^*}| > r_i$. We conclude that $\alpha \not\in D(e, r_i)$ for all $e\in\Delta_i(T)$ and \eqref{eq:Deltaf=DeltaT} follows. 
\end{proof}

We are in the position to present our main decomposition. 
\begin{theorem}\label{th:appdecT}
Given integer $m\geq 0$, let non-negative integer array $\bs r=(r_0,r_1, \cdots, r_n)$ satisfy
$$
r_{n}=0,\;\; r_{n-1}=m,\;\; r_{\ell}\geq 2r_{\ell+1} \; \textrm{ for } \ell=n-2,\ldots, 0.
$$
Let $k\geq 2r_0+1 \geq 2^n m + 1$. Then we have the following direct decomposition of the simplicial lattice  on an $n$-dimensional simplex $T$:
\begin{equation*}%\label{eq:appsmoothdecnd}
  \mathbb T^{n}_k(T) = \Oplus_{\ell = 0}^{n}\Oplus_{f\in \Delta_{\ell}(T)} S_{\ell}(f),
\end{equation*}
where
\begin{align*}
S_0(\texttt{v}) &=  D(\texttt{v}, r_0), \\
S_{\ell}(f) &= D(f, r_{\ell}) \backslash \left [ \bigcup_{i=0}^{\ell-1}\bigcup_{e\in \Delta_{i}(f)}D(e, r_{i}) \right ], \; \ell = 1,\dots, n-1, \\
S_n(T) & = \mathbb T^{n}_k(T) \backslash  \left [  \bigcup_{i=0}^{n-1}\bigcup_{f\in \Delta_{i}(T)}D(f, r_{i}) \right ].
\end{align*}
Consequently we have the following geometric decomposition of $\mathbb P_{k}(T)$
\begin{equation}\label{eq:PrSdec}
\mathbb P_{k}(T) = \Oplus_{\ell = 0}^{n} \Oplus_{f\in \Delta_{\ell}(T)} \mathbb P_k(S_{\ell}(f)).
\end{equation}
\end{theorem}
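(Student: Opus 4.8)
The plan is to adapt verbatim the argument used for the three-dimensional Theorem~\ref{th:decT}, since Lemmas~\ref{lm:appdisjoint} and~\ref{lm:appDeltaf=DeltaT} already supply the two decisive set-theoretic facts in arbitrary dimension.

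\textbf{Step one: disjointness of the family $\{S_\ell(f): f\in\Delta_\ell(T),\ 0\le\ell\le n\}$.} For two distinct vertices $\texttt{v}_1,\texttt{v}_2$, if $\alpha\in D(\texttt{v}_1,r_0)$ then $\alpha_{\texttt{v}_1}\ge k-r_0$; since $\texttt{v}_1\in\texttt{v}_2^*$ and $k\ge 2r_0+1$, this forces $|\alpha_{\texttt{v}_2^*}|\ge k-r_0\ge r_0+1$, i.e. $\alpha\notin D(\texttt{v}_2,r_0)$, so the $S_0(\texttt{v})$ are pairwise disjoint. For $1\le\ell\le n-1$ and $f\ne\tilde f$ in $\Delta_\ell(T)$, Lemma~\ref{lm:appdisjoint} gives $D(f,r_\ell)\cap D(\tilde f,r_\ell)\subseteq\bigcup_{e\in\Delta_{\ell-1}(f)}D(e,r_{\ell-1})$, so removing these edge tubes separates $S_\ell(f)$ from $S_\ell(\tilde f)$; Lemma~\ref{lm:appDeltaf=DeltaT} lets us rewrite $S_\ell(f)=D(f,r_\ell)\setminus\bigcup_{i<\ell}\bigcup_{e\in\Delta_i(T)}D(e,r_i)$, which simultaneously yields disjointness across different dimensions. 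Finally $S_n(T)$ is by construction disjoint from every lower $D(f,r_i)$, hence from every $S_\ell(f)$ with $\ell<n$.

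\textbf{Step two: the union identity, by induction on $\ell$.} I would prove
\[
\Oplus_{i=0}^{\ell}\Oplus_{f\in\Delta_i(T)}S_i(f)=\bigcup_{i=0}^{\ell}\bigcup_{f\in\Delta_i(T)}D(f,r_i),\qquad \ell=0,1,\ldots,n-1.
\]
The base case $\ell=0$ is the definition $S_0(\texttt{v})=D(\texttt{v},r_0)$. For the step from $\ell=j-1$ to $\ell=j$, use the rewriting of $S_j(f)$ from Lemma~\ref{lm:appDeltaf=DeltaT} together with the inductive hypothesis:
\begin{align*}
\Oplus_{i=0}^{j}\Oplus_{f\in\Delta_i(T)}S_i(f)
&=\Big(\Oplus_{f\in\Delta_j(T)}\big(D(f,r_j)\setminus\textstyle\bigcup_{i<j}\bigcup_{e\in\Delta_i(T)}D(e,r_i)\big)\Big)\oplus\bigcup_{i<j}\bigcup_{e\in\Delta_i(T)}D(e,r_i)\\
&=\bigcup_{i=0}^{j}\bigcup_{f\in\Delta_i(T)}D(f,r_i).
\end{align*}
Setting $\ell=n-1$ and appending $S_n(T)=\mathbb T^n_k(T)\setminus\bigcup_{i=0}^{n-1}\bigcup_{f\in\Delta_i(T)}D(f,r_i)$ gives the direct decomposition $\mathbb T^n_k(T)=\Oplus_{\ell=0}^{n}\Oplus_{f\in\Delta_\ell(T)}S_\ell(f)$. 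The polynomial decomposition~\eqref{eq:PrSdec} is then immediate from the bijection $\alpha\leftrightarrow\lambda^\alpha$ between $\mathbb T^n_k(T)$ and the Bernstein basis of $\mathbb P_k(T)$: a partition of the lattice into the pieces $S_\ell(f)$ induces a direct sum $\mathbb P_k(T)=\Oplus_{\ell=0}^{n}\Oplus_{f\in\Delta_\ell(T)}\mathbb P_k(S_\ell(f))$ by definition of $\mathbb P_k(S)=\spa\{\lambda^\alpha:\alpha\in S\}$.

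I do not expect a genuine obstacle: all the arithmetic that uses the hypotheses $r_{\ell-1}\ge 2r_\ell$ and $k\ge 2r_0+1\ge 2^n m+1$ has been isolated into Lemmas~\ref{lm:appdisjoint} and~\ref{lm:appDeltaf=DeltaT}, and what remains is the same induction-and-bookkeeping as in Theorem~\ref{th:decT}. The one point needing care is the order of operations: one must invoke Lemma~\ref{lm:appDeltaf=DeltaT} to replace $\Delta_i(f)$ by $\Delta_i(T)$ in the definition of $S_\ell(f)$ \emph{before} asserting disjointness, exactly as in the three-dimensional proof; only then do the various $S_\ell(f)$ become mutually disjoint subsets of a common ambient set whose union telescopes cleanly.
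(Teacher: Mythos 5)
Your proposal is correct and follows essentially the same route as the paper's own proof: disjointness via the vertex argument plus Lemmas~\ref{lm:appdisjoint} and~\ref{lm:appDeltaf=DeltaT}, then the telescoping union identity by induction on $\ell$, with the polynomial decomposition read off from the Bernstein-basis bijection. No gaps.
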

\begin{proof}
First we show that the sets  $\{S_{\ell}(f), f\in\Delta_{\ell}(T), \ell=0,\ldots,n\}$ are disjoint.
%Hence we acquire
%\begin{equation}\label{eq:SlDrmD}
%S_{\ell}(f) = D(f, r_{\ell}) \backslash \left [ \bigcup_{i=0}^{\ell-1}\bigcup_{e\in \Delta_{i}(T)}D(e, r_{i}) \right ] \quad\textrm{ for } \ell = 1,\dots, n-1.
%\end{equation}
Take two vertices $\texttt{v}_1, \texttt{v}_2\in \Delta_0(T)$. For $\alpha\in D(\texttt{v}_1, r_0)$, we have $\alpha_{\texttt{v}_1} \geq k - r_0$. As $\texttt{v}_1\subseteq \texttt{v}_2^*$ and $k\geq 2r_0+1$, $|\alpha_{\texttt{v}_2^*}|\geq \alpha_{\texttt{v}_1}\geq k-r_0\geq r_0+1$, i.e., $\alpha \notin D(\texttt{v}_2, r_0)$. Hence $\{S_{0}(\texttt{v}), \texttt{v}\in\Delta_{0}(T)\}$ are disjoint and $\Oplus_{\texttt{v}\in\Delta_{0}(T)} S_{0}(\texttt{v})$ is a disjoint union.
By Lemma \ref{lm:appdisjoint} and \eqref{eq:appDeltaf=DeltaT}, we know $\{S_{\ell}(f), f\in\Delta_{\ell}(T), \ell=0,\ldots,n\}$ are disjoint.

Next we inductively prove 
\begin{equation*}%\label{eq:appUSequalUD}
\Oplus_{i = 0}^{\ell}\Oplus_{f\in \Delta_{i}(T)} S_{i}(f)=\bigcup_{i=0}^{\ell}\bigcup_{f\in \Delta_{i}(T)}D(f, r_{i}) \quad \textrm{ for }\; \ell=0,\ldots, n-1.
\end{equation*}
Obviously \eqref{eq:USequalUD} holds for $\ell=0$. Assume \eqref{eq:USequalUD} holds for $\ell<j$. Then
\begin{align*}
&\quad\Oplus_{i = 0}^{j}\Oplus_{f\in \Delta_{i}(T)} S_{i}(f)= \Oplus_{f\in \Delta_{j}(T)} S_{j}(f)\;\oplus \;\bigcup_{i=0}^{j-1}\bigcup_{e\in \Delta_{i}(T)}D(e, r_{i}) \\
&= \Oplus_{f\in \Delta_{j}(T)}\left(D(f, r_{j}) \backslash \left [ \bigcup_{i=0}^{j-1}\bigcup_{e\in \Delta_{i}(T)}D(e, r_{i}) \right ]\right)\;\oplus \;\bigcup_{i=0}^{j-1}\bigcup_{e\in \Delta_{i}(T)}D(e, r_{i}) \\
&=\bigcup_{i=0}^{j}\bigcup_{f\in \Delta_{i}(T)}D(f, r_{i}).
\end{align*}
By induction, \eqref{eq:USequalUD} holds for $\ell=0,\ldots, n-1$. Then \eqref{eq:smoothdecnd} is true from the definition of $S_n(T)$ and \eqref{eq:USequalUD}. 
%Finally, it follows from \eqref{eq:USequalUD} that
%\begin{align*}
%  \mathbb T^{n}_k(T)=D(T, r_{n}) =\bigcup_{\ell=0}^{n}\bigcup_{f\in \Delta_{\ell}(T)}D(f, r_{\ell})= \Oplus_{\ell = 0}^{n}\Oplus_{f\in \Delta_{\ell}(T)} S_{\ell}(f).
%\end{align*}
%i.e. \eqref{eq:smoothdecnd}.
\end{proof}

We can write out the inequality constraints in $S_{\ell}(f)$. For $\ell = 1,\dots, n$,
\begin{equation}\label{eq:Slfineqlty}
S_{\ell}(f) =  \{\alpha\in\mathbb T_{k}^{n}: |\alpha_{f^*}|\leq r_{\ell}, |\alpha_{e}|\leq k - r_{i}-1, \forall e\in\Delta_i(f), i=0,\ldots,\ell-1\}.
\end{equation}
For $\alpha\in S_{\ell}(f)$, by Lemma~\ref{lm:disjoint} we also have $\alpha\not\in D(\tilde f, r_{\ell})$ for $\tilde f \in \Delta_{\ell} (T)\backslash\{f\}$, i.e.
\begin{equation}\label{eq:Slfineqlty2}
|\alpha_{\tilde f}|\leq k - r_{\ell}-1 \quad\forall~\tilde f \in \Delta_{\ell} (T)\backslash\{f\}.
\end{equation}
From the implementation point of view, the index set $S_{\ell}(f)$ can be found by a logic array and set the entry as true when the distance constraint holds. 

\subsection{Decomposition of degree of freedoms}
Recall that $L(f,s) = \{ \alpha \in \mathbb T^{n}_k, \dist(\alpha,f) = s\}$ consists of lattice nodes $s$ away from $f$. 
%The following unisolvence is the generalization of Lemma \ref{lem:vertexunisolvence} from a vertex to a sub-simplex $f$. 
\begin{lemma}
Let $\ell=0,\ldots, n-1$ and $s\leq r_{\ell}$ be a non-negative integer. Given $f\in \Delta_{\ell}(T)$, let $n_f = \{n_f^1, n_f^2, \ldots, n_f^{n - \ell}\}$ be $n- \ell$ vectors spanning the normal plane of $f$. The polynomial space $\mathbb P_k(S_{\ell}(f)\cap L(f,s))$ is uniquely determined by DoFs
\begin{equation}\label{eq:normalDof}
\int_f  \frac{\partial^{\beta} u}{\partial n_f^{\beta}} \, \lambda_f^{\alpha_f} \dd s \quad \forall~\alpha\in S_{\ell}(f), |\alpha_f| = k - s, \beta \in \mathbb N^{1:n-\ell}, |\beta | = s.
\end{equation}
\end{lemma}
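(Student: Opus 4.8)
The statement is a local unisolvence claim on a single sub-simplex, counting degrees of freedom on the lattice plane $L(f,s)$. The plan is to set up an explicit bijection between the Bernstein basis of $\mathbb P_k(S_{\ell}(f)\cap L(f,s))$ and the DoFs \eqref{eq:normalDof}, then show the associated square matrix is triangular with nonzero diagonal, hence invertible. First I would recall from the ``second important relation'' in the appendix that the map sending a node $\alpha\in L(f,s)$ to the pair $(\alpha_f,\alpha_{f^*})$ identifies a Bernstein polynomial $\lambda^\alpha = \lambda_f^{\alpha_f}\lambda_{f^*}^{\alpha_{f^*}}$ with the data $(\alpha_f\in\mathbb T^{\ell}_{k-s}(f),\ \beta:=\alpha_{f^*}\in\mathbb N^{1:n-\ell},\ |\beta|=s)$; this is precisely the index set appearing in \eqref{eq:normalDof} once we restrict $\alpha_f$ to those arising from $S_\ell(f)$. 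So the cardinality of the DoF set \eqref{eq:normalDof} equals $\dim\mathbb P_k(S_\ell(f)\cap L(f,s))$, and it suffices to prove that vanishing of all DoFs forces $u=0$ for $u\in\mathbb P_k(S_\ell(f)\cap L(f,s))$.

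The second step is the triangularity. I would write $u=\sum_{\alpha} c_\alpha\lambda^\alpha$ over $\alpha\in S_\ell(f)\cap L(f,s)$, and evaluate a DoF with test multi-index $(\alpha_f^0,\beta^0)$, $|\beta^0|=s$. Restricting $\frac{\partial^{\beta^0}u}{\partial n_f^{\beta^0}}$ to $f$, only terms whose node $\alpha$ has $\dist(\alpha,f)\le|\beta^0|=s$ survive by Lemma~\ref{lm:derivative}; since every node in $L(f,s)$ has distance exactly $s$, the derivative $\frac{\partial^{\beta^0}\lambda^\alpha}{\partial n_f^{\beta^0}}\big|_f$ is a nonzero multiple of $\lambda_f^{\alpha_f}$ when $\alpha_{f^*}$ matches $\beta^0$ and is zero otherwise (the normal-derivative operator applied to $\lambda_{f^*}^{\alpha_{f^*}}$ with $|\alpha_{f^*}|=s=|\beta^0|$ produces a constant that is nonzero iff the exponent pattern $\alpha_{f^*}$ equals $\beta^0$, because any mismatch either leaves a factor $\lambda_i$, $i\in f^*$, which vanishes on $f$, or over-differentiates some coordinate to $0$). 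Hence the DoF with test data $(\alpha_f^0,\beta^0)$ picks up only the coefficients $c_\alpha$ with $\alpha_{f^*}=\beta^0$, and among those the linear functional $\lambda_f^{\alpha_f}\mapsto\int_f\lambda_f^{\alpha_f}\lambda_f^{\alpha_f^0}\dd s$ is, after normalization, a Gram matrix of Bernstein polynomials on the $\ell$-simplex $f$, which is symmetric positive definite. Therefore the full DoF-to-coefficient matrix is block-diagonal indexed by $\beta^0$, each block being an SPD Gram matrix, so it is invertible and all $c_\alpha=0$.

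The third step is a bookkeeping check that the DoF count and the polynomial space genuinely match, i.e. that as $\alpha$ ranges over $S_\ell(f)\cap L(f,s)$, the pair $(\alpha_f,\beta)$ with $|\alpha_f|=k-s$, $|\beta|=s$, $\beta\ge1$ ranges exactly over what \eqref{eq:normalDof} prescribes. This follows because the inequality description \eqref{eq:Slfineqlty} of $S_\ell(f)$ only constrains $\alpha$ through sums $|\alpha_e|$ for $e\in\Delta_i(f)$, $i<\ell$, and through $|\alpha_{f^*}|\le r_\ell$; on $L(f,s)$ the latter is automatic since $s\le r_\ell$, while the former translate into constraints purely on $\alpha_f$, so the condition $\alpha\in S_\ell(f)$ restricted to $L(f,s)$ is equivalent to $\alpha_f$ lying in the corresponding face-bubble index set and $\beta=\alpha_{f^*}$ being any positive multi-index of length $s$. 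I would phrase the final DoF set in \eqref{eq:normalDof} accordingly (which the statement already does by writing ``$\forall\,\alpha\in S_\ell(f)$, $|\alpha_f|=k-s$'').

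The main obstacle I anticipate is making the ``nonzero multiple'' claim in step two fully rigorous: one must verify that $\frac{\partial^{\beta}}{\partial n_f^{\beta}}\lambda_{f^*}^{\gamma}\big|_f$ is a nonzero constant precisely when $\gamma=\beta$ (as multi-indices in the $f^*$-coordinates) and vanishes on $f$ otherwise. This rests on the barycentric derivative identity \eqref{eq:tijlambdal} and the fact that $\{n_f^1,\dots,n_f^{n-\ell}\}$ spans the normal plane, so that in $f^*$-coordinates these directional derivatives are a linear change of variables with invertible Jacobian; differentiating $\lambda_{f^*}^{\gamma}$ a total of $|\beta|=|\gamma|$ times either hits every factor down to a constant (giving $\gamma!\,$ times a nonzero determinant factor) or, if the multi-degree pattern does not match, leaves at least one $\lambda_i$ with $i\in f^*$ undifferentiated, which restricts to $0$ on $f$. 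Once this linear-algebra fact is pinned down, the rest is the triangular/block-diagonal argument above, which is routine.
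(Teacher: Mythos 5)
Your overall strategy (match dimensions via the bijection $\alpha\leftrightarrow(\alpha_f,\beta)$ with $\beta=\alpha_{f^*}$, then show the DoF-to-coefficient matrix is block structured in $\beta$ with invertible blocks) is exactly the paper's, and the Gram-matrix step on $f$ is fine. The one place where your argument as written breaks is the claim that $\frac{\partial^{\beta}}{\partial n_f^{\beta}}\lambda_{f^*}^{\gamma}\big|_f$ is nonzero \emph{iff} $\gamma=\beta$ for an arbitrary spanning set $\{n_f^1,\dots,n_f^{n-\ell}\}$. That diagonality is false in general: since $|\gamma|=|\beta|=s$, applying $s$ directional derivatives to the degree-$s$ monomial $\lambda_{f^*}^{\gamma}$ always yields a constant via the product rule, and with non-dual directions this constant is generically nonzero even when $\gamma\neq\beta$ — nothing is ``left undifferentiated,'' so your fallback explanation in the last paragraph does not rescue the off-diagonal vanishing. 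What is true, and what you gesture at with the ``invertible Jacobian'' remark, is that the matrix $\bigl(D^{\beta}_{n_f}\lambda_{f^*}^{\gamma}\bigr)_{|\beta|=|\gamma|=s}$ is the $s$-th symmetric power of the invertible pairing matrix $\bigl(n_f^j\cdot\nabla\lambda_{f^*(i)}\bigr)$, hence invertible; equivalently, the span of the functionals in \eqref{eq:normalDof} over all $|\beta|=s$ is independent of the chosen basis of the normal plane. Either of these observations closes the gap. The paper sidesteps the issue entirely by \emph{choosing} $n_f^j$ dual to $\nabla\lambda_{f^*(i)}$, for which the identity $\frac{\partial^{\beta}}{\partial n_f^{\beta}}\lambda_{f^*}^{\alpha_{f^*}}=\beta!\,\delta(\alpha_{f^*},\beta)$ makes the block literally diagonal. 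So: same route, but you must either make that dual-basis choice explicit or replace your diagonality claim by the invertibility-of-the-symmetric-power argument.
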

\begin{proof}
A basis of $\mathbb P_k(S_{\ell}(f)\cap L(f,s))$ is
$\{ \lambda^{\alpha} = \lambda_f^{\alpha_f}\lambda_{f^*}^{\alpha_{f^*}} , \alpha \in S_{\ell}(f), | \alpha_{f^*} | = s \}$ and thus the dimensions match (by mapping $\alpha_{f^*}$ to $\beta$).

We choose a basis of the normal plane $\{n_f^1, n_f^2, \ldots, n_f^{n - \ell}\}$ s.t. it is dual to the vectors $\{ \nabla \lambda_{f^*(1)}, \nabla \lambda_{f^*(2)}, \ldots, \}$, i.e., $\nabla \lambda_{f^*(i)}\cdot n_f^j = \delta_{i,j}$ for $i,j=1,\ldots, n - \ell$. Then we have the duality
\begin{equation}\label{eq:Dbeta}
\frac{\partial^{\beta} }{\partial n_f^{\beta}} (\lambda_{f^*}^{\alpha_{f^*}}) = \beta!\delta( \alpha_{f^*}, \beta), \quad \alpha_{f^*}, \beta\in \mathbb N^{1:n-\ell}, |\alpha_{f^*}| =|\beta| = s,
\end{equation}
which can be proved easily by induction on $s$. When $T$ is the reference simplex $\hat T$, $\lambda_i = x_i$ and $\nabla \lambda_i = - \bs{e}_i$, \eqref{eq:Dbeta} is the calculus result $D^{\beta}_{n_f}\bs x_{f^*}^{\alpha_{f^*}}=\beta!\delta( \alpha_{f^*}, \beta)$. 

Assume $u = \sum c_{\alpha_f, \alpha_{f^*}}  \lambda_f^{\alpha_f}\lambda_{f^*}^{\alpha_{f^*}} \in \mathbb P_k(S_{\ell}(f)\cap L(f,s))$. If the derivative is not fully applied to the component $\lambda_{f^*}^{\alpha_{f^*}}$, then there is a term $\lambda_{f^*}^{\gamma}$ with $|\gamma| > 0$ left and $\lambda_{i}^{\gamma}|_f = 0$ for $i\in f^*$. So for any $\beta \in \mathbb N^{1:n-\ell}$ and $|\beta | = s$,
\begin{equation*}%\label{eq:normalderivative}
\frac{\partial^{\beta} u }{\partial n_f^{\beta}} |_f = \beta! \sum_{\alpha\in S_{\ell}(f),|\alpha_f| = k- s} c_{\alpha_f, \beta}  \lambda_f^{\alpha_f}.
\end{equation*}
The vanishing DoF \eqref{eq:normalDof} implies $\sum\limits_{ \alpha\in S_{\ell}(f),|\alpha_f| = k- s} c_{\alpha_f, \beta}  \lambda_f^{\alpha_f}|_f=0$.
Hence $c_{\alpha_f, \beta} = 0$ for all $|\alpha_f| = k- s, \alpha\in S_{\ell}(f)$. As $\beta$ is arbitrary, we conclude all coefficients $ c_{\alpha_f, \alpha_{f^*}} = 0$ and thus $u = 0$.
%Notice that $D^{\beta}\lambda^{\alpha}|_f = c\, \lambda_f^{\alpha_f}$ as $\lambda_{f^*}|_f = 0$. Then $\frac{\partial^{\beta} u}{\partial n_f^{\beta}}|_f\in \spa \{ \lambda_f^{\alpha_f}, \alpha_f\in S_{\ell}(f)\cap L_k(f)\}$ and vanishing \eqref{eq:normalDof} implies $\frac{\partial^{\beta} u}{\partial n_f^{\beta}}|_f = 0$.
\end{proof}

%\begin{remark}\rm
%Based on \eqref{eq:normalderivative}, the DoFs can be replaced by node values
%$$
%\frac{\partial^{\beta} u}{\partial n_f^{\beta}}(x(\alpha_f)) \quad \forall~\alpha\in S_{\ell}(f), |\alpha_f| = k - s, \beta \in \mathbb N^{1:n-\ell}, |\beta | = s.
%$$
%Here $x(\alpha_f)$ is the geometric embedding using $\alpha_f$ only and thus $x(\alpha_f)\in f$.
%%Other dual basis $\mathbb P_k(S_{\ell}(f)\cap L(f,s))$ can be also used.
%\end{remark}

For $u\in\mathbb P_k(S_{\ell}(f)\cap L(f,s))$ and $\beta\in\mathbb N^{1:n-\ell}$ with $|\beta|<s$, by Lemma~\ref{lm:derivative}, $\frac{\partial^{\beta} u}{\partial n_f^{\beta}}|_f=0$.
Applying the operator $\frac{\partial^{\beta} (\cdot)}{\partial n_f^{\beta}}|_f$ to the direct decomposition $\mathbb P_k(S_{\ell}(f)) = \Oplus_{s=0}^{r_{\ell}} \mathbb P_k(S_{\ell}(f)\cap L(f,s))$ will possess a block lower triangular structure and leads to the following unisolvence result.

\begin{lemma}\label{lem:appfaceunisolvence}
Let $\ell=0,\ldots, n-1$. The polynomial space $\mathbb P_k(S_{\ell}(f))$ is uniquely determined by DoFs
\begin{equation*}
\int_f  \frac{\partial^{\beta} u}{\partial n_f^{\beta}} \, \lambda_f^{\alpha_f} \dd s \quad \forall~\alpha\in S_{\ell}(f), |\alpha_f| = k - s, \beta \in \mathbb N^{1:n-\ell}, |\beta | = s, s=0,\ldots, r_{\ell}.
\end{equation*}
\end{lemma}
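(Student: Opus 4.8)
The plan is to reduce everything to the preceding lemma (unisolvence of $\mathbb P_k(S_\ell(f)\cap L(f,s))$ against the DoFs \eqref{eq:normalDof} of order $|\beta|=s$) via the direct decomposition
$$
\mathbb P_k(S_\ell(f)) = \Oplus_{s=0}^{r_\ell}\mathbb P_k\big(S_\ell(f)\cap L(f,s)\big),
$$
which holds because $S_\ell(f)\subseteq D(f,r_\ell)=\bigcup_{s=0}^{r_\ell}L(f,s)$ is partitioned by the slices $L(f,s)$, together with Lemma~\ref{lm:derivative} to produce the block lower triangular structure. First I would check the dimension count: for fixed $s$, the DoFs \eqref{eq:normalDof} with $|\beta|=s$ are indexed by pairs $(\alpha_f,\beta)$ with $\alpha\in S_\ell(f)$, $|\alpha_f|=k-s$, $|\beta|=s$; via $\alpha_{f^*}\mapsto\beta$ this index set is in bijection with $\{\alpha\in S_\ell(f):|\alpha_{f^*}|=s\}$, whose cardinality is $\dim\mathbb P_k(S_\ell(f)\cap L(f,s))$. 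Summing over $s=0,\dots,r_\ell$ shows the number of DoFs equals $\dim\mathbb P_k(S_\ell(f))$, so it suffices to prove that vanishing of all DoFs forces $u=0$.

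Next I would write $u=\sum_{s=0}^{r_\ell}u_s$ with $u_s\in\mathbb P_k(S_\ell(f)\cap L(f,s))$ and induct on $s'=0,1,\dots,r_\ell$, proving $u_{s'}=0$. Fix $s'$ and assume $u_0=\cdots=u_{s'-1}=0$. For any $\beta\in\mathbb N^{1:n-\ell}$ with $|\beta|=s'$: by Lemma~\ref{lm:derivative}, $\frac{\partial^{\beta}u_s}{\partial n_f^{\beta}}\big|_f=0$ for every $s>s'$, since each Bernstein monomial occurring in $u_s$ has distance $s>|\beta|$ to $f$ and hence vanishes to that order on $f$; by the induction hypothesis the slices with $s<s'$ are already zero. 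Therefore the DoFs \eqref{eq:normalDof} attached to this $\beta$ reduce to $\int_f \frac{\partial^{\beta}u_{s'}}{\partial n_f^{\beta}}\,\lambda_f^{\alpha_f}\,\dd s=0$ for all admissible $\alpha_f$. Ranging over all $\beta$ with $|\beta|=s'$, these are exactly the DoFs of the preceding lemma applied to $u_{s'}\in\mathbb P_k(S_\ell(f)\cap L(f,s'))$, so $u_{s'}=0$. This closes the induction and gives $u=0$; combined with the dimension count, unisolvence follows.

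The argument is essentially routine once the preceding lemma and Lemma~\ref{lm:derivative} are in hand. The only point needing care is the bookkeeping of the triangular structure — that a normal derivative of order $s'$ annihilates every slice of distance $>s'$ and, after the lower slices are removed by induction, isolates precisely the slice of distance $s'$, where the preceding lemma applies. There is no genuine analytic obstacle; the potential pitfall is merely to keep the order of the induction (increasing $s'$) consistent with which DoFs act nontrivially on which slice.
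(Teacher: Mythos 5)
Your proof is correct and follows essentially the same route as the paper: the paper also uses the slice decomposition $\mathbb P_k(S_{\ell}(f)) = \Oplus_{s=0}^{r_{\ell}} \mathbb P_k(S_{\ell}(f)\cap L(f,s))$, invokes Lemma~\ref{lm:derivative} to obtain the block lower triangular structure, and reduces to the preceding lemma on each diagonal block. Your version merely spells out the induction on $s'$ and the dimension count that the paper leaves implicit.
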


Together with decomposition \eqref{eq:PrSdec} of the polynomial space, we obtain the following result.
\begin{theorem}\label{th:applocalPrCm}
 Given integer $m\geq 0$, let non-negative integer array $\bs r=(r_0,r_1, \cdots, r_n)$ satisfy
$$
r_{n}=0,\;\; r_{n-1}=m,\;\; r_{\ell}\geq 2r_{\ell+1} \; \textrm{ for } \ell=n-2,\ldots, 0.
$$
Let $k\geq 2r_0+1 \geq 2^n m + 1$. Then the shape function $\mathbb P_k(T)$ is uniquely determined by the following DoFs
\begin{align}
\label{eq:C1nd0}
D^{\alpha} u (\texttt{v}) & \quad \alpha \in \mathbb N^{1:n}, |\alpha | \leq  r_0, \texttt{v}\in \Delta_0(T),\\
\label{eq:C1nd2}
\int_f \frac{\partial^{\beta} u}{\partial n_f^{\beta}}  \, \lambda_f^{\alpha_f} \dd s & \quad \alpha\in S_{\ell}(f), |\alpha_f| = k - s, \beta \in \mathbb N^{1:n-\ell}, |\beta | = s,\\
&\quad f\in \Delta_{\ell}(T), \ell =1,\ldots, n-1, s=0,\ldots, r_{\ell}, \notag \\
\label{eq:C1nd3}
\int_T u \lambda^{\alpha} \dx & \quad \alpha \in S_n(T).
% \in \mathbb N^{0:3}_{r-4(m+1)}, \alpha \leq r - r_0-m  - 2 \textrm{ and } \\
%&\quad \alpha_i+\alpha_j \leq r  - r_1 -2m-3 \textrm{ for } 0\leq i<j\leq 3. \notag
\end{align}
\end{theorem}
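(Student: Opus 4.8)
The plan is to prove Theorem~\ref{th:applocalPrCm} along the same lines as the three-dimensional case (Theorem~\ref{thm:Cr3dfemunisolvence}): first verify that the number of functionals in \eqref{eq:C1nd0}--\eqref{eq:C1nd3} equals $\dim\mathbb P_k(T)$, and then establish unisolvence by showing that a $u\in\mathbb P_k(T)$ annihilated by all of them must vanish identically. Equal cardinality together with unisolvence then gives that the functionals form a dual basis of $\mathbb P_k(T)$, which is exactly the assertion.

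For the count I would use the geometric decomposition \eqref{eq:PrSdec} of Theorem~\ref{th:appdecT}, so that $\dim\mathbb P_k(T)=\sum_{\ell=0}^{n}\sum_{f\in\Delta_\ell(T)}|S_\ell(f)|$. The vertex functionals \eqref{eq:C1nd0} number ${r_0+n\choose n}=|D(\texttt{v},r_0)|=|S_0(\texttt{v})|$ at each vertex. On $f\in\Delta_\ell(T)$ with $1\le\ell\le n-1$, grouping the functionals \eqref{eq:C1nd2} according to the value $s=|\alpha_{f^*}|$ and identifying $\beta$ with $\alpha_{f^*}$ shows their number equals $\sum_{s=0}^{r_\ell}|\{\alpha\in S_\ell(f):|\alpha_{f^*}|=s\}|=|S_\ell(f)|$, using $S_\ell(f)\subseteq D(f,r_\ell)=\{\alpha:|\alpha_{f^*}|\le r_\ell\}$. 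Finally \eqref{eq:C1nd3} contributes $|S_n(T)|$, and the three totals add up to $\dim\mathbb P_k(T)$.

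For unisolvence, let $u\in\mathbb P_k(T)$ kill every functional. I would show by induction on $\ell=0,1,\dots,n-1$ that $D^\gamma u|_f=0$ for all $f\in\Delta_\ell(T)$ and all $|\gamma|\le r_\ell$; the base case $\ell=0$ is \eqref{eq:C1nd0}. For the inductive step fix $f\in\Delta_\ell(T)$ with a normal frame $\{n_f^1,\dots,n_f^{\,n-\ell}\}$ as in Lemma~\ref{lem:appfaceunisolvence}; since tangential differentiation commutes with restriction to $f$, it suffices to prove $\partial^\beta_{n_f}u|_f=0$ for every $\beta\in\mathbb N^{1:n-\ell}$ with $|\beta|=s\le r_\ell$. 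For any $e\in\Delta_i(f)$ with $i<\ell$ the induction hypothesis gives $D^\gamma u|_e=0$ for $|\gamma|\le r_i$, hence $\partial^\beta_{n_f}u$ vanishes on $e$ to order at least $r_i-s+1$; restricting to $f$ and using the vanishing-order characterization of Bernstein supports on a simplex (the converse of Lemma~\ref{lm:derivative}), $\partial^\beta_{n_f}u|_f$ is a degree-$(k-s)$ polynomial on $f$ supported on exactly the indices $\alpha_f$ with $\dist_f(\alpha_f,e)>r_i-s$ for all such $e$. By the identity $|\alpha_{e^*}|=|\alpha_{f^*}|+\dist_f(\alpha_f,e)$, these are precisely the $\alpha_f$ with $\alpha_f+\beta\in S_\ell(f)$, i.e.\ the indices paired against in the level-$s$ functionals \eqref{eq:C1nd2} on $f$; since the Gram matrix of the corresponding Bernstein polynomials on $f$ is invertible, the vanishing of those functionals forces $\partial^\beta_{n_f}u|_f=0$ (this is the block lower-triangular mechanism behind Lemma~\ref{lem:appfaceunisolvence}). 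Once the induction reaches $\ell=n-1$, $u$ vanishes to order $r_i+1$ on every sub-simplex of dimension $i\le n-1$, so again by the converse of Lemma~\ref{lm:derivative} and the definition of $S_n(T)$ we get $u\in\mathbb P_k(S_n(T))$, whence \eqref{eq:C1nd3} yields $u=0$.

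The main obstacle is the inductive step, specifically the bookkeeping that matches the vanishing orders inherited from the lower skeleton of $f$ with the defining inequalities of $S_\ell(f)$ after stripping off $s$ normal derivatives: one must check that the already-established vanishing kills neither too few nor too many Bernstein modes, so that the $f$-functionals are genuinely unisolvent for the residual polynomial on $f$. This relies on the decay hypotheses $r_i\ge 2r_{i+1}$, which make the decomposition \eqref{eq:PrSdec} valid and the sets $S_\ell(f)$ pairwise disjoint (Lemmas~\ref{lm:appdisjoint} and~\ref{lm:appDeltaf=DeltaT}), and which, chained from $r_{n-1}=m$, also force the degree bound $k\ge 2r_0+1\ge 2^nm+1$ that guarantees all the $S_\ell(f)$ are nonempty. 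Carrying through these quantitative distance estimates, rather than the abstract triangular structure, is the delicate part.
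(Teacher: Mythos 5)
Your proof is correct and follows essentially the same route as the paper's: the dimension count rests on the lattice decomposition \eqref{eq:PrSdec}, and your induction on the dimension of sub-simplices is the forward-substitution form of the paper's ``block lower triangular'' argument, with your per-level Gram-matrix step playing the role of Lemma~\ref{lem:appfaceunisolvence}. The one ingredient the paper does not state explicitly but which you also invoke at the same level of rigor as its three-dimensional analogue (Theorem~\ref{thm:Cr3dfemunisolvence}) is the converse of Lemma~\ref{lm:derivative}, identifying the vanishing order on a sub-simplex with the Bernstein support.
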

\begin{proof}
Thanks to the decomposition \eqref{eq:PrSdec}, the dimensions match.
Take $u\in\mathbb P_k(T)$ satisfy all the DoFs \eqref{eq:C1nd0}-\eqref{eq:C1nd3} vanish. We are going to show $u=0$.

For $\alpha\in S_{\ell}(f)$ and $e\in\Delta_i(T)$ with $i\leq\ell$ and $e\neq f$,
by \eqref{eq:Slfineqlty} and \eqref{eq:Slfineqlty2} we have $|\alpha_{e^*}|\geq r_{i}+1$, hence $\frac{\partial^{\beta}\lambda^{\alpha}}{\partial n_e^{\beta}}|_e=0$ for $\beta\in\mathbb N^{1:n-i}$ with $|\beta|\leq r_i$.
Again this tells us that applying the operator $\frac{\partial^{\beta}(\cdot)}{\partial n_f^{\beta}}|_f$ to the direct decomposition $\mathbb P_{k}(T) = \Oplus_{\ell = 0}^{n} \Oplus_{f\in \Delta_{\ell}(T)} \mathbb P_k(S_{\ell}(f))$ will produce a block lower triangular structure. Then apply Lemma~\ref{lem:appfaceunisolvence}, we conclude $u\in\mathbb P_k(S_{n}(T))$, which together with the vanishing DoF \eqref{eq:C1nd3} gives $u=0$.
\end{proof}
% We use the decomposition $D(f,r) = \Oplus_{k=0}^{r_{\ell}} L_k(f)$ and due to Lemma~\ref{lm:derivative},
% \LC{Write one more paragraph for $|\beta|\leq r_{\ell}$ using the lower triangular structure.}

\begin{remark}\rm
For $\alpha\in S_{\ell}(f)$, by \eqref{eq:Slfineqlty} we have $|\alpha_{e}|\leq k - r_{\ell-1}-1$ for all $e\in\Delta_{\ell-1}(f)$, then $\alpha_f\geq r_{\ell-1}+1-|\alpha_{f^*}|$, and
$$
\lambda^{\alpha}=\lambda_{f^*}^{\alpha_{f^*}}\lambda_f^{\alpha_f}=\lambda_{f^*}^{\alpha_{f^*}}\lambda_f^{r_{\ell-1}+1-|\alpha_{f^*}|}\lambda_f^{\alpha_f-(r_{\ell-1}+1)+|\alpha_{f^*}|}.
$$
Using $\alpha_f-(r_{\ell-1}+1)+|\alpha_{f^*}|$ as the new index, DoFs \eqref{eq:C1nd2}-\eqref{eq:C1nd3} can be replaced by
\begin{align*}
\int_f \frac{\partial^{\beta} u}{\partial n_f^{\beta}}  \, \lambda_f^{\alpha} \dd s & \quad \beta \in \mathbb N^{1:n-\ell}, |\beta | = s, s=0,\ldots, r_{\ell}, \;\;\alpha\in\mathbb T_{k-(\ell+1)(r_{\ell-1}+1)+\ell s}^{\ell}, \\
&\quad |\alpha_{e}|\leq k - r_{i}-1-(i+1)(r_{\ell-1}+1-s), \forall e\in\Delta_i(f), i=0,\ldots,\ell-2, \\
&\quad f\in \Delta_{\ell}(T), \ell =1,\ldots, n-1, \notag \\
\int_T u \lambda^{\alpha} \dx & \quad \alpha \in \mathbb T_{k-(n+1)(m+1)}^{n},\\
&\quad |\alpha_{e}|\leq k - r_{i}-1-(i+1)(m+1), \forall e\in\Delta_i(T), i=0,\ldots,n-2.
% \in \mathbb N^{0:3}_{r-4(m+1)}, \alpha \leq r - r_0-m  - 2 \textrm{ and } \\
%&\quad \alpha_i+\alpha_j \leq r  - r_1 -2m-3 \textrm{ for } 0\leq i<j\leq 3. \notag
\end{align*}
Namely we can remove bubble functions in the test function space. 
\end{remark}

\subsection{Smooth finite elements in arbitrary dimension}
Given a triangulation $\mathcal T_h$, the finite element space is obtained by asking the DoFs depending on the sub-simplex only.

\begin{theorem}\label{th:appC1Rn}
 Given integer $m\geq 0$, let non-negative integer array $\bs r=(r_0,r_1, \cdots, r_n)$ satisfy
$$
r_{n}=0,\;\; r_{n-1}=m,\;\; r_{\ell}\geq 2r_{\ell+1} \; \textrm{ for } \ell=n-2,\ldots, 0.
$$
Let $k\geq 2r_0+1 \geq 2^n m + 1$. The following DoFs
\begin{align}
\label{eq:C1Rnd0}
D^{\alpha} u (\texttt{v}) & \quad \alpha \in \mathbb N^{1:n}, |\alpha | \leq  r_0, \texttt{v}\in \Delta_0(\mathcal T_h),\\
\label{eq:C1Rndf}
\int_f \frac{\partial^{\beta} u}{\partial n_f^{\beta}}  \, \lambda_f^{\alpha_f} \dd s & \quad \alpha\in S_{\ell}(f), |\alpha_f| = k - s, \beta \in \mathbb N^{1:n-\ell}, |\beta | = s, s=0,\ldots, r_{\ell},\\
&\quad f\in \Delta_{\ell}(\mathcal T_h), \ell =1,\ldots, n-1, \notag \\
\label{eq:C1RndT}
\int_T u \lambda^{\alpha} \dx & \quad \alpha \in S_n(T), T\in \mathcal T_h,
% \in \mathbb N^{0:3}_{r-4(m+1)}, \alpha \leq r - r_0-m  - 2 \textrm{ and } \\
%&\quad \alpha_i+\alpha_j \leq r  - r_1 -2m-3 \textrm{ for } 0\leq i<j\leq 3. \notag
\end{align}
will define a finite element space
$$
V_h = \{ u \in C^{m}(\Omega) \mid \text{ DoFs }\eqref{eq:C1Rnd0}-\eqref{eq:C1Rndf} \text{ are single valued}, u|_{T}\in \mathbb P_k(T), \forall T\in \mathcal T_h \}.
$$
\end{theorem}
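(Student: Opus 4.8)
The statement has two ingredients: uni-solvence of the local element $(T,\mathbb P_k(T),\text{DoFs})$, and $C^m$-conformity of the assembled space. The first is exactly Theorem~\ref{th:applocalPrCm}: the DoFs \eqref{eq:C1Rnd0}--\eqref{eq:C1RndT} are uni-solvent for $\mathbb P_k(T)$ (the list here and the list in Theorem~\ref{th:applocalPrCm} differ only by the equivalent Bernstein/moment forms of the functionals on each slice $\mathbb P_k(S_\ell(f)\cap L(f,s))$, cf.\ Lemma~\ref{lem:appfaceunisolvence}). Hence it remains to show that a piecewise polynomial $u$ with $u|_T\in\mathbb P_k(T)$ for all $T\in\mathcal T_h$ and with single-valued DoFs \eqref{eq:C1Rnd0}--\eqref{eq:C1Rndf} lies in $C^m(\Omega)$.

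Since a piecewise polynomial is $C^m$ on $\Omega$ exactly when, across every interior $(n-1)$-face $F$, the function and all its derivatives of order $\le m$ match, fix such a face $F=T_+\cap T_-$ and let $\texttt{v}_{F^\ast}$ be the vertex of $T_\pm$ opposite $F$, so that $F^\ast=\{\texttt{v}_{F^\ast}\}$ and $\dist(\alpha,F)=\alpha_{\texttt{v}_{F^\ast}}$; recall the normal $n_F$ is a fixed attribute of $F$, common to both elements. By Lemma~\ref{lm:derivative} the normal jet $(\partial_{n_F}^j u|_F)_{j=0}^m$ on $F$ depends only on the Bernstein coefficients $c_\alpha$ of $u|_{T_\pm}$ with $\dist(\alpha,F)\le m$, i.e.\ with $\alpha\in D(F,m)$. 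The geometric core of the argument is that, with respect to the decomposition of Theorem~\ref{th:appdecT}, each such $\alpha$ lies in a slice $S_{\dim g}(g)$ attached to a sub-simplex $g\subseteq F$: if $\alpha\in S_\ell(f)$ with $f\not\subseteq F$ then $\texttt{v}_{F^\ast}\in f$, and one checks $\alpha_{\texttt{v}_{F^\ast}}>m$ in every case. Indeed, $f=T$ is excluded from $D(F,r_{n-1})$ by the definition of $S_n(T)$; $\dim f=n-1$ with $f\ne F$ is excluded by the disjointness argument of Lemma~\ref{lm:appdisjoint} (which uses $r_{n-2}\ge 2r_{n-1}$); for $1\le\dim f=\ell\le n-2$ one has $f\cap F=f\setminus\{\texttt{v}_{F^\ast}\}\in\Delta_{\ell-1}(f)$, so membership in $S_\ell(f)$ forces $|\alpha_f|\ge k-r_\ell$ and $|\alpha_{f\cap F}|\le k-r_{\ell-1}-1$, whence $\alpha_{\texttt{v}_{F^\ast}}=|\alpha_f|-|\alpha_{f\cap F}|\ge r_{\ell-1}-r_\ell+1\ge r_\ell+1>m$ using $r_{\ell-1}\ge 2r_\ell$ and $r_\ell\ge r_{n-1}=m$; and $f=\{\texttt{v}_{F^\ast}\}$ gives $\alpha_{\texttt{v}_{F^\ast}}\ge k-r_0\ge r_0+1>m$. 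Thus $D(F,m)\subseteq\bigcup_{g\in\Delta(F)}S_{\dim g}(g)$.

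Now run the block-lower-triangular elimination from the proof of Theorem~\ref{th:applocalPrCm}, but processing only the DoFs \eqref{eq:C1Rnd0}--\eqref{eq:C1Rndf} attached to sub-simplices $g\subseteq F$ (vertices of $F$ first, then edges of $F$, and so on up to $F$ itself, invoking Lemma~\ref{lem:appfaceunisolvence} at each dimension): the off-diagonal contributions again vanish by the distance estimates of the preceding paragraph, and the procedure recovers $c_\alpha$ for all $\alpha\in\bigcup_{g\in\Delta(F)}S_{\dim g}(g)$, in particular for all $\alpha\in D(F,m)$, as an explicit linear function of those DoF values. Consequently the normal jet on $F$ is a fixed linear function of the DoFs of \eqref{eq:C1Rnd0}--\eqref{eq:C1Rndf} carried by $F$ and by $\partial F$; since these are single-valued, the jet computed from $u|_{T_+}$ equals the one computed from $u|_{T_-}$, so $u$ and its derivatives up to order $m$ are continuous across $F$. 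As $F$ was arbitrary, $u\in C^m(\Omega)$, which together with the local uni-solvence shows $V_h$ is a well-defined $C^m$-conforming finite element space.

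The main obstacle is the geometric statement of the second paragraph — identifying exactly which slices $S_\ell(f)$ of the lattice decomposition meet the distance-$m$ tube $D(F,m)$ of an $(n-1)$-face — which is the only genuinely new computation and is precisely where the hierarchy $r_{\ell-1}\ge 2r_\ell$ and the lower bound $k\ge 2r_0+1$ enter. The remaining bookkeeping (the triangular elimination restricted to $\overline F$, and the harmless fact that the vertex DoFs $D^\alpha u(\texttt{v})$ with $|\alpha|\le r_0$ encode more than the $m$-jet along $F$ but, being shared, do no damage) simply mirrors the already-established proof of Theorem~\ref{th:applocalPrCm}.
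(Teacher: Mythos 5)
Your reduction to local unisolvence (Theorem~\ref{th:applocalPrCm}) is fine, and the geometric core of your second paragraph is correct: one can check, exactly as you do, that every lattice node $\alpha$ with $\dist(\alpha,F)\le m$ lies in a slice $S_{\dim g}(g)$ with $g\subseteq F$, and that the block-triangular elimination restricted to the sub-simplices of $F$ recovers the Bernstein coefficients $c_\alpha$ on those slices from the DoFs carried by $\overline F$. The gap is in the last inference. What you have shown is that, \emph{for each element separately}, the normal jet $(\partial_{n_F}^j u|_{T_\pm})|_F$, $j\le m$, is determined by the shared DoF values; i.e.\ you have two factorizations $\mathrm{jet}_\pm=\Phi_\pm\circ N$, where $N$ is the vector of DoFs on $\overline F$. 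To conclude continuity you need $\Phi_+=\Phi_-$, and nothing in your argument delivers this: both steps of your construction --- (DoF values) $\to$ (coefficients $c_\alpha$) and (coefficients) $\to$ (jet) --- are built from element-dependent data (the simplicial lattice of $T_\pm$, and the normal derivatives $\partial_{n_F}\lambda_{\ell}$ of the barycentric coordinates of $T_\pm$, which differ in sign and scale across $F$). Equality of the two jets on the subspace where the shared DoFs \emph{vanish} (which your argument does give) is strictly weaker than equality on the subspace where they merely \emph{agree}, and it is the latter that conformity requires.

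The paper closes exactly this point by a different bookkeeping: for each $i=0,\dots,m$ it sets $w=\partial_{n_F}^i u|_F\in\mathbb P_{k-i}(F)$ and observes that, after adapting the normal frame on each $g\subseteq F$ so that $n_F$ is one of the normal vectors, the DoFs on $\overline F$ whose normal-to-$F$ order is exactly $i$ become functionals of $w$ \emph{alone} (tangential derivatives $D_F^{\alpha}w(\texttt{v})$ and moments of $\partial^{\beta}w/\partial n_{F,f}^{\beta}$ on $f\subseteq F$), hence intrinsic to $F$; these functionals are then unisolvent on $\mathbb P_{k-i}(F)$ by Theorem~\ref{th:applocalPrCm} applied in dimension $n-1$ with the shifted parameters $\bs r_F^i=(r_0-i,\dots,r_{n-2}-i,0)$ and degree $k-i$. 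Single-valuedness of intrinsic, unisolvent functionals of $w$ immediately gives $w^+=w^-$. Your distance computations are not wasted --- they are essentially what is needed to see that the shifted parameters again satisfy the hypotheses and that the relevant blocks decouple --- but the argument must be run on the trace space $\mathbb P_{k-i}(F)$ with functionals of the trace, not on the Bernstein coefficients of the two neighbouring elements.
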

\begin{proof}
Restricted to one simplex $T$, by Theorem \ref{th:applocalPrCm}, DoFs \eqref{eq:C1Rnd0}-\eqref{eq:C1RndT}  will define a function $u$ s.t. $ u|_{T}\in \mathbb P_k(T)$. We only need to verify $u\in C^{m}(\Omega)$. 
It suffices to prove $\frac{\partial^{i} u}{\partial n_F^{i}}|_F \in \mathbb P_{k-i}(F)$, for all $i=0,\ldots, m$ and all $F\in \Delta_{n-1}(T)$, are uniquely determined by \eqref{eq:C1Rnd0}-\eqref{eq:C1Rndf} on $F$. 

Let $w=\frac{\partial^{i} u}{\partial n_F^{i}}|_F\in \mathbb P_{k-i}(F)$. Consider the modified index sequence $\bs r_F^{i} = (r_0 - i, r_1 - i, \ldots, r_{n-2} - i, 0)$ and degree $k^i = k - i$. Then $k^i, \bs r_F^{i}$ satisfies the condition in Theorem \ref{th:decT} and we obtain a direct decomposition of $\mathbb T^{n-1}_{k-i}(F) = \Oplus_{\ell = 0}^{n-1}\Oplus_{f\in \Delta_{\ell}(F)} S_{\ell}^F(f),$ where
\begin{align*}
S_0^F(\texttt{v}) &=  D(\texttt{v}, r_0-i)\cap \mathbb T^{n-1}_{k-i}(F), \\
S_{\ell}^F(f) &= (D(f, r_{\ell}-i)\cap \mathbb T^{n-1}_{k-i}(F)) \backslash \left [  \Oplus_{i = 0}^{\ell - 1}\Oplus_{e\in \Delta_{i}(F)} S_{i}^F(e) \right ], \; \ell = 1,\dots, n-2, \\
S_{n-1}^F(F) & = \mathbb T^{n-1}_{k-i}(F) \backslash \left [  \Oplus_{\ell = 0}^{n-2}\Oplus_{f\in \Delta_{\ell}(F)} S_{\ell}^F(f)\right ].
\end{align*}
The DoFs \eqref{eq:C1Rnd0}-\eqref{eq:C1Rndf} related to $w$ are
\begin{align*}
D_F^{\alpha} w (\texttt{v}) & \quad \alpha \in \mathbb N^{1:n-1}, |\alpha | \leq  r_0-i, \texttt{v}\in \Delta_0(F),\\
\int_f \frac{\partial^{\beta} w}{\partial n_{F,f}^{\beta}}  \, \lambda_f^{\alpha_f} \dd s & \quad \alpha\in S_{\ell}^F(f), |\alpha_f| = k-i - s, \beta \in \mathbb N^{1:n-1-\ell}, |\beta| = s, \\
&\quad f\in \Delta_{\ell}(F), \ell =1,\ldots, n-2, s=0,\ldots, r_{\ell}-i, \notag \\
\int_F w \lambda^{\alpha} \dx & \quad \alpha \in S_{n-1}^F(F),
\end{align*}
where $D_Fw$ is the tangential derivatives of $w$, $n_{F,f}$ is the normal vector of $f$ but tangential to $F$.
Clearly the modified sequence $\bs r^F_{i}$ still satisfies constraints required in Theorem \ref{th:applocalPrCm}. We can apply Theorem \ref{th:applocalPrCm} with the shape function space $\mathbb P_{k-i}(F)$ to conclude $w$ is uniquely determined on $F$. Thus the result $u\subset C^m(\Omega)$ follows.
% Given an edge $e\in \Delta_1(\mathcal T_h)$ and $\beta \in \mathbb N^{1:n-1}$, let $w = \frac{\partial^{\beta} u}{\partial n_e^{\beta}}|_e \in \mathbb P_{r-|\beta|}(e)$. Then $\partial_e^{(j)}w(\texttt{v})$ for $j=0,1,\ldots, r_0 - |\beta|$ is determined by \eqref{eq:C1Rnd0} and
% $$
% \int_e w\lambda_e^{\alpha_e}\dd s\quad \alpha_e\in S_1(e), |\alpha_e| = k - |\beta|, \alpha_e \geq r_0 - |\beta| + 1
% $$
% is determined by \eqref{eq:C1Rndf}. Then apply Theorem \ref{th:localPrCm} to $e$, $w|_e \in \mathbb P_{r-|\beta|}(e)$ is uniquely determined. We thus verified the statement for $\ell = 1$.
% by 1-D Hermite interpolation \mnote{ modify the previous one by adding the inequality constraint in the dof},
% Assume \LC{more. Just rewrite DoF with different indexes and apply Theorem \ref{th:localPrCm}. Verify the modified sequence $\bs r^{i} = (r_0 - i, r_1 - i, \ldots, r_{n-2} - i, 0)$ still satisfies the constraint.}.
\end{proof}

Counting the dimension of $V_h$ is hard and not necessary. The cardinality of $S_{\ell}(f)$ is difficult to determine due to the inequality constraints. In the implementation, compute the distance of lattice nodes to sub-simplexes and use a logic array to find out $S_{\ell}(f)$. 
%A basis of shape functions dual to DoFs can be found by inverting a lower block triangular matrix.

\bibliographystyle{abbrv}
\bibliography{paper,refgeodecomp}
\end{document}